\documentclass[12pt,a4paper]{amsart}
\usepackage[latin1]{inputenc}
\usepackage{latexsym}
\usepackage{graphicx,shortvrb}
\usepackage[usenames,dvipsnames]{color}
\usepackage{amsmath, amssymb}
\usepackage{amsfonts}
\usepackage[colorlinks, bookmarks=true]{hyperref}

\newtheorem{theorem}{Theorem}[section]
\newtheorem{lemma}[theorem]{Lemma}
\newtheorem{proposition}[theorem]{Proposition}
\newtheorem{corollary}[theorem]{Corollary}

\theoremstyle{definition}
\newtheorem{definition}[theorem]{Definition}
\newtheorem{remark}[theorem]{Remark}

\newcommand{\field}[1] {\mathbb{#1}}
\newcommand{\R}{\field{R}}
\newcommand{\dens}{{\mathbf{d}}}
\newcommand{\N}{\mathbb{N}}
\newcommand{\C}{\mathbb{C}}
\newcommand{\Z}{\mathbb{Z}}
\newcommand{\D}{\mathbb{D}}
\newcommand{\T}{\mathbb{T}}
\newcommand{\one}{\mathbf{1}}
\newcommand{\spn}{\mathbf{span}}
\newcommand{\rank}{\mathbf{rank}}

\definecolor{orange}{rgb}{1,0.5,0}

\setlength{\textwidth}{16cm}       
\setlength{\oddsidemargin}{0.25cm}   
\setlength{\evensidemargin}{0.25cm}  
\setlength{\topmargin}{1.2cm}     

\title{Approximation schemes satisfying Shapiro's Theorem}
\author{J. M. Almira and T. Oikhberg}

\begin{document}

\keywords{Approximation scheme, approximation error, Bernstein's Lethargy,
approximation by dictionary}
\subjclass[2000]{41A25, 41A65, 41A27}

\baselineskip=16pt

\numberwithin{equation}{section}

\maketitle \markboth{Approximation schemes satisfying Shapiro's Theorem}{J. M. Almira, T. Oikhberg}
\begin{abstract}
An approximation scheme is a family of homogeneous subsets $(A_n)$ of a
quasi-Banach space $X$, such that
$A_1 \subsetneq A_2 \subsetneq \ldots \subsetneq X$, $A_n + A_n \subset A_{K(n)}$,
and $\overline{\cup_n A_n} = X$. Continuing the line of research originating
at the classical paper \cite{bernsteininverso} by S.N.~Bernstein, we give several characterizations of the approximation schemes with the property that, for every sequence $\{\varepsilon_n\}\searrow 0$, there exists $x\in X$ such that
$dist(x,A_n)\neq \mathbf{O}(\varepsilon_n)$ (in this case we say that $(X,\{A_n\})$
satisfies Shapiro's Theorem). If $X$ is a Banach space, $x \in X$ as above exists
if and only if, for every sequence $\{\delta_n\} \searrow 0$, there exists $y \in X$
such that $dist(y,A_n) \geq \delta_n$. We give numerous examples of approximation schemes
satisfying Shapiro's Theorem.
\end{abstract}

\section{Introduction and motivation}\label{intro}
One of the most remarkable early results in the constructive theory of functions
is Bernstein Lethargy Theorem: if
$X_0\subsetneq X_1\subsetneq X_2\subsetneq \cdots \subsetneq X$ is an ascending chain of finite dimensional vector subspaces of a Banach space $X$, and $\{\varepsilon_n\}\searrow 0$
is a non-increasing sequence of positive real numbers that converges to zero, then there exists an element $x\in X$ such that the $n$-th error of best approximation by elements
of $X_n$ satisfies $E(x,X_n)=\varepsilon_n$ for all $n\in\mathbb{N}$.
Here and throughout the paper, we write $E(x,A) = \inf_{a \in A} \|x-a\|$
($x$ and $A$ are an element and a subset of a quasi-Banach space $X$, respectively).
Furthermore, the notation $\{\alpha_n\} \searrow 0$ means that the sequence
$(\alpha_n)$ is non-increasing, and $\lim \alpha_n = 0$. 

The result quoted above was first obtained in 1938 by S.N. Bernstein \cite{bernsteininverso}
for $X=C([0,1])$ and $X_n=\Pi_n$, the vector space of real polynomials of degree $\leq n$.
The case of arbitrary finite dimensional $X_n$ is treated, for instance, in
\cite[Section II.5.3]{singerlibro}.

There are very few generalizations of Bernstein's result to arbitrary chains of
(possibly infinite dimensional) closed subspaces
$X_1 \subsetneq X_2 \subsetneq \ldots$ of a Banach space $X$.
The results due to Tjuriemskih \cite{tjuriemskih} and Nikolskii \cite{nikolskii,nikolskii2}
(see also \cite[Section I.6.3]{singerlibro}) assert that a sufficient (resp.~necessary)
condition for the existence of $x \in X$ verifying $E(x,X_n) = \varepsilon_n$ is that
$X$ is a Hilbert space (resp.~$X$ is reflexive). These results were proved independently and
by other means by Almira and Luther \cite{almiraluther1, almiraluther2} and Almira and Del Toro \cite{almiradeltoro1}. Moreover, in \cite{almiradeltoro2} it was shown that if $X$ is a reflexive Banach space and $\{0\}\subset X_1\subset X_2 \subset \cdots$ is an infinite chain of closed subspaces of $X$ then for every pair of sequences of positive numbers $\{\varepsilon_n\}\searrow 0$, $\{\delta_n\}\searrow 0$, there is an element $x\in X$ such that $E(x,X_n)/\varepsilon_n$ converges to zero but $E(x,X_n)/\varepsilon_n\not= \mathbf{O}(\delta_n)$.
Also, Bernstein
Lethargy Theorem has been generalized to
chains of finite-dimensional subspaces in non-Banach spaces (such as $SF$-spaces)
by G. Lewicki \cite{lewicki, lewicki1}. These two approaches were successfully combined
by Micherda \cite{micherda}.

Thanks to the work by Plesniak \cite{plesniak}, the lethargy theorem has become a very
useful tool for the theory of quasianalytic functions of several complex variables.

In 1964 H.S. Shapiro \cite{shapiro} used Baire Category Theorem to prove that,
for any sequence $X_1 \subsetneq X_2 \subsetneq \ldots \subsetneq X$ of
closed (not necessarily finite dimensional) subspaces of a Banach space $X$,
and any sequence $\{\varepsilon_{n}\}\searrow 0$, there exists an $x\in X$ such that $E(x,X_{n})\neq\mathbf{O}(\varepsilon_{n})$.  This result was strengthened by Tjuriemskih \cite{tjuriemskih1} who, under the very same conditions of Shapiro's Theorem, proved the existence of  $x\in X$ such that $E(x,X_{n})\geq \varepsilon_{n}$, $n=0,1,2,\cdots$. Moreover, Borodin \cite{borodin} gave an easy proof of this result and proved that, for arbitrary infinite dimensional Banach spaces $X$ and for sequences $\{\varepsilon_n\}\searrow 0$ satisfying $\varepsilon_n>\sum_{k=n+1}^\infty\varepsilon_k$, $n=0,1,2,\cdots$, there exists  $x\in X$ such that $E(x,X_{n})= \varepsilon_{n}$, $n=0,1,2,\cdots$.

However, approximation by linear subspaces of a Banach space is very restrictive.
There are many other choices of approximation processes such as rational approximation, approximation by splines with of without free knots, $n$-term approximation with
dictionaries of different kinds, and approximation of operators by operators of
finite rank, just to mention a few of them. Do the results of Bernstein, Shapiro and Tjuriemskih
hold in this setting, too? The following startling result was proved by
Yu.~Brudnyi \cite[Theorem 4.5.12]{brundykrugljak}:

\begin{theorem}\label{brud_original}
Suppose $\{0\}=A_0\subset A_1\subset\cdots \subset A_n\subset$ is an infinite chain
of subsets of a Banach space $X$, satisfying the following conditions:
$A_n+A_m\subset A_{n+m}$ for all $n,m\in\mathbb{N}$;
$\lambda A_n\subset A_n$ for all $n\in\mathbb{N}$ and all scalars $\lambda$;
$\bigcup_{n\in\mathbb{N}}A_n$ is dense in $X$; and
\begin{eqnarray}\label{condicionbrudnyi}
\gamma=\inf_{n\in\mathbb{N}} \sup_{x \in A_{n+1}, \|x\| \leq 1}E(x,A_n)>0 .
\end{eqnarray}
Then for every non-increasing convex sequence
$\{\varepsilon_n\}_{n=0}^{\infty}\searrow 0$ there exists $x\in X$ such that
$E(x,A_n)\geq \varepsilon_n$ for all $n\in\mathbb{N}$,
and $E(x,A_n) \leq c \varepsilon_n$ for infinitely many values of $n$
(the constant $c$ depends only on $\gamma$).
\end{theorem}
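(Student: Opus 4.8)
The plan is to realize the lower estimate by an explicit lacunary series built from the ``peak'' elements furnished by the gap condition (\ref{condicionbrudnyi}), and then to extract the upper estimate at a sparse set of indices by tuning that construction with the help of the convexity of $\{\varepsilon_n\}$. First I would use (\ref{condicionbrudnyi}) to fix, for each level $m$, an element $h\in A_{m+1}$ with $\|h\|\le 1$ and $E(h,A_m)\ge\gamma/2$. Choosing a rapidly increasing sequence of levels $\ell_1<\ell_2<\cdots$ with peaks $h_j\in A_{\ell_j+1}$, I would set $x=\sum_{j\ge1}t_jh_j$ for positive coefficients $t_j$ with $\sum_jt_j<\infty$, so that $x$ is a well-defined point of $X$. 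Writing $s_J=\sum_{j\le J}t_jh_j$ and $\sigma_J=\sum_{j\le J}(\ell_j+1)$, the homogeneity $\lambda A_n\subset A_n$ and the semigroup property $A_n+A_m\subset A_{n+m}$ give $s_J\in A_{\sigma_J}$, while $\|x-s_J\|\le\sum_{j>J}t_j$.

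The core of the lower bound is the following observation. Fix $n$ and let $J$ be least with $n+\sigma_{J-1}\le\ell_J$. For any $a\in A_n$ the element $t_J^{-1}(a-s_{J-1})$ lies in $A_{n+\sigma_{J-1}}\subset A_{\ell_J}$ (again by homogeneity and the semigroup property), so $\|h_J-t_J^{-1}(a-s_{J-1})\|\ge E(h_J,A_{\ell_J})\ge\gamma/2$; separating the tail yields
\begin{equation*}
E(x,A_n)\ge\frac{\gamma}{2}\,t_J-\sum_{k>J}t_k .
\end{equation*}
Taking $t_J$ of the order $\tfrac{2}{\gamma}\varepsilon_{\ell_{J-1}}$ and spacing the levels so sparsely that $\sum_{k>J}t_k\le\tfrac{\gamma}{4}t_J$ (possible since $\varepsilon_n\to0$ lets one sample $\{\varepsilon_n\}$ geometrically), the right-hand side dominates $\varepsilon_n$ on the whole block of indices handled by $J$, and hence $E(x,A_n)\ge\varepsilon_n$ for every $n$. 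Here convexity is already convenient: it lets one write $\varepsilon_n=\sum_{k\ge n}d_k$ with $d_k=\varepsilon_k-\varepsilon_{k+1}\searrow0$, and compare $\varepsilon$ across a block through the elementary bound $\varepsilon_m-\varepsilon_{m+h}\le h\,d_m$.

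For the upper estimate I would evaluate $x$ at the partial-sum scales, where $E(x,A_{\sigma_J})\le\sum_{k>J}t_k\asymp t_{J+1}$. The aim is to choose the levels $\ell_j$ — equivalently the scales $\sigma_J$ — so that $t_{J+1}\le c\,\varepsilon_{\sigma_J}$ for infinitely many $J$, with $c$ depending only on $\gamma$; combined with the already established $E(x,A_n)\ge\varepsilon_n$, this furnishes infinitely many $n=\sigma_J$ with $\varepsilon_n\le E(x,A_n)\le c\varepsilon_n$. I expect this to be the main obstacle. The semigroup property forces $s_J$ into $A_{\sigma_J}$ with $\sigma_J-\lambda_J=2\sigma_{J-1}+1$ strictly exceeding the index $\lambda_J=\ell_J-\sigma_{J-1}$ up to which the peak $h_J$ is effective, so one is obliged to control $\{\varepsilon_n\}$ across this additive gap whose width grows with the cumulative cost $\sigma_{J-1}$.

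The plan for closing this gap is to play the monotonicity of the differences $d_k$ against the freedom in selecting the levels: one selects scales at which $\{\varepsilon_n\}$ is comparable at $\lambda_J$ and $\sigma_J$, and balances this against the geometric spacing required in the lower-bound step, the constant $c=c(\gamma)$ emerging from this balance. Reconciling the additive blow-up $A_n+A_m\subset A_{n+m}$ with the demand that the tail be comparable to $\varepsilon_{\sigma_J}$ at infinitely many scales is the delicate point of the argument, and it is precisely the convexity hypothesis — through the decreasing increments $d_k$ and the representation $\varepsilon_n=\sum_{k\ge n}d_k$ — that I would exploit to make the selection possible; this is also why one can only hope for ``$\le c\varepsilon_n$ at infinitely many $n$'' rather than at all $n$, the latter being unavailable once the semigroup condition replaces the lattice property $X_n+X_m=X_{\max(n,m)}$ of linear subspaces.
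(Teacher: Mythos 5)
First, a point of orientation: the paper never proves Theorem~\ref{brud_original} at all; it quotes it from \cite[Theorem 4.5.12]{brundykrugljak} and uses it only as a black box (in Lemma~\ref{brud2} and Theorem~\ref{shapiroimpliesbrudnyi}). So your proposal has to stand on its own, and it does not: the difficulty you flag at the end is not a tuning issue but a genuine obstruction that defeats the entire design. Your lower-bound mechanism is sound (modulo an indexing slip: $t_J$ must dominate $\tfrac{2}{\gamma}\varepsilon$ evaluated at the \emph{left end} $\ell_{J-1}-\sigma_{J-2}+1$ of block $J$, not at $\ell_{J-1}$, since $\varepsilon$ is non-increasing), but the upper-bound step is impossible within your framework, and convexity cannot rescue it. Test it against $\varepsilon_n=2^{-n}$, which is convex and decreases to $0$. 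The blocks must cover every index, and (discarding peaks whose coverage range lies inside an earlier one) the first index $n^*=\ell_J-\sigma_{J-1}+1$ beyond the reach of $h_J$ is covered by some peak $h_k$ with $k\ge J+1$, whose coefficient therefore sits in the tail. The separation inequality then forces the hard floor $\tfrac{\gamma}{2}t_k\ge\varepsilon_{n^*}$, while your plan to certify the upper estimate at the partial-sum scale $\sigma_J=\ell_J+\sigma_{J-1}+1$ requires $t_k\le\sum_{j>J}t_j\le c\,\varepsilon_{\sigma_J}$. For the geometric sequence these two requirements combine into
\[
\frac{2}{\gamma}\,2^{-(\ell_J-\sigma_{J-1}+1)}\;\le\; c\,2^{-(\ell_J+\sigma_{J-1}+1)},
\qquad\text{i.e.}\qquad 2^{2\sigma_{J-1}}\le \frac{c\gamma}{2} ,
\]
in which $\ell_J$ cancels: no choice of levels helps, and since $\sigma_{J-1}\to\infty$ the inequality fails for all but finitely many $J$. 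Hence with $c=c(\gamma)$ fixed, your construction can certify $E(x,A_n)\le c\varepsilon_n$ at only finitely many indices. Convexity is powerless here: your bound $\varepsilon_m-\varepsilon_{m+h}\le h\,d_m$ gives multiplicative comparability only when $h\,d_m\lesssim\varepsilon_m$, and for $\varepsilon_n=2^{-n}$ one has $d_m=\varepsilon_m/2$, so only gaps of bounded length can be bridged, while the gap you must bridge, $2\sigma_{J-1}+1$, grows without bound; moreover $\varepsilon_n/\varepsilon_{n+h}=2^h$ is the same at every $n$, so there are no favorable scales for your selection argument to find.

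The example $X=c_0$, $A_n=\{x:|\mathrm{supp}(x)|\le n\}$ shows what a correct proof must do instead. Here \eqref{condicionbrudnyi} holds with $\gamma=1$ and $E(x,A_n)=x^*_{n+1}$ (the $(n+1)$-st largest modulus of a coordinate), and for $\varepsilon_n=2^{-n}$ the element realizing the theorem is essentially $x=(\varepsilon_0,\varepsilon_1,\varepsilon_2,\dots)$: it has a nontrivial contribution at \emph{every} level, and its $N$-th section lies in $A_N$, far below the index $N(N+1)/2$ that the semigroup property alone predicts for a sum of $N$ generic peaks $y_k\in A_{k+1}$. In your design, by contrast, $E(x,A_n)$ is pinched between $\tfrac{\gamma}{2}t_{k}-\mathrm{tail}$ and $\approx t_{k}$ on the whole stretch from $n^*$ to $\ell_{J+1}-\sigma_J$, i.e.\ it is essentially constant across the additive gap separating $n^*$ from $\sigma_J$; a profile constant on stretches of unbounded length cannot stay above $2^{-n}$ at the left endpoint while touching $c\,2^{-n}$ at $\sigma_J$. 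Any successful construction must therefore let infinitely many peaks contribute jointly to each error (in the $c_0$ model the natural candidate is $x=\sum_k(\varepsilon_k-\varepsilon_{k+1})y_k$ with $y_k$ the gap elements), and the real work --- which your proposal does not address --- is to prove the lower bound for such a ``dense'' sum without the one-peak separation argument, and to show that its sections approximate well at indices far smaller than those guaranteed by $A_n+A_m\subset A_{n+m}$.
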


Recall that a sequence ${\varepsilon_n}$ is called {\it convex} if, for every $n$,
$\varepsilon_n \leq (\varepsilon_{n-1} + \varepsilon_{n+1})/2$.
By \cite[pp.113-114]{edwards}, for any sequence $\{\varepsilon_n\}\searrow 0$,
there is a convex sequence $\{\xi_n\}\searrow 0$ such that $\xi_n\geq \varepsilon_n$ for all
$n\in \mathbb{N}$. Thus, we do not need to assume the convexity of $\{\varepsilon_n\}$
to show the existence of $x \in X$ satisfying
$E(x,A_n)\geq \varepsilon_n$ for any $n\in\mathbb{N}$.

In this paper, we are concerned with generalizations of results of Brudnyi and
Shapiro quoted above for general approximation schemes, defined by
A.~Pietsch \cite{Pie} to produce a unified approach to diverse phenomena of
approximation theory.

\begin{definition}\label{def_appr_scheme}
Suppose $X$ is a quasi-Banach space, and let
$A_0\subset A_1 \subset \ldots \subset X$
be an infinite chain of subsets of $X$, where all inclusions are
strict. We say that $(X,\{A_n\})$ is an {\it approximation scheme}
(or that $(A_n)$ is an approximation scheme in $X$) if:
\begin{itemize}
\item[$(i)$] There exists a map $K:\mathbb{N}\to\mathbb{N}$ such that $K(n)\geq n$ and $A_n+A_n\subseteq A_{K(n)}$ for all $n\in\mathbb{N}$
(we can assume that $K$ is increasing).

\item[$(ii)$] $\lambda A_n\subset A_n$ for all $n\in\mathbb{N}$ and all scalars $\lambda$.

\item[$(iii)$] $\bigcup_{n\in\mathbb{N}}A_n$ is dense in $X$.
\end{itemize}
\end{definition}

One example of an approximation scheme is an increasing chain of linear subspaces
of $X$, whose union is dense. Then we can take $K(n) = n$. Further examples of
approximation schemes can be found throughout the paper.

\begin{definition}\label{def}
We say that  $(X,\{A_n\})$ {\it satisfies Shapiro's Theorem} if for any non-increasing sequence $\{\varepsilon_n\}\searrow 0$ there exists some $x\in X$ such
that $E(x,A_{n})\neq\mathbf{O}(\varepsilon_{n}).$
\end{definition}


Section \ref{shapiro} is devoted to describing approximation schemes satisfying
Shapiro's Theorem (Theorems \ref{maintheorem} and \ref{condicion}).
In Section \ref{section_brudnyi}, we prove that for an approximation scheme in a Banach space $X$, satisfying Shapiro's Theorem is equivalent to (a weakened version of) Brudnyi's Theorem
\ref{brud_original} (Theorem~\ref{shapiroimpliesbrudnyi}, Corollary~\ref{coro}).
Section \ref{fail_shapiro} shows some examples of ``pathological'' approximation schemes
failing Shapiro's Theorem. Section \ref{seccion_central} studies the relationship between approximation
schemes that satisfy Shapiro's Theorem, and those verifying the abstract versions of Jackson's and Bernstein's inequalities.   Section~\ref{examples}
contains many examples of
approximation schemes which do satisfy Shapiro's Theorem. Finally, Section \ref{fast_decay} examines the
related question of controlling the rate of decay of the best approximation  errors.


\section{Shapiro's Theorem}\label{shapiro}

Throughout this paper, we work with approximation schemes in infinite
dimensional quasi-Banach spaces. The proposition below shows that a finite
dimensional space cannot ``host'' an approximation scheme.

\begin{proposition}\label{finite_dimension}
Suppose $X$ is a finite dimensional space, and the family of its subsets
$A_0\subset A_1\subset\cdots \subset A_n\subset\cdots \subset X$
satisfies (i), (ii), and (iii) of Definition~\ref{def_appr_scheme}.
Then there exists $N\in \N$ such that $A_N=X$.
\end{proposition}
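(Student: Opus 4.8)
The plan is to exploit finite-dimensionality twice: first to force the linear spans of the $A_n$ to stabilize at the whole space, and then to ``fill in'' the relevant $A_n$ using conditions $(i)$ and $(ii)$ so that it equals $X$ outright. Set $d=\dim X$ and $V_n=\spn A_n$. Since $A_0\subseteq A_1\subseteq\cdots$, the subspaces $V_n$ form an increasing chain, so $\dim V_n$ is a non-decreasing sequence of integers bounded by $d$; hence it is eventually constant, say $V_n=V$ for all $n\geq N_0$, where $V=\spn(\bigcup_n A_n)$. As a finite-dimensional subspace, $V$ is closed, and $\bigcup_n A_n\subseteq V$; taking closures and invoking the density condition $(iii)$ gives $X=\overline{\bigcup_n A_n}\subseteq V$, whence $V=X$. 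Thus $\spn A_{N_0}=X$.

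The second step turns ``$\spn A_{N_0}=X$'' into ``$A_N=X$''. I would choose a basis $a_1,\dots,a_d$ of $X$ with each $a_i\in A_{N_0}$. Given an arbitrary $x\in X$, write $x=\sum_{i=1}^d\lambda_i a_i$; by $(ii)$ each summand $\lambda_i a_i$ lies in $A_{N_0}$, and $0\in A_{N_0}$ as well, so after padding with zeros and choosing $j$ with $2^j\geq d$ the vector $x$ is a sum of $2^j$ elements of $A_{N_0}$. A straightforward induction on $j$ using $(i)$, namely $A_m+A_m\subseteq A_{K(m)}$, shows that any sum of $2^j$ elements of $A_{N_0}$ lies in $A_{K^{(j)}(N_0)}$, where $K^{(j)}$ denotes the $j$-fold iterate of $K$. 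Hence every $x\in X$ belongs to $A_{K^{(j)}(N_0)}$, and $N=K^{(j)}(N_0)$ does the job.

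I do not expect a serious obstacle here; the only points requiring care are the two finite-dimensional inputs (that the dimensions of the spans stabilize, and that a finite-dimensional subspace is closed, so density forces $V=X$) and the bookkeeping in the filling step -- namely that $0\in A_{N_0}$ (which follows from $(ii)$ with $\lambda=0$, using $A_{N_0}\neq\emptyset$) and that iterating $A_m+A_m\subseteq A_{K(m)}$ keeps the index finite. The essential mechanism is that $(i)$ and $(ii)$ together make each sufficiently large $A_n$ behave like a linear subspace, so once its span is all of $X$ it must already equal $X$.
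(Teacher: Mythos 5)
Your proof is correct and follows essentially the same route as the paper's: first show that $\spn A_n=X$ for some $n$ (the spans form an increasing chain of closed subspaces whose union is dense), then use homogeneity $(ii)$ to absorb coefficients and finitely many iterations of $(i)$ to conclude that some $A_N$ equals $X$. The only cosmetic difference is that you extract a fixed basis of $X$ from $A_{N_0}$ (giving $d$ summands, combined dyadically), whereas the paper invokes Carath\'eodory's Theorem to write each $x$ as a combination of $\dim X+1$ elements of $A_n$; both yield the uniform bound on the number of summands needed to keep the iterate of $K$ finite.
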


\begin{proof} For each $n$, $X_n = \spn[A_n]$ is a closed subspace of $X$.
Then $X_1 \subset X_2 \subset \cdots$. As $\cup_n A_n$ is dense
in $X$, we conclude that $X_n = X$ for some $n$. By Caratheodory's Theorem, and by the homogeneity of the set $A_n$,
any $x \in X$ can be represented as $x = \sum_{k=1}^M \alpha_k a_k$, with
$a_k \in A_n$, and $\alpha_k \in \R$ (here $M = \dim X + 1$). Therefore,
$X = A_N$, where $N = K(\ldots(K(n))\ldots)$ ($M$ times).
\end{proof}


Note that if $((X,\|\cdot\|),\{A_n\})$ satisfies Shapiro's Theorem, and $|||\cdot|||$
is an equivalent quasi-norm on $X$, then
$((X,|||\cdot|||),\{A_n\})$ also satisfies Shapiro's Theorem.
This remark will be particularly useful for quasi-normed spaces $X$, as it allows us
to deploy Aoki-Rolewicz theorem: any quasi-normed space can be equipped with an equivalent
norm $||| \cdot |||$ for which there exists $p \in (0,1]$ such that
$|||x+y|||^p\leq |||x|||^p + |||y|||^p $ for any $x, y \in X$
(see e.g.~\cite[pp. 7-8]{kalton}).

\begin{theorem}\label{maintheorem}
Suppose $(A_n)$ is an approximation scheme in a quasi-Banach space $X$.
The following are equivalent:
\begin{itemize}
\item[$(a)$]  The approximation scheme $(X,\{A_n\})$ satisfies Shapiro's Theorem.

\item[$(b)$] There exists a constant $c>0$ and an infinite set $\mathbb{N}_{0}\subseteq\mathbb{N}$ such that for all $n\in\mathbb{N}_{0}$, there
exists $x_{n}\in X\setminus \overline{A_{n}}$ which satisfies $E(x_{n},A_{n})\leq cE(x_{n},A_{K(n)}).$

\item[$(c)$]
There is no sequence $\{\varepsilon_n\}\searrow 0$ such that
$E(x,A_n)\leq \varepsilon_n\|x\|$  for all $x\in X$ and $n\in \N$.
\end{itemize}
\end{theorem}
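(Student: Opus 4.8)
The plan is to treat statement $(c)$ as the hub and prove $(a)\Rightarrow(c)$, $(c)\Leftrightarrow(b)$, and $(c)\Rightarrow(a)$. Throughout I invoke the remark preceding the theorem to assume, via Aoki--Rolewicz, that the quasi-norm is a $p$-norm, so that $\|u+v\|^p\le\|u\|^p+\|v\|^p$; this is the only place the quasi-Banach structure enters, and it lets me avoid any appeal to duality (which may be unavailable here). I may also assume $\overline{A_n}\subsetneq X$ for every $n$, since if some $\overline{A_n}=X$ then $E(x,A_m)=0$ for all large $m$ and one checks directly that $(a)$, $(b)$, $(c)$ all fail, so the equivalence holds vacuously. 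The device organizing $(b)\Leftrightarrow(c)$ is the quantity $\beta_n=\sup\{E(x,A_{K(n)})/E(x,A_n):x\in X\setminus\overline{A_n}\}$, which satisfies $0\le\beta_n\le1$ because $A_n\subseteq A_{K(n)}$. Unwinding the quantifiers shows that $(b)$ is exactly the statement $\limsup_n\beta_n>0$, so that $\neg(b)$ means $\beta_n\to0$; note also that $E(x,A_{K(n)})\le\beta_n E(x,A_n)$ then holds for \emph{every} $x\in X$, the case $x\in\overline{A_n}$ being trivial.

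The implication $(a)\Rightarrow(c)$ is immediate by contraposition: a sequence as in $\neg(c)$ forces $E(x,A_n)\le\varepsilon_n\|x\|=\mathbf{O}(\varepsilon_n)$ for every fixed $x$, contradicting Shapiro's Theorem. For $(c)\Rightarrow(b)$ I prove $\neg(b)\Rightarrow\neg(c)$: assuming $\beta_n\to0$, fix $n_0$ and iterate $E(x,A_{K(n)})\le\beta_n E(x,A_n)$ along the orbit $n_{j+1}=K(n_j)$, which escapes to infinity because $\beta_n\to0$ forces $K(n)>n$ for large $n$. Using $E(x,A_{n_0})\le\|x\|$ (as $0\in A_{n_0}$) gives $E(x,A_{n_j})\le\big(\prod_{i<j}\beta_{n_i}\big)\|x\|$; declaring $\varepsilon_m$ to be this product on the block $n_j\le m<n_{j+1}$ (and $1$ for $m<n_0$) yields $\{\varepsilon_m\}\searrow0$ with $E(x,A_m)\le\varepsilon_m\|x\|$ for all $x$, i.e.\ $\neg(c)$. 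The reverse $(b)\Rightarrow(c)$ rests on a short residual identity, which I expect to be the cleanest point of the argument. Assuming $\neg(c)$ with witness $\{\varepsilon_n\}\searrow0$, take $x\notin\overline{A_n}$ and a near-best $a\in A_n$ with $\|x-a\|\le(1+\eta)E(x,A_n)$; since $a+A_n\subseteq A_n+A_n\subseteq A_{K(n)}$ one gets $E(x,A_{K(n)})\le E(x-a,A_n)\le\varepsilon_n\|x-a\|\le(1+\eta)\varepsilon_n E(x,A_n)$, whence $\beta_n\le\varepsilon_n\to0$. Thus $\neg(c)\Rightarrow\neg(b)$, completing $(b)\Leftrightarrow(c)$.

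It remains to prove $(c)\Rightarrow(a)$, which I phrase as $\neg(a)\Rightarrow\neg(c)$ and which I expect to be the main obstacle, since it must upgrade the pointwise, $x$-dependent estimate hidden in $\neg(a)$ to a single homogeneous bound. If Shapiro's Theorem fails there is $\{\varepsilon_n\}\searrow0$ with $E(x,A_n)=\mathbf{O}(\varepsilon_n)$ for every $x$, so $X=\bigcup_M F_M$, where $F_M=\{x:E(x,A_n)\le M\varepsilon_n\ \text{for all }n\}$. Each $F_M$ is closed, because $x\mapsto E(x,A_n)^p$ is $1$-Lipschitz for $\|\cdot\|^p$, so by the Baire category theorem some $F_{M_0}$ contains a ball $B(x_0,r)$. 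For $\|u\|\le r$ I approximate $x_0+u$ and $x_0$ within $A_n$ and subtract: the difference of the two approximants lies in $A_n+A_n\subseteq A_{K(n)}$, and the $p$-triangle inequality gives $E(u,A_{K(n)})\le 2^{1/p}M_0\varepsilon_n$. Homogeneity of $A_{K(n)}$ then removes the size restriction on $u$, yielding $E(u,A_{K(n)})\le C\varepsilon_n\|u\|$ for all $u$, with $C=2^{1/p}M_0/r$. I repackage this as $\neg(c)$ by setting $\delta_m=C\varepsilon_{n(m)}$ with $n(m)=\max\{n:K(n)\le m\}$, rendered non-increasing; since $K(n)\to\infty$ this gives $\{\delta_m\}\searrow0$ with $E(u,A_m)\le\delta_m\|u\|$ for all $u,m$.

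The crux throughout is the scheme axiom $A_n+A_n\subseteq A_{K(n)}$: it powers both the residual identity in $(b)\Rightarrow(c)$ and the propagation of the Baire estimate in $(c)\Rightarrow(a)$, and it is the reason the natural index in the conclusions shifts from $n$ to $K(n)$. The one genuinely delicate bookkeeping step is ensuring that the sequences extracted in $(c)\Rightarrow(b)$ and $(c)\Rightarrow(a)$ remain monotone and tend to $0$ after being held constant on $K$-blocks; everything else follows directly from the two displayed inequalities for $\beta_n$.
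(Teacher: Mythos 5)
Your proposal is correct, and it reorganizes the proof in a genuinely different way from the paper. The paper attacks the hard implication $(b)\Rightarrow(a)$ head-on: fixing the arbitrary given sequence $\{\varepsilon_n\}\searrow 0$, it runs Baire category on the sets $\Gamma_m=\{x\in X: E(x,A_n)\le m\varepsilon_n\ \text{for all }n\}$, and since membership in $\Gamma_m$ is tested against that \emph{given} sequence, the index shift caused by $A_n+A_n\subseteq A_{K(n)}$ has to be compensated inside the same sequence; this is precisely what forces the preprocessing Lemma~\ref{sucesiones} (replacing $\{\varepsilon_n\}$ by $\{\xi_n\}$ with $\xi_n\le 2\xi_{K(n+1)-1}$) and the almost-convexity step \eqref{eq:contained}. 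The implication $(a)\Rightarrow(b)$ is then a separate contrapositive, and $(c)$ is attached at the end via \eqref{shapiro_fails}. You instead make $(c)$ the hub. Your $(b)\Leftrightarrow(c)$ is entirely elementary: $\beta_n=\sup_{x\notin\overline{A_n}}E(x,A_{K(n)})/E(x,A_n)$ turns $(b)$ into $\limsup_n\beta_n>0$; one direction iterates $E(x,A_{K(n)})\le\beta_n E(x,A_n)$ along the orbit $n_{j+1}=K(n_j)$ (your remark that $\beta_n\to0$ forces $K(n)>n$ eventually is exactly what makes the orbit escape), while the reverse direction, $\neg(c)\Rightarrow\neg(b)$ via a near-best approximant and $a+A_n\subseteq A_{K(n)}$, has no counterpart in the paper at all. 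Baire then enters only in $\neg(a)\Rightarrow\neg(c)$, where the symmetry/convex-hull device is replaced by subtracting near-best approximants of $x_0+u$ and $x_0$, and the resulting bound $E(u,A_{K(n)})\le C\varepsilon_n\|u\|$ can simply be re-indexed ($\delta_m=C\varepsilon_{n(m)}$, $n(m)=\max\{n:K(n)\le m\}$) because $\neg(c)$ only asks for \emph{some} null sequence. This lets you dispense with Lemma~\ref{sucesiones} altogether, a genuine streamlining; what the paper's route buys in exchange is that its Baire argument stays with the originally prescribed sequence, yielding the quantitative contradiction $1/(2c)\le 2m_0C_X\varepsilon_n/r$ at the original indices. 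Two loose ends in your write-up, both routine: the finitely many $m$ for which $\{n:K(n)\le m\}$ is empty must be assigned a value $\delta_m\ge 1$ (using $E(u,A_m)\le\|u\|$) before monotonizing, and you should record that $(b)$ and $(c)$, not only $(a)$, are invariant under the Aoki--Rolewicz renorming, so the reduction to a $p$-norm is legitimate for all three statements.
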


For the proof we need:




\begin{lemma}\label{sucesiones} Let
$h:\mathbb{N}\to\mathbb{N}$ be a map such that $h(n)\geq n$ for all $n$,
 and let $\{\varepsilon _{n}\}\searrow 0$. Then there exists a
sequence
$\{\xi_{n}\}\searrow 0$ such that $\xi_{n}\geq\varepsilon_{n}$ and
$\xi_{n}\leq 2\xi_{h(n)}$ for every $n$.
\end{lemma}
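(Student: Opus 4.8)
The plan is to build $\xi$ as a step function that is constant on a sequence of consecutive blocks of integers and drops by exactly a factor of $2$ when passing from one block to the next; the only freedom is in the choice of the block endpoints, and I will place these far enough apart to reconcile all three requirements on $\xi$. Concretely, I would fix $\beta=\varepsilon_1$, choose a strictly increasing sequence of integers $1=m_0<m_1<m_2<\cdots$, and set $\xi_n=2^{-k}\beta$ whenever $m_k\le n<m_{k+1}$. With this definition $\{\xi_n\}$ is automatically non-increasing and $\xi_n\to 0$ (because $2^{-k}\beta\to 0$ and the blocks exhaust $\mathbb{N}$), so all the real content lies in choosing the $m_k$ so that the two remaining conditions hold.

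The domination $\xi_n\ge\varepsilon_n$ will follow as soon as, on the $k$-th block, the largest value of $\varepsilon$ there does not exceed the block value; since $\{\varepsilon_n\}$ is non-increasing, its maximum on $[m_k,m_{k+1})$ is $\varepsilon_{m_k}$, so it suffices to require $\varepsilon_{m_k}\le 2^{-k}\beta$, which I can arrange by taking $m_k$ large (using $\varepsilon_n\to 0$). The slow-decay condition $\xi_n\le 2\xi_{h(n)}$ is where $h$ enters. Because $h(n)\ge n$ and $\xi$ is non-increasing, $\xi_{h(n)}\le\xi_n$ automatically; the inequality $\xi_{h(n)}\ge\xi_n/2$ holds exactly when $h(n)$ lands in the same block as $n$ or in the immediately following one, i.e. when $h(n)<m_{k+2}$ for $n$ in the $k$-th block. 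I will therefore additionally demand $m_{k+2}>\max_{m_k\le n<m_{k+1}}h(n)$; this maximum is taken over finitely many indices and is hence finite, so once more the requirement merely forces $m_{k+2}$ to be large.

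All the constraints are thus of the single type ``choose the next endpoint sufficiently large'': strictly increasing, beyond the $\varepsilon$-threshold $\varepsilon_{m_k}\le 2^{-k}\beta$, and beyond the finite $h$-image of the previous block. They can therefore be satisfied simultaneously by a routine induction, after which the three claimed properties of $\xi$ read off directly from the block structure. I expect the only genuine subtlety to be the index bookkeeping: the ``$h$ does not skip more than one block'' condition naturally refers to $m_{k+2}$, so it must be imposed at the step where $m_{k+2}$ is defined (in terms of the already-chosen $m_k$ and $m_{k+1}$), rather than when $m_k$ is introduced. Everything else is mechanical verification.
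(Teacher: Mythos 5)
Your construction is correct, and it is essentially dual to the paper's. The paper first replaces $h$ by a strictly increasing majorant $h'(n)=\max_{1\le k\le n}h(k)+n$, takes the block endpoints to be the orbit $m_0=0$, $m_k=h'(m_{k-1})$ (so the block structure is dictated by $h$), and then chooses the block values adaptively, $\beta_k=\max\{\varepsilon_{m_k},\beta_{k-1}/2\}$, so that they dominate $\varepsilon$ while halving at most once per block transition. You instead fix the values to be exactly geometric, $2^{-k}\beta$, and push the block endpoints out far enough to meet both the domination constraint $\varepsilon_{m_k}\le 2^{-k}\beta$ and the containment constraint $m_{k+2}>\max_{m_k\le n<m_{k+1}}h(n)$. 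Each choice buys something. Your exactly-geometric values make $\xi_n\to 0$ automatic, whereas the paper needs a short case analysis to see that $\beta_k\to 0$ (either $\beta_k=\varepsilon_{m_k}$ infinitely often, or eventually $\beta_k=\beta_{k-1}/2$); and your finite maximum of $h$ over a block handles non-monotone $h$ directly, with no preprocessing of $h$. Conversely, the paper's blocks come for free from iterating $h$, so it has no inductive bookkeeping, while your argument needs the two-step lookahead you flag: the $h$-constraint involves $m_{k+2}$ and so must be imposed at the step where $m_{k+2}$ is chosen, in terms of the already-fixed $m_k$ and $m_{k+1}$. You have placed that constraint correctly, all the requirements on each new endpoint are of the form ``exceed finitely many bounds,'' so the induction closes and all three properties of $\xi$ follow as you describe.
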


\begin{proof} 
Passing from the original function $h$ to (say)
$h^\prime(n) = \max_{1 \leq k \leq n} h(k) + n$, we can assume that
(i) $h(n) > n$ for every $n$, and (ii) the function $h$ is strictly increasing.
Set $m_0 = 0$, and, for $k \geq 1$, $m_k = h(m_{k-1})$. Set $\beta_0 = \varepsilon_1$,
and $\beta_k = \max\{\varepsilon_{m_k}, \beta_{k-1}/2\}$ for $k \geq 1$.
For $n \in \N$, find $k \geq 0$ such that  $n \in [m_k, m_{k+1})$, and set
$\xi_n = \beta_k$.

Then the sequence $(\xi_n)$ has the desired properties.
For $n \in [m_k, m_{k+1})$, $\xi_n = \beta_k \geq \varepsilon_{m_k} \geq \varepsilon_n$.
Furthermore, as $h$ is increasing, $h(n) \in [m_{k+1}, m_{k+2})$, hence
$\xi_{h(n)} = \beta_{k+1} \geq \beta_k/2 = \xi_n/2$.
It remains to show that $\lim \xi_n = 0$, or in other words, that
$\lim \beta_k = 0$. If $\beta_k = \varepsilon_{m_k}$ for infinitely
many values of $k$, then $\lim \beta_k = \lim \varepsilon_{m_k} = 0$.
Otherwise, $\beta_k = \beta_{k-1}/2$ for any $k \geq k_0$. In this case,
too, $\lim \beta_k = 0$.
\end{proof} 

\begin{proof}[Proof of Theorem~\ref{maintheorem}] 
As $X$ is a quasi-Banach space, there exists a constant $C_X$ such that
$\|x+y\| \leq C_X(\|x\| + \|y\|)$ for any $x,y \in X$.

$(b)$ $\Rightarrow$ $(a)$:
As a first step, we prove the existence of $x \in X$ satisfying
$E(x,A_n) \neq \mathbf{O}(\varepsilon_n)$ under the additional assumption that
$\varepsilon_n\leq 2\varepsilon_{K(n+1)-1}$ for all $n\in\mathbb{N}$. Assume, for
the sake of contradiction, that $E(x,A_{n})=\mathbf{O}(\varepsilon_{n})$ for all $x\in X$.
Then $X=\bigcup_{m=1}^{\infty}\Gamma_m$, where $\Gamma_{\alpha}=\{x\in X:E(x,A_n)\leq \alpha \varepsilon_n, n=0,1,2,\cdots\}$ ($\alpha>0$).
The sets $\Gamma_m$ are closed subsets of $X$. Furthermore,
$E(-x,A_n)=E(x,A_n)$ for all $n$, hence $\Gamma_m=-\Gamma_m$ for all $m$.
Finally,
\begin{equation}
{\mathbf{conv}}(\Gamma_m) \subset \Gamma_{2mC_X}
\label{eq:contained}
\end{equation}
(here, ${\mathbf{conv}}(S)$ stands for the convex hull of a set $S$).
Indeed, suppose $x, y \in \Gamma_m$, and $\lambda \in [0,1]$. Recalling the inclusion
$A_n + A_n \subset A_{K(n)}$, we see that, for every $n$,
\begin{eqnarray*}
E(\lambda x + (1-\lambda)y ,A_{K(n)}) &=& \inf_{g\in A_{K(n)}}\|\lambda x+ (1-\lambda)y-g\|\\
&\leq&  \inf_{a,b\in A_{n}}\|\lambda (x-a) +(1-\lambda)(y-b)\| \\
&\leq& C_X[ \inf_{a\in A_{n}}\|\lambda (x-a)\| +\inf_{b\in A_{n}}\|(1-\lambda)(y-b)\|] \\
&=& \lambda C_XE(x,A_n)+ (1-\lambda)C_XE(y,A_n)\leq m C_X\varepsilon_n .
\end{eqnarray*}
For an arbitrary $j$, find $n$ such that $K(n) \leq j < K(n+1)$
(for simplicity, we set $K(0) = 0$). Then
$$
E(\lambda x + (1-\lambda)y ,A_j) \leq E(\lambda x + (1-\lambda)y ,A_{K(n)}) \leq
m C_X\varepsilon_n \leq 2 m C_X \varepsilon_{K(n+1)-1} \leq 2 m C_X \varepsilon_j ,
$$
which implies $\lambda x + (1-\lambda)y \in \Gamma_{2mC_X}$, thus proving
\eqref{eq:contained}.

By Baire Category Theorem, there exists some
$m_0\in\mathbb{N}$ such that $\Gamma_{m_0}$ has non-empty interior. That is, there exists
a ball $B(x,r)\subset \Gamma_{m_0}$ with $r>0$. By symmetry, $-B(x,r)\subset \Gamma_{m_0}$.
By \eqref{eq:contained},
$$
B(0,r) \subset \frac{1}{2} \big( B(-x,r) + B(x,r) \big)
\subset \Gamma_{2m_0C_X} .
$$
Hence,
$\frac{r}{\|x\|}x\in \Gamma_{2m_0C_X}$ for every $x\in X$, and the inequality
\[
E(x,A_n)\leq \frac{\|x\|}{r}2m_0C_X\varepsilon_n
\]
holds for all $x\in X$ and all $n\in\mathbb{N}$.

For $n\in\mathbb{N}_0$, find $a_n\in A_n$ such that  $\|x_n-a_n\|\leq 2E(x_n,A_n)$, where
$\{x_k\}_{k\in\mathbb{N}_0}$ is the sequence of elements of $X$  given by condition $(b)$.
Take $y_n=x_n-a_n$. Then
\begin{equation*}
\|y_{n}-b_{n}\|=\|x_{n}-(a_{n}+b_{n})\|\geq E(x_{n},A_{K(n)})\geq\frac{1}{c}E(x_{n},A_{n})\geq\frac{1}{2c}\|y_{n}\|
\end{equation*}
for all $b_{n}\in A_{n}$. Hence
\[
\frac{1}{2c}\|y_{n}\|\leq E(y_{n},A_{n}) \leq   \frac{\|y_n\|}{r}2m_0C_X\varepsilon_n ,
\]
and consequently, $1/(2c) \leq 2m_0C_X\varepsilon_n/r$ for all $n\in\mathbb{N}_0$.
This contradicts $\varepsilon_n\to 0$. Thus, for every sequence
$\{\varepsilon_n\} \searrow$ satisfying $\varepsilon_n\leq 2\varepsilon_{K(n+1)-1}$
($n \in \N$), there exists $x \in X$ such that $E(x,A_n) \neq \mathbf{O}(\varepsilon_n)$.

Now suppose the sequence $\{\varepsilon_n\} \searrow 0$ is arbitrary.
Applying Lemma \ref{sucesiones} to $\{\varepsilon_n\}_{n=0}^{\infty}$
and the map $h(n)=K(n+1)-1$, we obtain a sequence $\{\xi_n\}_{n=0}^{\infty}$
satisfying $\varepsilon_n \leq \xi_n\leq 2\xi_{K(n+1)-1}$
for all $n\in\mathbb{N}$. By the above, there exists $x\in X$ such that
$E(x,A_n)\not= \mathbf{O}(\xi_n)$, which implies $E(x,A_n)\not= \mathbf{O}(\varepsilon_n)$.
This ends the proof of $(b)\Rightarrow (a)$.

$(a)$ $\Rightarrow$ $(b)$:
If $X=\cup_{n=0}^{\infty}\overline{A_{n}}$, then both $(a)$ and $(b)$ are false, since in
this case, for any $x \in X$ there exists $n \in \N$ such that $E(x,A_n) = 0$.
Suppose $X \neq \cup_{n=0}^{\infty}\overline{A_{n}}$, and (b) is false. Then
the sequence $\{c_{n}\}_{n=0}^{\infty}\subset\lbrack 0,\infty)$, given by
\[
c_{n}=\inf_{x\in X\setminus\overline{A_{K(n)}}}\frac{E(x,A_{n})}{E(x,A_{K(n)})} ,
\]
has no bounded subsequences, hence $\lim_{n\rightarrow\infty}c_{n}=\infty$. Set $\varepsilon_{k}=1/c_{n}$ for $K(n) \leq k < K(n+1)$ and let $\{\varepsilon_n^*\}$ denote the non-increasing rearrangement of the sequence $\{\varepsilon_n\}\in c_0(\mathbb{N})$. For any
$x\in X\setminus\cup_{n=0}^{\infty}\overline{A_{n}}$, and any
$k\in\lbrack K(n),K(n+1))$, we have
\begin{equation}\label{shapiro_fails}
E(x,A_{k})\leq E(x,A_{K(n)})\leq\frac{1}{c_{n}}E(x,A_{n})\leq\frac{1}{c_{n}}\|x\|=\varepsilon_{k}\|x\| \leq \varepsilon_{k}^*\|x\| ,
\end{equation}
hence $E(x,A_{k})=\mathbf{O}(\varepsilon_{k}^*)$, and $(a)$ is also false.

$(a)$ $\Rightarrow$ $(c)$ is clear. 
On the other hand, if $(a)$ is false then $(b)$ is also false, so that \eqref{shapiro_fails} holds true. This implies that
$E(x,A_{k})\leq \varepsilon_{k}^*\|x\|$, for the sequence $\{\varepsilon_k^*\} \searrow 0$
described above.
\end{proof} 

\begin{remark}
It follows from Theorem \ref{maintheorem} that every  non trivial
linear approximation scheme (i.e. every approxi\-mation scheme verifying $K(n)=n$
and $\overline{A_n}\neq X$ for all $n$) satisfies Shapiro's Theorem.
In particular, this extends Shapiro's result to the quasi-Banach setting.

A different proof of Theorem \ref{maintheorem} was given by Almira and Del Toro in
\cite{almiradeltoro1, almiradeltoro2}. That proof used some
general theory of approximation spaces, introduced by Almira and Luther in
\cite{almiraluther1, almiraluther2}. The proof presented here is self-contained,
avoids the theory of generalized approximation spaces,
and follows a more classical line of thinking.
\end{remark}

One of our main tools for verifying that an approximation scheme satisfies
Shapiro's Theorem is property (P).

\begin{definition}\label{def:prop_P}
We say that an approximation scheme $(X,\{A_n\})$ satisfies property $(P)$ (with constants $a,b>0$) if for every $n\in\mathbb{N}$, $n>0$, there exists an element $x\in X$ with $\|x\|=1$ such that $E(x,A_n)\geq \frac{1}{an^b}$.
\end{definition}

\begin{theorem}\label{condicion}
Suppose an approximation scheme $(X,\{A_n\})$ satisfies property $(P)$,
and there exists $c > 1$ such that $A_n+A_n\subseteq A_{c n}$ for any $n\in\mathbb{N}$.
Then  $(X,\{A_n\})$ satisfies Shapiro's Theorem.
\end{theorem}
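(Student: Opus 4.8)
The plan is to establish the equivalent condition $(c)$ of Theorem~\ref{maintheorem}: no sequence $\{\varepsilon_n\}\searrow 0$ can satisfy $E(x,A_n)\le \varepsilon_n\|x\|$ for all $x\in X$ and all $n$. I argue by contradiction, assuming such a sequence $\{\varepsilon_n\}$ exists, and I will derive a conflict with property $(P)$.

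The engine of the argument is a self-improvement (doubling) lemma: \emph{if $E(x,A_m)\le \eta\|x\|$ holds for every $x\in X$, then in fact $E(x,A_{cm})\le \eta^2\|x\|$ for every $x$.} To see this, fix $x$ and $\delta>0$, pick $a\in A_m$ with $\|x-a\|\le E(x,A_m)+\delta\le \eta\|x\|+\delta$, put $r=x-a$, and pick $a'\in A_m$ with $\|r-a'\|\le E(r,A_m)+\delta\le \eta\|r\|+\delta$. Since $a+a'\in A_m+A_m\subseteq A_{cm}$ and $x-(a+a')=r-a'$, we get $E(x,A_{cm})\le \|r-a'\|\le \eta^2\|x\|+\eta\delta+\delta$, and letting $\delta\to 0$ gives the claim. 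Notably, no quasi-norm constant enters, because the final estimate rests on the exact identity $x-(a+a')=r-a'$ rather than on the quasi-triangle inequality.

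Next I iterate the lemma. Choose $n_1$ with $0<\rho:=\varepsilon_{n_1}<1$ (such $n_1$ exists; and $\rho>0$, since $E(\,\cdot\,,A_{n_1})\equiv 0$ would contradict property $(P)$ at level $n_1$). Setting $K(n)=\lceil cn\rceil$, so that $A_n+A_n\subseteq A_{K(n)}$, and writing $m_0=n_1$, $m_{j+1}=K(m_j)$, induction on the lemma yields $E(x,A_{m_j})\le \rho^{\,2^j}\|x\|$ for all $x$ and all $j\ge 0$. Because the recursion is \emph{linear}, one has $m_j\le c^{\,j}\bigl(n_1+\tfrac{1}{c-1}\bigr)$, so the level $m_j$ grows only exponentially in $j$, whereas the error bound $\rho^{\,2^j}$ decays double-exponentially — this gap is the heart of the matter.

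Finally I invoke property $(P)$ at each level $m_j$: there is a unit vector $x_j$ with $E(x_j,A_{m_j})\ge (a\,m_j^{\,b})^{-1}$. Comparing with the decay estimate gives $(a\,m_j^{\,b})^{-1}\le \rho^{\,2^j}$, that is $1\le a\,(n_1+\tfrac{1}{c-1})^b\,c^{\,jb}\,\rho^{\,2^j}$ for every $j$, whose right-hand side tends to $0$ as $j\to\infty$, a contradiction. I expect the main obstacle to be the self-improvement step itself, namely checking that it genuinely squares the error bound while advancing the index only by the factor $c$; this is exactly where the hypothesis $A_n+A_n\subseteq A_{cn}$ (linear rather than superlinear growth of $K$) is indispensable, for it keeps $m_j^{\,b}$ merely exponential in $j$ and lets the double-exponentially small $\rho^{\,2^j}$ win. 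Once this is in place, the mismatch between double-exponential decay and polynomial lethargy closes the proof, and condition $(c)$ — hence Shapiro's Theorem — follows.
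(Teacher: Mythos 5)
Your proof is correct, and it takes a genuinely different route from the paper's. Both arguments run by contradiction and both finish by playing a fast decay estimate, along indices growing like $c^j$, against the polynomial lower bound $1/(a n^b)$ supplied by property (P); the difference is the engine producing the decay. The paper assumes Shapiro's Theorem fails and invokes the negation of condition (b) of Theorem~\ref{maintheorem}: for every $C>1$ there is $N$ such that $E(x,A_n)\ge C\,E(x,A_{cn})$ for all $x\in X$ and $n\ge N$; iterating yields $E(x,A_{c^kN})\le C^{-k}\|x\|$, and the contradiction with property (P) requires tuning $C>c^b$ before choosing $k$. You instead verify condition (c), and your key tool is the self-improvement lemma: a uniform bound $E(\cdot,A_m)\le\eta\|\cdot\|$ squares to $E(\cdot,A_{cm})\le\eta^2\|\cdot\|$. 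This bootstraps a single sub-unit bound $\rho=\varepsilon_{n_1}<1$ into double-exponential decay $\rho^{2^j}$ along $m_j\le c^j\bigl(n_1+\tfrac{1}{c-1}\bigr)$, which beats the merely exponential growth of $m_j^{\,b}$ with no tuning at all. Note that your doubling lemma is exactly the submultiplicativity of the density sequence $\dens_n=E(S(X),A_n)$ specialized to equal indices (i.e.\ $\dens_{cn}\le\dens_n^2$), which the paper proves later as Proposition~\ref{multiplicative} but never uses for this theorem; so your argument can be summarized as: property (P), submultiplicativity of $(\dens_n)$, and linear index growth are jointly contradictory. As to what each approach buys: the paper's is shorter, because the negation of (b) hands it an arbitrarily large gain factor per step; yours uses only the weaker-looking condition (c), isolates a reusable quantitative fact, and, as you correctly note, never touches the quasi-norm constant, since the estimate rests on the exact identity $x-(a+a')=r-a'$. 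Your treatment of the minor points (ruling out $\varepsilon_{n_1}=0$ via property (P), interpreting $cn$ as $\lceil cn\rceil$, and the bound on $m_j$) is sound.
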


\begin{proof}
Assume, for the sake of contradiction, that $(X,\{A_n\})$ fails Shapiro's Theorem.
By Theorem \ref{maintheorem}, for any $C>1$ there exists $N\in\mathbb{N}$ such that
$E(x,A_n)\geq CE(x,A_{c n})$ for any $x\in X$ and $n\geq N$. Pick $C>c^b$ and select $k$ to satisfy $aN^b < \frac{C^k}{c^{bk}}$
(here, $a$ and $b$ are as in Definition~\ref{def:prop_P}).
Take $x\in X$ with $\|x\|=1$ and $E(x,A_{c^kN})\geq \frac{1}{a(c^kN)^b}$. Then
\[
1=\|x\|\geq E(x,A_N)\geq CE(x,A_{c N})\geq C^2E(x,A_{c^2 N})\geq \cdots\geq C^kE(x,A_{c^k N}),
\]
so that
\[
\frac{1}{a(c^kN)^b}\leq E(x,A_{c^k N})\leq C^{-k} ,
\]
hence $a(c^kN)^b\geq C^k$, which contradicts our choice of $k$.
\end{proof} 

Section~\ref{examples} contains several examples where Property (P)
is used to show that an approximation scheme satisfies Shapiro's Theorem.

\section{A comparison with Brudnyi's theorem}\label{section_brudnyi}

To proceed, we need to introduce some notation.
Recall that, for $x \in X$ and $A \subset X$, we define
$E(x,A) = \inf_{a \in A}\|x-a\|$. Furthermore, for subsets $A, B$ of $X$, we define
$E(B,A)=\sup_{b\in B}E(b,A)$
(note that $E(A,B)$ may be different from $E(B,A)$).
We denote by $S(X)$ the unit sphere of a quasi-Banach space $X$.

\begin{definition}\label{weak_brudnyi}
We say that $(X,\{A_n\})$ {\it satisfies Brudnyi's condition} if
\eqref{condicionbrudnyi} holds. We say that $(X,\{A_n\})$
{\it satisfies weak Brudnyi's condition with constant $c\in (0,1]$} if
$E(S(X),A_n)\geq c$ for all $n\in\mathbb{N}$.
\end{definition}

Note that Brudnyi's condition implies the ``jump condition''
from Theorem~\ref{maintheorem}(b), that is, the existence
(for each $n \in \N$) of $x_n \in X$ satisfying $E(x_n,A_n)\leq C E(x_n,A_{K(n)})$.
This implication holds for general approximation schemes,
and not just for the case $K(n)=2n$, covered by Brudnyi's theorem.
Indeed, applying \eqref{condicionbrudnyi} to $A_{K(n)}$, we obtain
$x_{n}\in A_{K(n)+1}$ such that  $\|x_{n}\|=1$ and $E(x_{n},A_{K(n)})\geq \gamma$.
Then
\[
E(x_{n},A_n)\leq 1=C\gamma\leq C E(x_{n},A_{K(n)}),
\]
where $C = 1/\gamma$.

However, there exist approximation schemes failing Brudnyi's condition
\eqref{condicionbrudnyi}, for which one can obtain a prescribed rate of
decay of $(E(x,A_n))$.

\begin{theorem} There exists an approximation scheme $(A_n)$ in the space $c_0$,
such that $A_m + A_n \subset A_{\max\{m,n\}+1}$ for any $m,n \in \N$, and:
\begin{enumerate}
\item
Brudnyi's condition \eqref{condicionbrudnyi} is not satisfied.
\item
For any $\{\varepsilon_n\}\searrow 0$, there exists $x \in c_0$ such that
$E(x, A_{2n-1}) = \varepsilon_n$ for any $n \in \N$.
Consequently, $(A_n)$ satisfies Shapiro's Theorem.
\end{enumerate}
\end{theorem}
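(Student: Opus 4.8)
The plan is to construct the scheme explicitly. Write $(e_k)$ for the unit vector basis of $c_0$, put $V_k=\spn\{e_1,\ldots,e_k\}$, and fix a sequence of ``opening angles'' $M_k\nearrow\infty$. I would set $A_0=\{0\}$, $A_{2k-1}=V_k$, and for the even indices define the proper homogeneous cone
\[
A_{2k}=\Big\{x\in V_{k+1}:|x_{k+1}|\le M_k\max_{1\le j\le k}|x_j|\Big\}.
\]
Each $A_{2k}$ contains $V_k$ and is contained in $V_{k+1}$, and the inclusions $V_k\subsetneq A_{2k}\subsetneq V_{k+1}$ are strict (e.g.\ $e_1+M_ke_{k+1}\in A_{2k}\setminus V_k$, while $e_{k+1}\in V_{k+1}\setminus A_{2k}$). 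Property $(ii)$ is immediate from the scaling invariance of the defining inequality, and property $(iii)$ holds because $\bigcup_k A_{2k-1}=\bigcup_k V_k$ is the dense set of finitely supported sequences.

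The first point I would stress is that the additivity condition comes essentially for free, precisely because each $A_n$ is sandwiched between two consecutive subspaces. Checking $A_n+A_n\subseteq A_{n+1}$ by parity: for $n=2k-1$ one has $V_k+V_k=V_k\subseteq A_{2k}$, and for $n=2k$ one has $A_{2k}+A_{2k}\subseteq V_{k+1}+V_{k+1}=V_{k+1}=A_{2k+1}$, the point being that $A_{2k}$ lives inside the \emph{subspace} $V_{k+1}$. Since $A_m\subseteq A_n$ whenever $m\le n$, this yields $A_m+A_n\subseteq A_{\max\{m,n\}+1}$, so $(A_n)$ is an approximation scheme with $K(n)=n+1$.

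To prove $(1)$ I would show Brudnyi's condition fails by estimating only the even-to-odd jumps. Since $A_{2n-1}=V_n$ is a subspace, $E(x,A_{2n-1})=\sup_{j>n}|x_j|$; this is the computation that also drives the error prescription. For the deviation I claim $E(x,A_{2k})\le 2/M_k$ for every $x\in V_{k+1}$ with $\|x\|\le 1$. The only case needing care is $|x_{k+1}|>M_k\max_{j\le k}|x_j|$, forcing $\max_{j\le k}|x_j|<1/M_k$; here I would approximate $x$ by the vector agreeing with $x$ except that one of the first $k$ coordinates is raised to $|x_{k+1}|/M_k$, which lands in $A_{2k}$ and differs from $x$ by at most $2/M_k$ in the $c_0$-norm. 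Hence $\sup_{x\in A_{2k+1},\,\|x\|\le 1}E(x,A_{2k})\le 2/M_k\to 0$, so the infimum defining $\gamma$ in \eqref{condicionbrudnyi} is $0$. I expect this deviation estimate --- handling vectors nearly parallel to the new basis vector $e_{k+1}$ --- to be the one genuinely delicate step, since naively truncating the $(k+1)$-st coordinate gives an error close to $1$ rather than to $0$.

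For $(2)$, given $\{\varepsilon_n\}\searrow 0$ I would take $x=\sum_{j\ge 2}\varepsilon_{j-1}e_j\in c_0$; then $E(x,A_{2n-1})=E(x,V_n)=\sup_{j>n}\varepsilon_{j-1}=\varepsilon_n$, as required. Finally, to deduce Shapiro's Theorem I would feed the prescription an auxiliary sequence: given $\{\varepsilon_n\}\searrow 0$, a standard diagonal choice produces $\{\alpha_n\}\searrow 0$ with $\limsup_n \alpha_n/\varepsilon_{2n-1}=\infty$ (possible because $\varepsilon_{2n-1}\searrow 0$), and letting $x$ realize $E(x,A_{2n-1})=\alpha_n$ gives $E(x,A_m)\neq\mathbf{O}(\varepsilon_m)$ already along the odd indices $m=2n-1$. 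Thus $(A_n)$ satisfies Shapiro's Theorem.
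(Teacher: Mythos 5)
Your proof is correct, and it is built on the same architecture as the paper's: interleave the coordinate subspaces of $c_0$ with homogeneous cones squeezed between consecutive subspaces, read off the exact errors $E(x,A_{2n-1})=\sup_{j>n}|x_j|$ on the odd-indexed sets (which are genuine subspaces), and use the cones to force $\gamma=0$ in \eqref{condicionbrudnyi}. The two implementations, however, are mirror images of each other. The paper pinches the new coordinate: its even-indexed sets are $B_{k+1}=\{x\in\Pi_{k+1}:|x_{k+1}|\le \max_{j\le k}|x_j|/(k+1)\}$, so Brudnyi's condition fails at the \emph{odd-to-even} jumps, via the one-line estimate $E(x,\Pi_k)=|x_{k+1}|\le 1/(k+1)$ for unit $x\in B_{k+1}$. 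You open the cone up instead ($|x_{k+1}|\le M_k\max_{j\le k}|x_j|$ with $M_k\nearrow\infty$), so Brudnyi's condition fails at the \emph{even-to-odd} jumps, and this costs you the extra (correct) approximation trick of raising one of the first $k$ coordinates to $|x_{k+1}|/M_k$ in order to get $E(x,A_{2k})\le 2/M_k$ for unit $x\in V_{k+1}$ --- precisely the step you flagged as delicate, and which in the paper's version is immediate. Everything else matches: the additivity argument via the sandwich $V_k\subseteq A_{2k}\subseteq V_{k+1}$ (the paper uses the same observation that $B_{k+1}$ lives inside $\Pi_{k+1}$), strictness, density, and the prescription vector are identical in substance, and your explicit auxiliary sequence $\{\alpha_n\}$ with $\limsup_n\alpha_n/\varepsilon_{2n-1}=\infty$ correctly fills in the ``consequently'' step that the paper leaves to the reader.
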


\begin{proof} 
We introduce the sets $B_n$:
$B_0=\{\mathbf{0}\}$, $B_1=\{(x_1,0,\cdots,0,\cdots):x_1\in\mathbb{R}\}$ and, for $n\geq 1$,
\[
B_{n+1}=\Big\{(x_1,\cdots,x_{n+1}, 0,\cdots):(x_1,\cdots,x_n)\in\mathbb{R}^n \text{ and }
|x_{n+1}|\leq\frac{\sup_{k\leq n}|x_k|}{n+1}\Big\}.
\]
Let us also introduce the sets $\Pi_n=\{(x_1,\cdots,x_n,0,\cdots):(x_1,\cdots,x_n)\in\mathbb{R}^n\}$.
Consider the approximation scheme $(X,\{A_n\}_{n=0}^{\infty})$, where
$A_0=B_0$, $A_1=B_1=\Pi_1$, $A_2=B_2$, $A_3=\Pi_2$, $A_4=B_3$, $A_5=\Pi_3$, $\cdots$.
Clearly, $A_0 \subsetneq A_1\subset A_2 \subsetneq \cdots \subsetneq c_0$,
$A_n+A_m\subset A_{\max\{n,m\}+1}\subset A_{n+m}$ for any $m$ and $n$, and
$\overline{\cup_n A_n} = c_0$. Furthermore, if $\{\varepsilon_n\} \searrow 0$,
then $x = (\varepsilon_0, \varepsilon_1, \varepsilon_2, \ldots) \in c_0$
satisfies $E(x,A_{2n-1}) = \varepsilon_n$ for any $n$.

However, there is no $\gamma > 0$ such that  $E(S(X) \cap A_{n+1}, A_n) \geq \gamma$
for every $n$. Indeed, it is easy to see that
$$
E(S(X) \cap A_{2k}, A_{2k-1}) = E(S(X) \cap B_{k+1}, \Pi_k) = \frac{1}{k+1} .
$$
Thus, the approximation scheme $(A_n)$ has the desired properties.
\end{proof} 

Clearly, if, for an approximation scheme $(X, \{A_n\})$, the lower estimate of
Brudnyi's Theorem~\ref{brud_original} holds (that is, for any
$\{\varepsilon_n\} \searrow 0$ there exists $x \in X$ such that $E(x,A_n) \geq \varepsilon_n$ for every $n$), then $(X, \{A_n\})$ satisfies
Shapiro's Theorem. If $X$ is a Banach space, the converse is also true.

\begin{theorem} \label{shapiroimpliesbrudnyi}
Suppose $X$ is a Banach space, and an approximation scheme $(X,\{A_n\})$ satisfies Shapiro's Theorem. 
Then for every sequence $\{\varepsilon_n\}_{n=0}^{\infty}\searrow 0$ there exists
$x\in X$ such that $E(x,A_n)\geq \varepsilon_n$ for all $n\in\mathbb{N}$.
\end{theorem}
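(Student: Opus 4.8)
The plan is to deduce the weak Brudnyi condition from Shapiro's Theorem and then to build the required $x$ as a norm-convergent series of suitably scaled elements lying far from the sets $A_n$. First I would use Theorem~\ref{maintheorem}$(c)$: since $(X,\{A_n\})$ satisfies Shapiro's Theorem, there is no $\{\delta_n\}\searrow 0$ with $E(x,A_n)\le \delta_n\|x\|$ for all $x$ and $n$. As $s_n:=E(S(X),A_n)=\sup_{\|x\|=1}E(x,A_n)$ is non-increasing, this forces $\inf_n s_n=:c>0$, i.e.\ the scheme satisfies the weak Brudnyi condition (Definition~\ref{weak_brudnyi}). Hence for every $n$ there is a unit vector with distance $\ge c/2$ from $A_n$, and, using density of $\cup_m A_m$, a vector $u\in A_q$ for some finite $q>n$ with $\|u\|\le 2$ and $E(u,A_n)\ge c/3$.

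Next I would exploit monotonicity to reduce the problem to a sparse set of scales. Because $E(x,A_n)$ is non-increasing in $n$, it suffices to find an increasing sequence $N_0=0<N_1<N_2<\cdots$ with $E(x,A_{N_k})\ge \varepsilon_{N_{k-1}}$ for all $k$: then for $n\in(N_{k-1},N_k]$ one gets $E(x,A_n)\ge E(x,A_{N_k})\ge\varepsilon_{N_{k-1}}\ge\varepsilon_n$, covering every $n$. I would build $x=\sum_{k\ge1}\lambda_k u_k$ recursively. At step $k$, with $u_1\in A_{q_1},\dots,u_{k-1}\in A_{q_{k-1}}$ and $\lambda_1,\dots,\lambda_{k-1}$ already chosen, the partial sum $S_{<k}=\sum_{j<k}\lambda_j u_j$ lies in some explicit $A_{Q_k}$ (obtained by iterating $A_a+A_b\subseteq A_{K(\max(a,b))}$). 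I would then pick $N_k\ge\max\{Q_k,N_{k-1}+1\}$ so large that $\varepsilon_{N_k}\le\frac{c}{16}\varepsilon_{N_{k-1}}$, choose $u_k\in A_{q_k}$ with $\|u_k\|\le 2$ that is far from $A_{K(N_k)}$ (not merely from $A_{N_k}$), so $E(u_k,A_{K(N_k)})\ge c/3$, and set $\lambda_k=\frac{C}{c}\,\varepsilon_{N_{k-1}}$ for a fixed absolute constant $C$. The geometric decay of the $\varepsilon_{N_k}$ makes $\sum_k\lambda_k$ finite, so the series converges in the Banach space $X$.

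The key step is the lower estimate at the scale $N_k$. Since $N_k\ge Q_k$, the partial sum $S_{<k}$ already belongs to $A_{N_k}$, so for any $a\in A_{N_k}$ the element $a-S_{<k}$ lies in $A_{N_k}+A_{N_k}\subseteq A_{K(N_k)}$, whence
\[
E(x,A_{N_k})\ge E\!\left(\lambda_k u_k+\textstyle\sum_{j>k}\lambda_j u_j,\ A_{K(N_k)}\right)\ge \lambda_k\,E(u_k,A_{K(N_k)})-\textstyle\sum_{j>k}\lambda_j\|u_j\|.
\]
The first term is at least $\lambda_k c/3$ because $u_k$ is far from $A_{K(N_k)}$, while the tail is at most $2\sum_{j>k}\lambda_j$, kept below $\lambda_k c/6$ by the geometric control. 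Choosing $C$ so that $\lambda_k c/6\ge\varepsilon_{N_{k-1}}$ gives $E(x,A_{N_k})\ge\varepsilon_{N_{k-1}}$, and hence $E(x,A_n)\ge\varepsilon_n$ for all $n$.

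The main obstacle is the tension produced by the doubling condition $A_n+A_n\subseteq A_{K(n)}$: earlier terms of the series can only be neutralized by absorbing them into $A_{N_k}$, yet a sum of many elements from small sets lands in $A_{Q_k}$ with $Q_k$ far larger than the scales at which those elements are individually far, which defeats any naive single–main–term estimate. The two devices resolving this are (i) working only along the sparse scales $N_k$, which lets me take $N_k\ge Q_k$ after the earlier terms are fixed, and (ii) choosing $u_k$ far from $A_{K(N_k)}$ rather than $A_{N_k}$, to absorb the index inflation incurred when the approximant is shifted by $S_{<k}$. Finally, the hypothesis that $X$ is a Banach space is genuinely used: the tail estimate $\|\sum_{j>k}\lambda_j u_j\|\le \sum_{j>k}\lambda_j\|u_j\|$ invokes the ordinary triangle inequality over infinitely many terms, which is unavailable in a general quasi-Banach space.
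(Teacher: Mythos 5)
Your proof is correct, and it takes a genuinely different route from the paper's. The paper deduces the statement from Brudnyi's Theorem~\ref{brud_original}: Lemma~\ref{brud1} upgrades Shapiro's Theorem to the exact equality $E(S(X),A_n)=1$, Lemma~\ref{brud2} extracts a subsequence $(A_{s_i})$ satisfying the hypotheses of Theorem~\ref{brud_original} (in particular \eqref{condicionbrudnyi} with $\gamma=1/2$), and then, after replacing $\{\varepsilon_n\}$ by a dominating convex sequence as in \cite{edwards}, the conclusion follows from Brudnyi's theorem together with the inclusions $A_i\subseteq A_{s_i}$. You bypass Brudnyi's theorem entirely: you use only the weak Brudnyi condition $\inf_n E(S(X),A_n)=c>0$ (correctly derived from Theorem~\ref{maintheorem}$(c)$ and the monotonicity of $E(S(X),A_n)$), and you build $x$ by a self-contained gliding-hump series. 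Your two devices are exactly what make the single-main-term estimate legitimate: freezing the earlier terms before choosing $N_k$, so that $S_{<k}\in A_{Q_k}\subseteq A_{N_k}$ and every competitor $a\in A_{N_k}$ can be shifted to $a-S_{<k}\in A_{K(N_k)}$; and taking the bump $u_k$ far from $A_{K(N_k)}$ rather than from $A_{N_k}$, to absorb that index inflation. The constants check out: since $c\le 1$, the tail satisfies $2\sum_{j>k}\lambda_j\le \frac{2c}{15}\lambda_k<\frac{c}{6}\lambda_k$, so $E(x,A_{N_k})\ge \frac{c}{3}\lambda_k-\frac{c}{6}\lambda_k=\frac{c}{6}\lambda_k=\varepsilon_{N_{k-1}}$ with $C=6$, and monotonicity of $E(x,A_n)$ and of $\{\varepsilon_n\}$ fills in all intermediate indices (including $n=0$). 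As to what each approach buys: the paper's argument is short on the page but outsources the real work to \cite[Theorem 4.5.12]{brundykrugljak} and needs the convexification step; yours is elementary and self-contained, requires no convexity of $\{\varepsilon_n\}$, and isolates precisely where the Banach (as opposed to quasi-Banach) hypothesis enters, namely the countable triangle inequality ensuring convergence of the series and the tail estimate. One cosmetic remark: by Lemma~\ref{brud1} you could even take $c$ arbitrarily close to $1$, but your weaker constant suffices.
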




For the proof we need two lemmas.
The first one will be stated for the quasi-Banach setting because we will use it later (see Corollary \ref{coro}) to give a new characterization of approximation schemes that satisfy Shapiro's Theorem.

\begin{lemma}\label{brud1}
If $X$ is a quasi-Banach space, and an approximation scheme $(X,\{A_n\})$
satisfies Shapiro's Theorem, then $E(S(X),A_n)=1$ for $n=0,1,2,\ldots$.
\end{lemma}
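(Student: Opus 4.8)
The plan is to show that the quantity $c_n := E(S(X),A_n)$ equals $1$ for every $n$, by ruling out the possibility that $c_{n_0} < 1$ for some $n_0$ and deriving from this a violation of Shapiro's Theorem through condition $(c)$ of Theorem~\ref{maintheorem}. First I would record the elementary facts. Since each $A_n$ is homogeneous it contains $0$, so $E(x,A_n) \leq \|x\|$ and hence $c_n \leq 1$; since $E(\cdot,A_n)$ is absolutely homogeneous, the definition of $c_n$ as a supremum over $S(X)$ gives $E(x,A_n) \leq c_n\|x\|$ for every $x \in X$; and since $A_n \subset A_{n+1}$, the sequence $\{c_n\}$ is non-increasing. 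The entire content of the lemma is therefore the lower bound $c_n \geq 1$, which I will establish by contradiction, assuming $c_{n_0} < 1$ for some $n_0$.

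The key step is the submultiplicative estimate
\[
c_{K(n)} \leq c_n^2 \qquad (n \in \N).
\]
To prove it, fix a unit vector $x$ and $\epsilon > 0$, choose $a \in A_n$ with $\|x-a\| \leq (c_n+\epsilon)\|x\|$, and then choose $a' \in A_n$ with $\|(x-a)-a'\| \leq (c_n+\epsilon)\|x-a\|$. Because $a+a' \in A_n + A_n \subset A_{K(n)}$ and $x-(a+a') = (x-a)-a'$, we obtain $E(x,A_{K(n)}) \leq (c_n+\epsilon)^2\|x\|$. I would emphasize that this telescoping introduces no quasi-norm constant, since the residual is estimated directly rather than through a triangle-type inequality; letting $\epsilon \to 0$ and taking the supremum over $S(X)$ yields the claim.

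Finally I would iterate the estimate along the orbit $n_0, K(n_0), K^2(n_0), \ldots$, which is non-decreasing because $K(n) \geq n$. If $K^j(n_0) \to \infty$, then $c_{K^j(n_0)} \leq c_{n_0}^{2^j} \to 0$, and since $\{c_m\}$ is non-increasing its limit equals the limit of any subsequence, forcing $c_m \to 0$. Otherwise the orbit stabilizes at a value $m^*$ with $K(m^*)=m^*$; applying the estimate there gives $c_{m^*} \leq c_{m^*}^2$, and as $c_{m^*} \leq c_{n_0} < 1$ this forces $c_{m^*}=0$, whence again $c_m \to 0$. In either case $\{c_m\}\searrow 0$ and $E(x,A_m) \leq c_m\|x\|$ for all $x$ and $m$, which contradicts condition $(c)$ of Theorem~\ref{maintheorem} (in force because the scheme satisfies Shapiro's Theorem). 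Hence no such $n_0$ exists, and $c_n = 1$ for all $n$.

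I expect the main obstacle to be the derivation of the submultiplicative estimate in the quasi-Banach setting: one must resist passing to an equivalent Aoki--Rolewicz norm (which would change the unit sphere, and hence the value of $E(S(X),A_n)$), and instead obtain $c_{K(n)} \leq c_n^2$ directly by telescoping the two successive approximations. The secondary delicate point is the dichotomy in iterating $K$, in particular the possibility $K(n)=n$ (the linear case), which is absorbed by the fixed-point branch above.
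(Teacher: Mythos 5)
Your proof is correct: the elementary facts, the telescoped estimate $c_{K(n)}\leq c_n^2$ (which indeed uses no quasi-norm constant, since the residual $(x-a)-a'$ is estimated directly), the iteration along the orbit of $K$, and the final contradiction with condition $(c)$ of Theorem~\ref{maintheorem} are all sound.

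The route differs from the paper's in its bookkeeping, though the engine is the same. The paper fixes $c\in(c_1,1)$, iterates the residual approximation $k$ times \emph{in one stroke} to get $E(x,A_{K^k(n)})\leq c^k\|x\|$, and then manufactures an explicit piecewise-constant null sequence, $\varepsilon_i=c^k$ for $K^{k-1}(n)<i\leq K^k(n)$, which violates the definition of Shapiro's Theorem. You instead isolate the one-step submultiplicativity of the density sequence $\dens_n=E(S(X),A_n)$ --- this is exactly the paper's Proposition~\ref{multiplicative} with $m=n$, which the authors prove later (Section~\ref{fail_shapiro}) by the same two-step decomposition --- and then combine $\dens_{K^j(n_0)}\leq \dens_{n_0}^{2^j}$ with monotonicity of $\{\dens_m\}$, so that the density sequence itself is the sequence contradicting condition $(c)$. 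What your version buys is an explicit treatment of the case where the orbit $K^j(n_0)$ stabilizes at a fixed point $m^*$ (e.g.\ linear schemes, $K(n)=n$): there your inequality $\dens_{m^*}\leq\dens_{m^*}^2<\dens_{m^*}$ forces $\dens_{m^*}=0$. In that same case the paper's piecewise-constant sequence is not literally defined for all indices (the intervals $(K^{k-1}(n),K^k(n)]$ are eventually empty), and one must observe separately that $\overline{A_{m^*}}=X$, so every null sequence gives a contradiction; the paper glosses over this, while your dichotomy absorbs it. Conversely, the paper's construction avoids any discussion of whether $\{\dens_m\}$ tends to zero, since the contradiction is read off directly from the geometric decay along the orbit. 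Your closing remark about not renorming is also well taken: since the statement concerns $E(S(X),A_n)$ for the given quasi-norm, an Aoki--Rolewicz renorming would change the quantity being computed, and both proofs correctly work with the original norm.
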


\begin{proof}
Suppose otherwise. Then there exists $n\in\mathbb{N}$
such that $E(S(X),A_n)= c_1 < 1$.
Find $c \in (c_1, 1)$. Then every $x\in X$ admits a decomposition $x=y_1+z_1$
with $y_1\in A_n$ and $\|z_1\|<c\|x\|$. Furthermore, $z_1 = y_2 + z_2$,
with $y_2 \in A_n$, and $\|z_2\| < c \|z_1\| < c^2 \|x\|$.
Continuing in the same way, for any $k\in \mathbb{N}$ we get a decomposition
$x=y_1+y_2+\cdots+y_k+z_k$, with $y_1,y_2,\cdots,y_k\in A_n$,
and $\|z_k\|<c^k \|x\|$.
Now, the sum $y_1+y_2+\cdots+y_k$ belongs to $A_{K^k(n)}$
(here, $K^k(n) = K(K( \ldots K(n) \ldots ))$ ($k$ times),
so that $E(x,A_{K^k(n)})\leq c^k$ for $k=0,1,2,\cdots$ and $\|x\|\leq 1$. It follows that
\begin{equation}\label{nueva}
E(x,A_{K^k(n)})\leq c^k\|x\| \text{ for }k=0,1,2,\cdots \text{ and }x\in X.
\end{equation}
Now let $\varepsilon_i = c^k$ for $K^{k-1}(n) < i \leq K^k(n)$.
For such $i$, and $x \in X$,
$$
E(x,A_i) \leq E(x, A_{K^k(n)}) \leq c^k \|x\| = \varepsilon_i \|x\| .
$$
As $\{\varepsilon_i\} \searrow 0$, this contradicts our assumption that $(X,\{A_n\})$
satisfies Shapiro's Theorem.
\end{proof}

\begin{lemma}\label{brud2}
Suppose $X$ and $(A_i)$ are as in Theorem~\ref{shapiroimpliesbrudnyi}.
Then there exists a sequence of natural numbers $s_0=0<s_1<s_2<\ldots$,
such that $(X,\{A_{s_i}\})$ satisfies the hypotheses of Brudnyi's Theorem
\ref{brud_original}.
\end{lemma}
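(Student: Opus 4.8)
The plan is to build the subsequence $(s_i)$ by induction, arranging at each step the two index-dependent requirements of Brudnyi's Theorem~\ref{brud_original}, namely the semigroup law $A_{s_p}+A_{s_q}\subseteq A_{s_{p+q}}$ and the gap condition \eqref{condicionbrudnyi}. The remaining hypotheses come for free: homogeneity is inherited, and $\bigcup_i A_{s_i}=\bigcup_n A_n$ is dense once $s_i\to\infty$. I would take $s_0=0$, so that $A_{s_0}=A_0=\{0\}$, as is the case for the approximation schemes considered here.

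The key preliminary is a ``far element living inside the scheme'' claim: for every $n$ there exist $m>n$ and $x\in A_m$ with $\|x\|=1$ and $E(x,A_n)\geq 1/2$. First I would invoke Lemma~\ref{brud1}, which gives $E(S(X),A_n)=1$, to select $u\in S(X)$ with $E(u,A_n)>3/4$. By density of $\bigcup_k A_k$ there is some $v\in A_k$ with $\|u-v\|$ as small as desired; since $X$ is a genuine Banach space, the ordinary triangle inequality then keeps $\|v\|$ close to $1$ and $E(v,A_n)$ close to $E(u,A_n)$. Normalizing, $x=v/\|v\|\in A_k$ (using homogeneity, which also yields $E(\lambda v,A_n)=|\lambda|\,E(v,A_n)$) has unit norm and $E(x,A_n)\geq 1/2$; moreover $E(x,A_n)>0$ forces $v\notin A_n$, hence $k>n$, and we set $m=k$. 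Note that Theorem~\ref{maintheorem}(b) is not needed here: Lemma~\ref{brud1} together with density and homogeneity suffices.

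With this claim the induction runs as follows. Given $s_i$, apply the claim with $n=s_i$ to obtain $m>s_i$ and a unit vector $x_i\in A_m$ with $E(x_i,A_{s_i})\geq 1/2$, and set $s_{i+1}=\max\{m,K(s_i)\}$. Then $s_{i+1}>s_i$, and $x_i\in A_m\subseteq A_{s_{i+1}}$ certifies $\sup_{x\in A_{s_{i+1}},\,\|x\|\le 1}E(x,A_{s_i})\geq 1/2$, so Brudnyi's constant for the subsequence is at least $1/2>0$.

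The part needing care is the semigroup law, since the only structural input is $A_n+A_n\subseteq A_{K(n)}$ with $K$ increasing. The clean observation is that forcing $s_{i+1}\geq K(s_i)$ at every step is already enough: for $1\le p\le q$, monotonicity gives $A_{s_p}+A_{s_q}\subseteq A_{s_q}+A_{s_q}\subseteq A_{K(s_q)}\subseteq A_{s_{q+1}}\subseteq A_{s_{p+q}}$, using $p+q\geq q+1$ and that $(s_i)$ is increasing, while the case $p=0$ is trivial since $A_0=\{0\}$. As the prescription $s_{i+1}=\max\{m,K(s_i)\}$ builds in $s_{i+1}\geq K(s_i)$, this holds automatically. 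I expect this index bookkeeping and the rescaling in the far-element claim to be the only genuine points to verify; everything else is inherited directly from the original scheme.
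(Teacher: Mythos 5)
Your proposal is correct and takes essentially the same route as the paper's own proof: an inductive construction of $(s_i)$ in which Lemma~\ref{brud1} together with the density of $\bigcup_n A_n$ produces a unit vector lying in some $A_m$ at distance at least $1/2$ from $A_{s_i}$, the next index is chosen to be at least $K(s_i)$, and the semigroup hypothesis of Theorem~\ref{brud_original} follows from $A_{s_q}+A_{s_q}\subseteq A_{K(s_q)}\subseteq A_{s_{q+1}}\subseteq A_{s_{p+q}}$. The only differences are cosmetic: you spell out the normalization/perturbation step and the case $p=0$ explicitly, which the paper leaves implicit.
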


\begin{proof}
Throughout, we are assuming that the function $K$ appearing in the definition
of an approximation scheme (Definition~\ref{def_appr_scheme}) is non-decreasing.
It suffices to select $s_0=0<s_1<s_2<\ldots$ in such a way that the sets $B_i=A_{s_i}$
satisfy (i) $B_n+B_m\subset B_{\max\{n,m\}+1}$ for all $n,m\in\mathbb{N}$, and
(ii) $E(B_{n+1}\cap S(X),B_n) \geq 1/2$ for any $n \in \N$.
Suppose $s_0=0<s_1 < \ldots < s_k$ have already been selected in such a way
that the (i) and (ii) are satisfied for $0 \leq m,n \leq k-1$. By Lemma~\ref{brud1},
$E(S(X),B_k) = 1$. As $\overline{\cup_\ell A_\ell} = X$, there exist
$\ell > K(s_k)$ and $x \in A_\ell \cap S(X)$ such that $E(x,B_k) > 1/2$.
Then $s_{k+1} = \ell$ works for us. Indeed, $E(S(X) \cap B_{k+1}, B_k) > 1/2$.
Furthermore,
$$
B_k + B_k = A_{s_k} + A_{s_k} \subset A_{K(s_k)} \subset A_\ell = B_{k+1} .
$$
Proceeding inductively, we obtain $0 = s_0 < s_1 < \ldots$
with the desired properties.
\end{proof}

\begin{proof}[Proof of Theorem~\ref{shapiroimpliesbrudnyi}]
By \cite[pp.113-114]{edwards}, there exists a convex sequence
$(\delta_n)$, convergent to $0$, such that  $\delta_n \geq \varepsilon_n$
for every $n$. By Brudnyi's theorem, there exists $x\in X$ such that
$E(x,A_{s_i})\geq \delta_i$ for $i=0,1,2,\ldots$. But $A_i\subseteq A_{s_i}$, hence
$E(x,A_i)\geq E(x,A_{s_i})\geq \varepsilon_i$ for every $i$.
\end{proof} 

\begin{corollary}\label{coro} For any approximation scheme $(X,\{A_n\})$ the following are equivalent claims:
\begin{itemize}
\item[$(a)$] $(X,\{A_n\})$  satisfies Shapiro's Theorem.
\item[$(b)$] $(X,\{A_n\})$  satisfies the weak Brudnyi's condition with constant $c$ for every $c\in (0,1]$.
\item[$(c)$] $(X,\{A_n\})$  satisfies the weak Brudnyi's condition with constant $c$ for a certain $c\in (0,1]$.
\end{itemize}
Moreover, if $X$ is a Banach space, then $(a)$, $(b)$ and $(c)$ are equivalent to:
\begin{itemize}
\item[$(d)$] For every non-decreasing sequence $\{\varepsilon_n\}_{n=0}^{\infty}\searrow 0$ there exists an element $x\in X$ such that $E(x,A_n)\geq \varepsilon_n$ for all $n\in\mathbb{N}$.
\end{itemize}
\end{corollary}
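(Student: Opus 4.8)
The plan is to run the cycle $(a)\Rightarrow(b)\Rightarrow(c)\Rightarrow(a)$ using the machinery already in place, and then attach $(d)$ in the Banach case via two implications. For $(a)\Rightarrow(b)$ I would simply quote Lemma~\ref{brud1}: if $(X,\{A_n\})$ satisfies Shapiro's Theorem, then $E(S(X),A_n)=1$ for every $n$, so the weak Brudnyi condition holds with constant $c$ for \emph{every} $c\in(0,1]$. The implication $(b)\Rightarrow(c)$ is immediate, since (b) asserts the condition for all $c\in(0,1]$ while (c) only requires a single value.

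The substantive step is $(c)\Rightarrow(a)$, which I would prove by contraposition through Theorem~\ref{maintheorem}. Suppose Shapiro's Theorem fails. Then part (c) of Theorem~\ref{maintheorem} fails, so there is a sequence $\{\varepsilon_n\}\searrow 0$ with $E(x,A_n)\leq \varepsilon_n\|x\|$ for all $x\in X$ and all $n$. Taking the supremum over $x\in S(X)$ gives $E(S(X),A_n)\leq \varepsilon_n\to 0$, which precludes the weak Brudnyi condition for \emph{any} positive constant; hence (c) must fail. I do not expect a real obstacle here, the only point requiring care being that $E(S(X),A_n)$ is defined as a supremum over the sphere, so the uniform bound $E(x,A_n)\leq\varepsilon_n\|x\|$ transfers directly to a bound on $E(S(X),A_n)$.

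For the Banach case, $(a)\Rightarrow(d)$ is exactly Theorem~\ref{shapiroimpliesbrudnyi}, so nothing new is needed. For $(d)\Rightarrow(a)$ I would use the elementary device of passing from a target $\{\varepsilon_n\}\searrow 0$ to $\{\sqrt{\varepsilon_n}\}$: this new sequence is again non-increasing and tends to $0$, while $\sqrt{\varepsilon_n}/\varepsilon_n=\varepsilon_n^{-1/2}\to\infty$. Applying (d) to $\{\sqrt{\varepsilon_n}\}$ produces $x\in X$ with $E(x,A_n)\geq\sqrt{\varepsilon_n}$ for all $n$, whence $E(x,A_n)\neq\mathbf{O}(\varepsilon_n)$, so Shapiro's Theorem holds. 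This last implication in fact does not use completeness, and coincides with the remark preceding Theorem~\ref{shapiroimpliesbrudnyi}; completeness enters only through the appeal to Theorem~\ref{shapiroimpliesbrudnyi} in $(a)\Rightarrow(d)$.
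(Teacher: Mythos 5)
Your proof is correct and follows essentially the same route as the paper: Lemma~\ref{brud1} for $(a)\Rightarrow(b)$, Theorem~\ref{maintheorem} for $(c)\Rightarrow(a)$, and Theorem~\ref{shapiroimpliesbrudnyi} for $(a)\Rightarrow(d)$, with $(d)\Rightarrow(a)$ treated as elementary. The only cosmetic difference is that for $(c)\Rightarrow(a)$ the paper directly verifies the jump condition (b) of Theorem~\ref{maintheorem} (choosing $x_n\in S(X)$ with $E(x_n,A_{K(n)})>c'$, so that $E(x_n,A_n)\leq 1\leq \tfrac{1}{c'}E(x_n,A_{K(n)})$), whereas you contrapose through condition (c) of that theorem; both are one-line applications of the same result, and your $\sqrt{\varepsilon_n}$ device merely makes explicit the step the paper calls trivial.
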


\begin{proof}
$(a)\Rightarrow (b)$ follows from Lemma~\ref{brud1}. $(b)\Rightarrow (c)$ is trivial.
To prove $(c)\Rightarrow (a)$, assume
$c\in (0,1)$ is such that $\sup_{n \in \N} E(S(X),A_n) > c > 0$.
Then for every $n\in\mathbb{N}$ there exists $x_n\in X$ with $\|x_n\|=1$ and
$E(x_n,A_{K(n)}) > c$, so
$E(x_n,A_n)\leq \|x_n\|=1\leq cE(x_n,A_{K(n)})$. This, in conjunction with Theorem \ref{maintheorem}, implies that $(X,\{A_n\})$ satisfies Shapiro's theorem.

Finally, the claim that $(a)\Rightarrow (d)$ holds for Banach spaces is just a reformulation of Theorem \ref{shapiroimpliesbrudnyi}, and $(d)\Rightarrow (a)$ is trivial.
\end{proof} 

As a consequence, we show that the approximation schemes satisfying
Shapiro's Theorem are stable under perturbations.

\begin{proposition}\label{stability}
Suppose, for a quasi-Banach space $(X, \|\cdot\|)$, there exists
$p \in (0,1]$ for which any $x_1, x_2 \in X$ satisfy
$\|x_1 + x_2\|^p \leq \|x_1\|^p + \|x_2\|^p$.
Suppose the approximation schemes $(A_n)$ and $(B_n)$ in $X$
are such that $(A_n)$ satisfies Shapiro's Theorem, and
$\liminf_n E(S(X) \cap B_n, A_n) < 1$.
Then $(X,\{B_n\})$ also satisfies Shapiro's Theorem.
\end{proposition}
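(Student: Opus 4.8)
The plan is to reduce everything to the weak Brudnyi characterization of Shapiro's Theorem supplied by Corollary~\ref{coro}: I will show that $(X,\{B_n\})$ satisfies the weak Brudnyi condition with some constant $c'\in(0,1]$, that is, $E(S(X),B_n)\geq c'$ for all $n$, and then invoke the implication $(c)\Rightarrow(a)$ of that corollary. To set up the two ingredients, first unpack the hypothesis: $\liminf_n E(S(X)\cap B_n,A_n)<1$ yields a constant $c\in(0,1)$ and an infinite set $\mathbb{N}_0\subseteq\mathbb{N}$ with $E(S(X)\cap B_n,A_n)\leq c$ for every $n\in\mathbb{N}_0$. Using the homogeneity of $A_n$ and $B_n$ (condition (ii) of Definition~\ref{def_appr_scheme}), this sphere estimate upgrades to the scale-free inequality $E(b,A_n)\leq c\|b\|$ for \emph{every} $b\in B_n$ and every $n\in\mathbb{N}_0$, obtained by normalizing a nonzero $b$ to $b/\|b\|\in S(X)\cap B_n$ and rescaling the approximants in $A_n$. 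On the other side, since $(A_n)$ satisfies Shapiro's Theorem, Lemma~\ref{brud1} gives $E(S(X),A_n)=1$ for all $n$.

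The heart of the argument is a single quantitative geometric estimate, where the $p$-subadditivity of $\|\cdot\|^p$ is used crucially. Fix $n\in\mathbb{N}_0$ and a small $\eta\in(0,1-c)$, and choose $x\in S(X)$ with $E(x,A_n)>1-\eta$, possible because $E(S(X),A_n)=1$. Given an arbitrary $b\in B_n$, pick $a\in A_n$ with $\|b-a\|$ at most $c\|b\|$ plus an arbitrarily small error. Applying the hypothesis $\|x_1+x_2\|^p\leq\|x_1\|^p+\|x_2\|^p$ to the splitting $x-a=(x-b)+(b-a)$ gives $(1-\eta)^p\leq\|x-a\|^p\leq\|x-b\|^p+\|b-a\|^p$, while applying it to $b=(b-x)+x$ gives $\|b\|^p\leq\|x-b\|^p+1$. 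Substituting $\|b-a\|^p\leq c^p\|b\|^p$ (up to the small error), eliminating $\|b\|^p$, and letting the auxiliary error tend to $0$ yields $\|x-b\|^p\geq\big((1-\eta)^p-c^p\big)/(1+c^p)$. As this holds for every $b\in B_n$, we conclude $E(x,B_n)\geq c'$, where $c'=\big(((1-\eta)^p-c^p)/(1+c^p)\big)^{1/p}>0$ is a positive constant independent of $n$; hence $E(S(X),B_n)\geq c'$ for every $n\in\mathbb{N}_0$.

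Finally I will promote this bound from the infinite set $\mathbb{N}_0$ to all of $\mathbb{N}$ by monotonicity: since $B_n\subset B_{n+1}$, the quantity $E(S(X),B_n)$ is non-increasing in $n$, so a lower bound that holds for arbitrarily large indices (which the infinitude of $\mathbb{N}_0$ guarantees) automatically holds for every $n$. Thus $E(S(X),B_n)\geq c'$ for all $n$, and Corollary~\ref{coro} completes the proof. The step I expect to demand the most care is the geometric estimate: one must work with the $p$-subadditive quasi-norm provided by the hypothesis, rather than a generic quasi-triangle inequality, to keep the resulting constant both \emph{positive} and \emph{uniform} in $n$, and it is essential to convert the hypothesis into the homogeneous form $E(b,A_n)\leq c\|b\|$ before the two inequalities are combined. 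The remaining steps are routine bookkeeping.
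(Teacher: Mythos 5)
Your proof is correct and follows essentially the same route as the paper's: both rest on Corollary~\ref{coro} (via Lemma~\ref{brud1} giving $E(S(X),A_n)=1$), the homogeneous form $E(b,A_n)\leq c\|b\|$ of the hypothesis, the two $p$-subadditive splittings $x-a=(x-b)+(b-a)$ and $b=(b-x)+x$, and the monotonicity of $E(S(X),B_k)$ to pass from infinitely many indices to all of them. The only difference is organizational: you run the estimate directly (fixing $x$ with $E(x,A_n)>1-\eta$ and bounding $E(x,B_n)$ from below), whereas the paper phrases the same inequalities as a proof by contradiction.
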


\begin{proof}
Pick $C \in (\liminf_n E(S(X) \cap B_n, A_n), 1)$.
Then for any $N \in \N$ there exists $n \geq N$ such that $E(S(X) \cap B_n, A_n) < C$.
Find $0<c<1$ such that $c^p + C^{p}(1 + c^p) < 1$.
By Corollary~\ref{coro}(c), it suffices to show that, for such $n$,
$E(S(X), B_n) \geq c$, since the sequence $(E(S(X), B_k))_{k=0}^\infty$
is non-increasing.

Suppose, for the sake of contradiction, for every $x \in S(X)$ there exists $b \in B_n$
with $\|x-b\| < c$. As $b = x - (x-b)$,
$\|b\| \leq (\|x\|^p + \|x-b\|^p)^{1/p} < (1 + c^p)^{1/p}$
Then there exists $a \in A_n$ such that $\|b - a\| \leq C (1 + c^p)^{1/p}$,
hence
$$
\|x - a\|^p \leq \|b - a\|^p + \|x-b\|^p \leq c^p + C^{p} (1 + c^p) ,
$$
which contradicts Corollary~\ref{coro}(b).
\end{proof}

Another useful consequence of Corollary \ref{coro} is:
\begin{corollary}\label{increasing} Let $X$ be a quasi-Banach space and let us assume that for each $r\in\mathbb{N}$, the family $(A_{n,r})_{n=0}^{\infty}$ defines an approximation scheme in $X$ that satisfies Shapiro's Theorem and $n_1\leq n_2$, $r_1\leq r_2$ imply $A_{n_1,r_1}\subseteq A_{n_2,r_2}$. Then for every pair of increasing sequences $\{n_i\}\to\infty$, $\{r_i\}\to\infty$, the approximation scheme $(A_{n_i,r_i})$ satisfies Shapiro's theorem.
\end{corollary}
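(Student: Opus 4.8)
The plan is to reduce everything to the distance-to-the-sphere characterization of Shapiro's Theorem furnished by Corollary~\ref{coro}, so that the conclusion drops out of Lemma~\ref{brud1} almost immediately. Write $B_i = A_{n_i,r_i}$. First I would check that $(X,\{B_i\})$ is genuinely an approximation scheme. Homogeneity (ii) is inherited from each $A_{n,r}$. For the closure-under-addition condition (i), let $K_r$ denote the map associated with the $r$-th scheme $(A_{n,r})_n$, so that $A_{n,r}+A_{n,r}\subseteq A_{K_r(n),r}$. Then $B_i+B_i=A_{n_i,r_i}+A_{n_i,r_i}\subseteq A_{K_{r_i}(n_i),r_i}$, and since $n_j\to\infty$ while $r_j\geq r_i$ for $j\geq i$, the joint monotonicity $A_{m,s}\subseteq A_{m',s'}$ (for $m\leq m'$, $s\leq s'$) lets me choose $j\geq i$ with $A_{K_{r_i}(n_i),r_i}\subseteq A_{n_j,r_j}=B_j$; this defines an admissible increasing map for $(B_i)$. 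Density (iii) holds because every $A_{n,r}$ sits inside some $B_i$ (pick $i$ with $n_i\geq n$ and $r_i\geq r$), and already $\bigcup_n A_{n,0}$ is dense. Discarding repeated terms, I may assume the inclusions $B_i\subsetneq B_{i+1}$ are strict; this affects neither the hypotheses nor, by a routine $\limsup$ argument along subsequences, the conclusion.

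The decisive observation is that each $B_i$ lies as far as possible from the unit sphere. Indeed, for each fixed $r$ the column $(A_{n,r})_n$ satisfies Shapiro's Theorem by hypothesis, so Lemma~\ref{brud1} yields $E(S(X),A_{n,r})=1$ for every $n$ and every $r$. In particular $E(S(X),B_i)=E(S(X),A_{n_i,r_i})=1$ for all $i$, so $(X,\{B_i\})$ satisfies the weak Brudnyi condition with constant $c=1$, and hence with every $c\in(0,1]$.

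It then remains only to invoke Corollary~\ref{coro}: the implication (c)~$\Rightarrow$~(a) turns the weak Brudnyi condition into Shapiro's Theorem for $(X,\{B_i\})$, completing the argument. The only place demanding genuine care is the verification that $(B_i)$ is an approximation scheme — specifically condition (i), where one must exploit both that $n_i\to\infty$ and that $r_i\to\infty$, together with monotonicity in both indices, to absorb $A_{K_{r_i}(n_i),r_i}$ into a later diagonal term. The Shapiro conclusion itself is then immediate from the characterization, since Lemma~\ref{brud1} hands us the optimal constant $c=1$ for free.
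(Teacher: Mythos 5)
Your proof is correct and follows essentially the same route as the paper's: both arguments deduce $E(S(X),A_{n,r})=1$ for all $n,r$ from the hypothesis via Lemma~\ref{brud1} (equivalently, Corollary~\ref{coro}), conclude $E(S(X),B_i)=1$ for the diagonal sets $B_i=A_{n_i,r_i}$, and then invoke Corollary~\ref{coro} to get Shapiro's Theorem. The only difference is that you spell out the verification that $(B_i)$ is an approximation scheme (the admissible map for condition (i), density, and strictness of inclusions), which the paper dismisses as ``obvious''; your added detail is accurate.
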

\begin{proof} Let us denote $B_i=A_{n_i,r_i}$, $i=0,1,2,\cdots$. Obviously, $(B_i)$ is an approximation scheme in $X$. By hypothesis and by Corollary \ref{coro}, for each $n,r\in\mathbb{N}$ we have that $E(S(X),A_{n,r})=1$. Hence, for each $i\in\mathbb{N}$,
we also have $E(S(X),B_i)=1$, and the result follows as a direct application of
Corollary \ref{coro}.
\end{proof}

\section{Approximation schemes that do not satisfy Shapiro's Theorem}\label{fail_shapiro}

Section~\ref{examples} below gives many examples of approximation schemes
satisfying Shapiro's Theorem. In this section, we present some examples of
schemes failing this condition, and explore their properties.

For an approximation scheme $(X,\{A_n\})$, define its {\it density sequence}
$\dens_n = \dens_n(X,\{A_n\})$ by setting, for $n \geq 0$,
$\dens_n = E(S(X),A_n)$.
Clearly, $1 = \dens_0 \geq \dens_1 \geq \ldots \geq 0$.

\begin{proposition}\label{multiplicative}
Suppose $(A_n)$ is an approximation scheme in a quasi-Banach space $X$, and
the function $L : \N \times \N \to \N$ is such that $A_m + A_n \subset A_{L(m,n)}$
for any $m,n \in \N$ (we can take $L(n,m)=K(\max\{n,m\})$).
Then $\dens_{L(m,n)} \leq \dens_m \dens_n$ for any $m,n \in \N$.
\end{proposition}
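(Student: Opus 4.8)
The plan is to iterate two one-step approximations and exploit the homogeneity of the sets $A_n$. The basic observation is that, since $\lambda A_n \subset A_n$ for every scalar $\lambda$ (condition (ii) of Definition~\ref{def_appr_scheme}), the quantity $E(\cdot, A_n)$ is positively homogeneous: $E(\lambda y, A_n) = |\lambda| E(y, A_n)$ for all scalars $\lambda$ and all $y \in X$. Consequently $E(y, A_n) \leq \dens_n \|y\|$ for every $y \in X$: for $y \neq 0$ apply the definition $\dens_n = E(S(X),A_n)$ to the unit vector $y/\|y\|$, and note that the case $y = 0$ is trivial. This scaling estimate is exactly what will let the two density factors multiply.

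First I would fix $x \in S(X)$ and $\varepsilon > 0$, and produce a first approximant $a \in A_m$ with $\|x - a\| < E(x, A_m) + \varepsilon \leq \dens_m + \varepsilon$. Writing $z = x - a$, the residual then satisfies $\|z\| < \dens_m + \varepsilon$. Next I would approximate $z$ from $A_n$: by the scaling estimate $E(z, A_n) \leq \dens_n \|z\|$, so one can choose $b \in A_n$ with $\|z - b\| < \dens_n \|z\| + \varepsilon < \dens_n(\dens_m + \varepsilon) + \varepsilon$.

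The key point is that $a + b \in A_m + A_n \subset A_{L(m,n)}$, while the combined error is simply $\|x - (a+b)\| = \|(x-a) - b\| = \|z - b\|$, an exact identity requiring no triangle inequality. Hence $E(x, A_{L(m,n)}) \leq \|z-b\| < \dens_n(\dens_m + \varepsilon) + \varepsilon$. Letting $\varepsilon \searrow 0$ gives $E(x, A_{L(m,n)}) \leq \dens_m \dens_n$, and taking the supremum over $x \in S(X)$ yields the asserted bound $\dens_{L(m,n)} \leq \dens_m \dens_n$. The stated choice $L(n,m) = K(\max\{n,m\})$ is covered since $A_m + A_n \subset A_{\max\{m,n\}} + A_{\max\{m,n\}} \subset A_{K(\max\{m,n\})}$.

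I do not expect any serious obstacle: the only thing to watch is the bookkeeping of the $\varepsilon$'s, since the infima defining $E$ need not be attained. It is worth emphasizing that the quasi-norm constant $C_X$ never enters, precisely because the two successive errors recombine into the single norm $\|z - b\|$ rather than being summed; this is why the clean multiplicative bound survives in the full quasi-Banach setting.
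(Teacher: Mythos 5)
Your proof is correct and is essentially the paper's own argument: both perform the same two-step decomposition $x=(a+b)+z$ with $a\in A_m$, $b\in A_n$, use homogeneity of the $A_k$ to get the scaling bound $E(z,A_n)\leq \dens_n\|z\|$, observe $a+b\in A_{L(m,n)}$, and exploit the fact that the errors recombine exactly (so the quasi-norm constant $C_X$ never enters). The only cosmetic differences are that the paper uses multiplicative slack $(1+\delta)$ where you use additive $\varepsilon$'s, and approximates from $A_n$ first rather than $A_m$.
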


\begin{proof}
Consider $x \in X$. Fix $\delta > 0$, $m$, and $n$. Write $x = a + y$,
with $a \in A_n$, and $\|y\| \leq (1 + \delta) \dens_n \|x\|$.
Furthermore, write $y = b + z$, with $b \in A_m$ and
$$
\|z\| \leq (1 + \delta) \dens_m \|y\| \leq (1 + \delta)^2 \dens_m \dens_n \|x\| .
$$
Then $x = (a+b) + z$, with $a + b \in A_{L(m,n)}$. As $\delta > 0$ is arbitrary,
we are done.
\end{proof}

\begin{corollary}
Let $(A_n)$ be an approximation scheme in a quasi-Banach space $X$.
Then $(A_n)$ satisfies Shapiro's Theorem if
and only if $\dens_n=1$ for any $n\in\mathbb{N}$.
\end{corollary}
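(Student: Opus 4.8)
The plan is to read off both implications from results already established, since $\dens_n$ is by definition $E(S(X),A_n)$, the very quantity appearing in the weak Brudnyi's condition. First I would recall the elementary bound $\dens_n\leq 1$ for every $n$ (already noted after the definition of the density sequence): by homogeneity, condition (ii) of Definition~\ref{def_appr_scheme}, we have $\mathbf{0}\in A_n$, so $E(x,A_n)\leq\|x\|=1$ for every $x\in S(X)$, whence $\dens_n\leq 1$. Thus $\dens_n=1$ is the extremal case, and the corollary asserts that this extremal value is attained at every level exactly when Shapiro's Theorem holds.

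For the forward implication there is nothing new to do: if $(X,\{A_n\})$ satisfies Shapiro's Theorem, then Lemma~\ref{brud1} gives $E(S(X),A_n)=1$ for all $n$, which is precisely $\dens_n=1$. For the converse, suppose $\dens_n=1$ for all $n$. Then $E(S(X),A_n)=1\geq c$ for every $n$ and every $c\in(0,1]$, so $(X,\{A_n\})$ satisfies the weak Brudnyi's condition with constant $c$ (take $c=1$), and the implication $(c)\Rightarrow(a)$ of Corollary~\ref{coro} yields Shapiro's Theorem. This is the economical route.

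More in the spirit of the density sequence, and explaining the placement of this corollary right after Proposition~\ref{multiplicative}, I would alternatively argue the contrapositive of the converse via submultiplicativity. If $\dens_{n_0}=d<1$ for some $n_0$, then setting $n_{k+1}=K(n_k)$ and applying the case $m=n$ of $\dens_{L(m,n)}\leq\dens_m\dens_n$, namely $\dens_{K(n)}\leq\dens_n^2$, gives $\dens_{n_k}\leq d^{2^k}$, so the densities decay to $0$ geometrically fast along the orbit of $K$. Exactly as in the proof of Lemma~\ref{brud1}, one then pads these values into a non-increasing null sequence $\{\varepsilon_i\}$, constant on each block $(K^{k-1}(n_0),K^k(n_0)]$, to obtain the universal estimate $E(x,A_i)\leq\varepsilon_i\|x\|$ for all $x\in X$ and all $i\in\N$; this contradicts Shapiro's Theorem by Theorem~\ref{maintheorem}(c).

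There is essentially no technical obstacle here. The only point that deserves attention is the bookkeeping in this last, self-contained argument: the geometric decay of $\dens_{n_k}$ holds only along the sparse sequence $(n_k)$, and one must transfer it to a genuine non-increasing sequence $(\varepsilon_i)$ defined on all of $\N$, so that $E(x,A_i)=\mathbf{O}(\varepsilon_i)$ holds for every index $i$ rather than only along $(n_k)$. Since this is precisely the transfer already performed inside Lemma~\ref{brud1}, invoking that lemma (or Corollary~\ref{coro}) bypasses the computation entirely.
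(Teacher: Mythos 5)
Your proposal is correct and follows essentially the route the paper intends: the corollary is stated without proof precisely because it is immediate from Lemma~\ref{brud1} (Shapiro's Theorem implies $E(S(X),A_n)=1$ for all $n$) together with the implication $(c)\Rightarrow(a)$ of Corollary~\ref{coro}, which is exactly your primary argument. Your alternative self-contained argument via the submultiplicativity of Proposition~\ref{multiplicative} is also sound (modulo the trivial degenerate case $\dens_{n_k}=0$, where Shapiro's Theorem fails outright), but it is not needed once Corollary~\ref{coro} is available.
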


As a particular case of Proposition~\ref{multiplicative},
consider an approximation scheme arising from a dictionary.
We say that a set ${\mathcal{D}}$ is a
{\it dictionary} in a quasi-Banach space $X$ if
$\overline{\mathbf{span}[\mathcal{D}]} = X$. Define the approximation scheme $(X,\Sigma_n(\mathcal{D}))$ by setting
\begin{equation}
\Sigma_0(\mathcal{D})=\{0\}; \ \
\Sigma_n({\mathcal{D}}) = \bigcup_{F \subset {\mathcal{D}}, |F| \leq n} \spn[F]
\, \, \, {\mathrm{for}} \, \, n \geq 1 .
\label{nterm}
\end{equation}
Then $\Sigma_n({\mathcal{D}}) + \Sigma_m({\mathcal{D}}) =
\Sigma_{n+m}({\mathcal{D}})$ for every $n, m \geq 0$ (hence
we can take $L(m,n) = m+n$). We thus have:

\begin{corollary}\label{n_term-decay}
Suppose an approximation scheme $(\Sigma_n({\mathcal{D}}))$ is constructed as described
in the previous paragraph. Then $\dens_{m+n} \leq \dens_m \dens_n$
for any $m$ and $n$. In particular, if $\dens_m < 1$ for some $m$,
then the sequence $(\dens_n)$ decays exponentially or faster.
\end{corollary}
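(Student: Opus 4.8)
The plan is to read off the submultiplicative inequality directly from Proposition~\ref{multiplicative}, and then to extract exponential decay by a standard interpolation argument that rests on the monotonicity of the sequence $(\dens_n)$.

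First I would observe that the inequality $\dens_{m+n} \leq \dens_m \dens_n$ is an immediate special case of Proposition~\ref{multiplicative}. Indeed, the paragraph preceding the statement records that $\Sigma_m(\mathcal{D}) + \Sigma_n(\mathcal{D}) = \Sigma_{m+n}(\mathcal{D})$ for all $m,n \geq 0$, so one may take $L(m,n) = m+n$ in that proposition. Its conclusion $\dens_{L(m,n)} \leq \dens_m \dens_n$ then reads exactly $\dens_{m+n} \leq \dens_m \dens_n$, and no further work is needed for the first assertion.

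For the decay statement, suppose $\dens_m = \rho < 1$ for some index $m$ (necessarily $m \geq 1$, since $A_0 = \{0\}$ forces $\dens_0 = 1$). Iterating the submultiplicative inequality just established gives $\dens_{km} \leq \dens_m^{\,k} = \rho^k$ for every $k \geq 0$. To pass from the multiples $km$ to an arbitrary index $n$, I would write $n = km + r$ with $k = \lfloor n/m \rfloor$ and $0 \leq r < m$, and invoke the monotonicity $1 = \dens_0 \geq \dens_1 \geq \cdots \geq 0$ recorded just after the definition of the density sequence. Since $n \geq km$, this yields $\dens_n \leq \dens_{km} \leq \rho^k$.

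Finally, using $k > n/m - 1$ together with $\rho < 1$, I would bound $\rho^k \leq \rho^{\,n/m - 1} = \rho^{-1}\bigl(\rho^{1/m}\bigr)^n$, so that $\dens_n \leq C\theta^n$ with $C = \rho^{-1}$ and $\theta = \rho^{1/m} \in (0,1)$; this is precisely exponential decay. I do not anticipate any genuine obstacle here. The only point that requires a little care is the interpolation from the multiples of $m$ to all indices $n$, which relies essentially on the monotonicity of the density sequence rather than on submultiplicativity alone, and on keeping track of the floor term so that the base $\theta$ and the constant $C$ come out correctly.
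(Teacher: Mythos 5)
Your proposal is correct and follows exactly the route the paper intends: the first inequality is the special case $L(m,n)=m+n$ of Proposition~\ref{multiplicative}, and the exponential decay is the standard iteration-plus-monotonicity argument that the paper leaves implicit. The only (trivial) caveat is the degenerate case $\dens_m=0$, where $\rho^{-1}$ is undefined but monotonicity gives $\dens_n=0$ for all $n\geq m$, which is covered by the phrase ``or faster.''
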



In Section~\ref{examples}, we shall see many dictionaries (some quite
redundant) for which $\dens_n = 1$ for any $n$.
These dictionaries cannot be ``too redundant.''
Indeed, if a dictionary ${\mathcal{D}}$ is a $c$-net of the unit sphere $S(X)$
for some $c < 1$, then $\dens_1 \leq c$, hence $\dens_n \leq c^n$ for every $n$.

Below we consider an ``extreme'' case of $\dens_n$
becoming $0$ for $n$ large enough.

\begin{proposition}\label{general}Let $(X,\{A_n\})$ be an approximation scheme.
The following are equivalent:
\begin{itemize}
\item[$(a)$] $\bigcup \overline{A_n} = X$ (equivalently, for all $x\in X$ there exists $n=n(x)\in \N$ such that $E(x,A_n)=0$).
\item[$(b)$] $\overline{A_n}=X$ for some $n \in \N$ (equivalently, $E(x,A_{n})=0$
for all $x\in X$).
\end{itemize}
Consequently, $\bigcup \overline{A_n}\neq X$ if and only if $\dens_n>0$ for all $n$.
\end{proposition}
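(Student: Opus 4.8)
The plan is to dispose of the easy implications and concentrate on the one direction that carries content, namely $(a)\Rightarrow(b)$. First I would record that the parenthetical reformulations are immediate from the definition $E(x,A_n)=\inf_{a\in A_n}\|x-a\|$: one has $x\in\overline{A_n}$ exactly when $E(x,A_n)=0$, so $(a)$ says that every $x$ lies in \emph{some} $\overline{A_n}$, while $(b)$ says that every $x$ lies in a \emph{single} $\overline{A_n}$. The implication $(b)\Rightarrow(a)$ is then trivial.

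For $(a)\Rightarrow(b)$ the key tool is the Baire Category Theorem. Since $X$ is a quasi-Banach space it is a complete metrizable topological vector space (one may pass to the equivalent $p$-norm of Aoki--Rolewicz mentioned above to make this explicit), and each $\overline{A_n}$ is closed. Assuming $(a)$, we have $X=\bigcup_n\overline{A_n}$, so some $\overline{A_{n_0}}$ has non-empty interior; that is, $B(x_0,r)\subset\overline{A_{n_0}}$ for some $x_0\in X$ and $r>0$. I then promote this interior point to a neighborhood of $0$ using the algebraic structure of the scheme. By homogeneity (Definition~\ref{def_appr_scheme}(ii)) the set $A_{n_0}$, hence also $\overline{A_{n_0}}$, is symmetric, so $-x_0\in\overline{A_{n_0}}$. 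Translating, $B(0,r)=B(x_0,r)-x_0\subset\overline{A_{n_0}}+\overline{A_{n_0}}$. As addition is continuous, $\overline{A_{n_0}}+\overline{A_{n_0}}\subset\overline{A_{n_0}+A_{n_0}}\subset\overline{A_{K(n_0)}}$, whence $B(0,r)\subset\overline{A_{K(n_0)}}$. Finally, homogeneity of $\overline{A_{K(n_0)}}$ lets me rescale any nonzero $x$ into this ball, giving $x\in\overline{A_{K(n_0)}}$; thus $\overline{A_{K(n_0)}}=X$, which is $(b)$ with $n=K(n_0)$.

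For the concluding equivalence I would observe that $\dens_n=E(S(X),A_n)=0$ holds precisely when $S(X)\subset\overline{A_n}$, which by homogeneity of $\overline{A_n}$ is the same as $\overline{A_n}=X$. Therefore the statement ``$\dens_n>0$ for every $n$'' is exactly the negation of $(b)$, which by the main equivalence is the negation of $(a)$, i.e.~$\bigcup_n\overline{A_n}\neq X$.

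The main obstacle is the careful handling of closures and algebraic operations in the quasi-Banach setting: I must ensure that $\overline{A}+\overline{A}\subset\overline{A+A}$ and that multiplication by a fixed scalar commutes with closure, both of which follow from the \emph{continuity} of the vector space operations rather than from any norm inequality, and that Baire's theorem is available even though the quasi-norm may fail to be continuous.
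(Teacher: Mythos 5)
Your proof is correct and follows essentially the same route as the paper's: Baire Category to find a ball inside some $\overline{A_{n_0}}$, symmetry of $\overline{A_{n_0}}$ to move that ball to the origin inside $\overline{A_{n_0}}+\overline{A_{n_0}}\subset\overline{A_{K(n_0)}}$, and homogeneity to conclude $\overline{A_{K(n_0)}}=X$, with the density-sequence claim handled identically. Your added care about why $\overline{A}+\overline{A}\subset\overline{A+A}$ and why Baire applies in the quasi-Banach setting is a welcome (if routine) elaboration of steps the paper leaves implicit.
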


\begin{proof}
The implication (b) $\Rightarrow$ (a) is obvious. To prove the converse,
suppose $X = \cup_n \overline{A_n}$. By Baire Category Theorem,
for some $n$, there exist $x \in X$ and $c > 0$ such that
$B(x,c)$ (the ball with the center at $x$, and radius $c$)
lies inside of $\overline{A_n}$. By symmetry, $B(-x,c) \subset \overline{A_n}$.
Then
$$
B(0,c) \subset B(x,c) + B(-x,c) \subset \overline{A_n} + \overline{A_n} \subset
\overline{A_{K(n)}} .
$$
But $\lambda \overline{A_{K(n)}} = \overline{A_{K(n)}}$ for any scalar $\lambda$ and $m$,
hence $\overline{A_{K(n)}} = X$.

To prove the last claim of the Proposition, note that
$X \neq \overline{A_n}$ if and only if $\dens_n > 0$.
\end{proof}


\begin{corollary}\label{hamel}
Suppose ${\mathcal{D}}$ is a Hamel basis in a Banach space $X$.
Then there exists $n \in \N$ for which $\Sigma_n({\mathcal{D}})$
is dense in $X$.
\end{corollary}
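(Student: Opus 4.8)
The plan is to recognize $(\Sigma_n(\mathcal{D}))$ as a family to which Proposition~\ref{general} applies, and then read off the conclusion directly. The entire content comes from the defining feature of a Hamel basis: every $x \in X$ is a \emph{finite} linear combination of elements of $\mathcal{D}$, so that $\bigcup_n \Sigma_n(\mathcal{D}) = \spn[\mathcal{D}] = X$ (not merely a dense subset).

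First I would verify that $(\Sigma_n(\mathcal{D}))$ satisfies conditions (i) and (ii) of Definition~\ref{def_appr_scheme}. Condition (ii) (homogeneity) is immediate, since rescaling a linear combination of at most $n$ dictionary elements yields another such combination. Condition (i) holds with $K(n) = 2n$, because $\Sigma_n(\mathcal{D}) + \Sigma_m(\mathcal{D}) = \Sigma_{n+m}(\mathcal{D})$, as recorded just after~\eqref{nterm}. Next, the Hamel-basis property supplies, for each $x \in X$, a representation as a finite linear combination using, say, at most $n$ basis vectors, whence $x \in \Sigma_n(\mathcal{D})$. Thus $\bigcup_n \Sigma_n(\mathcal{D}) = X$, so that $\bigcup_n \overline{\Sigma_n(\mathcal{D})} = X$ as well; this is precisely condition (a) of Proposition~\ref{general} (and it also certifies condition (iii) of the scheme). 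Invoking Proposition~\ref{general}, condition (b) must then hold, i.e.\ $\overline{\Sigma_n(\mathcal{D})} = X$ for some $n$, which is exactly the assertion.

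There is no real obstacle in the derivation; the interest lies in the (somewhat counterintuitive) conclusion rather than its proof, and the Baire-category argument underlying Proposition~\ref{general} does all the work. The one point worth flagging is consistency with the strictness built into Definition~\ref{def_appr_scheme}: when $\dim X = \infty$ the Hamel basis $\mathcal{D}$ is infinite, and linear independence forces any element using exactly $n+1$ basis vectors with nonzero coefficients to lie outside $\Sigma_n(\mathcal{D})$, so the inclusions $\Sigma_n \subsetneq \Sigma_{n+1}$ are genuinely strict. This is in no way contradicted by $\overline{\Sigma_n(\mathcal{D})} = X$, since a proper subset can be dense. The finite-dimensional case is trivial, as then $\mathcal{D}$ is finite and $\Sigma_{\dim X}(\mathcal{D}) = X$ outright.
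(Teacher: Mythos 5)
Your proposal is correct and follows exactly the route the paper intends: the corollary is stated immediately after Proposition~\ref{general} precisely because a Hamel basis gives $\bigcup_n \Sigma_n({\mathcal{D}}) = \spn[{\mathcal{D}}] = X$, so condition (a) of that proposition holds and the Baire-category argument behind its implication (a)~$\Rightarrow$~(b) yields the conclusion. Your additional checks (that $(\Sigma_n({\mathcal{D}}))$ is a genuine approximation scheme with $K(n)=2n$, and that the inclusions are strict in infinite dimension) are exactly the details the paper leaves implicit.
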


Note that there are no uniform bounds for the values of $n$ with the
property outlined in Propositions~\ref{general}(b) and Corollary~\ref{hamel}.
Indeed, by \cite{BDHMP}, any Banach space has a dense Hamel basis ${\mathcal{D}}$.
In particular, $\Sigma_1({\mathcal{D}})$ is dense in $X$.
On the other hand, consider
a space $X = \ell_\infty^N \oplus_p Y$. If ${\mathcal{H}}$ is a
Hamel basis in $Y$, then ${\mathcal{D}} = \{e_i \oplus h : 1 \leq i \leq N ,
h \in {\mathcal{H}}\}$ ($(e_i)$ is the canonical basis in $\ell_\infty^N$)
is a Hamel basis in $X$. Then, for any $n < N$, there exists a norm $1$
$x \in X$ such that $E(x, \Sigma_n({\mathcal{D}})) = 1$ (indeed,
$(e_1 + \ldots + e_N) \oplus 0$ has this property).

Another corollary deals with Hamel bases indexed by positive reals.

\begin{corollary}\label{real_line}
Suppose $\mathcal{D}=\{e_i\}_{i\in [0,\infty)}$ is a Hamel basis of a separable Banach space
$X$, and $A_n=\spn[\{e_i\}_{i\leq n}]$. Then there exists $n_0\in\N$ such that
$A_{n_0}$ is dense in $X$. In particular, $A_{n_0}$ is an infinite codimensional dense subspace of $X$.
\end{corollary}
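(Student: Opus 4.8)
The plan is to recognize $(A_n)$ as a linear approximation scheme and then invoke Proposition~\ref{general} directly. First I would verify that $(X,\{A_n\})$ genuinely is an approximation scheme in the sense of Definition~\ref{def_appr_scheme}. Each $A_n=\spn[\{e_i\}_{i\leq n}]$ is a linear subspace, so $A_n+A_n=A_n$ and $\lambda A_n\subset A_n$, and we may take $K(n)=n$, which also makes $K$ increasing. The inclusions $A_0\subset A_1\subset\cdots\subset X$ are strict: for $n<m$ the index set $(n,m]\cap[0,\infty)$ is nonempty, and the corresponding basis vectors lie in $A_m\setminus A_n$ by linear independence of a Hamel basis; likewise $A_n\neq X$ for every $n$, since $\mathcal{D}$ is a Hamel basis and $A_n$ omits every $e_i$ with $i>n$.

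The crucial observation is that $\bigcup_n A_n=X$ \emph{exactly}, not merely densely. Indeed, any $x\in X$ is a finite linear combination $x=\sum_{j=1}^{k}c_j e_{i_j}$ of Hamel basis vectors; choosing any integer $n\geq\max_j i_j$ places all the $e_{i_j}$ in $A_n$, so $x\in A_n$. Here one uses precisely that $[0,\infty)=\bigcup_n[0,n]$, so that every basis index is eventually captured by some $A_n$. In particular, condition (iii) of Definition~\ref{def_appr_scheme} holds trivially, and $(X,\{A_n\})$ is an approximation scheme.

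With the scheme in hand, the conclusion is immediate. Since $\bigcup_n\overline{A_n}\supseteq\bigcup_n A_n=X$, condition (a) of Proposition~\ref{general} holds, and hence so does condition (b): there exists $n_0\in\N$ with $\overline{A_{n_0}}=X$, that is, $A_{n_0}$ is dense in $X$. For the final assertion, $A_{n_0}$ is infinite codimensional as a vector subspace: because $\mathcal{D}$ is a Hamel basis, an algebraic complement of $A_{n_0}$ is $\spn[\{e_i\}_{i>n_0}]$, spanned by the infinitely (indeed uncountably) many linearly independent basis vectors with index exceeding $n_0$, so $\dim(X/A_{n_0})$ is infinite.

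I expect the only delicate point to be the correct reading of the phrase \emph{infinite codimensional}: since $A_{n_0}$ is dense, its topological closure is all of $X$, so the codimension must be understood algebraically, as $\dim(X/A_{n_0})$, rather than as the codimension of a closed subspace. The actual content of the statement is the counterintuitive existence of a dense $A_{n_0}$ already at a finite level; this is supplied entirely by the Baire-category argument packaged in Proposition~\ref{general}, and no new estimate is required. Separability of $X$ plays no role in the argument beyond making the hypothesis consistent, since a separable infinite-dimensional Banach space admits a Hamel basis indexable by $[0,\infty)$.
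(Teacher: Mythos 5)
Your proof is correct and takes exactly the route the paper intends: the corollary is stated without proof precisely because it follows from Proposition~\ref{general} once one observes that $(A_n)$ is a genuine approximation scheme (with $K(n)=n$) and that the Hamel basis property together with $[0,\infty)=\bigcup_n[0,n]$ gives $\bigcup_n A_n=X$ exactly, hence condition (a) and thus (b) of that proposition. Your supplementary remarks --- that the inclusions are strict by linear independence, that the infinite codimension is algebraic (witnessed by the complement $\spn[\{e_i\}_{i>n_0}]$), and that separability serves only to make the hypothesis consistent --- are all accurate.
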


Next, we present an example where the ``slowest possible'' rate of approximation $E(x,A_n)$
is precisely controlled.

\begin{theorem}\label{control_approximation}
Suppose $X$ is $L_\infty(0,1)$, $\ell_\infty$, or $C(\Delta)$
(where $\Delta$ is the ternary Cantor set). Suppose, furthermore,
that $1 \geq \varepsilon_1 \geq \varepsilon_2 \geq \ldots \geq 0$,
and $\lim_n \varepsilon_n = 0$. Then there exists an
approximation scheme $(A_n)$ in $X$ such that
the $\dens_n \leq \varepsilon_n$ for any $n$, and
there exists $x \in S(X)$ with the property that
$E(x,A_n) \geq \varepsilon_n/(1 + \varepsilon_n)\geq \frac{\varepsilon_n}{2}$ for any $n$.
\end{theorem}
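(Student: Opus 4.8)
The plan is to engineer the scheme so that a single fixed $x\in S(X)$ is, at every level, a \emph{near-worst} point for $A_n$: since the two demands force
$\frac{\varepsilon_n}{1+\varepsilon_n}\le E(x,A_n)\le \dens_n\le \varepsilon_n$, the set $A_n$ must be an $\varepsilon_n$-net cone for the whole sphere while $x$ sits almost exactly at the resolution of that net. First I would record where the constants come from. Working in the sup norm, take $\mathbf 1\in S(X)$ (the constant function $1$, i.e.\ the unit of the $C(K)$-structure) and the homogeneous gauge $\rho(g)=\mathrm{dist}(g,\R\mathbf 1)=\tfrac12\,\mathrm{osc}(g)$, and consider the ``defect cone'' $A=\{g:\rho(g)\ge c\|g\|\}$. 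A direct optimization shows $\dens(A)=E(\mathbf 1,A)=\frac{c}{2-c}$, attained precisely at $\mathbf 1$; choosing $c=c_n=\frac{2\varepsilon_n}{1+\varepsilon_n}$ then yields $\dens_n=\varepsilon_n$ and $E(\mathbf 1,A_n)=\varepsilon_n\ge\frac{\varepsilon_n}{1+\varepsilon_n}$. So the numerology is forced and easy, and the entire content of the theorem is structural.

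The main obstacle is the approximation-scheme axiom $A_n+A_n\subset A_{K(n)}$, which the naive defect cone violates catastrophically: oscillations cancel, so $\mathbf 1=(\tfrac12\mathbf 1+w)+(\tfrac12\mathbf 1-w)$ realizes $\mathbf 1$ as a sum of two highly oscillating (hence $A_n$-)elements, whence $\mathbf 1\in A_n+A_n\subset A_{K(n)}$ and $E(\mathbf 1,A_{K(n)})=0$, killing the witness bound at level $K(n)$. More generally, any ``thick'' cone — one cut out by a single homogeneous inequality and having nonempty interior — satisfies $A+A=X$, and a symmetric cone that is closed under addition is a subspace, of density $1$. Since $E(x,A_{K(n)})>0$ propagates to $x\notin\overline{A_m}$ for \emph{every} $m$, the witness must be kept out of $\overline{A_n+A_n}$ at all levels, and this can happen only if the cones are \emph{thin}, i.e.\ unions of proper closed subspaces rather than solid bodies. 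Reconciling thinness (for the sum axiom and additive protection of $x$) with $\varepsilon_n$-density \emph{in the sup norm} is, I expect, the crux.

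As a structural tool I would use the fact that $\ell_\infty$, $L_\infty(0,1)$ and $C(\Delta)$ are all $C(K)$-spaces over a totally disconnected $K$, hence carry an abundance of norm-one projections (band projections, conditional expectations onto sub-$\sigma$-algebras, restrictions to clopen partitions) and a self-similar splitting $X\cong X\oplus_\infty X$. A norm-one projection $P:X\to Y$ \emph{lifts} schemes: if $(B_n)$ is a scheme on $Y$ with $\dens_n(B_n)\le\varepsilon_n$ and a witness $y$ with $E(y,B_n)\ge\frac{\varepsilon_n}{1+\varepsilon_n}$, then $A_n:=P^{-1}(B_n)=B_n+\ker P$ works on $X$. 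Indeed $\|P\|=1$ gives $E(f,A_n)\le E(Pf,B_n)\le\dens_n(B_n)$ for $f\in S(X)$ (take the $\ker P$-part to be $f-Pf$), so $\dens_n(A_n)\le\varepsilon_n$; it gives $E(y,A_n)\ge\|P(y-b-h)\|=E(y,B_n)$; the sum axiom descends since $A_n+A_n=(B_n+B_n)+\ker P\subset B_{K(n)}+\ker P=A_{K(n)}$; and $Y+\ker P=X$ secures density of $\bigcup_nA_n$. This reduces the problem to building a \emph{thin} ``master'' scheme on a complemented piece, where $B_n=\bigcup_{S\in\mathcal F_n}W_S$ is a union of proper subspaces chosen so that $W_S+W_T=W_{S\cup T}$ makes sums raise the level cleanly, every unit vector is $\varepsilon_n$-captured by some $W_S$, and the witness keeps a definite part of its mass off every finite union of the indexing ``supports''.

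The delicate balancing point — and the step I anticipate will be hardest — is choosing the witness profile and the index family $\mathcal F_n$ simultaneously: $\varepsilon_n$-density forces the subspaces $W_S$ to absorb all but an $\varepsilon_n$-part of every spread-out unit vector, while protecting $x$ under iterated addition forbids any finite union $S_1\cup\cdots\cup S_k$ from ever capturing the witness better than the level-$K(\cdot)$ threshold. Here the sup norm fights back, since bounded-below directions (like $\mathbf 1$) are hard for subspaces to approximate, so the witness must be given a slowly decaying profile and the supports arranged so their unions never swallow its tail; one must then verify that the resulting $\dens_n$ lands in $[\frac{\varepsilon_n}{1+\varepsilon_n},\varepsilon_n]$. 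It is precisely at this point that the richness of these three spaces — enough clopen sets, norm-one projections, and room from self-similarity — becomes indispensable, whereas a ``generic'' Banach space offers no such slack.
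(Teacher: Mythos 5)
You have correctly identified the obstruction (solid cones such as $\{g:\mathrm{osc}(g)\ge 2c\|g\|\}$ cannot satisfy $A_n+A_n\subset A_{K(n)}$, by exactly the cancellation $\mathbf{1}=(\tfrac12\mathbf{1}+w)+(\tfrac12\mathbf{1}-w)$ you give), and your lifting lemma is sound: a norm-one projection $P$ onto $Y$ transports a scheme $(B_n)$ in $Y$ to $A_n=B_n+\ker P$ in $X$ with the density bound, the witness bound, and the sum axiom all preserved -- this is essentially how the paper itself passes from $\ell_\infty$ to arbitrary injective spaces in Corollary~\ref{many_spaces_controlled}. But these two observations are where your argument ends: your final paragraph explicitly defers the construction of the ``thin master scheme'' (the family $\mathcal{F}_n$, the subspaces $W_S$, and the witness) as the hardest step, and that construction is the entire content of Theorem~\ref{control_approximation}. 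A reduction to a complemented subspace is of no use until some concrete scheme with both estimates exists somewhere, so what you have is a correct diagnosis plus a transport principle, not a proof. (Also, in your discarded cone computation the constant should be $c/(1-c)$ rather than $c/(2-c)$, though nothing depends on it.)

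The missing construction is short and fits precisely the ``union of proper subspaces'' template you set up. For $X=L_\infty(0,1)$, choose integers $m(1)\le m(2)\le\ldots$ with $1/m(n)\le\varepsilon_n\le 1/(m(n)-1)$, and let $A_n$ be the set of functions taking at most $m(n)$ distinct values; equivalently $A_n=\bigcup\spn[\chi_{E_1},\ldots,\chi_{E_{m(n)}}]$, the union over measurable partitions $(E_i)$ of $(0,1)$. The sum axiom holds because a sum of two such functions takes at most $m(n)^2$ values, so $A_n+A_n\subset A_{K(n)}$ once $m(K(n))\ge m(n)^2$ (possible since $m(n)\to\infty$). The density bound $\dens_n\le 1/m(n)\le\varepsilon_n$ comes from quantizing the range: split $[-\|x\|,\|x\|]$ into $m(n)$ intervals of length $2/m(n)$ and round $x$ to their midpoints. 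The witness is $x(t)=2t-1$: if $a$ takes $k\le m(n)$ values $a_1<\cdots<a_k$ and $\|x-a\|=c<1/m(n)$, then the essential range $[-1,1]$ of $x$ is covered by $k$ intervals $[a_j-c,a_j+c]$ of total length $2kc<2$, which is impossible; hence $E(x,A_n)\ge 1/m(n)\ge\varepsilon_n/(1+\varepsilon_n)$. ($\ell_\infty$ is identical; for $C(\Delta)$ one uses functions constant on a clopen dyadic partition with at most $m(n)$ values, and a Cantor-ladder witness.) Note that this inverts the intuition driving your first paragraph: for the correct scheme, the flat function $\mathbf{1}$ is the \emph{best}-approximated unit vector (it lies in $A_1$), and the hard directions are the functions whose range fills $[-1,1]$, not the ``bounded-below'' ones. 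The quantity that makes your constraint bookkeeping work automatically is the value count: it is multiplicative under addition (which gives the sum axiom with thin sets) and it measures exactly the sup-norm resolution (which gives matching upper and lower bounds).
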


The above theorem is stated for real Banach spaces.
Similar results (with different constants) can also be obtained
in the complex case.

\begin{proof}
We start by presenting the construction of $(A_n)$ in the case of
$X = L_\infty(0,1)$. Find a sequence of positive integers
$m(1) \leq m(2) \leq \ldots$, such that, for any $n$,
$1/m(n) \leq \varepsilon_n \leq 1/(m(n)-1)$.
Define $A_n$ as the set of (equivalence classes of) functions
in $L_\infty(0,1)$ assuming no more than $m(n)$ different values.
In other words, $A_n$ consists of all functions
$a = \sum_{i=1}^{m(n)} \alpha_i \chi_{E_i}$, where
$(E_i)_{i=1}^{m(n)}$ is a partition of $(0,1)$ into
measurable sets.

(1) For a norm $1$ function $x \in L_\infty(0,1)$ and $n \in \N$, we shall
find $a \in A_n$ such that $\|x - a\| \leq 1/m(n)$. To this end,
let $s_j = (2j - 1)/m(n) - 1$ ($1 \leq j \leq m(n)$).
Let $I_1 = [-1,-1+2/m(n)]$, and $I_j = (-1 + 2(j-1)/m(n), -1 + 2j/m(n)]$
for $2 \leq j \leq m(n)$. Note that $s_j$ is the midpoint of $I_j$.
For $t \in (0,1)$, define $a(t) = s_j$ if $x(t) \in I_j$. Then $a$
is defined almost everywhere, $a \in A_n$, and
$\|x-a\| \leq 1/m(n) \leq \varepsilon_n$.

(2) We claim that the function $x(t) = 2t-1$ is such that $\|x-a\| \geq
1/m(n) \geq \varepsilon_n/(1+\varepsilon_n)$. Indeed, suppose
$a$ takes values $a_1 < a_2 < \ldots a_k$, with $k \leq m(n)$,
and $\|x - a\| = c < 1/m(n)$. Then
$x(t) \in \cup_{j=1}^k [a_j - c, a_j + c]$ almost everywhere, which,
in turn, implies $a_1 \leq -1 + c$, $a_j + 2c \geq a_{j+1}$ for
$1 \leq j \leq k-1$, and $a_k \geq 1 - c$. This, however, is impossible.

The case of $X = \ell_\infty$ is handled the same way, with
minor modifications. For $X = C(\Delta)$, consider {\it elementary
intervals} $T_{s,k} = [\sum_{j=1}^k s_j 3^{-j}, \sum_{j=1}^k s_j 3^{-j} + 3^{-k}]$
($k \in \N$, $s = (s_1, \ldots, s_k) \in \{0,2\}^k$).
Define $A_n$ to be the set of functions $a$ on $\Delta$ such that
(i) $a$ attains no more than $m(n)$ different values, and (ii) there
exists $k \in \N$ such that the restriction of $a$ to $T_{s,k} \cap \delta$
is constant for any $s \in \{0,2\}^k$. To show $E(x,a) \leq \|x\|/m(n)$
for any $x \in C(\Delta)$, take into account the uniform continuity of $x$.
A version of the ``Cantor ladder'' gives an example of $x$ with
$E(x,A_n) \geq 1/m(n) \geq \varepsilon_n/(1+\varepsilon_n)$ for any $n$.
\end{proof}

The theorem above implies that many Banach spaces contain an approximation
scheme with controlled rate of approximation.

\begin{corollary}\label{many_spaces_controlled}
Suppose $X$ is an infinite dimensional Banach space, and
either (1) $X$ is injective, or (2) $X$ is separable, and
contains an isomorphic copy of $C(\Delta)$.
Then there exists a constant $c > 0$ such that, for every sequence
$1 \geq \varepsilon_1 \geq \varepsilon_2 \geq \ldots \geq 0$,
satisfying $\lim_n \varepsilon_n = 0$, there exists an approximation
scheme $(A_n)$ with the property that $\dens_n \leq \varepsilon_n$ for any $n$,
and there exists $x \in S(X)$ with the property that
$E(x,A_n) \geq c \varepsilon_n$ for all $n$.
\end{corollary}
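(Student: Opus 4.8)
The plan is to reduce both cases to a single transfer principle: if $X$ contains a \emph{complemented} subspace $W$ isomorphic to one of the model spaces $Y\in\{\ell_\infty, C(\Delta)\}$ already treated in Theorem~\ref{control_approximation}, then the controlled scheme produced there can be pushed forward to all of $X$, at the cost of a constant governed by the isomorphism and projection constants. Concretely, write $X=W\oplus Z$ with a bounded projection $P\colon X\to W$, $\|P\|=\lambda$, and fix an isomorphism $J\colon Y\to W$ normalized so that $\|y\|\le\|Jy\|\le M\|y\|$; set $Q=J^{-1}P\colon X\to Y$, a bounded surjection with $\|Q\|\le\lambda$.

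The engine is the following construction. Given the target $(\varepsilon_n)$, apply Theorem~\ref{control_approximation} in $Y$ to the rescaled sequence $\delta_n=\varepsilon_n/(\lambda M)$ (note $\delta_n\le 1$), obtaining a scheme $(\widehat A_n)$ in $Y$ with $\dens_n(Y,\{\widehat A_n\})\le\delta_n$ and a vector $y_0\in S(Y)$ with $E(y_0,\widehat A_n)\ge\delta_n/2$. Now set $A_n=Q^{-1}(\widehat A_n)=J(\widehat A_n)+Z$. Since $Q$ is a bounded linear surjection and $(\widehat A_n)$ is an approximation scheme, $(A_n)$ inherits homogeneity, the inclusion $A_n+A_n\subseteq A_{K(n)}$ with the same $K$ (because $Q(A_n+A_n)\subseteq\widehat A_n+\widehat A_n\subseteq\widehat A_{K(n)}$), and density of $\cup_n A_n$; strictness of the inclusions is arranged by passing to a subsequence of indices on which $(\widehat A_n)$ is strictly increasing. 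For the upper estimate, given $x\in S(X)$ choose $\widehat a\in\widehat A_n$ with $\|Qx-\widehat a\|_Y\le\delta_n\|Qx\|_Y$ and put $a=J\widehat a+(I-P)x\in A_n$; then $x-a=J(Qx-\widehat a)$, so $\|x-a\|\le M\delta_n\|Qx\|_Y\le\lambda M\delta_n=\varepsilon_n$, giving $\dens_n(X,\{A_n\})\le\varepsilon_n$.

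For the lower estimate, take $x_0=Jy_0/\|Jy_0\|\in S(X)$, noting $\|Jy_0\|\in[1,M]$ and $Qx_0=y_0/\|Jy_0\|$. For any $a\in A_n$, writing $\widehat a=Qa\in\widehat A_n$ and using the homogeneity of $\widehat A_n$,
\[
\|x_0-a\|\ge\frac{1}{\lambda}\|Qx_0-\widehat a\|_Y=\frac{1}{\lambda\|Jy_0\|}\big\|y_0-\|Jy_0\|\,\widehat a\big\|_Y\ge\frac{1}{\lambda M}E(y_0,\widehat A_n)\ge\frac{\varepsilon_n}{2(\lambda M)^2}.
\]
Thus $E(x_0,A_n)\ge c\varepsilon_n$ with $c=1/(2(\lambda M)^2)$, which depends only on $X$, completing the transfer.

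It remains to supply the complemented model subspace in each case. In case (1) this is clean: every infinite-dimensional injective space contains an isomorphic copy of $\ell_\infty$, and since $\ell_\infty$ is injective, any such copy $W$ is the range of a bounded projection (extend $\mathrm{id}_W$ to $X$), placing us in the situation above with $Y=\ell_\infty$. In case (2) the corresponding input is that a separable $X$ containing a copy of $C(\Delta)$ admits the split $X=W\oplus Z$ with $W\cong C(\Delta)$, and \emph{this is the step I expect to be the main obstacle}. Indeed, $C(\Delta)$ is neither injective nor separably injective, so — unlike $c_0$ (Sobczyk) or the injective case — a copy of it need not be automatically complemented in a separable superspace; and one cannot simply fall back on $c_0$, since $c_0$ supports no bad vector at the rate $1/m(n)$ forced by its coordinatewise decay, so the richness of $C(\Delta)$ is genuinely needed for the matching lower bound. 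Producing, inside such an $X$, a density-scheme of rate $\varepsilon_n$ together with a matching bad vector — whether by establishing the required complemented split or by a more delicate construction — is precisely where the separability hypothesis must be used in an essential way, and is the point demanding the most care.
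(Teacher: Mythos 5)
Your transfer machinery is sound and is essentially the paper's own argument. In case (1) the paper likewise invokes \cite[Theorem 2.f.3]{LT1} to obtain a complemented subspace $Y\subset X$ together with an isomorphism $U:Y\to\ell_\infty$ (with contractive inverse), applies Theorem~\ref{control_approximation} in $\ell_\infty$ to the rescaled sequence $\delta_n=\varepsilon_n/(\|U\|\|P\|)$, defines $A_n=\ker P+\spn[U^{-1}(B_n)]$ — the same set as your $J(\widehat A_n)+Z$ — and proves precisely your two estimates, with the constant $c$ depending only on $\|U\|$ and $\|P\|$; your normalization $x_0=Jy_0/\|Jy_0\|$ is, if anything, slightly more careful than the paper's choice of $x_0=U^{-1}z_0$. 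So through case (1) your proposal and the paper coincide.

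The genuine gap is case (2), exactly the step you flagged as the obstacle, and the missing ingredient is a single classical fact: a theorem of Pe{\l}czy\'nski (this is what the paper cites, via Rosenthal's survey \cite{rosenthal}) asserting that every \emph{separable} Banach space containing an isomorphic copy of $C(\Delta)$ must contain a \emph{complemented} subspace isomorphic to $C(\Delta)$. Your objection — that $C(\Delta)$ is not separably injective, so a given copy need not be complemented — is correct but beside the point: the theorem does not complement the copy you are handed, it produces a different, complemented one, and separability of $X$ is exactly the hypothesis that makes this possible. With this fact, case (2) reduces to your transfer principle with $Y=C(\Delta)$ and your proof closes; without it, case (2) remains open in your write-up. (Your side remark that one cannot retreat to $c_0$ via Sobczyk's theorem is correct and consistent with Remark~\ref{control_appr_c_0}: in $c_0$ only the weaker conclusion, with lower bound $\delta_n\varepsilon_n$ for a prescribed null sequence $\{\delta_n\}$, is available, so the full strength of $C(\Delta)$ is indeed needed.)
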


\begin{proof}
(1) Suppose $X$ is injective. Then (see \cite[Theorem 2.f.3]{LT1}),
there exists a subspace $Y$ of $X$, a projection $P$ from $X$ onto $Y$,
and an isomorphism $U : Y\to \ell_\infty$ with contractive inverse.
By Theorem~\ref{control_approximation}, there exists an approximation
scheme $(B_n)$  in  $\ell_{\infty}$ such that $E(z,B_n) \leq \delta_n \|z\|$
for any $n$ and $z \in \ell_\infty$, where $\delta_n = \varepsilon_n/(\|U\| \|P\|)$.
Furthermore, there exists $z_0 \in \ell_\infty$ with $\|z_0\| = 1$, and
$E(z_0, B_n) \geq \delta_n/2$ for any $n$. We claim that the family
$A_n = \ker P+\spn[U^{-1}(B_n)]$ has the desired properties.

Note first that, for any $x \in X$,
$$
E(x,A_n) \leq E(Px, U^{-1}(B_n)) \leq E(UPx, B_n) \leq \delta_n \|UPx\| \leq
\delta_n \|U\| \|P\| \|x\| = \varepsilon_n \|x\| .
$$
On the other hand, find $z_0 \in S(\ell_\infty)$ such that $E(z_0, B_n) \geq
\delta_n/2$ for any $n$. Then $x_0 = U^{-1} z_0$ has norm not
exceeding $1$. To estimate $E(x_0, A_n)$, consider $b \in A_n$.
Then
$$
\|x_0 - b\| \geq \frac{1}{\|P\|} \|P(x_0 - b)\| = \frac{1}{\|P\|} \|x_0 - P b\| .
$$
Furthermore,
$$
\|x_0 - P b\| \geq \frac{1}{\|U\|} \|U x_0 - U P b\| =
\frac{1}{\|U\|} \|z_0 - U P b\| \geq
\frac{1}{\|U\|} E(z_0, B_n) \geq \frac{\varepsilon_n}{2\|U\|} .
$$
This leads to the desired estimates on $E(x_0, A_n)$.

The proof of (2) is very similar, except that now, we rely on the fact that
any separable Banach space containing a copy of $C(\Delta)$, must also contain
a complemented copy of the latter space (see e.g.~\cite{rosenthal}).
\end{proof}

\begin{remark}\label{control_appr_c_0}
A weaker version of Theorem~\ref{control_approximation}
holds in the space $c_0$. More precisely, suppose
$1 \geq \varepsilon_1 \geq \varepsilon_2 \geq \ldots \geq 0$,
and $\lim_n \varepsilon_n = 0$. Then there exists an approximation
scheme $(A_n)$ in $c_0$, with the following properties:
\begin{enumerate}
\item
$\varepsilon_n \geq \dens_n \geq \varepsilon_n/3$.
\item
For any non-increasing sequence $\{\delta_n\} \in c_0$ there exists $x \in c_0$ such that
$E(x,A_n) \geq \delta_n \varepsilon_n$ for every $n$.
\end{enumerate}
As the construction is similar to the one presented above,
we do not describe it here.
\end{remark}

\section{Connection with Central Theorems of Approximation Theory} \label{seccion_central}

In this section we examine the connections between the so called central theorems of approximation theory -- that is, the classical Jackson's (direct) and Bernstein's (inverse)
results for the speed of approximation by a given approximation scheme -- and Shapiro's
Theorem.

\begin{definition} Let $(X,\{A_n\})$ be an approximation scheme and let $Y$ be a quasi-semi-Banach space continuously and strictly included in the
quasi-Banach space $X$. We say that the approximation scheme $(X,\{A_n\})$ satisfies
({\it generalized}) {\it Jackson's Inequality} with respect to $Y$ if
there exists a sequence $(c_n)$ such that 
$\lim_{n\to\infty}c_n=+\infty$ and
\begin{equation}\label{J}
E(x,A_n)\leq \frac{1}{c_n}\|x\|_Y \text{ for all }x\in Y.
\end{equation}
The approximation scheme $(X,\{A_n\})$ is said to satisfy ({\it generalized})
{\it Bernstein's Inequality} with respect to $Y$ if
$\bigcup_{n=0}^{\infty}A_n\subseteq Y$, and  there exists a sequence $(b_n)$
such that 
$\lim_{n\to\infty}b_n=+\infty$ and
\begin{equation}\label{B}
\|x_n\|_Y\leq b_n \|x_n\|_X \text{ for all }x_n\in A_n.
\end{equation}
(The classical definition for these inequalities appears when $b_n=c_n=Cn^r$).
\end{definition}

Jackson's Inequality does not imply Shapiro's Theorem.
On the contrary, Jackson's Inequality is satisfied
for a sufficiently large space $Y\subset X$ if and only if Shapiro's Theorem fails.


\begin{proposition}\label{shapiro_jackson} For an approximation scheme  $(X,\{A_n\})$, the following are equivalent:
\begin{itemize}
\item[$(i)$] $(X,\{A_n\})$ does not satisfy Shapiro's Theorem.
\item[$(ii)$] $(A_n)$ satisfies Jackson's Inequality for some finite codimensional subspace 
$Y \subset X$.
\item[$(iii)$] $(A_n)$ satisfies Jackson's Inequality for every subspace 
$Y \subset X$.
\end{itemize}
 In particular, if $(X,\{A_n\})$ satisfies  Shapiro's Theorem and $Y$ is a quasi-normed subspace of $X$ such that $(A_n)$ satisfies Jackson's Inequality with respect to $Y$, then $Y$ must be of infinite codimension.
\end{proposition}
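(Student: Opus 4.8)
The plan is to route everything through the quantitative reformulation of Shapiro's Theorem supplied by Theorem~\ref{maintheorem}. By that result, $(X,\{A_n\})$ fails Shapiro's Theorem precisely when there is a sequence $\{\varepsilon_n\}\searrow 0$ with $E(x,A_n)\le \varepsilon_n\|x\|$ for all $x\in X$ and all $n$; equivalently, invoking the multiplicativity of the density sequence from Proposition~\ref{multiplicative} (which forces a single value $\dens_{n_0}<1$ to propagate, via $\dens_{K(n)}\le\dens_n^2$, to $\dens_n\to 0$), condition $(i)$ holds if and only if $\dens_n\to 0$. I would use this equivalence freely.

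The implication $(i)\Rightarrow(iii)$ is then immediate: if $E(x,A_n)\le \varepsilon_n\|x\|_X$ for all $x\in X$, then any continuously included subspace $Y$ obeys $\|\cdot\|_X\le C\|\cdot\|_Y$, whence $E(x,A_n)\le C\varepsilon_n\|x\|_Y$ for $x\in Y$; taking $c_n=(C\varepsilon_n)^{-1}\to\infty$ yields Jackson's Inequality for $Y$. The implication $(iii)\Rightarrow(ii)$ is a mere specialization, since a finite codimensional subspace is in particular a subspace.

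The substance lies in $(ii)\Rightarrow(i)$, and the hard part is comparing the two quasi-norms on $Y$. The key lemma I would establish first is: \emph{if $Y$ is a finite codimensional, continuously included quasi-semi-Banach subspace of $X$, then $\|\cdot\|_Y$ is equivalent to the restriction of $\|\cdot\|_X$, and $Y$ is closed with a finite dimensional topological complement $F$, so that $X=Y\oplus F$.} Indeed, continuity of the inclusion forces the semi-norm to be a genuine norm (a null vector of $\|\cdot\|_Y$ has $\|\cdot\|_X=0$, hence is $0$), and the algebraic direct sum map $(y,f)\mapsto y+f$ from the complete space $(Y,\|\cdot\|_Y)\oplus F$ onto $X$ is a continuous bijection, hence an isomorphism by the open mapping theorem for $F$-spaces. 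This gives bounded projections: for $\|x\|_X\le 1$ we may write $x=y+f$ with $\|y\|_Y\le M$ and $\|f\|_X\le M$. I expect this norm-comparison step to carry the weight of the whole argument.

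Granting the lemma, I would bound the density sequence directly. The quasi-triangle inequality gives $E(x,A_n)\le C_X\bigl(E(y,A_n)+E(f,A_n)\bigr)$. The first term satisfies $E(y,A_n)\le \|y\|_Y\,E(B_Y,A_n)\le M/c_n\to 0$ by Jackson's Inequality, where $B_Y$ denotes the unit ball of $Y$. For the finite dimensional term I would show $E(MB_F,A_n)\to 0$ by a Dini-type argument: the maps $x\mapsto E(x,A_n)$ are continuous and non-increasing in $n$, and they tend to $0$ pointwise on the compact set $B_F$ because $\overline{\cup_m A_m}=X$, so by a finite subcover they tend to $0$ uniformly on $B_F$. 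Combining the two estimates yields $\dens_n=E(S(X),A_n)\to 0$, i.e.\ $(i)$. Finally, the concluding ``in particular'' assertion is precisely the contrapositive of $(ii)\Rightarrow(i)$: if Shapiro's Theorem holds while Jackson's Inequality holds with respect to $Y$, then $Y$ cannot be finite codimensional.
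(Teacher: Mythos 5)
Your proof is correct, but the crucial implication $(ii)\Rightarrow(i)$ is carried out by a genuinely different mechanism than the paper's. The paper works on the quotient side: it sets $N=\dim X/\overline{Y}$, lifts a normalized basis of $X/\overline{Y}$ to vectors $x_1,\dots,x_N\in X$, writes each $x\in S(X)$ as $y+\sum_i\alpha_i x_i$ with $|\alpha_i|\le C_1$ and $\|y\|_X\le C_2$ (see \eqref{eq:represent}), approximates $y$ using Jackson's inequality \eqref{J} and each $x_i$ using density of $\bigcup_n A_n$, and assembles the approximant inside $A_m$ with $m=K(K(\cdots(k)\cdots))$ ($N+1$ times); Corollary~\ref{coro} then finishes. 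You replace all of this with an open-mapping-theorem lemma: completeness of $Y$, continuity of the inclusion and finite algebraic codimension force $X=Y\oplus F$ topologically, with $\|\cdot\|_Y$ equivalent to $\|\cdot\|_X$ on $Y$ and bounded projections; Jackson then controls the $Y$-component uniformly, compactness of the ball of $F$ controls the finite-dimensional component, and $\dens_n\to 0$ follows (your reduction of $(i)$ to $\dens_n\to 0$ via Theorem~\ref{maintheorem} and Proposition~\ref{multiplicative} is sound). What your route buys is precisely the step the paper glosses over: when the paper invokes \eqref{J} it needs $E(y,A_n)\le c\|y\|/(2NC_2C_X^N)$ with $\|y\|$ bounded by the \emph{$X$-norm} estimate of \eqref{eq:represent}, whereas \eqref{J} only controls $E(y,A_n)$ in terms of $\|y\|_Y$ (and only for $y\in Y$, not $\overline{Y}$); this is exactly the norm comparison your lemma supplies. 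Your lemma also shows that the case $\overline{Y}=X$, which the paper dismisses with ``there is nothing to prove,'' is in fact vacuous, since a complete, continuously and strictly included subspace of finite algebraic codimension is automatically closed. What the paper's route buys is that it never invokes the open mapping theorem, working instead with explicit quotient-space data.

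Two small repairs to your write-up. First, the inequality $E(x,A_n)\le C_X\bigl(E(y,A_n)+E(f,A_n)\bigr)$ is not quite right as stated: approximating $y$ and $f$ separately by elements of $A_n$ produces an element of $A_{K(n)}$, not of $A_n$, so what you actually obtain is $E(x,A_{K(n)})\le C_X\bigl(E(y,A_n)+E(f,A_n)\bigr)$ and hence $\dens_{K(n)}\to 0$; since the density sequence is non-increasing, this still yields $\dens_m\to 0$, so the conclusion is unharmed. Second, the Dini-type version of your compactness step presupposes continuity of $E(\cdot,A_n)$, which for a general quasi-norm is not automatic (quasi-norms need not be continuous for their own topology); your alternative finite-subcover argument needs only the quasi-triangle inequality, so that is the one to keep (or first pass to an Aoki--Rolewicz $p$-norm, as licensed in Section~\ref{shapiro}).
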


\begin{proof}
$(iii) \Rightarrow (ii)$ is trivial.

$(i) \Rightarrow (iii)$: by Corollary~\ref{coro}, there exists a sequence
$\{\varepsilon_n\} \searrow 0$ such that  $E(x,A_n) \leq \varepsilon_n \|x\|_X$
for any $x \in X$. The space $Y$ is continuously embedded into $X$, hence
there exists a constant $C$ such that  $\|y\|_X \leq C \|y\|_Y$ for any $y \in Y$.
Therefore, $E(y,A_n) \leq C \varepsilon_n \|y\|_Y$ for any $y \in Y$,
which is \eqref{J} with $c_n = (C \varepsilon_n)^{-1}$.

$(ii) \Rightarrow (i)$: by Corollary~\ref{coro},
it suffices to find $m \in \N$ for which $E(S(X),A_m) < 1$.
Let $N = \dim X/\overline{Y}$.
If $N = 0$ (that is, $X = \overline{Y}$), there is nothing to prove.
Otherwise, consider the quotient map $q : X \to E = X /\overline{Y}$.
Find $x_1, \ldots, x_N$ in $X$, such that the vectors $e_i = qx_i$
form a normalized basis in $E$, and $\|x_i\| < 2$ for every $i$. Then there exists
a constant $C_1 \geq 1$ such that $C_1^{-1} \max_{1 \leq i \leq N} |\alpha_i| \leq
\|\sum_{1 \leq i \leq N} \alpha_i q x_i\|$ for any $N$-tuple of scalars $(\alpha_i)$.

Recall the existence of a constant $C_X \geq 1$ such that $\|x+y\| \leq C_X(\|x\| + \|y\|)$ for any $x, y \in X$.
By induction,
\begin{equation}
\|\sum_{j=1}^m z_j\| \leq C_X^{m-1} \sum_{j=1}^m \|z_j\|
\label{eq:sum_of_m}
\end{equation}
for any $z_1, \ldots, z_m \in X$.
We claim that any $x \in S(X)$ has a representation
\begin{equation}
x = y + \sum_{i=1}^N \alpha_i x_i, \, \,
{\mathrm{with}} \, \, \max_{1 \leq i \leq N} |\alpha_i| \leq C_1 ,
\, \, {\mathrm{and}} \, \, \|y\| \leq C_2 = 2 C_1 C_X^N .
\label{eq:represent}
\end{equation}
Indeed, $\|qx\| \leq 1$, hence one can write $qx = \sum_{i=1}^N \alpha_i e_i$,
with $(\alpha_i)$ as above. Then $y = x - \sum_{i=1}^N \alpha_i x_i \in Y$,
and \eqref{eq:sum_of_m} yields the desired estimate on the norm of $y$.

Pick $c \in (0,1)$, and show the existence of $m \in \N$ for which
$E(S(X),A_m) < c$. Start by using \eqref{J} to find $n \in \N$ such that
$E(y, A_n) \leq c \|y\|/(2 N C_2 C_X^N)$ holds for every $y \in Y$.
Then find $k \geq n$ such that, for every $i \in \{1, \ldots, N\}$, there exists
$a_i \in A_k$ satisfying $\|x_i - a_i\| < c/(2 N C_1 C_X^N)$.
We claim that $E(S(X),A_m) \leq c$, where $m = K(K(\ldots(k)\ldots))$ ($N+1$ times).
Indeed, any $x \in S(X)$ can be represented as in \eqref{eq:represent}.
Find $a_0 \in A_k$ satisfying
$\|y-a_0\| < c \|y\|/(2 N C_2 C_X^N) \leq c/(2 N C_X^N)$. Then
$a = a_0 + \sum_{i=1}^N \alpha_i a_i \in A_m$, and, by \eqref{eq:sum_of_m},
\begin{align*}
\|x - a\|
&
=
\|(y + \sum_{i=1}^N \alpha_i x_i) - (a_0 + \sum_{i=1}^N \alpha_i a_i)\| \leq
C_X^N \big(\|y - a_0\| + \sum_{i=1}^N |\alpha_i| \|a_i\|\big)
\\
&
<
C_X^N \Big( \frac{c}{2 N C_X^N} + N C_1 \frac{c}{2 N C_1 C_X^N} \Big) < c .
\end{align*}
Thus, $E(S(X),A_m) \leq c < 1$. An application of Corollary~\ref{coro}
completes the proof.
\end{proof}


Now we concentrate on Bernstein's Inequality.

\begin{theorem}\label{central} Let $(X,\{A_n\})$ be an approximation scheme that
satisfies Bernstein's Inequality for a certain proper subspace Y of X.
Then $(X,\{A_n\})$ satisfies Shapiro's Theorem.
\end{theorem}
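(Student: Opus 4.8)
The plan is to argue by contradiction: assume $(X,\{A_n\})$ fails Shapiro's Theorem and deduce that $Y$ must coincide with $X$, contradicting the strict inclusion $Y\subsetneq X$. By Theorem~\ref{maintheorem}(c) (equivalently, by Corollary~\ref{coro}), the failure of Shapiro's Theorem produces a sequence $\{\varepsilon_n\}\searrow 0$ with $E(x,A_n)\le \varepsilon_n\|x\|_X$ for every $x\in X$ and every $n$. Since $\varepsilon_n\to 0$, I would fix a \emph{single} index $N$ with $\theta:=2\varepsilon_N<1$. The only two estimates I intend to use are then both anchored at this one level: the approximation bound $E(x,A_N)\le \varepsilon_N\|x\|_X$ for all $x\in X$, and Bernstein's inequality at the same level, $\|a\|_Y\le b_N\|a\|_X$ for all $a\in A_N$.

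The heart of the argument is a geometrically convergent greedy expansion performed entirely at level $N$. Given $x\in X$, I would choose $a^{(1)}\in A_N$ with $\|x-a^{(1)}\|_X\le \theta\|x\|_X$, set $r^{(1)}=x-a^{(1)}$, and iterate: having defined $r^{(k-1)}$, pick $a^{(k)}\in A_N$ with $\|r^{(k-1)}-a^{(k)}\|_X\le \theta\|r^{(k-1)}\|_X$ and put $r^{(k)}=r^{(k-1)}-a^{(k)}$. Then $\|r^{(k)}\|_X\le \theta^{\,k}\|x\|_X$, so the partial sums $S_K=\sum_{k=1}^K a^{(k)}=x-r^{(K)}$ converge to $x$ in $X$. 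Writing $a^{(k)}=r^{(k-1)}-r^{(k)}$ and invoking a constant $C_X$ with $\|u+v\|_X\le C_X(\|u\|_X+\|v\|_X)$, I get $\|a^{(k)}\|_X\le 2C_X\theta^{\,k-1}\|x\|_X$; crucially, Bernstein's inequality \emph{at the fixed level} $N$ then yields $\|a^{(k)}\|_Y\le b_N\|a^{(k)}\|_X\le 2C_X b_N\theta^{\,k-1}\|x\|_X$.

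Since $\theta<1$, the series $\sum_k a^{(k)}$ is absolutely convergent in $Y$: after renorming $Y$ by an equivalent $p$-quasinorm (Aoki--Rolewicz), the $p$-powers $\|a^{(k)}\|_Y^{\,p}$ sum to a convergent geometric series. As $Y$ is complete, $S_K$ converges in $Y$ to some $y\in Y$; because the inclusion $Y\hookrightarrow X$ is continuous, $S_K\to y$ in $X$ as well, and comparison with $S_K\to x$ forces $x=y\in Y$. Since $x\in X$ was arbitrary, $Y=X$, the desired contradiction. I expect the only genuinely delicate point to be the one that first looks like the main obstacle: the Bernstein constants $b_n$ tend to $+\infty$, so any telescoping scheme that climbs through levels $A_n$ with $n\to\infty$ would be overwhelmed by the growth of $b_n$. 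The device that removes this obstacle is to refuse to let the level grow: iterating the approximation at the single fixed level $N$ replaces every dangerous factor $b_n$ by the one constant $b_N$, and the geometric decay of the residuals (guaranteed by $\varepsilon_N<\tfrac12$) supplies convergence. The hypotheses $\bigcup_n A_n\subseteq Y$ and the completeness of $Y$ enter only to ensure that each $a^{(k)}$ lies in $Y$ and that the $Y$-Cauchy sequence $(S_K)$ has a limit there.
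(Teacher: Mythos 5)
Your proof is correct and takes essentially the same route as the paper's: both arguments fix a single level $N$ at which every element can be approximated within a factor $\theta<1$ of its norm (obtained from Theorem~\ref{maintheorem}(c)/Corollary~\ref{coro}), iterate the greedy decomposition at that \emph{fixed} level so that only the one Bernstein constant $b_N$ ever enters, and then use the completeness of $Y$ together with the continuity of the embedding $Y\hookrightarrow X$ to force the limit into $Y$, contradicting the properness of $Y$. The only cosmetic differences are that the paper renorms both $X$ and $Y$ with a common exponent $p$ and derives the contradiction from a chosen $x\in B(X)\setminus Y$, whereas you renorm only $Y$ via Aoki--Rolewicz and conclude $Y=X$ for arbitrary $x$.
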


\begin{proof} 
We show that if the approximation scheme does not satisfy Shapiro's Theorem,
and it satisfies Bernstein's Inequality with respect to a certain space $Y$,
then the norms of $Y$ and $X$ are equivalent. So, let us assume that $(X,\{A_n\})$
satisfies \eqref{B} for a certain sequence of positive real numbers $(b_n)$,
and a quasi-Banach space $Y$ continuously included in $X$.
By renorming $X$ and $Y$ if necessary (see Section~\ref{shapiro}),
we may assume the existence
of $p_X, p_Y \in (0,1]$ such that:
(i) for any $x_1, x_2 \in X$,
$\|x_1 + x_2\|_X^{p_X} \leq \|x_1\|_X^{p_X} + \|x_2\|_X^{p_X}$,
(ii) for any $y_1, y_2 \in Y$,
$\|y_1 + y_2\|_Y^{p_Y} \leq \|y_1\|_Y^{p_Y} + \|y_2\|_Y^{p_Y}$.
Letting $p = \min \{p_X, p_Y\}$,
we see that, for any $x_1, x_2 \in X$ and $y_1, y_2 \in Y$,
$\|x_1 + x_2\|_X^p \leq \|x_1\|_X^p + \|x_2\|_X^p$, and
$\|y_1 + y_2\|_Y^p \leq \|y_1\|_Y^p + \|y_2\|_Y^p$.
For the sake of brevity, we shall denote $\| \cdot \|_X$
simply by $\| \cdot \|$.

If $(X,\{A_n\})$ does not satisfy Shapiro's Theorem, Corollary \ref{coro} guarantees
the existence of $n_0 \in \N$ for which $E(S(X),A_{n_0}) < (1/2)^{1/p}$.
Therefore, for any $x \in X$, there exist $a \in A_{n_0}$ and $x^\prime \in X$
such that $x = a + x^\prime$, and $\|x^\prime\| < 2^{-1/p} \|x\|$.

Now pick $x \in B(X) \backslash Y$. By the above, we can find $a_0 \in A_{n_0}$
and $x_0 \in X$ such that  $x = a_0 + x_0$, with $\|x_0\| < 2^{-1/p}$.
Furthermore, we can write $x_0 = a_1 + x_1$, with $a_1 \in A_{n_0}$,
and $\|x_1\| < 2^{-2/p}$. Proceeding further in the same manner,
we write, for each $m$, $x = a_0 + a_1 + \ldots + a_m + x_m$,
with $a_0, a_1, \ldots \in A_{n_0}$, and $\|x_m\| < 2^{-m/p}$.
Note that $a_m = x_{m-1} - x_m$, hence
$\|a_m\| \leq (\|x_{m-1}\|^p + \|x_m\|^p )^{1/p} < 3^{1/p} 2^{-m/p}$.

Let $z_m = x - x_m$. As $\lim_m \|x_m\| = 0$, the sequence $(z_m)$ converges
to $x$ in the space $X$. We shall show that $(z_m)$ is a Cauchy sequence in $Y$.
Indeed, for $n > m$, $z_n - z_m = \sum_{k=m+1}^n a_k$. Furthermore,
$\|a_k\|_Y \leq b_{n_0} \|a_k\|$, for each $k$. Therefore,
$$
\|z_n - z_m\|_Y^p \leq \sum_{k=m+1}^n \|a_k\|_Y^p
\leq b_{n_0}^p \sum_{k=m+1}^n \|a_k\|^p <
3 b_{n_0}^p \sum_{k=m+1}^n 2^{-(k+1)} < 3 b_{n_0}^p 2^{-m} .
$$
As $Y$ is a subset of $X$, the sequence $(z_m)$ must
converge to $x$ in the space $Y$. This leads to a contradiction,
since $x$ was selected in such a way that $x \notin Y$.
\end{proof} 

As a corollary, we conclude that the property of satisfying Shapiro's Theorem is,
under certain conditions, inherited by subspaces.

\begin{corollary} Suppose the approximation scheme $(X,\{A_n\})$ satisfies
Bernstein's inequality for a proper subspace $Y$ of $X$. Suppose, furthermore,
that $Z$ is another quasi-normed subspace of $X$, properly containing $Y$, and
such that $\bigcup_{n=0}^{\infty} A_n$ is dense in $Z$
(in the topology determined by the norm of $Z$). Then
$(Z,\{A_n\})$ satisfies Shapiro's Theorem.
\end{corollary}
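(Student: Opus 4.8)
The plan is to obtain the conclusion directly from Theorem~\ref{central}, applied to the approximation scheme $(Z,\{A_n\})$ together with its proper subspace $Y$; the whole content of the argument is to check that the hypotheses of that theorem persist when the ambient space is shrunk from $X$ to $Z$.

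First I would confirm that $(Z,\{A_n\})$ is genuinely an approximation scheme. Bernstein's inequality on $X$ forces $\bigcup_n A_n \subseteq Y \subseteq Z$, so each $A_n$ lies in $Z$; the homogeneity $\lambda A_n \subseteq A_n$, the inclusion $A_n + A_n \subseteq A_{K(n)}$, and the strictness of the chain $A_0 \subsetneq A_1 \subsetneq \cdots$ are intrinsic to the sets and are inherited unchanged, while the density of $\bigcup_n A_n$ in $Z$ is exactly the hypothesis. Next, writing $C_Z$ for the constant of the continuous embedding $Z \hookrightarrow X$ (that is, $\|w\|_X \leq C_Z \|w\|_Z$ for $w \in Z$), I would transfer Bernstein's inequality: for every $x_n \in A_n$,
\[
\|x_n\|_Y \leq b_n \|x_n\|_X \leq C_Z b_n \|x_n\|_Z,
\]
so \eqref{B} holds for $(Z,\{A_n\})$ with the constants $\tilde b_n = C_Z b_n \to +\infty$. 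Together with $\bigcup_n A_n \subseteq Y$ and $Y \subsetneq Z$, this says that $(Z,\{A_n\})$ satisfies Bernstein's inequality with respect to $Y$.

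The one point that needs care is the continuity of the inclusion $Y \hookrightarrow Z$ demanded by the definition of Bernstein's inequality, and I expect this to be the main (though mild) obstacle. I would resolve it with the closed graph theorem: $Y$ and $Z$ are complete, both embed continuously into $X$, and the graph of $Y \hookrightarrow Z$ is closed since convergence in $Y$ or in $Z$ implies convergence in $X$, where limits are unique. With this in hand, Theorem~\ref{central} applies to $(Z,\{A_n\})$ and yields that $(Z,\{A_n\})$ satisfies Shapiro's Theorem. Alternatively, one can sidestep the continuity of $Y \hookrightarrow Z$ altogether by repeating the proof of Theorem~\ref{central} verbatim with $Z$ in place of $X$: the only step invoking the subspace embedding is the final one, where a sequence that is Cauchy in $Y$ and convergent in $Z$ is identified with its $Z$-limit, and this identification can be carried out inside $X$ using the continuous inclusions $Y \hookrightarrow X$ and $Z \hookrightarrow X$.
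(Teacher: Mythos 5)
Your proposal is correct and takes essentially the same route as the paper: the corollary is stated there without proof precisely because it is the immediate application of Theorem~\ref{central} to the approximation scheme $(Z,\{A_n\})$ with the proper subspace $Y$, after noting that $\bigcup_n A_n\subseteq Y\subsetneq Z$, that density of $\bigcup_n A_n$ in $Z$ is hypothesized, and that \eqref{B} transfers via the continuous embedding $Z\hookrightarrow X$. Your additional care about the continuity of $Y\hookrightarrow Z$ (settled by the closed graph theorem, or sidestepped by rerunning the proof of Theorem~\ref{central} and identifying limits inside $X$) fills in a technicality the paper leaves implicit, and both of your fixes are sound.
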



Section~\ref{examples} contains some examples where the fact that a given
approximation scheme satisfies Shapiro's Theorem is deduced from a
Bernstein's Inequality.


Below we introduce the so called ``smoothness spaces''
(or ``abstract approximation spaces'').
If $(A_n)$ is an approximation scheme in $X$, we define, for
$0 < q \leq \infty$ and $0 < r < \infty$,
\begin{equation}\label{approx_space_def}
A_q^r = A_q^r(X, \{A_n\}) =
\{x\in X:|x|_{A_q^r}=\|\{(n+1)^{r-1/q}E(x,A_n)\}\|_{\ell^q}<\infty\} .
\end{equation}
If $A_n+A_n\subseteq A_{cn}$ for a constant $c > 1$, then $A_q^r$ is a quasi-Banach space
\cite{almiraluther2}.
It was shown by DeVore and Popov (see \cite[Th. 9.3, p. 236]{devore}) that
$A_q^r$ satisfies Bernstein's Inequality: $|x|_{A_q^r} \leq Cn^r\|x\|_X$ for all $x\in A_n$.

To apply Theorem \ref{central} with $Y=A_q^r$, we need $Y$ to be a proper subspace of $X$,
which does not always hold.
For instance, suppose ${\mathcal{D}}$ is a dictionary in a Banach space $X$,
which is $1/2$-dense in $X$. Let $A_n = \Sigma_n({\mathcal{D}})$.
Clearly, $A_n + A_n \subset A_{2n}$. By Corollary~\ref{n_term-decay} and
the discussion following it, $E(x, A_n) \leq 2^{-n} \|x\|$ for any $x \in X$.
Therefore, $A_q^r=X$ for any $q,r$ (with equivalent norms).


On the other hand, there are many classical results in Approximation Theory devoted to the characterization of the approximation spaces $A_q^r$ as smoothness spaces of functions (Besov, etc.), and these are always proper subspaces of the ground space $X$.
In this setting, one can apply Theorem~\ref{central} to show that
the corresponding approximation scheme satisfies Shapiro's Theorem.
The same applies to the situation when $X$ is a space of operators, and
membership in $A_q^r$ reflects the ``degree of compactness''
(see e.g.~\cite{Pie}).

Below, we show that the spaces $A_q^r$ form a scale of subspaces
of $X$ if the approximation scheme $(A_n)$ satisfies
Shapiro's Theorem. We also present other results on the spaces $A_q^r$.

\begin{corollary} \label{coro2}
Let $(X,\{A_n\})$ be an approximation scheme such that $A_n+A_n\subseteq A_{cn}$
for a certain constant $c>1$. Then the following are equivalent:
 \begin{itemize}
 \item[$(a)$] $(X,\{A_n\})$ satisfies Bernstein's Inequality for
some proper subspace $Y$ of $X$.
 \item[$(b)$] $(X,\{A_n\})$ satisfies Shapiro's theorem.
 \item[$(c)$] For every $r > 0$ there exists $x\in X$ such that
$E(x,A_n)\not=\mathbf{O}(n^{-r})$.
 \item[$(d)$] For a certain $r > 0$, there exists $x\in X$ such that $E(x,A_n)\not=\mathbf{O}(n^{-r})$.
 \item[$(e)$] For any $q \in (0,\infty]$ and $r \in (0,\infty)$,
$A_q^r$ is a proper subspace of $X$.
 \item[$(f)$] For some $q \in (0,\infty]$ and $r \in (0,\infty)$,
$A_q^r$ is a proper subspace of $X$.
 \end{itemize}
Moreover, if any of these conditions is satisfied, then  for every $q, r > 0$, $A_q^r$ is an infinite codimensional subspace of $X$.
 \end{corollary}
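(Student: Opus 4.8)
The plan is to establish the equivalence of (a)--(d) and (f) via the cycle $(a)\Rightarrow(b)\Rightarrow(c)\Rightarrow(d)\Rightarrow(f)\Rightarrow(a)$, and then to fold in (e) by proving $(b)\Rightarrow(e)$ (the implication $(e)\Rightarrow(f)$ being trivial). Several of these steps are short. For $(a)\Rightarrow(b)$ I would simply invoke Theorem~\ref{central}. For $(b)\Rightarrow(c)$, given $r>0$ I would apply Shapiro's Theorem to the sequence $\varepsilon_n=(n+1)^{-r}\searrow 0$ to produce $x$ with $E(x,A_n)\neq\mathbf{O}((n+1)^{-r})=\mathbf{O}(n^{-r})$. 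The implication $(c)\Rightarrow(d)$ is immediate. For $(d)\Rightarrow(f)$ I would observe that $x\in A_\infty^r$ is equivalent to $E(x,A_n)=\mathbf{O}((n+1)^{-r})$; thus the element furnished by (d) witnesses $A_\infty^r\neq X$, which is (f) with $q=\infty$ and the same $r$. For $(f)\Rightarrow(a)$, I would take $Y=A_q^r$: it is a quasi-Banach space (by \cite{almiraluther2}, using $A_n+A_n\subseteq A_{cn}$), continuously and strictly included in $X$ (strictly, by (f)), contains $\bigcup_n A_n$, and satisfies Bernstein's Inequality by the DeVore--Popov estimate \cite[Th.~9.3]{devore}; hence (a) holds.

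The technical heart is a Jackson-type estimate for $A_q^r$ valid for all $q\in(0,\infty]$ and $r\in(0,\infty)$, which will drive both $(b)\Rightarrow(e)$ and the final codimension claim. Since $E(x,A_n)$ is non-increasing in $n$, for $q<\infty$ I would bound
\[
|x|_{A_q^r}^q=\sum_{n\geq 0}(n+1)^{rq-1}E(x,A_n)^q\geq E(x,A_N)^q\sum_{n=0}^{N}(n+1)^{rq-1}\geq c_{r,q}(N+1)^{rq}E(x,A_N)^q ,
\]
so that $E(x,A_N)\leq C(N+1)^{-r}|x|_{A_q^r}$ for every $x\in A_q^r$; the case $q=\infty$ reduces to the trivial bound $E(x,A_N)\leq (N+1)^{-r}|x|_{A_\infty^r}$. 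This is precisely Jackson's Inequality for $Y=A_q^r$, with $c_n\asymp (n+1)^r\to\infty$. The point to get right here is that one must sum the weights $(n+1)^{rq-1}$ rather than keeping a single term, so that the estimate survives even when $r-1/q<0$.

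With this in hand, $(b)\Rightarrow(e)$ goes as follows. Suppose Shapiro's Theorem holds but $A_q^r=X$ for some $q,r$. Since $A_q^r$ is complete and continuously included in $X$, the open mapping theorem for $F$-spaces forces $|x|_{A_q^r}\leq C'\|x\|_X$; combined with the Jackson estimate this yields $E(x,A_n)\leq CC'(n+1)^{-r}\|x\|_X$ for all $x\in X$, i.e.\ a sequence $\{\varepsilon_n\}\searrow 0$ with $E(x,A_n)\leq\varepsilon_n\|x\|$ for all $x$ and $n$, contradicting Theorem~\ref{maintheorem}(c). Hence $A_q^r\neq X$ for every $q,r$, which is (e). Finally, for the moreover statement I would assume the (now equivalent) conditions hold, so in particular Shapiro's Theorem holds and, by the Jackson estimate above, $(A_n)$ satisfies Jackson's Inequality with respect to $Y=A_q^r$; the last assertion of Proposition~\ref{shapiro_jackson} then gives that $A_q^r$ has infinite codimension. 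I expect the only delicate points to be the verification that $A_q^r$ is continuously and strictly embedded in $X$ (needed to apply Theorem~\ref{central} and Proposition~\ref{shapiro_jackson}, and relying on the $n=0$ term of the quasi-norm) and the correct handling of the weight-summation in the Jackson estimate across all parameter ranges.
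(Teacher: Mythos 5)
Your proof is correct, and it rests on the same pillars as the paper's: Theorem~\ref{central} for $(a)\Rightarrow(b)$, the DeVore--Popov Bernstein inequality $|x|_{A_q^r}\leq Cn^r\|x\|_X$ (for $x\in A_n$) for returning to $(a)$, and Proposition~\ref{shapiro_jackson} for the final codimension claim (your treatment of which is essentially word-for-word the paper's). The differences are in the routing. The paper closes the main loop as $(d)\Rightarrow(a)$ directly, taking $Y=A_\infty^r$, and handles $(f)$ by noting that $x\notin A_q^r$ forces $x\notin A_\infty^s$ for all $s>r$, which gives $(d)$; you instead go $(d)\Rightarrow(f)\Rightarrow(a)$ with $Y=A_q^r$ for the pair $(q,r)$ furnished by $(f)$ --- same ingredients, equally valid. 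The more substantive difference concerns $(e)$: the paper obtains it as the ``trivial'' implication $(c)\Rightarrow(e)$, which tacitly uses the embedding $A_q^r\subseteq A_\infty^r$ --- precisely the weighted-sum Jackson estimate $E(x,A_N)\leq C(N+1)^{-r}|x|_{A_q^r}$ that you prove explicitly, including the correct handling of the case $rq<1$ --- whereas you prove $(b)\Rightarrow(e)$ by combining that estimate with the open mapping theorem for $F$-spaces. Your OMT step is sound (it uses the completeness of $A_q^r$, cited from \cite{almiraluther2} and available under the hypothesis $A_n+A_n\subseteq A_{cn}$), but it is avoidable and is the one place where you lean on completeness rather than on the definition \eqref{approx_space_def} alone: once your Jackson estimate is in hand, the witness furnished by $(c)$ for the exponent $r$ already lies outside every $A_q^r$, so $(c)\Rightarrow(e)$ follows with no functional-analytic input. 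In short, your write-up makes explicit the estimate the paper leaves implicit, at the cost of one unnecessary appeal to the open mapping theorem.
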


\begin{proof} 
The implication $(a)\Rightarrow (b)$ is a reformulation of Theorem \ref{central}.
$(b) \Rightarrow (c) \Rightarrow (d)$ and $(c) \Rightarrow (e) \Rightarrow (f)$
are trivial.

$(d) \Rightarrow (a)$: If $E(x,A_n)\not=\mathbf{O}(n^{-r})$ for some $x\in X$ and
$r > 0$, then $x\not\in A_{\infty}^r$.
Then $A_{\infty}^r$ is strictly contained in $X$, and $(X,\{A_n\})$
satisfies Bernstein's inequality for $Y=A_{\infty}^r$.
Theorem~\ref{central} yields $(a)$.

$(f) \Rightarrow (c)$: consider $x \in X \backslash A_q^r$.
By \eqref{approx_space_def}, $x \notin A_\infty^s$ for any $s > r$.

The last claim follows from the fact that $(A_n)$ satisfies Jackson's inequality \eqref{J}
with $Y=A_q^r$, and Proposition \ref{shapiro_jackson}.
\end{proof} 

To further investigate abstract approximation spaces, denote by $B_q^r$ the
closure of $\bigcup_{n=0}^{\infty}A_n$ in $A_q^r$. Clearly, $B_q^r$ is a
closed subspace of $A_q^r$ (with the same norm). If $A_n + A_n \subset A_{cn}$
with some $c$, the results of \cite[Section 3]{almiraluther2} imply that
$B_q^r = A_q^r$ for $0 < q < \infty$, and that
$B_\infty^r=\{x\in A_\infty^r:\lim_{n\to\infty}(n+1)^r E(x,A_n)=0\}$.

\begin{proposition}\label{approx_scale}
Suppose the approximation scheme $(X,\{A_n\})$ satisfies Shapiro's Theorem,
and $A_n + A_n \subset A_{cn}$ for some $c$.
Then $(B_q^r, \{A_n\})$ satisfies Shapiro's Theorem.
Consequently, $A_q^{r+\varepsilon}(X,\{A_n\})$ is an infinite codimensional
subspace of $A_u^r(X,\{A_n\})$ for all $\varepsilon>0$, all $0<q\leq \infty$ and all $0<r,u<\infty$.
\end{proposition}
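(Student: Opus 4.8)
The plan is to regard $(B_q^r,\{A_n\})$ as an approximation scheme in its own right and to verify the weak Brudnyi condition, so that Shapiro's Theorem follows from Corollary~\ref{coro}. First I would check that $(B_q^r,\{A_n\})$ really is an approximation scheme: each $A_n$ lies in $B_q^r$, the inclusions are strict (the $A_n$ are the same sets as in $X$), homogeneity and $A_n+A_n\subseteq A_{cn}$ are inherited, and $\bigcup_n A_n$ is dense in $B_q^r$ by the definition of $B_q^r$ as a closure. Hence, by Corollary~\ref{coro}, it suffices to exhibit a constant $c_0>0$ with $\dens_n(B_q^r,\{A_n\})=E(S(B_q^r),A_n)\geq c_0$ for every $n$, the errors now being taken in the $A_q^r$-norm.

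The technical core is a lower bound for the $A_q^r$-error by a weighted tail of the $X$-errors. For $x\in B_q^r$ and $a\in A_n$, keeping only the indices $k\geq n$ in the defining $\ell^q$-norm of \eqref{approx_space_def} gives
\begin{equation*}
|x-a|_{A_q^r}\geq \big\|\{(k+1)^{r-1/q}E(x-a,A_k)_X\}_{k\geq n}\big\|_{\ell^q}.
\end{equation*}
Since $a\in A_n\subseteq A_k$ and $A_k+A_k\subseteq A_{ck}$, every competitor $a+g$ with $g\in A_k$ lies in $A_{ck}$, so $E(x-a,A_k)_X\geq E(x,A_{ck})_X$ uniformly in $a$. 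Taking the infimum over $a\in A_n$ and reindexing $j=ck$ (the weights $(k+1)^{r-1/q}$ and $(ck+1)^{r-1/q}$ are comparable, and sampling a monotone sequence every $c$ steps costs only a constant), I obtain a constant $C=C(c,r,q)$ with
\begin{equation*}
E(x,A_n)_{A_q^r}\geq C^{-1}\big\|\{(j+1)^{r-1/q}E(x,A_j)_X\}_{j\geq cn}\big\|_{\ell^q}\qquad (x\in B_q^r).
\end{equation*}

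The decisive and slightly counterintuitive step is the choice of test element. Fix $n$ and pick $M$ much larger than $cn$. Because $(X,\{A_n\})$ satisfies Shapiro's Theorem, Corollary~\ref{coro} gives $\dens_M(X,\{A_n\})=1$, so there is $w\in S(X)$ with $E(w,A_M)_X\geq 1/2$, whence by monotonicity $E(w,A_j)_X\geq 1/2$ for all $j\leq M$. Approximating $w$ from $\bigcup_\ell A_\ell$ produces $\tilde w\in A_{M'}\subseteq B_q^r$ with $\|\tilde w\|_X\leq 2$ and $E(\tilde w,A_j)_X\geq \kappa$ for all $j\leq M$, where $\kappa>0$ is absolute. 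The point is that $\tilde w$ stays far from every $A_j$ up to the huge level $M$; since the weights $(j+1)^{rq-1}$ have exponent exceeding $-1$, the bulk of $|\tilde w|_{A_q^r}$ is carried by frequencies near $M$, hence beyond $cn$, and a direct computation (the sum being replaced by a supremum when $q=\infty$) shows that the tail on $\{j\geq cn\}$ is a fixed fraction of the full norm, so $E(\tilde w,A_n)_{A_q^r}\geq c_0\,|\tilde w|_{A_q^r}$ with $c_0$ independent of $n$. This yields $\dens_n(B_q^r,\{A_n\})\geq c_0$ for all $n$, and Corollary~\ref{coro} gives Shapiro's Theorem for $(B_q^r,\{A_n\})$. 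I expect the main obstacle to be precisely this estimate: the naive single-term lower bound would force control of the level $M'$ of a far element, which Shapiro's Theorem does not provide (it is strictly weaker than Brudnyi's condition), whereas the tail bound turns a \emph{large}, uncontrolled $M$ into an advantage.

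For the ``consequently'' part I would apply the first assertion with the index $u$ in place of $q$, so that $(B_u^r,\{A_n\})$ satisfies Shapiro's Theorem and still obeys $A_n+A_n\subseteq A_{cn}$. Using the reiteration identity for approximation spaces (see \cite{almiraluther2}), namely $A_q^\varepsilon(B_u^r,\{A_n\})=A_q^{r+\varepsilon}(X,\{A_n\})$ with equivalent norms, I would invoke the ``moreover'' clause of Corollary~\ref{coro2} for the scheme $(B_u^r,\{A_n\})$: since it satisfies Shapiro's Theorem, $A_q^\varepsilon(B_u^r,\{A_n\})$ is an infinite codimensional subspace of $B_u^r$. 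As this space coincides with $A_q^{r+\varepsilon}(X,\{A_n\})$ and $B_u^r$ is a closed subspace of $A_u^r$ carrying the same norm, $A_q^{r+\varepsilon}(X,\{A_n\})$ is infinite codimensional in $A_u^r(X,\{A_n\})$, as claimed. The only nonroutine ingredient here is the reiteration identity, which I would cite rather than reprove.
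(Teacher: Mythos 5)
Your proposal is correct, and for the main assertion it takes a genuinely different route from the paper. The paper verifies the ``jump condition'' of Theorem~\ref{maintheorem}(b) inside $B_q^r$: it first upgrades Shapiro's Theorem in $X$ to elements $a_n\in\bigcup_\ell A_\ell$ with $E(a_n,A_n)_X\leq C\,E(a_n,A_{K^2(n)})_X$, and then transfers this two-level estimate to the $A_q^r$-norm by invoking the two-sided equivalence of Lemma~3.16 from \cite{almiraluther2}, which bounds $E(\cdot,A_n)_{A_q^r}$ above and below by weighted $\ell_q$-norms of shifted $X$-errors. You instead verify the weak Brudnyi condition of Corollary~\ref{coro}(c) directly: you prove (elementarily, from $A_n+A_k\subseteq A_{ck}$) only the one-sided tail bound $E(x,A_n)_{A_q^r}\geq C^{-1}\bigl\|\{(j+1)^{r-1/q}E(x,A_j)_X\}_{j\geq cn}\bigr\|_{\ell^q}$, and then build a test element $\tilde w\in\bigcup_\ell A_\ell\subset B_q^r$ whose $X$-errors stay bounded below up to a level $M$ chosen so large that, since $\sum_k (k+1)^{rq-1}$ diverges, the tail $\{j\geq cn\}$ carries a fixed fraction of $|\tilde w|_{A_q^r}$. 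Both proofs share the density trick (test elements must lie in $\bigcup_\ell A_\ell$ to belong to $B_q^r$), but your argument is self-contained -- it replaces the citation of Lemma~3.16 and the $K^2$-jump modification of Theorem~\ref{maintheorem} by an explicit head--tail computation -- and it correctly exploits the key structural point that the uncontrolled level $M'$ of $\tilde w$ only feeds the tail, so no control of it is needed; the price is a longer hands-on estimate, and a couple of constants you call ``absolute'' in fact depend on the quasi-norm modulus $C_X$ and on $c,r,q$ (harmless, since they are independent of $n$). For the ``consequently'' part your argument (reiteration plus the ``moreover'' clause of Corollary~\ref{coro2}, using $B_u^r=A_u^r$ for $u<\infty$) coincides with the paper's.
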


\begin{proof}
A small modification of the proof of $(a)\Rightarrow (b)$ in Theorem \ref{maintheorem} shows that there exists a constant $C>1$ and a sequence $\{x_n\}_{n\in\mathbb{N}_0}$ (where $\mathbb{N}_0\subseteq \mathbb{N}$ is an infinite sequence) such that $E(x_n,A_n)\leq C E(x_n,A_{K^2(n)})$ for all $n\in\mathbb{N}_0$. By the density of $\bigcup_nA_n$ in $X$, we can find another sequence
$\{a_n\}_{n\in\mathbb{N}_0}\subset \bigcup_n A_n\subset B_q^r$ such that $E(a_n,A_n)\leq C E(a_n,A_{K^2(n)})$ for all $n\in\mathbb{N}_0$. Hence for every $n\in\mathbb{N}_0$ and $m\in\{n,n+1,\cdots,K^2(n)\}$ we have $E(a_n,A_m)\leq E(a_n,A_n) \leq CE(a_n,A_{K^2(n)})$.
Furthermore, by Lemma 3.16 from \cite{almiraluther2}, there exist $A,B>0$
(depending only on $X$ and the parameters $q,r$) such that, for every $n\in\mathbb{N}$,
\[
A\left\|\{(k+1)^{r-\frac{1}{q}}E(a_n,A_{\max\{k,K(n)\}})\}\right\|_{\ell_q}\leq E(a_n,A_n)_{A_q^r} \leq
B \left\|\{(k+1)^{r-\frac{1}{q}}E(a_n,A_{\max\{k,n\}})\}\right\|_{\ell_q} .
\]
Therefore,
\begin{eqnarray*}
E(a_n,A_n)_{B_q^r} &\leq& B \left\|\{(k+1)^{r-\frac{1}{q}}E(a_n,A_{\max\{k,n\}})\}\right\|_{\ell_q} , \\
E(a_n,A_{K(n)})_{B_q^r} &\geq& A\left\|\{(k+1)^{r-\frac{1}{q}}E(a_n,A_{\max\{k,K^2(n)\}})\}\right\|_{\ell_q} .
\end{eqnarray*}
It follows that
\begin{eqnarray*}
E(a_n,A_n)_{B_q^r} &\leq& B\left\|\{(k+1)^{r-\frac{1}{q}}E(a_n,A_{\max\{k,n\}})\}\right\|_{\ell_q} \\
&\leq & B\left(\sum_{k=0}^{K^2(n)}(k+1)^{rq-1}C^qE(a_n,A_{K^2(n)})^q+ \sum_{k=K^2(n)+1}^{\infty}(k+1)^{rq-1}E(a_n,A_k)^q \right)^{\frac{1}{q}} \\
&\leq& CBA^{-1} E(a_n,A_{K(n)})_{B_q^r} .
\end{eqnarray*}
By Theorem~\ref{maintheorem}$(b)\Rightarrow (a)$,
$(B_q^r, \{A_n\})$ satisfies Shapiro's Theorem.

To prove the second part of our proposition, recall
the reiteration theorem: if an approximation scheme satisfies $A_n+A_n\subseteq A_{cn}$
for a certain constant $c$, then
$$A_q^{r_2}(A_s^{r_1}(X,\{A_n\}),\{A_n\})=A_q^{r_1+r_2}(X,\{A_n\})$$ (this is proved in
\cite{Pie} for the particular case of $A_n+A_m\subseteq A_{n+m}$, and
in \cite[Example 3.36]{almiraluther2} in full generality). Hence,
$$
A_q^{r+\varepsilon}(X,\{A_n\})=A_q^{\varepsilon}(A_u^{r}(X,\{A_n\}),\{A_n\})=
A_q^{\varepsilon}(A_u^{r},\{A_n\}) .
$$
As $u < \infty$,
the first part of our proposition shows that $(A_u^{r},\{A_n\})$ satisfies Shapiro's Theorem.
By Corollary~\ref{coro2}, 
$A_q^{\varepsilon}(A_u^r,\{A_n\})$  is an infinite codimensional subspace of
$A_u^{r}(X,\{A_n\})$.
\end{proof}

Finally, another consequence of Theorem \ref{central} is the following
\begin{corollary}
Suppose $(X,\{A_n\})$ is an approximation scheme, such that, for every $n \in \N$,
$A_n+A_n\subseteq A_{cn}$ ($c>1$ is independent of $n$), and $A_n$
is boundedly compact in $X$ (that is, any bounded subset of $A_n$ is relatively
compact in $X$). Then $(X,\{A_n\})$ satisfies Shapiro's Theorem.
 \end{corollary}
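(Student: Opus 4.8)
The plan is to apply Theorem~\ref{central}. Recall that $A_\infty^r$ always satisfies Bernstein's inequality (by the DeVore--Popov estimate quoted above), so it would suffice to show that $A_\infty^r$ is a \emph{proper} subspace of $X$ for some $r$; by Corollary~\ref{coro2} this is the same as proving that the scheme satisfies Shapiro's Theorem, and the real content is that the density sequence $(\dens_n)$ does not decay to $0$. I will in fact establish the extreme statement $\dens_n = 1$ for every $n$, and then conclude immediately from Corollary~\ref{coro}, since $\dens_n = E(S(X),A_n) = 1$ is precisely the weak Brudnyi condition with constant $c = 1$. By the observation preceding Theorem~\ref{maintheorem}, I may first replace $\|\cdot\|$ by an equivalent $p$-norm (Aoki--Rolewicz), which leaves both Shapiro's Theorem and the bounded compactness of each $A_n$ unaffected, and which makes the estimates below clean.

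Fix $n$. The first step is to reduce $E(x,A_n)$, for $x \in S(X)$, to a distance to a \emph{compact} set. Since $0 \in A_n$ we have $E(x,A_n) \leq 1$; moreover, if $a \in A_n$ satisfies $\|x-a\| < 1$, then $\|a\|^p \leq \|x\|^p + \|x-a\|^p < 2$, so every competitor beating the trivial bound lies in the ball $B = \{u : \|u\| \leq 2^{1/p}\}$. By bounded compactness, $K_n := \overline{A_n \cap B}$ is compact, and consequently
\[
E(x,A_n) = \min\{1,\ E(x,K_n)\} \qquad (x \in S(X)) ,
\]
so that $\dens_n = \sup_{x \in S(X)} \min\{1, E(x,K_n)\}$. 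It thus remains to produce, for each $\varepsilon > 0$, a unit vector far from the compact set $K_n$. Cover $K_n$ by finitely many balls of radius $\varepsilon$ about points $k_1, \ldots, k_m \in K_n$, and set $F = \spn[\{k_1, \ldots, k_m\}]$. As $X$ is infinite dimensional (a standing assumption for all our approximation schemes), $F$ is a proper closed subspace, so Riesz's lemma --- valid in any quasi-normed space, quasi-norms being homogeneous --- produces a unit vector $x$ with $E(x,F) \geq 1 - \varepsilon$. For $k \in K_n$, pick $i$ with $\|k - k_i\| < \varepsilon$; then $\|x-k\|^p \geq \|x - k_i\|^p - \|k-k_i\|^p > (1-\varepsilon)^p - \varepsilon^p$. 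Letting $\varepsilon \to 0$ gives $E(x,K_n) \to 1$, whence $\dens_n = 1$.

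Since $\dens_n = 1$ holds for every $n$, Corollary~\ref{coro} $(c) \Rightarrow (a)$ (with $c = 1$) shows that $(X, \{A_n\})$ satisfies Shapiro's Theorem; equivalently, $\dens_n = 1$ forces $A_\infty^r \subsetneq X$ for every $r$, and Theorem~\ref{central} applies with $Y = A_\infty^r$. The main obstacle is the reduction to a compact set followed by the Riesz-type selection: one must verify that near-optimal approximants from $A_n$ to a unit vector remain in a fixed ball --- so that bounded compactness can be brought to bear --- and then extract a far-away unit vector from the finite-dimensional span of a finite net of the resulting compact set. The essential hypotheses are exactly the infinite dimensionality of $X$ and the bounded compactness of each $A_n$; the condition $A_n + A_n \subseteq A_{cn}$ is needed only to make sense of the spaces $A_q^r$ in the alternative, Theorem~\ref{central}-based formulation.
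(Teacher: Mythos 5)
Your proof is correct, but it follows a genuinely different route from the paper's. The paper's own proof is a two-line reduction to machinery already in place: bounded compactness of the $A_n$ implies, by the cited result \cite[Theor. 3.32]{almiraluther2}, that the inclusion $A_{\infty}^r\hookrightarrow X$ is a compact operator, hence $A_\infty^r$ is a proper subspace of $X$; since $A_\infty^r$ always satisfies Bernstein's inequality (the DeVore--Popov estimate), Theorem~\ref{central} applies with $Y=A_\infty^r$. That is exactly the route you sketch in your opening paragraph and then set aside. What you actually carry out is more elementary and self-contained: after an Aoki--Rolewicz renorming you reduce $E(x,A_n)$, for $x\in S(X)$, to the distance to the compact set $K_n=\overline{A_n\cap B}$ (checking correctly that near-optimal approximants stay in a fixed ball and that distance is unchanged under closure), cover $K_n$ by a finite $\varepsilon$-net, and apply Riesz's lemma --- which is the paper's own Lemma~\ref{riesz}, proved there precisely in the quasi-Banach setting --- to the finite-dimensional span of the net, producing a unit vector at distance at least $((1-\varepsilon)^p-\varepsilon^p)^{1/p}$ from $A_n$. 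This yields $\dens_n=1$ for every $n$, and Corollary~\ref{coro} finishes the argument. Your approach buys two things: it avoids the external apparatus (the spaces $A_q^r$, the Almira--Luther compactness theorem, DeVore--Popov), and, as you correctly observe, it never uses the hypothesis $A_n+A_n\subseteq A_{cn}$, so you in fact prove the stronger statement that \emph{any} approximation scheme (with arbitrary $K(n)$) whose layers are boundedly compact satisfies Shapiro's Theorem, with the sharp conclusion $\dens_n\equiv 1$ in the renormed space. What the paper's route buys is brevity and thematic fit: it stays within Section~\ref{seccion_central}'s framework of deducing Shapiro's Theorem from a Bernstein inequality on a proper subspace.
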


\begin{proof}
If each $A_n$ is boundedly compact in $X$ then for every $r>0$ the natural inclusion $A_{\infty}^r\hookrightarrow X$ is a compact operator
(see \cite[Theor. 3.32]{almiraluther2}).
In particular, $A_{\infty}^r$ is strictly contained in $X$,
and we can apply Theorem \ref{central} with $Y=A_{\infty}^r$.
\end{proof}

\section{Examples of schemes satisfying Shapiro's Theorem}\label{examples}

In this section, we present a collection of examples of approximation schemes
satisfying Shapiro's Theorem. The main tools involved are (i) Property (P), (ii) Bernstein's Inequality  and (iii) the characterization of approximation schemes satisfying Shapiro's Theorem given in Corollary \ref{coro}. Many examples involve the order of the
best $n$-term approximation with respect to a dictionary.

\subsection{Biorthogonal systems and their generalizations}\label{examp_biorth}
Suppose $X$ is a quasi-Banach space, $I$ is an infinite index set,
and $(X_i)_{i \in I}$ are non-trivial subspaces of $X$. We say that $(X_i)$ form a
{\it complete minimal bounded decomposition} of $X$
({\it CMBD}, for short) if $X = \overline{\spn[X_i : i \in I]}$, and, for every $i \in I$,
there exists $x \in X_{i}$ such that $E(x, \spn[X_j : j \neq i]) > c \|x\|$
($c > 0$ is independent of $i$).

A CMBD can be regarded as a generalization of 
a complete minimal system. Recall that a family $(x_i)_{i \in I}$ in a
Banach space $X$ is called {\it minimal} if, for any $i \in I$,
$x_i$ doesn't belong to the closure of $\spn[x_j : j \in I \backslash \{i\}]$.
A minimal system is called {\it complete} if $\spn[x_i : i \in I]$ is dense in $X$.
It is easy to see that a minimal system gives rise to a
{\it biorthogonal system} $(x_i, f_i)$, where $x_i \in X$, $f_i \in X^*$, and
$\langle f_i, x_j \rangle = \delta_{ij}$ (Kronecker's delta).
A biorthogonal system is {\it bounded} if $\sup_i \|x_i\| \|f_i\| < \infty$.


It is easy to see that, if $(x_i, f_i)$ is a bounded complete biorthogonal system, then
the family of spaces $X_i = \spn[x_i]$ forms a CMDB.
It is known that every separable Banach space has a complete bounded biorthogonal
system $(x_i, f_i)_{i \in I}$ such that $\cap_{i \in I} \ker f_i = \{0\}$
\cite[Theorem 1.27]{HMVZ}.
Certain non-separable spaces also possess complete bounded biorthogonal systems
(see e.g.~Sections 4.2 and 5.2 of \cite{HMVZ}).

In addition to biorthogonal systems, CMBDs arise when one considers
a dictionary consisting of two or more bases, possessing certain
``mutual coherence.'' Several examples can be found in
Section 4 of \cite{gribonvaltres}. For instance, the union of Haar and
Walsh bases works very nicely.

The following two theorems show that the approximation schemes
arising from CMBDs or biorthogonal systems have Property (P).
Furthermore, as the approximation schemes described there
satisfy $A_n + A_n \subset A_{2n}$, both schemes satisfy Shapiro' Theorem.

\begin{theorem}\label{decomposition}
Consider a quasi-Banach space $X$ such that, for a certain fixed $p>0$ and for any $x_1, \ldots, x_m \in X$,
$$
\|x_1 + \ldots + x_m\|^p \leq C^p (\|x_1\|^p + \ldots + \|x_m\|^p) .
$$
Suppose $(X_i)_{i \in I}$ is a
complete minimal bounded decomposition of $X$, with
$E(x, \spn[X_j : j \neq i]) \geq c \|x\|$ for any $i \in I$, and $x \in X_i$.
Suppose, furthermore, that $E$ is a finite dimensional subspace of $X$,
and an approximation scheme $(A_n)$
is defined by setting, for $n \in \N$,
$$
A_n = E + \cup_{F \subset I, |F| \leq n} \spn[X_i : i \in F] .
$$
Then the approximation scheme $(A_n)$ has Property (P), and consequently,
satisfies Shapiro's Theorem.
\end{theorem}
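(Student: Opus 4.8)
The plan is to verify property~(P) directly and then invoke Theorem~\ref{condicion}: since the sets here satisfy $A_n+A_n\subseteq A_{2n}$, the hypothesis of that theorem holds, so once property~(P) is established Shapiro's Theorem follows at once. By the renorming remark following Proposition~\ref{finite_dimension}, I would first replace the quasi-norm by an equivalent $p$-additive one (Aoki--Rolewicz), so that $\|\sum_k x_k\|^p\le\sum_k\|x_k\|^p$; this is harmless since property~(P) and Shapiro's Theorem are preserved under equivalent renormings, and it makes the metric estimates below clean.

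First I would fix, for each $i\in I$, a unit vector $u_i\in X_i$ and build the test element as a ``spread'' vector $x=\sum_{i\in G}u_i$, where $G\subseteq I$ is finite with $|G|=m$, to be taken a suitable multiple of $n$. The model case is $E=\{0\}$: given any $a=\sum_{i\in F}y_i$ with $|F|\le n<m$, pick $i_0\in G\setminus F$; since every other summand of $x-a$ lies in $\spn[X_j:j\neq i_0]$, the decomposition property $E(\cdot,\spn[X_j:j\neq i_0])\ge c\|\cdot\|$ gives $\|x-a\|\ge E(u_{i_0},\spn[X_j:j\neq i_0])\ge c$, whence $E(x,A_n)\ge c$. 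As $\|x\|\le m^{1/p}$, normalizing $x$ yields a unit vector with $E(x,A_n)\ge c\,m^{-1/p}$, i.e.\ property~(P) with $b=1/p$.

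To incorporate the finite dimensional $E$, I would keep this construction and argue by contradiction, assuming $\rho:=E(x,A_n)$ is small. Writing a near-optimal approximant as $a=e+s$ with $e\in E$ and $s=\sum_{i\in F}y_i$ ($|F|\le n$), the same bookkeeping shows that $u_{i_0}-e$ lies within $\rho$ of $\spn[X_j:j\neq i_0]$ for every $i_0\in G\setminus F$; combining this with $E(u_{i_0},\spn[X_j:j\neq i_0])\ge c$ through the $p$-subadditivity of the distance-to-a-subspace functional gives the key inequality
\[
E\big(e,\spn[X_j:j\neq i_0]\big)^p\ \ge\ c^p-\rho^p\qquad\text{for all }i_0\in G\setminus F .
\]
Thus a small $\rho$ forces the single vector $e$ to be simultaneously ``far'' (in the CMBD sense) from more than $m-n$ of the coordinate complements. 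The point is that a fixed finite dimensional $E$ cannot achieve this for arbitrarily many indices: using the density of $\bigcup_m A_m$ together with the compactness of the unit sphere of $E$, one produces a finite set $F^\ast\subseteq I$ relatively capturing $E$, outside of which $E(e,\spn[X_j:j\neq i])$ is controlled, and a contradiction is reached once $m-n$ exceeds $|F^\ast|$.

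The hard part will be making this last step quantitative, because the control of $e$ outside $F^\ast$ is only \emph{relative}, of the form $E(e,\spn[X_j:j\neq i])\le\eta\|e\|$, while the displayed inequality is \emph{absolute}; so everything hinges on bounding the norm of the optimal correction $e$. I would bound $\|e\|$ by combining near-optimality (so that $\|e+s\|\le 2\|x\|$) with the CMBD component estimate $\|v_i\|\le c^{-1}\|\sum_j v_j\|$, which controls the coordinates of $e$ once it has been relatively captured by $\spn[X_i:i\in F^\ast]$; this gives $\|e\|\lesssim\|x\|\sim m^{1/p}$. Reconciling this growth of $\|e\|$ with the accuracy $\eta$ needed for the capture (so that $\eta\|e\|$ stays below $c$) is the main obstacle: it forces one to choose the capture accuracy, hence $|F^\ast|$, in tandem with $m$, and to track how $|F^\ast|$ grows as the accuracy improves. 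I expect this parameter bookkeeping -- rather than any single inequality -- to be the delicate core, after which property~(P) follows with an exponent $b$ governed by $1/p$.
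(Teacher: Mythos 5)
Your reduction to Property (P) via Theorem~\ref{condicion} (using $A_n+A_n\subseteq A_{2n}$) is the same as the paper's, and your model case $E=\{0\}$ is correct: for $x=\sum_{i\in G}u_i$ with $|G|=n+1$, any $a=\sum_{i\in F}y_i$ with $|F|\le n$ misses some $i_0\in G\setminus F$, all other terms of $x-a$ lie in $\spn[X_j:j\neq i_0]$, so $\|x-a\|\ge E(u_{i_0},\spn[X_j:j\neq i_0])\ge c$ and $E(x/\|x\|,A_n)\ge c\,(n+1)^{-1/p}$. The theorem, however, requires a nonzero finite-dimensional $E$, and there your argument has a genuine gap: the bound $\|e\|\lesssim \|x\|\sim m^{1/p}$ on the $E$-component of a near-optimal approximant $a=e+s$ does not follow from near-optimality and is false for elements of $A_n$ in general, because $e$ and $s$ can nearly cancel. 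For instance, in $\ell_2$ with $X_i=\spn[\delta_i]$ and $E=\spn[v]$, $v=\sum_k 2^{-k}\delta_k$, the element $a=t\big(v-\sum_{k\le n}2^{-k}\delta_k\big)$ lies in $A_n$, has norm about $t2^{-n}$, and its unique decomposition has $E$-component of norm $t$, arbitrarily large relative to $\|a\|$. Without an upper bound on $\|e\|$, the relative capture estimate $E(e,\spn[X_j:j\neq i_0])\le\eta\|e\|$ never contradicts the absolute lower bound $c^p-\rho^p$; all you can conclude is $\|e\|\ge (c^p-\rho^p)^{1/p}/\eta$, which is not absurd. Worse, the parameter loop you defer as ``bookkeeping'' is genuinely circular: shrinking $\eta$ enlarges $F^*$, hence forces $m$ (and with it $\|x\|$, and any conceivable bound on $\|e\|$) to grow, while $\eta$ must stay below a quantity of order $1/\|e\|$; nothing in your sketch breaks this cycle, and I do not see how to break it within your framework of a fixed spread vector $\sum_{i\in G}u_i$ plus a separate treatment of $E$.

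The paper closes exactly this hole with an idea your outline is missing: the test vector is built to depend on $E$ through dimension counting inside blocks. Fix $n$ disjoint sets $S_1,\dots,S_n\subset I$ of cardinality $m=\dim E+1$, put $Y_k=\spn[X_i:i\in S_k]$, and let $Q_k=\sum_{i\in S_k}P_i$, where $P_i$ are the coordinate projections furnished by the CMBD condition ($\|P_i\|\le 1/c$); then $\|Q_k\|\le CC_0m^{1/p}$ and $Q_k$ annihilates $\spn[X_i:i\notin S_k]$. Since $\dim Q_k(E)\le\dim E<m\le\dim Y_k$, Lemma~\ref{riesz} gives a unit vector $y_k\in Y_k$ with $E(y_k,Q_k(E))>1/2$. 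Taking $y=(y_1+\cdots+y_n)/(Cn^{1/p})$, any approximant $a=e+\sum_{i\in F}\alpha_ix_i\in A_{n-1}$ misses some block $S_k$; applying $Q_k$ kills the terms $\alpha_ix_i$ and maps $e$ into the subspace $Q_k(E)$, so $\|y-a\|\ge \|Q_k\|^{-1}\|y_k/(Cn^{1/p})-Q_ke\|\ge (2C^2C_0m^{1/p}n^{1/p})^{-1}$. The crucial point---and precisely what defeats your cancellation problem---is that $Q_k(E)$ is a linear subspace, so the distance from $y_k$ to $Q_k(E)$ controls the distance from $y_k$ to every scalar multiple of $Q_ke$; the norm of the $E$-component never enters the estimate. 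Your proof cannot be completed without importing this (or an equivalent) scale-invariant mechanism.
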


\begin{theorem}\label{biorth_syst}
For a complete minimal system $(x_i)_{i \in I}$ in a Banach space $X$,
consider the approximation scheme
$A_n = \{ \sum_{i \in F} \alpha_i x_i : F \subset I, \, |F| \leq n\}$
($n \geq 0$). Then for every $n$ there exists a norm $1$ $y \in X$ such that
(in the above notation) $E(y, A_{n-1}) > 1/(2n)$.
Consequently, the approximation scheme $(A_n)$ satisfies Shapiro's Theorem.
\end{theorem}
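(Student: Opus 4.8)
The plan is to verify Property (P) with exponent $b=1$ and then invoke Theorem~\ref{condicion}. Since the sum of two sets, each a span of at most $n$ of the $x_i$, is a span of at most $2n$ of them, the scheme satisfies $A_n+A_n\subseteq A_{2n}$, i.e.\ the hypothesis $A_n+A_n\subseteq A_{cn}$ with $c=2>1$. Moreover, if for each $n$ we produce a norm $1$ vector $y$ with $E(y,A_{n-1})>1/(2n)$, then writing $m=n-1$ we obtain, for every $m\ge 1$, a norm $1$ vector with $E(y,A_m)>\tfrac{1}{2(m+1)}\ge\tfrac{1}{4m}$, which is exactly Property (P) with $a=4$, $b=1$. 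Hence the whole theorem reduces to the existence of the vectors $y$, and the final ``Shapiro'' clause becomes automatic from Theorem~\ref{condicion}.

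To build $y$, I would first pass to the biorthogonal system $(x_i,f_i)$ attached to the complete minimal family $(x_i)$, normalise so that $\|x_i\|=1$, and set $\phi_i=f_i/\|f_i\|$, a norm $1$ functional with $\phi_i(x_j)=0$ for $j\ne i$. Fix any $n$ indices $i_1,\dots,i_n$. The key elementary observation is that every $a\in A_{n-1}$ is a combination of at most $n-1$ of the $x_j$, so its index set omits at least one $i_k$; for that $k$ one has $\phi_{i_k}(a)=0$, whence $\|y-a\|\ge|\phi_{i_k}(y-a)|=|\phi_{i_k}(y)|$. Taking the infimum over $a\in A_{n-1}$ and then the worst omitted index yields the clean bound $E(y,A_{n-1})\ge\min_{1\le k\le n}|\phi_{i_k}(y)|$; equivalently, $E(y,A_{n-1})\ge\min_k\mathrm{dist}\big(y,\overline{\spn}[x_j:j\ne i_k]\big)$. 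So it suffices to find a unit vector that is simultaneously far from each of the $n$ subspaces $\overline{\spn}[x_j:j\ne i_k]$.

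Thus the whole problem collapses to a geometric statement about $n$ norm $1$ functionals: there is a unit vector $y$ with $|\phi_{i_k}(y)|>1/(2n)$ for every $k$. This is the step I expect to be the main obstacle, and it is where the constant $1/(2n)$ is born. My plan for it is a duality/sign-selection argument: via Sion's minimax theorem one rewrites $\max_{\|y\|\le1}\min_k|\phi_{i_k}(y)|=\max_{\sigma\in\{\pm1\}^n}\mathrm{dist}\big(0,\mathbf{conv}\{\sigma_k\phi_{i_k}:k\}\big)$, so it is enough to choose signs $\sigma_k$ keeping the convex hull of the $\sigma_k\phi_{i_k}$ at distance $>1/(2n)$ from the origin. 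Concretely I would take, for each $k$, a near-norming unit vector $u_k$ (with $\phi_{i_k}(u_k)>1-\delta$), form $y_0=\sum_k\sigma_k u_k$ (of norm at most $n$), and select the signs so that every coordinate $\phi_{i_k}(y_0)$ stays bounded away from $0$; normalising then produces the factor $1/n$, with the extra $1/2$ absorbing the cross-term losses. The delicate point is precisely that naive averaging over signs loses control through the off-diagonal terms, so the sign selection (or a compactness argument replacing it) must be carried out with care; that $1/(2n)$ is of the correct order is confirmed by the $\ell_1^n$/cross-polytope configuration, where $\max_{\|y\|=1}\min_k|\phi_k(y)|$ equals exactly $1/n$.
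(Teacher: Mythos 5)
Your two reductions are correct and match the paper's own framing: $A_n+A_n\subseteq A_{2n}$ together with Property (P) (exponent $b=1$) gives the Shapiro clause via Theorem~\ref{condicion}, and the inequality $E(y,A_{n-1})\ge\min_{1\le k\le n}|\phi_{i_k}(y)|$ for the normalized Hahn--Banach biorthogonal functionals is a clean (and correct) way to kill all approximants at once, including the closed span of the infinitely many omitted $x_j$. But there is a genuine gap exactly where you flag it: the claim that for any $n$ norm-one functionals $\phi_1,\dots,\phi_n$ on a Banach space there is a unit vector $y$ with $\min_k|\phi_k(y)|>1/(2n)$ is never proved, and in your formulation this claim \emph{is} the theorem. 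The claim is true, but it is precisely the central-hyperplane case of K.~Ball's plank theorem for symmetric bodies (Invent. Math. \textbf{104} (1991), 535--543): for norm-one functionals $\phi_k$ and nonnegative $t_k$ with $\sum_k t_k\le 1$ there is a unit vector $y$ with $|\phi_k(y)|\ge t_k$ for all $k$; taking $t_k=3/(4n)$ gives your bound. This is a genuinely hard theorem: your Sion-minimax identity merely restates the problem (one still must exhibit a sign pattern whose convex hull avoids the ball of radius $1/(2n)$ in $X^*$), and the off-diagonal interference you mention is exactly why Bang-type sign selections, which work in Hilbert space, do not go through in general Banach spaces. Your $\ell_1^n$ example does confirm the order $1/n$ is sharp, so no trick can give a constant independent of $n$. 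As written, then, the argument is incomplete; it becomes a complete --- and genuinely different, arguably cleaner --- proof only if you import Ball's theorem as a black box.

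It is worth seeing how the paper sidesteps the plank problem entirely: it does not fix the $n$ indices in advance. Instead it inductively constructs \emph{disjoint} finite blocks $S_1,\dots,S_n\subset I$ and norm-one vectors $y_j\in\spn[x_i:i\in S_j]$ with $E(y_j,\spn[x_i:i\notin S_j])>1/2$; this inductive step is where all the work lies (a weak$^*$-compactness/net argument in $X^{**}$, via Lemma~\ref{complemented}, needed because the biorthogonal functionals of a merely minimal --- not bounded --- system are not uniformly bounded). Then $y=(y_1+\cdots+y_n)/n$ works for structural reasons: any $a\in A_{n-1}$ misses some block $S_j$, and then $a$ together with every $y_k$, $k\ne j$, lies in $\spn[x_i:i\notin S_j]$, so $\|y-a\|\ge \frac1n E(y_j,\spn[x_i:i\notin S_j])>\frac1{2n}$ with no cross terms to estimate. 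In short, the paper trades the metric interference problem you are facing for a structural disjointness that makes the cross terms vanish; your route trades that construction for a deep geometric theorem. Either closes the argument, but one of the two must actually be supplied.
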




To prove Theorem~\ref{decomposition}, we need

\begin{lemma}\label{riesz}
Suppose $Y$ is a subspace of a quasi-Banach space $X$, with $\overline{Y} \subsetneq X$.
Then for every $\varepsilon > 0$ there exists
$w \in X$ such that  $\|w\| \leq 1$, and $dist(w, Y) \geq 1 - \varepsilon$.
\end{lemma}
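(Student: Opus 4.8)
The plan is to recognize this as the classical \emph{Riesz lemma} and to adapt its standard proof, observing that the usual argument uses only the absolute homogeneity of the norm and the linear (subspace) structure of $Y$; it never invokes the triangle inequality, so it transfers to the quasi-Banach setting verbatim.

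First I would dispose of the trivial range $\varepsilon \geq 1$, where $w = 0$ already gives $dist(w,Y) = 0 \geq 1 - \varepsilon$, and assume from now on that $\varepsilon \in (0,1)$. Since $\overline{Y} \subsetneq X$, I can select a point $x \in X \setminus \overline{Y}$. Because $x$ lies outside the closed set $\overline{Y}$ (closure taken in the quasi-norm topology), the quantity $d = \inf_{y \in Y} \|x - y\|$ is strictly positive. Next I would pick $y_0 \in Y$ approximating $x$ within the prescribed tolerance, namely with $d \leq \|x - y_0\| \leq d/(1-\varepsilon)$; such a $y_0$ exists by the definition of the infimum, since $d/(1-\varepsilon) > d$. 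I then normalize, setting $w = (x - y_0)/\|x - y_0\|$, so that $\|w\| = 1 \leq 1$.

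The key step is the lower bound on $dist(w,Y)$. For an arbitrary $y \in Y$, absolute homogeneity of the quasi-norm yields $\|w - y\| = \|x - y_0\|^{-1} \, \|x - (y_0 + \|x - y_0\|\, y)\|$. Since $Y$ is a linear subspace, the vector $y_0 + \|x - y_0\|\, y$ again belongs to $Y$, so its distance from $x$ is at least $d$; hence $\|w - y\| \geq d/\|x - y_0\| \geq d/(d/(1-\varepsilon)) = 1 - \varepsilon$. Taking the infimum over $y \in Y$ gives $dist(w,Y) \geq 1 - \varepsilon$, which completes the argument.

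I do not anticipate a genuine obstacle here; the only point worth flagging is exactly \emph{why} the classical proof survives the weakening of the hypotheses. The construction rescales a near-minimizer $x - y_0$ and exploits that $Y$ absorbs the rescaling factor, so the only analytic ingredients are homogeneity and the positivity of $d$. Subadditivity of the norm is never used, which is precisely what allows the passage from Banach spaces to quasi-Banach spaces to go through without any change of constants.
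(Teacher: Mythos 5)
Your proof is correct and is essentially identical to the paper's: both select $x \in X \setminus \overline{Y}$, take a near-minimizer $y_0 \in Y$ with $d \leq \|x-y_0\| \leq d/(1-\varepsilon)$, normalize $w = (x-y_0)/\|x-y_0\|$, and use homogeneity plus the subspace property of $Y$ to absorb the rescaling, never invoking the triangle inequality. The only (harmless) additions are your explicit treatment of $\varepsilon \geq 1$ and the remark on why the classical Riesz argument survives in the quasi-Banach setting.
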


\begin{proof}
Take $x\in X\setminus \overline{Y}$. Then $d=E(x,Y)>0$, and there exists
$y_0\in Y$ such that $d\leq \|x-y_0\|\leq \frac{1}{1-\epsilon}d$. Set
$z=x-y_0$ and $w=z/\|z\|\in S(X)$. Then
$$
\|w-y\|=\frac{1}{\|x-y_0\|}\|x-(y_0+y\|z\|)\|\geq
\frac{1}{\|x-y_0\|}E(x,Y)\geq (1-\epsilon)
$$
for any $y\in Y$.
\end{proof}

\begin{proof}[Proof of Theorem~\ref{decomposition}]
For $i \in I$, denote by $P_i : X \to X$ by setting $P_i x = x$ if $x \in X_i$,
and $P_i x = 0$ if $x \in \overline{\spn[X_j : j \in I \backslash \{i\}]}$.
Then $C_0 = \sup_i \|P_i\|$ is finite. Let $m = \dim E + 1$. We shall find
$y \in X$ such that  $\|y\| \leq 1$, and
$E(y, A_{n-1}) \geq (2 C^2 C_0 m^{1/p} n^{1/p})^{-1}$.

To this end fix disjoint subsets $S_1, \ldots, S_n \in I$,
of cardinality $m$ each. For $1 \leq k \leq n$, set
$Y_k = \spn[X_i : i \in S_k]$. Then $Q_k = \sum_{i \in S_k} P_i$
is a projection onto $Y_k$, satisfying $Q_k \spn[X_i : i \notin S_k] = 0$.
By the assumptions about $X$,
$\|Q_k\| \leq C (\sum_{i \in S_k} \|P_i\|^p)^{1/p} = C C_0 m^{1/p}$.
Moreover, for each $k$, $\dim Q_k(E) < m$, while $\dim Y_k \geq m$.
By Lemma~\ref{riesz}, there exists a norm one $y_k \in Y_k$
such that  $E(y_k, Q_k(E)) > 1/2$.

Now consider $y = (y_1 + \ldots + y_n)/(C n^{1/p})$. Clearly, $\|y\| \leq 1$.
It remains to show that, for any $e \in E$, any $F \subset I$ of
cardinality not exceeding $n-1$, any family of scalars $(\alpha_i)_{i \in F}$,
and any family $x_i \in X_i$ (once again, $i \in F$), we have
$\|y - (e + \sum_{i \in F} \alpha_i x_i)\| \geq (2 C^2 C_0 m^{1/p} n^{1/p})^{-1}$.
Find $k$ such that  $S_k \cap F = \emptyset$. Then
\begin{align*}
\|Q_k\| \|y - (e + \sum_{i \in F} \alpha_i x_i)\|
&
\geq
\|Q_k(y - (e + \sum_{i \in F} \alpha_i x_i))\|
\\
&
=
\|Q_k y - Q_k e\| \geq \frac{1}{C n^{1/p}} E(y_k, Q_k(E)) \geq \frac{1}{2 C n^{1/p}} .
\end{align*}
We complete the proof by recalling that $\|Q_k\| \leq C C_0 m^{1/p}$.
\end{proof}

The following lemma (necessary for the proof of Theorem~\ref{biorth_syst})
may be known to experts, although we couldn't find its statement anywhere.
Throughout, we use $S(X)$ and $B(X)$ to denote the unit sphere, respectively
the closed unit ball, of $X$.

\begin{lemma}\label{complemented}
Suppose $X$ is a Banach space, $E$ is a weak$^*$-closed subspace of $X^{**}$,
and $Z$ is a subspace of $X$, such that $\dim X/Z < \infty$, and
$\dim X^{**}/E > \dim X/Z$ ($E$ can be of finite or infinite codimension).
Then for every $c < 1$ there exists $x \in S(Z)$ such that $dist(x, E)_{X^{**}} \geq c$.
\end{lemma}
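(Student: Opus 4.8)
The plan is to dualise the problem, turning the distance to $E$ in $X^{**}$ into a supremum of functionals restricted to $Z$, and then to settle the resulting finite–dimensional question by a Borsuk--Ulam argument.

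First I would record the relevant duality. Since $E$ is weak$^*$-closed, $E=(E_\perp)^{\perp}$, where $E_\perp=\{f\in X^*:f|_E=0\}$, and the restriction map $X^{**}\to (E_\perp)^*$ induces an isometry $X^{**}/E\cong (E_\perp)^*$; in particular $\dim E_\perp=\dim X^{**}/E>N$, where $N:=\dim X/Z$. Evaluating this isometry at $x\in X\subset X^{**}$ gives $dist(x,E)_{X^{**}}=\sup\{|f(x)|:f\in E_\perp,\ \|f\|\le 1\}$. Thus, for a prescribed $c<1$, it suffices to produce $f\in E_\perp$ with $\|f\|\le 1$ and $\|f|_Z\|_{Z^*}\ge c$: indeed such an $f$ yields some $x\in S(Z)$ with $|f(x)|$ arbitrarily close to $\|f|_Z\|$, whence $dist(x,E)=\sup_{g\in E_\perp,\,\|g\|\le1}|g(x)|\ge|f(x)|\ge c$ after a harmless adjustment of constants. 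Recalling $\|f|_Z\|_{Z^*}=dist(f,Z^\perp)_{X^*}$ (Hahn--Banach, as $Z$ is closed of finite codimension), the problem reduces to finding a norm one $f\in E_\perp$ far from $Z^\perp$.

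Second, I would pass to finite dimensions. Since $\dim E_\perp>N=\dim Z^\perp$, I choose a subspace $F_0\subseteq E_\perp$ with $\dim F_0=N+1$ and set $V=F_0+Z^\perp\subseteq X^*$, a finite–dimensional normed space in which $dist(\cdot,Z^\perp)$ agrees with the distance computed in $X^*$. What remains is the purely finite–dimensional statement: if $F$ and $K$ are subspaces of a finite–dimensional normed space with $\dim F>\dim K$, then $\sup_{f\in S(F)}dist(f,K)=1$ (to be applied with $F=F_0$, $K=Z^\perp$). I emphasise that linear algebra alone gives only a \emph{positive} distance, so the genuine content is reaching distance arbitrarily close to $1$.

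The heart of the argument is this finite–dimensional claim, which I would prove by an antipodal (Borsuk--Ulam) argument; this is the step I expect to be the main obstacle. Assume first that the ambient norm is smooth, so that the duality map $J:S(V)\to S(V^*)$, sending each vector to its unique norming functional, is continuous and odd. A basis of $K$ identifies $K^*$ with $\R^N$, and $g:=J(\cdot)|_K:S(F)\to\R^N$ is continuous and odd on $S(F)\cong S^{N}$ (here $\dim F=N+1$). By the Borsuk--Ulam theorem $g$ vanishes at some $f_0\in S(F)$; then $J(f_0)$ is a norm one functional with $J(f_0)(f_0)=1$ and $J(f_0)|_K=0$, so $\|f_0-k\|\ge J(f_0)(f_0-k)=1$ for every $k\in K$, i.e. $dist(f_0,K)=1$. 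For a general, non-smooth norm I would approximate it from below by a smooth norm $|\cdot|$ with $c\|\cdot\|\le|\cdot|\le\|\cdot\|$, run the smooth case to get $f_0$ with $dist_{|\cdot|}(f_0,K)=1$, and transfer back: $\|f_0-k\|\ge|f_0-k|\ge 1$ while $\|f_0\|\le c^{-1}$, so $f_0/\|f_0\|$ has $\|\cdot\|$-distance at least $c$ from $K$. Feeding this functional through the reduction of the first paragraph produces the required $x\in S(Z)$ with $dist(x,E)\ge c$. The delicate points to monitor are the identification $\dim X^{**}/E=\dim E_\perp$ for weak$^*$-closed $E$, and the smoothing step, which lets me invoke Borsuk--Ulam with slack instead of demanding an exactly smooth norm.
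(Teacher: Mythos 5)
Your proof is correct, but it takes a genuinely different route from the paper's. The paper stays on the primal side, in $X^{**}$: it argues by contradiction, assuming every point of $B(Z)$ lies within $c$ of $E$, deduces the inclusion $B(Z)\subset (1+c)B(E)+cB(X^{**})$, uses weak$^*$-compactness of the right-hand side to upgrade $B(Z)$ to $B(Z^{\perp\perp})$, then passes to the quotient by $W=Z^{\perp\perp}\cap E$ (which makes $E/W$ finite-dimensional, since $Z^{\perp\perp}$ has codimension $\dim X/Z$) and invokes the Krasnoselskii--Krein--Milman lemma, cited from the literature, to produce an element of $Z^{\perp\perp}/W$ whose distance to $E/W$ exceeds $c$ --- contradicting the inclusion. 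You instead dualize: the bipolar theorem and the isometry $X^{**}/E\cong (E_\perp)^*$ (this is the only place weak$^*$-closedness of $E$ is used in your argument, versus the compactness step in the paper's) reduce the problem to finding a norm-one $f\in E_\perp$ with $dist(f,Z^\perp)\geq c$, and the finite-dimensional statement you then need --- that the unit sphere of a subspace of strictly larger dimension contains points at distance arbitrarily close to $1$ from a given subspace --- is exactly the Krasnoselskii--Krein--Milman lemma the paper cites; you re-prove it via Borsuk--Ulam applied to the duality map of a smooth norm, with a smoothing-and-rescaling step to handle general norms. So the two proofs share the same finite-dimensional core but package it differently: the paper's version is shorter because it quotes the lemma and never leaves $X^{**}$, at the cost of a contradiction argument and a weak$^*$-compactness step; yours is direct rather than by contradiction, self-contained at the crucial finite-dimensional step, and makes the dimension count ($\dim E_\perp>\dim Z^\perp$) completely transparent. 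All the steps you flag as delicate --- the identification $\dim X^{**}/E=\dim E_\perp$, the continuity and oddness of the duality map for a smooth norm in finite dimensions, and the existence of smooth approximating norms --- are indeed correct and standard.
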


\begin{proof}
Suppose, for the sake contradiction, that the statement of the lemma is false.
Then there exists $c \in (0,1)$ with the property that, for every $x \in B(Z)$,
there exists $e \in E$ such that $\|x-e\|_{X^{**}} \leq c$. By the triangle inequality,
$\|e\|_{X^{**}} \leq 1+c$, hence $B(Z) \subset (1+c) B(E) + c B(X^{**})$. The set on the
right is weak$^*$ closed (even weak$^*$ compact).
Taking the weak$^*$ closure of the left hand side, we obtain
\begin{equation}
B(Z^{\perp\perp}) \subset (1+c) B(E) + c B(X^{**})
\label{within_c}
\end{equation}
Let $W = Z^{\perp\perp} \cap E$, and consider the quotient map
$q : X^{**} \to X^{**}/W$. This map takes $Z^{\perp\perp}$
and $E$ to $Z^\prime = Z^{\perp\perp}/W$ and $E^\prime = E/W$, respectively.
Then $\dim E^\prime < \infty$, and $\dim Z^\prime > \dim E^\prime$.
By the well-known result by Krasnoselskii, Krein, and Milman
(see e.g.~\cite[Lemma 1.19]{HMVZ}), there exists $z^\prime \in Z^\prime$
such that $c < dist(z^\prime, E^\prime)_{X^{**}/W} = \|z^\prime\|_{X^{**}/W} < 1$.
Find $z \in Z^{\perp\perp}$ such that $\|z\| \leq 1$, and $q(z) = z^\prime$.
For every $e \in E$, we then have
$\|z - e\|_{X^{**}} \geq \|q(z-e)\|_{X^{**}/W}  \geq dist(z^\prime, E^\prime)_{X^{**}/W} > c$,
which contradicts \eqref{within_c}.
\end{proof}

\begin{proof}[Proof of Theorem~\ref{biorth_syst}]
By Hahn-Banach Theorem, there exist linear functionals $f_i \in X^*$,
satisfying $ \langle x_i, f_j \rangle = \delta_{ij}$ for $i,j \in I$.
Throughout the proof,
we consider the functionals $f_i$ as acting on $X^{**}$,
and their kernels $\ker f_i$ as subsets of $X^{**}$.
We also identify $X$ with its canonical image in $X^{**}$.

We shall construct a sequence of finite disjoint sets $S_j \subset I$
such that for any $j$ there exists a norm $1$ $y_j \in \spn[x_i : i \in S_j]$
with the property that $E(y_j, \spn[x_i : i \notin S_j]) > 1/2$.
Once this is done, let $y = (y_1 + \ldots + y_n)/n$. Clearly $\|y\| \leq 1$.
It remains to show that $\|y - \sum_{i \in F} \alpha_i x_i\| > 1/(2n)$
for any $F \subset I$ of cardinality less than $n$. As the sets $S_j$ are
disjoint, there exists $j$ such that $S_j \cap F = \emptyset$.
Then
$$
\|y - \sum_{i \in F} \alpha_i x_i\| =
\Big\| \frac{y_j}{n} + \frac{1}{n} \sum_{k \neq j} y_k -
\sum_{i \in F} \alpha_i x_i \Big\| \geq
\frac{1}{n} E(y_j, \spn[x_i : i \notin S_j]) > \frac{1}{2n} .
$$

We construct the sets $S_j$ and vectors $y_j$ inductively. Let $S_0 = \emptyset$.
Suppose the sets $S_j$ have already been obtained for all $j \leq m-1$ ($m \in \N$).
Let us construct $S_m$ and $y_m$. Let $T = \cup_{j < m} S_j$.
Introduce the spaces $E_0 = \cap_{i \in I} \ker f_i \hookrightarrow X^{**}$,
and $E_T = \spn[x_i : i \in T] \hookrightarrow X$. 
Define the projection $Q_T$ from $X^{**}$ onto $E_T$ by setting
$Q_T x = \sum_{i \in T} \langle f_i, x \rangle x_i$.
Clearly, $E_0$ is weak$^*$ closed, and $E_T$ is weak$^*$ closed due to being
finite dimensional. As $E_0 \subset \ker Q_T$, we conclude that
$E = E_0 + E_T$ is also weak$^*$ closed. Note that the set $(f_i)$ is linearly
independent, hence $\dim X^{**}/E_0 = \infty$.

Now set $Z = X \cap (\cap_{i \in T} \ker f_i)$. As $\dim X/Z < \infty$,
Lemma~\ref{complemented} implies the existence of $z \in B(Z)$ satisfying
$dist(z, E)_{X^{**}} > 5/6$. As $\spn [x_i : i \in I]$ is dense in $X$, there
exists $z_1 \in S(\spn [x_i : i \in I])$ such that  $\|z - z_1\| < 1/(12 \|Q_T\|)$,
and $dist(z_1, E)_{X^{**}}  > 5/6$. Let $z_2 = z_1 - Q_T z_1$. Then
$$
\|z_2 - z_1\| = \|Q_T z_1\| = \|Q_T (z_1 - z)\| \leq \|Q_T\| \|z_1  - z\| < 1/12 ,
$$
hence $\|z_2\| < 13/12$, and $dist(z_2, E)_{X^{**}} > 5/6 - 1/12 = 3/4$.
Letting $y = z_2/\|z_2\|$, we conclude that $dist(y, E)_{X^{**}} > 2/3$.

By our construction, there exists a finite set $S \subset I \backslash T$
such that $y \in \spn[x_i : i \in S]$.
Let $I^\prime = I \backslash (T \cup S)$, and show that there exists
a finite set $F \subset I^\prime$ such that
$$
E(y, \spn[x_i : i \in T \cup (I^\prime \backslash F)]) > 2/3 .
$$
Once such a set is found, then we can take $y_m = y$, and $S_m = S \cup F$.

Suppose otherwise. Then, for every $F$ as above, there exists
$y_F \in \spn[x_i : i \in T \cup (I^\prime \backslash F)]$,
satisfying $\|y - y_F\| \leq 2/3$. Observe that the set ${\mathcal{F}}(I^\prime)$
of finite subsets of $I^\prime$ forms a net, ordered by inclusion. More
precisely, for $F_1, F_2 \in {\mathcal{F}}(I^\prime)$, we say
$F_1 \prec F_2$ if $F_1 \subset F_2$. For any $F_1, F_2 \in {\mathcal{F}}(I^\prime)$,
there exists $F_3 \in {\mathcal{F}}(I^\prime)$ such that  $F_1 \prec F_3$ and $F_2 \prec F_3$
(in fact, we can take $F_3 = F_1 \cup F_2$).
By the triangle inequality, $\|y_F\|_{X^{**}}= \|y_F\|\leq 5/3$ for each $F$.
As the unit ball of $X^{**}$ is weak$^*$-compact, there exits a subnet
${\mathcal{A}}$ of ${\mathcal{F}}(I^\prime)$ such that  the net
$(y_F)_{F \in {\mathcal{A}}}$ converges weak$^*$ to some $x \in X^{**}$.
Then $\|y - x\|_{X^{**}} \leq \sup_F \|y - y_F\|\leq 2/3$.
Note that, for any $j \in F \cup S$, $\langle f_j, y_F \rangle = 0$.
Moreover, for every $F \in {\mathcal{F}}(I^\prime)$, there exists
$G \in {\mathcal{A}}$ containing $F$.
Therefore, $\langle f_j, x \rangle = 0$ for any
$j \in I^\prime \cup S = I \backslash T$.
Then $\langle f_j, x - Q_T x \rangle = 0$ for any $j \in I$, hence
$x - Q_T x \in E_0$, and therefore, $x \in E$.
This, however, contradicts $dist(y, E)_{X^{**}} > 2/3$.
\end{proof}



As an application, consider a compact set $K\subset \C$, such that $\Omega=\textbf{Int}(K)$
is a Jordan domain, and $C=\partial K$ is a rectifiable Jordan curve.
Define the family of {\it Faber polynomials} $\{F_n(z)\}_{n=0}^{\infty}$,
associated with $K$. Let $\phi$ be the Riemann mapping function defined from
$\C\setminus \overline{\D}$ onto $\C\setminus K$. Then
\[
F_n(z)= \frac{1}{2\pi i}\int_{|w|=1}\frac{w^n\phi'(w)}{\phi(w)-z}dw.
\]
These polynomials play a main role in complex approximation theory, so the dictionary
$\mathcal{D}=\{F_n\}_{n=0}^{\infty}$ is of interest
(see \cite{suetin}, \cite{curtiss} for more information on Faber polynomials).

\begin{corollary}
Let K be a closed Jordan domain of bounded boundary rotation,
such that the boundary $C=\partial K$ has no external cusps. Let $\mathcal{D}=\{F_n\}_{n=0}^{\infty}$, where $F_n(z)$ denotes the $n$-th Faber polynomial associated to $K$. Then $\mathcal{D}$ satisfies Shapiro's theorem on $A(K)$.
\end{corollary}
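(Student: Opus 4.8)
The plan is to recognize the Faber system $\{F_n\}_{n\geq 0}$ as a complete minimal system in the Banach space $A(K)$ (functions continuous on $K$ and analytic in its interior, with the sup-norm) and then to quote Theorem~\ref{biorth_syst}. Indeed, the approximation scheme attached to the dictionary $\mathcal D=\{F_n\}$ is precisely $\Sigma_n(\mathcal D)=\{\sum_{i\in F}\alpha_iF_i:|F|\leq n\}$, which is exactly the scheme appearing in that theorem; so once $\{F_n\}$ is shown to be complete and minimal, Theorem~\ref{biorth_syst} yields at once that $\mathcal D$ satisfies Shapiro's Theorem on $A(K)$. The whole argument therefore reduces to verifying these two properties of the Faber polynomials.

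Completeness is the easy half and uses only that $K$ is a Jordan domain. Since $\C\setminus K$ is connected, Mergelyan's theorem gives density of the polynomials in $A(K)$; and as $\deg F_n=n$ one has $\spn[F_0,\dots,F_n]=\Pi_n$, so $\spn[\mathcal D]$ is the full algebra of polynomials, hence dense. For minimality I would produce bounded biorthogonal functionals. Writing $\phi$ for the exterior conformal map and using the defining expansion, one has $F_m(\phi(w))=w^m+g_m(w)$ with $g_m$ analytic in $\{|w|>1\}$ and vanishing at $\infty$; consequently the functionals $a_n(f)=\frac{1}{2\pi}\int_0^{2\pi}f(\phi(e^{i\theta}))e^{-in\theta}\,d\theta$ satisfy $a_n(F_m)=\delta_{nm}$. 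The cleanest way to see that these behave well is to invoke the classical fact that, under the present geometric hypotheses, the Faber operator $T\colon A(\overline{\D})\to A(K)$, $T(w^n)=F_n$, is a bounded isomorphism (see \cite{suetin}). Since the monomials $\{w^n\}$ form a complete minimal system in the disc algebra $A(\overline{\D})$ — their biorthogonal functionals being the Taylor-coefficient maps $f\mapsto\widehat f(n)$, bounded by Cauchy's estimate — the isomorphism $T$ transports this to a complete minimal system $\{F_n=Tw^n\}$ in $A(K)$, with biorthogonal functionals $\widehat{(\cdot)}(n)\circ T^{-1}$.

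The main obstacle is exactly this minimality statement, i.e.\ the boundedness and biorthogonality of the Faber-coefficient functionals (equivalently, the invertibility of the Faber operator), and this is the sole place where \emph{bounded boundary rotation} and \emph{no external cusps} enter: the first guarantees boundedness of $T$ on the disc algebra, the second guarantees boundedness of $T^{-1}$, and together they make $T$ an isomorphism onto $A(K)$. Rather than reproving these estimates I would cite the corresponding theorem from the theory of Faber series \cite{suetin}. With completeness and minimality established, Theorem~\ref{biorth_syst} applies verbatim, producing for each $n$ a norm-one $y\in A(K)$ with $E(y,\Sigma_{n-1}(\mathcal D))>1/(2n)$, whence $\mathcal D$ satisfies Shapiro's Theorem, completing the proof.
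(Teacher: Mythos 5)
Your proposal is correct and follows essentially the same route as the paper: both reduce the statement to showing that $\{F_n\}$ is a complete minimal system in $A(K)$ and then invoke Theorem~\ref{biorth_syst}, with minimality obtained by transporting the complete minimal system $\{w^n\}$ of $A(\overline{\D})$ through the Faber operator $T$, whose invertibility under the stated geometric hypotheses the paper cites from Gaier and Anderson--Clunie rather than Suetin. Your side remarks (Mergelyan for completeness, and the explicit biorthogonal functionals $a_n$, which are automatically bounded since $\phi(e^{i\theta})\in\partial K$) are sound and would even yield minimality without invoking the isomorphism, but they are not needed once $T$ is known to be an isomorphism.
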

\begin{proof}
We show that, for $K$ as in the statement of the theorem, the Faber polynomials
form a complete minimal system in $A(K)$. An application of Theorem \ref{biorth_syst}
completes the proof.

On $K=\overline{\D}$, the Faber polynomials are the monomials $e_n$ ($e_n(z) = z^n$).
It is well known that $\spn[e_n : n \geq 0]$ is dense in $A(\overline{\D})$.
Moreover, the functionals $f \mapsto \widehat{f}(n)$ are biorthogonal to the $e_n$'s.
In the general case, by \cite[Chapter 1, Section C]{gaier}, there exists a
bounded injective operator $T : A(\overline{\D}) \to A(K)$, such that
$T e_n = F_n$ for any $n \geq 0$. By \cite{anderson}, the range of $T$ coincides with $A(K)$,
and $\|T^{-1}\|<\infty$. As an isomorphic image of a complete minimal system is
again a complete minimal system, we are done.
\end{proof}


\subsection{Generalized Haar schemes}
In this section we introduce and investigate the class of generalized Haar
families in spaces of functions (numerous examples will be given below).
Suppose, for each $n$, $A_n$ is a set of continuous functions on $\Omega$.
We say that the family $\{A_n\}$ is {\it generalized Haar}
if there exists a function $\psi = \psi_{\{A_n\}} : \N \to \N$ such that
no non-zero function of the form $\Re g$ ($g \in A_n$) has more than
$\psi(n) - 1$ zeroes on $\Omega$. Finally, the approximation scheme $(X,\{A_n\})$ is named ``generalized Haar''
if  $\{A_n\}$ is a generalized Haar system.

Very often, we consider the approximation schemes arising from dictionaries
(see \eqref{nterm} for the definition). We say that a dictionary ${\mathcal{D}}$ is a
{\it generalized Haar system} if the  family $\{\Sigma_n(\mathcal{D})\}$ is Haar.

In the four examples below, we exhibit some generalized Haar dictionaries.
The space $X$ is either $C([a,b])$, or $L_p(a,b)$ ($0 < p < \infty$), and
$\psi(n) = n$.
\begin{enumerate}
\item The dictionary ${\mathcal{D}}$, consisting of the functions
$f_\lambda(t) = t^\lambda$ ($\lambda \in \R$) on an interval $[a,b]$ with $0 < a < b$.
Indeed, these functions form a generalized
Haar system \cite[Section 3.1]{borwein}.
As polynomials are dense in $C([a,b])$, $\spn[{\mathcal{D}}]$ is dense in $X$.
\item The dictionary ${\mathcal{D}}$, consisting of functions $f_k(t) = t^k$
($k \in \N \cup \{0\}$) on arbitrary $[a,b]$. Indeed, the family $(f_k)$
forms a generalized Haar system on subintervals of $(0,\infty)$,
and of $(-\infty,0)$.
\item The dictionary ${\mathcal{D}}$, consisting of functions
$f_\lambda(t) = \exp(\lambda t)$ ($\lambda \in \R$), with arbitrary $[a,b]$.
In this case, the density of $\spn[{\mathcal{D}}]$ in $C([a,b])$ can be deduced,
for instance, from Stone-Weierstrass Theorem.
Furthermore, ${\mathcal{D}}$ is a generalized Haar system, by
\cite[Chapter 3]{borwein}.
\item The dictionary ${\mathcal{D}}$, consisting of functions $f_k(t) = t^k$
on $\R$. Consider a weight $W$ -- that is, an $L_1$ function $W : \R \to [0,1]$.
Consider the measure $\mu$, defined by $\mu(E) = \int_E W(x) \,dx$.
Take $X$ to be either $L_p(\mu)$ ($1 \leq p < \infty$), or a set of
continuous functions $f$ on $\R$ satisfying $\lim_{t \to \infty} f(t) W(t) = 0$.
For certain weights $W$, $\spn[{\mathcal{D}}]$ is known to be dense in $X$.
For instance, this is true for $W(x) = \exp(- |x|^\alpha)$, for any $\alpha \geq 1$.
See \cite{lubinsky} for this and other results on the density of polynomials in
the weighted spaces $X$.
\end{enumerate}
Moreover, the sets of trigonometric functions $$\mathcal{T}_n=\spn[\{1,\cos(t),\sin(t),\cdots,\cos(nt), \sin(nt)\}]$$ define a Haar system on $[0,2\pi)$.
A somewhat more complicated example  of generalized Haar system involves rational functions.
For $\Omega \subset \C$, denote by $R_n(\Omega)$ the set
of all rational functions $p(z)/q(z)$, where the polynomials $p(z)=\sum_{k=0}^na_kz^k$ and $q(z)=\sum_{k=0}^nb_kz^k$
have complex coefficients and degree $\leq n$, such that $q(z)$ doesn't vanish in $\Omega$.
We also consider the set $E_n(\Omega)$ of trigonometric rational functions of degree less than $n$,
consisting of functions $t \mapsto p(e^{it})/q(e^{it})$,
where $p(z)=\sum_{k=-n}^na_kz^k$ and $q(z)=\sum_{k=-n}^nb_kz^k$,
and $q(z)\neq 0$ for all  $z\in\Omega$.


\begin{proposition}\label{haar_schemes}
If $\Omega \subset \R$, then $\{R_n(\Omega)\}$ is a generalized Haar system. Moreover, if $\Omega\subseteq \partial\D=\T$ then
$\{E_n(\Omega)\}$ is a generalized Haar family.
\end{proposition}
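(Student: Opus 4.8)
The plan is to exhibit, in each case, an explicit $\psi$ bounding the number of real zeros of $\Re g$, by reducing to the classical zero-counts for real algebraic and trigonometric polynomials. The unifying observation I would use is that for $g = p/q$ one has $g = p\overline{q}/|q|^2$, so on the set where $q$ does not vanish (which includes all of $\Omega$, by the definitions of $R_n(\Omega)$ and $E_n(\Omega)$) the zeros of $\Re g$ coincide with the zeros of the numerator $\Re(p\overline{q})$, since $|q|^2 > 0$ there.

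For the first assertion ($\Omega \subset \R$), I would take $g = p/q \in R_n(\Omega)$ and note that for real $t$ the product $p(t)\overline{q(t)} = \sum_{j,k} a_j \overline{b_k}\, t^{j+k}$ is an ordinary polynomial of degree at most $2n$ with complex coefficients, so $P(t) = \Re(p(t)\overline{q(t)})$ is a real polynomial of degree at most $2n$. If $\Re g$ is not identically zero, then $P \not\equiv 0$, hence $P$ has at most $2n$ real roots, and therefore $\Re g$ has at most $2n$ zeros on $\Omega$. Thus $\psi(n) = 2n+1$ works.

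For the second assertion ($\Omega \subseteq \T$), I would set $z = e^{it}$ and use $\overline{z} = z^{-1}$ to write $\overline{q(z)} = \sum_{k=-n}^{n} \overline{b_k}\, z^{-k}$, so that $p(z)\overline{q(z)} = \sum_{k,l} a_k \overline{b_l}\, z^{k-l}$ is a trigonometric polynomial in $t$ of degree at most $2n$. Its real part $T(t) = \Re(p(e^{it})\overline{q(e^{it})})$ is then a real trigonometric polynomial of degree at most $2n$; if not identically zero, it has at most $4n$ zeros in a period (clear denominators to realize it as $e^{-2nit}$ times an algebraic polynomial of degree at most $4n$ in $e^{it}$). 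Hence $\Re g$ has at most $4n$ zeros on $\Omega$, and $\psi(n) = 4n+1$ works.

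The substantive content is entirely in these degree counts; the rest is bookkeeping. I expect the only point requiring care to be the trigonometric case, where one must confirm that forming $p\overline{q}$ on $\T$ keeps the result a trigonometric polynomial of degree $2n$ (rather than inflating the degree), and then invoke the sharp bound of $2d$ zeros per period for a real trigonometric polynomial of degree $d$.
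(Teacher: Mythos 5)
Your proposal is correct and follows essentially the same route as the paper: in both cases one writes $\Re g = \Re(p\overline{q})/|q|^2$ (the paper spells this out as $(\Re p\,\Re q + \Im p\,\Im q)/|q|^2$ in the real case and as a trigonometric polynomial $\sum_{k=-2n}^{2n} c_k e^{ikt}$ over $|q|^2$ in the circle case), and then invokes the zero bounds $2n$ for real algebraic polynomials and $4n$ for real trigonometric polynomials of degree $2n$. The only cosmetic difference is that the paper cites the Haar property of the trigonometric family $\mathcal{T}_{2n}$ rather than re-deriving the $4n$ bound by clearing denominators as you do.
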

\begin{proof}
We handle $\{R_n(\Omega)\}$ first. If $g = p/q \in R_n(\Omega)$, then
$p=\Re p+(\Im p)i$, $q=\Re q+(\Im q)i$, and $\Re p, \Re q, \Im p, \Im q$
are polynomials of degree $\leq n$. Hence
$\Re g = \Re \left(\frac{p \overline{q}}{|q|^2}\right)=\frac{\Re p \Re q + \Im p \Im q }{|q|^2}$. As
$t \mapsto  \Re p(t) \Re q(t) + \Im p(t) \Im q(t)$ is a polynomial of
degree not exceeding $2n$, $\Re g$ must vanish if it has more than $2n$ zeroes.

Now consider $g = p/q \in E_n(\Omega)$, with $\Omega\subseteq \partial\D$. Then $p(t) = \sum_{|j| \leq n} a_j e^{itj}$, $q(t) = \sum_{|j| \leq n} b_j e^{itj}$, and
\begin{equation}\label{algebraic_rational_equals_trigonom}
g(t)=\frac{\sum_{k=-n}^n a_ke^{ikt}}{\sum_{k=-n}^n b_ke^{ikt}}=\frac{\sum_{k=0}^{2n} a_{k-n}z^k}{\sum_{k=0}^{2n} b_{k-n}z^k}; \ \ (z=e^{it}),
\end{equation}
so that
\[
\Re g(t) = \Re\left(\frac{\left(\sum_{k=0}^{2n} a_{k-n}z^k\right)\left(\sum_{k=0}^{2n} \overline{b_{k-n}}z^{-k}\right)}{\left|\sum_{k=0}^{2n} b_{k-n}z^k\right|^2}\right) = \Re\left(\frac{\sum_{k=-2n}^{2n} c_{k}z^k}{\left|\sum_{k=0}^{2n} b_{k-n}z^k\right|^2}\right) \ \ (z=e^{it}) ,
\]
Representing $c_k = \alpha_k + i \beta_k$ ($\alpha_k, \beta_k \in \R$), we see that the
zeroes of $\Re g(t)$ are the zeroes of
\begin{eqnarray*}
x(t)&=& \Re\left(\sum_{k=-2n}^{2n} (\alpha_{k}+i\beta_k)e^{ikt}\right);  \\
&=& \Re\left(\sum_{k=-2n}^{2n} (\alpha_{k}+i\beta_k)(\cos(kt)+i\sin(kt))\right)\\
&=& \sum_{k=-2n}^{2n} (\alpha_{k}\cos(kt)-\beta_k\sin(kt))\\
&=&  \alpha_{0}+\sum_{k=1}^{2n} ((\alpha_{k}+\alpha_{-k})\cos(kt)+(\beta_{-k}-\beta_k)\sin(kt)) . 
\end{eqnarray*}
Thus, $x \in \mathcal{T}_{2n}$, and we are done,
since $\{\mathcal{T}_n\}_{n=1}^{\infty}$ is a Haar family.
\end{proof}

The next result shows that ``many'' generalized Haar approximation schemes
satisfy Shapiro's Theorem.
Below, $C_0(I)$ denotes the closure of continuous functions with
compact support in the $\|\cdot\|_\infty$ norm. In particular,
$C(I) = C_0(I)$ if $I$ is a compact set.

\begin{theorem}\label{cheb_syst}
Suppose $I$ is either a finite or infinite interval in $\R$, or the unit circle $\T$.
Suppose, furthermore, that $\mu$ is a finite atomless Radon measure on $I$, and $X$ is
a quasi-Banach space of functions on $I$, satisfying $L_p(\mu) \supseteq X \supseteq C_0(I)$
with some $p > 0$. Then any generalized Haar  approximation  scheme $(X, \{A_n\})$
satisfies Shapiro's Theorem.
\end{theorem}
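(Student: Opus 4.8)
My plan is to establish the weak Brudnyi condition and invoke Corollary~\ref{coro}. Concretely, I would produce a constant $c\in(0,1]$, \emph{independent of $n$}, with $\dens_n=E(S(X),A_n)\ge c$ for all $n$; the implication $(c)\Rightarrow(a)$ of Corollary~\ref{coro} then yields Shapiro's Theorem. I will freely use that the inclusions $C_0(I)\hookrightarrow X\hookrightarrow L_p(\mu)$ are continuous (by the closed graph theorem for quasi-Banach spaces), fixing constants $C,C'\ge1$ with $\|f\|_X\le C\|f\|_\infty$ on $C_0(I)$ and $\|f\|_{L_p(\mu)}\le C'\|f\|_X$ on $X$.

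Fix once and for all a compact subinterval $J\subseteq I$ (or $J=\T$) of measure $\mu_0:=\mu(J)>0$; this is chosen independently of $n$. For each $n$ write $N=\psi_{\{A_n\}}(n)$. Given a large integer $M$ and a small $\eta>0$, I would build a target $x=x_{M,\eta}\in C_0(I)$ with $\|x\|_\infty\le1$ as follows. Since $\mu$ is atomless, $t\mapsto\mu(J\cap(-\infty,t])$ is continuous, so $J$ splits into $M$ consecutive pieces of equal measure $\mu_0/M$; I let $x$ take the alternating values $+1,-1,+1,\dots$ on the \emph{cores} of successive pieces, interpolate continuously on thin collars of total measure at most $\eta\mu_0$, and vanish near $\partial J$ so that $x\in C_0(I)$. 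Each core then carries measure at least $(1-\eta)\mu_0/M$.

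To bound $E(x,A_n)$ from below, take any $g\in A_n$ and set $h=\Re g$. As $x$ is real, $|x-g|\ge|x-h|$ pointwise, so $\|x-g\|_X\ge (C')^{-1}\|x-g\|_{L_p(\mu)}\ge(C')^{-1}\|x-h\|_{L_p(\mu)}$. The generalized Haar property makes $h$ continuous with at most $N-1$ zeroes, hence with at most $N-1$ sign changes; thus $J$ is covered by at most $N$ intervals on each of which $h$ keeps a constant sign. On a core where $x=-1$ and $h\ge0$, or where $x=+1$ and $h\le0$, one has $|x-h|\ge1$. Since the values of $x$ alternate over consecutive cores whereas $h$ is sign-constant along each of its $\le N$ sign-intervals, a counting argument shows that at least $M/2-N$ of the $M$ cores are of this ``good'' type, whence, uniformly in $g$,
\[
\|x-h\|_{L_p(\mu)}^p\ \ge\ \Big(\frac M2-N\Big)\frac{(1-\eta)\mu_0}{M}\ =\ (1-\eta)\,\mu_0\Big(\frac12-\frac NM\Big).
\]
Normalizing by $\|x\|_X\le C\|x\|_\infty\le C$ and setting $\tilde x=x/\|x\|_X\in S(X)$, I obtain
\[
\dens_n\ \ge\ E(\tilde x,A_n)_X\ \ge\ \frac1{CC'}\Big((1-\eta)\mu_0\big(\tfrac12-\tfrac NM\big)\Big)^{1/p}.
\]
Letting $M\to\infty$ and then $\eta\to0$, with $n$ (hence $N$) fixed, yields $\dens_n\ge c:=(CC')^{-1}(\mu_0/2)^{1/p}$, a bound independent of $n$; one checks $c\le1$ because $\mu_0^{1/p}=\lim_{\eta\to0}\|x\|_{L_p}\le C'\|x\|_X\le C'C$.

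The crux is the \emph{uniformity} of $c$ in $n$. A single amplitude-one oscillation is $L_p$-close to some admissible $h$, since the approximant can absorb one extra oscillation on a set of small measure; hence the naive estimate decays as $N=\psi(n)\to\infty$. The escape is to send the number of oscillations $M\to\infty$ for each fixed $n$: the proportion of good cores tends to $1/2$ no matter how many sign changes $h$ is permitted, which decouples the bound from $\psi(n)$. The remaining work is routine bookkeeping: the equal-measure partition (atomlessness), the continuity collars placing $x$ in $C_0(I)$ — in particular treating infinite $I$ by working inside the fixed compact $J$, and the circle $\T$ by taking $M$ even so the pattern closes up — and the sign-run counting that produces the factor $\tfrac12-\tfrac NM$.
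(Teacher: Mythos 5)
Your proposal is correct and follows essentially the same route as the paper's own proof: both establish a uniform lower bound on $E(S(X),A_n)$ by building an alternating real continuous test function on an equal-$\mu$-measure partition, use the Haar bound on the sign changes of $\Re g$ together with a pairing/pigeonhole count to show that a definite proportion of the oscillations must be missed by any $g\in A_n$, transfer the resulting $L_p$ estimate to $X$ via the two continuous embeddings, and conclude with Corollary~\ref{coro}. The only immaterial difference is that the paper fixes the partition at $4(\psi(n)+1)$ pieces to obtain the uniform constant $1/5$ directly, whereas you let the number of pieces $M\to\infty$ for each fixed $n$ so that the good-core proportion tends to $1/2$; both devices decouple the final constant from $\psi(n)$.
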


\begin{proof}
Without loss of generality, we can assume $\mu(I) = 1$, and
$\int |f|^p \, d\mu \leq \|f\|_X^p$ for any $f \in X$. By Closed Graph Theorem,
there exists a constant $C$ such that, for any $f \in C_0(I)$, we have
$\|f\|_X \leq C \|f\|_\infty$.

For every $n \in \N$, we find a continuous function $h : I \to [-1,1]$ with compact
support, such that $\|h\|_\infty = 1$, and $\int |h-g|^p \, d\mu > 1/5$ for any $g\in A_n$. Indeed, if such an $h$ exists, then
$\|C^{-1}h\|_X\leq 1$ and $E(C^{-1} h, A_n)_{X} > 1/(5C)^{1/p}$.
By Corollary~\ref{coro}, $(A_n)$ satisfies Shapiro's Theorem in $X$.


As the measure $\mu$ is Radon,
$1 = \mu(I) = \sup \{ \mu([\alpha, \beta]) : [\alpha, \beta] \subset I\}$.
Pick $\alpha < \beta$ in $I$ such that $A = \mu([\alpha, \beta]) > 4/5$.
Let $N = \psi(n) + 1$. 
Set $t_0 = \alpha$, $t_{4N} = \beta$. As the map $s \mapsto \mu((a,s))$
is continuous, we can find $t_0 < t_1 < \ldots < t_{4N}$ such that, for
$1 \leq j \leq 4n$, $\mu((t_{j-1},t_j)) = A/(4N) > 1/(5N)$.
Recall that, for any $a<b$, $\mu((a,b))$ is the supremum of $\int \rho \, d\mu$,
taken over all non-negative continuous functions $\rho$, supported on
$(a,b)$, and such that $\|\rho\|_\infty \leq 1$. So, for $1 \leq j \leq 4N$,
we can find continuous $h_j : \R \to [0,1]$, supported on $(t_{j-1}, t_j)$,
such that $\int_{t_{j-1}}^{t_j} h_j \, d\mu > 1/(5N)$.

We shall show that $h = \sum_{j=1}^{4N} (-1)^j h_j^{1/p}$ satisfies
$\int |h-g|^p \, d\mu > 1/5$ for any $g \in A_n$.
As the function $h$ defined above is real-valued, it suffices to
prove the inequality $\int |h - \Re g|^p \, d\mu > 1/5$.
If $\Re g$ is identically $0$, the desired inequality follows from the
definition of $h$. Otherwise, denote by ${\mathcal{S}}$ the set of points
where $\Re g$ changes sign.
As $|{\mathcal{S}}| < N$, the set ${\mathcal{F}} = \{1 \leq k \leq 2N :
(t_{2k-2}, t_{2k}) \cap {\mathcal{S}} = \emptyset\}$ has the cardinality
larger than $N$. Note that, for $k \in {\mathcal{F}}$,
$\int_{t_{2k-2}}^{t_{2k}} |h - \Re g|^p \, d \mu > 1/(5N)$.
Indeed, if $g \leq 0$ on $(t_{2k-2}, t_{2k})$, then
$$
\int_{t_{2k-2}}^{t_{2k}} |h - g|^p \, d \mu \geq
\int_{t_{2k-1}}^{t_{2k}} |h - \Re g|^p \, d \mu \geq
\int_{t_{2k-1}}^{t_{2k}} h_{2k} \, d \mu > \frac{1}{5N} .
$$
The case of $g \geq 0$ is handled similarly.
Thus,
$$
\int |h-g|^p \, d\mu \geq
\sum_{k \in {\mathcal{F}}} \int_{t_{2k-2}}^{t_{2k}} |h - g|^p \, d \mu >
|{\mathcal{F}}| \cdot \frac{1}{5N} > \frac{1}{5} ,
$$
completing the proof.
\end{proof}

A similar result holds in the analytic case. Below, $A$ and $H_p$ refer
to the disk algebra and to the Hardy space, respectively.

\begin{proposition}\label{analytic}
Suppose $X$ is a quasi-normed space of analytic functions on $\T$,
such that $A \subset X \subset H_p$ for some $p > 0$.
Suppose $(X, \{A_n\})$ is a generalized Haar approximation scheme.
Then $(X, \{A_n\})$ satisfies Shapiro's Theorem.
\end{proposition}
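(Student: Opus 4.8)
The plan is to reduce everything to the criterion of Corollary~\ref{coro}: it suffices to exhibit a single constant $c>0$ for which $E(S(X),A_n)\geq c$, i.e.\ $\dens_n \geq c$, for every $n$. Concretely, for each $n$ I want one function $h_n\in X$ with $\|h_n\|_X$ controlled and with $E(h_n,A_n)_X$ bounded below independently of $n$; after normalizing, $h_n/\|h_n\|_X\in S(X)$ witnesses the weak Brudnyi bound, and Corollary~\ref{coro}(c)$\Rightarrow$(a) gives Shapiro's Theorem. This is exactly the architecture of the proof of Theorem~\ref{cheb_syst}. The one genuinely new feature, and the heart of the matter, is that the witnessing function is no longer free to be an arbitrary oscillating real function, since every element of $X$ is analytic; resolving this constraint is the main obstacle.

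First I would fix the comparison constants. As the inclusions $A\subset X\subset H_p$ are continuous (by the Closed Graph Theorem, just as in Theorem~\ref{cheb_syst}), there are constants $C_1,C_2$ with $\|f\|_X\leq C_1\|f\|_\infty$ for $f\in A$ and $\|f\|_{H_p}\leq C_2\|f\|_X$ for $f\in X$. Hence $E(h,A_n)_X\geq C_2^{-1}E(h,A_n)_{H_p}$, while any disk-algebra function $h$ satisfies $\|h\|_X\leq C_1\|h\|_\infty$. So it is enough to produce, for each $n$, a function $h_n\in A$ with $\|h_n\|_\infty$ bounded and $E(h_n,A_n)_{H_p}$ bounded below uniformly; the homogeneity of $A_n$ then transfers the lower bound to $E(h_n/\|h_n\|_X,A_n)_X$.

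The key construction is to take the analytic monomial $h_n(z)=z^{m(n)}$, so that $\|h_n\|_\infty=1$ and the boundary real part $\Re h_n(e^{it})=\cos(m(n)t)$ is a rapidly oscillating function of fixed amplitude. For $g\in A_n$ the generalized Haar hypothesis forces $\Re g$ to have at most $\psi(n)-1$ zeroes on $\T$, so (apart from the trivial case $\Re g\equiv 0$) the circle splits into at most $\psi(n)$ arcs on each of which $\Re g$ keeps a constant sign. Using the pointwise bound $|h_n-g|\geq|\Re h_n-\Re g|$, I would estimate $\int_\T|\cos(m(n)t)-\Re g|^p\,dt$ from below: on a constant-sign arc of $\Re g$, the subset where $\cos(m(n)t)$ carries the opposite sign with magnitude at least $1/2$ contributes at least $(1/2)^p$ to the integrand. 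Since each of the sets $\{\cos(m(n)t)\le -1/2\}$ and $\{\cos(m(n)t)\ge 1/2\}$ occupies a third of every full period, summing the per-arc contributions shows that this ``good set'' has measure at least $\tfrac{2\pi}{3}-\psi(n)\cdot\tfrac{2\pi}{m(n)}$, the subtracted term accounting for the at most one partial period lost at each end of each arc.

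Finally I would choose the frequency $m(n)$ large relative to the Haar bound, say $m(n)\geq 6\psi(n)$, so that the boundary losses are absorbed and the good set retains measure at least $\pi/3$. This yields $E(h_n,A_n)_{H_p}^p\geq \tfrac{1}{6}(1/2)^p$, hence $E(S(X),A_n)\geq C_1^{-1}C_2^{-1}\cdot\tfrac{1}{2}\,6^{-1/p}=:c>0$ uniformly in $n$, and Corollary~\ref{coro} completes the argument. The crux, and the only point where the analytic case really departs from Theorem~\ref{cheb_syst}, is precisely that the oscillating test function must be analytic; using the monomials $z^m$, whose real parts are the explicit oscillations $\cos(mt)$, and letting the frequency $m$ outrun the Haar bound $\psi(n)$, is what circumvents this obstruction.
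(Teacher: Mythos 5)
Your proposal is correct and takes essentially the same route as the paper's own (sketched) proof: the paper likewise reduces to Corollary~\ref{coro} with the comparison constants of Theorem~\ref{cheb_syst}, chooses the analytic exponential $h(t)=-i\exp(4\pi \psi(n) i t)$ (so $\Re h(t)=\sin(4\pi\psi(n)t)$, your monomial $z^{m(n)}$ up to rotation and choice of frequency), and combines the pointwise bound $|h-g|\geq|\Re h-\Re g|$ with the generalized Haar zero count to bound the boundary integral below. Your level-set counting (measure of $\{|\cos(m(n)t)|\geq 1/2\}$ with the opposite sign inside each constant-sign arc, with $m(n)\geq 6\psi(n)$ absorbing the endpoint losses) is only a bookkeeping variant of the paper's count of double half-periods of $\sin(4\pi\psi(n)t)$ free of zeroes of $\Re g$.
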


\begin{proof}[Sketch of the proof]
We identify $\T$ with $[0,1]$. Let $N = \psi(n)$, and consider
$h(t) = -i \exp(4\pi N i t)$. As in the proof of the previous theorem,
it suffices to show that, for any $g \in A_n$,
$\int_0^1 |\Re h(t) - \Re g(t)|^p \, dt > c/4$, where
$c = \int_0^1 |\sin u|^p \, du$. Note that $\Re h(t) = \sin (4 \pi N t)$.
For $0 \leq j \leq 4N$, set $t_j = j/(4N)$. Then
$\int_{t_{j-1}}^{t_j} |\Re h|^p = c/(4N)$ for any $j$.
If $\Re g$ is identically $0$, then
$$
\int_0^1 |\Re h - \Re g|^p \, dt =
\sum_{j=1}^{4N} \int_{t_{j-1}}^{t_j} |\Re h|^p = c .
$$
Otherwise, denote by ${\mathcal{F}}$ the set of all $k \in \{1, \ldots, 2N\}$
such that $\Re g$ doesn't vanish on $(t_{2k-2}, t_{2k})$. As $|{\mathcal{F}}| < N$,
We complete the proof as in Theorem~\ref{cheb_syst}.
\end{proof}

Another interesting Banach space is
$CBV_0(a,b)=\{f\in C([a,b]) :f(a)=0, V_{[a,b]}(f)<\infty\}$,
equipped with the norm $\|f\|_{BV}=V_{[a,b]}(f)$
(here $V_{[a,b]}(f)$ denotes the total variation of $f$).

\begin{theorem}\label{cheb_syst_BV}
Let ${\mathcal{D}}$ be a dictionary on $CBV_0(a,b)$. Suppose
${\mathcal{D}} \subset C^1([a,b])$, and
$\mathcal{D}'=\{g':g\in \mathcal{D}\}$ is a generalized Haar system on $[a,b]$.
Then ${\mathcal{D}}$ satisfies Shapiro's Theorem in $CBV_0(a,b)$.
\end{theorem}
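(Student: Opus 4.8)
The plan is to reduce the statement to the already-established Theorem~\ref{cheb_syst} by passing to derivatives. The crucial observation is that on absolutely continuous functions the total variation norm is exactly the $L_1$ norm of the derivative: if $f$ is absolutely continuous on $[a,b]$ with $f(a)=0$, then $\|f\|_{BV}=V_{[a,b]}(f)=\int_a^b|f'(t)|\,dt=\|f'\|_{L_1(a,b)}$. First I would introduce the differentiation map $D:f\mapsto f'$ and record that it is a linear isometry from the absolutely continuous functions in $CBV_0(a,b)$ onto $L_1(a,b)$, with inverse $g\mapsto\int_a^{\cdot}g$. Since ${\mathcal{D}}\subset C^1([a,b])$, every finite linear combination of elements of ${\mathcal{D}}$, and hence every element of $\overline{\spn[{\mathcal{D}}]}$, is absolutely continuous, so the entire approximation scheme lives in the domain of this isometry.

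Next I would transport the scheme. Because $D$ is linear, $D(\Sigma_n({\mathcal{D}}))=\Sigma_n({\mathcal{D}}')$, and for any admissible $f$ and any $g\in\Sigma_n({\mathcal{D}})$ one has $\|f-g\|_{BV}=\|f'-g'\|_{L_1}$; taking infima yields $E(f,\Sigma_n({\mathcal{D}}))_{BV}=E(f',\Sigma_n({\mathcal{D}}'))_{L_1}$. Thus $D$ is an isometric isomorphism intertwining the two filtrations, and since all of the quantities $E(x,A_n)$ are preserved by such a map, Shapiro's Theorem holds for one scheme if and only if it holds for the other. Moreover, the hypothesis that ${\mathcal{D}}$ is a dictionary translates, under $D$, into $\spn[{\mathcal{D}}']$ being dense in $L_1(a,b)$, so that $(\Sigma_n({\mathcal{D}}'))$ is a genuine approximation scheme in $L_1(a,b)$.

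It then remains to verify Shapiro's Theorem for $(L_1(a,b),\{\Sigma_n({\mathcal{D}}')\})$, which is exactly a case of Theorem~\ref{cheb_syst}. By hypothesis ${\mathcal{D}}'$ is a generalized Haar system on $[a,b]$, that is, $\{\Sigma_n({\mathcal{D}}')\}$ is a generalized Haar family with some index function $\psi$. I would apply Theorem~\ref{cheb_syst} with $I=(a,b)$, with $\mu$ the Lebesgue measure on $(a,b)$ (a finite atomless Radon measure), with $p=1$, and with $X=L_1(\mu)=L_1(a,b)$; the chain $L_p(\mu)\supseteq X\supseteq C_0(I)$ is satisfied because $X=L_1(\mu)$ contains $C_0(a,b)$. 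Theorem~\ref{cheb_syst} then shows that $(\Sigma_n({\mathcal{D}}'))$ satisfies Shapiro's Theorem in $L_1(a,b)$, and transporting the conclusion back through $D$ gives the claim for ${\mathcal{D}}$ in $CBV_0(a,b)$.

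The part requiring the most care is the bookkeeping at the level of the ambient spaces. One must confirm that the relevant ambient space on the $CBV_0$ side is precisely the closure of $\spn[{\mathcal{D}}]$, that this closure consists of absolutely continuous functions (so that the identity $V(f)=\|f'\|_{L_1}$ is available on all of it, and not merely on $\spn[{\mathcal{D}}]$), and that $D$ carries it onto $L_1(a,b)$, so that the dictionary hypothesis genuinely furnishes an approximation scheme in $L_1$ to which Theorem~\ref{cheb_syst} applies. Once this identification is pinned down, the remaining ingredients --- the equality of the best-approximation errors and the verification of the hypotheses of Theorem~\ref{cheb_syst} --- are routine.
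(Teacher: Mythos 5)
Your proposal is correct, and it takes a genuinely different route from the paper's. The paper never leaves $CBV_0$: it verifies the weak Brudnyi condition of Corollary~\ref{coro} directly, taking $[a,b]=[0,2\pi]$, $N=6\psi(n)$, and the explicit test function $f(t)=(1-\cos Nt)/(4N)$, for which $\|f\|_{BV}=1$ and, for every $g\in\Sigma_n(\mathcal{D})$,
$\|f-g\|_{BV}\geq\int_0^{2\pi}|f'(t)-\Re g'(t)|\,dt\geq 1/3$,
the last bound coming from counting the (at most $\psi(n)-1$) sign changes of $\Re g'$ against the $N$ oscillation intervals of $f'$; in effect the paper re-runs the argument of Theorem~\ref{cheb_syst} at the level of derivatives, using the elementary inequality $V_{[a,b]}(h)\geq\int_a^b|h'|$ as the bridge. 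You instead promote that bridge to a structural statement --- differentiation is a linear isometry from the absolutely continuous functions vanishing at $a$ onto $L_1(a,b)$, carrying $\Sigma_n(\mathcal{D})$ exactly onto $\Sigma_n(\mathcal{D}')$ --- and then quote Theorem~\ref{cheb_syst} as a black box for $(L_1(a,b),\{\Sigma_n(\mathcal{D}')\})$. Your version is shorter, reuses existing machinery, and explains why the Haar hypothesis is placed on $\mathcal{D}'$ rather than on $\mathcal{D}$; the paper's version is self-contained, yields an explicit test function and the concrete constant $1/3$, and needs no bookkeeping about closed spans. Two points in your write-up deserve sharpening, though neither is a gap. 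First, your ``if and only if'' between the two schemes is only available in the direction you actually use: an $L_1$ witness $h$ pulls back to the absolutely continuous witness $\int_a^{\cdot}h$, which lies in $CBV_0$ and has identical errors; a $CBV_0$ witness, by contrast, need not be absolutely continuous, and then its errors merely dominate those of its derivative. Second, to apply Theorem~\ref{cheb_syst} with ambient space all of $L_1(a,b)$ you need $\spn[\mathcal{D}']$ to be dense in $L_1(a,b)$; this does follow from the dictionary hypothesis, because differentiation is a norm-one linear surjection of $CBV_0$ onto $L_1(a,b)$ and therefore maps dense sets to dense sets. (Your worry about the ambient space is well founded but is a defect of the theorem's statement rather than of your argument: since the absolutely continuous functions form a closed subspace of $CBV_0$ in the $BV$ norm, the $BV$-closure of $\spn[\mathcal{D}]$ for $\mathcal{D}\subset C^1$ can never be all of $CBV_0$; the paper's own proof relies on the same density hypothesis when it invokes Corollary~\ref{coro}, so both arguments stand or fall together on this point.)
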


\begin{proof}
We work with the case of $[a,b]=[0,2\pi]$. By Corollary \ref{coro},
we only need to prove that
\[
\sup_{\|f\|_{BV}=1}E(f,\Sigma_n(\mathcal{D}))\geq \frac{1}{3}\ \text{ for } n=0,1,2,\cdots.
\]
Let $N = 6 \psi(n)$, and consider $f(t) = (1 - \cos Nt)/(4N)$.
Then $\|f\|_{BV} = 4^{-1} \int _0^{2\pi} |\sin Nt| \, dt = 1$.
We show that, for any $g \in \Sigma_n({\mathcal{D}})$,
\[
\|f-g\|_{BV} \geq \int_{0}^{2\pi}|f^\prime(t) - \Re g'(t)|dt \geq \frac{1}{3} .
\]
For such a $g$, define ${\mathcal{F}}$ as the set of all $\ell \in \{1, \ldots, N\}$
with the property that $\Re g^\prime$ does not change sign on
$(2\pi(\ell-1)/N, 2\pi\ell/N)$.
Note that $|{\mathcal{F}}| \geq N - \psi(n) = 5N/6$.
Furthermore, $f^\prime$ is positive on $(2\pi(\ell-1)/N, \pi(2\ell-1)/N)$, and
negative on $(\pi(2\ell-1)/N, 2\pi\ell/N)$. One one of these two intervals,
$|f^\prime| \geq |f^\prime - \Re g|$. Furthermore,
$$
\int_{2\pi(\ell-1)/N}^{\pi(2\ell-1)/N} |f^\prime| \, dt =
\int_{\pi(2\ell-1)/N}^{2\pi\ell/N} |f^\prime| \, dt = \frac{1}{2N} .
$$
Thus, for $\ell \in {\mathcal{F}}$,
$$
\int_{2\pi(\ell-1)/N}^{2\pi\ell/N} |f^\prime - \Re g^\prime| \, dt \geq \frac{1}{2N} ,
$$
and therefore,
\[
\|f-g\|_{BV} \geq \int_{0}^{2\pi}|f^\prime(t) - \Re g'(t)|dt \geq
\sum_{\ell \in {\mathcal{F}}}
\int_{2\pi(\ell-1)/N}^{2\pi\ell/N} |f^\prime - \Re g^\prime| \, dt \geq
\frac{1}{2N} \cdot \frac{5N}{6} > \frac{1}{3} .
\]
\end{proof}


\subsection{Approximation by rational functions}\label{section_rational}
The problem of describing the possible sequences of best rational approximations for a given
function dates back at least to E.~Dolzhenko \cite{dol}.
Certain Bernstein-type results have been obtained for
approximations in the uniform norm. For instance,
if $\varepsilon_1 > \varepsilon_2 > \ldots$ and $\lim \varepsilon_m = 0$, then
there exists $f \in C(\mathbb{T})$ such that
$E(f, E_m(\mathbb{T}))_{C(\mathbb{T})} = \varepsilon_m$ for every $m$ \cite{starovoitov}
(see also \cite{naz, pek94} for related results).
Evidence suggests that the condition that the sequence $\{\varepsilon_m\}$
is strictly increasing can be weakened. By \cite{starovoitov03}, for every sequence
$\{\varepsilon_m\} \searrow 0$ there exists $f \in C[0,1]$ such that
$E(f, R_{2^m-1}([0,1]))_{C[0,1]} = \varepsilon_m$ for every $m$.
On the other hand, Bernstein's Lethargy Theorem cannot be perfectly replicated for
rational approximation in $L_p$: by \cite{Lev}, for any $f \in L_p(0,1)$, the sequence
$E(f, R_m([0,1]))_{L_p}$ is either strictly decreasing, or eventually null.

This section attempts to (partially) answer Dolzhenko's question by  proving Shapiro's Theorem
for rational approximations in a variety of function spaces.

\begin{theorem}\label{rationals}
Take $0 < p < \infty$.
The following approximation schemes satisfy Shapiro's Theorem:
\begin{enumerate}
\item $(X, \{R_n(I)\})$, where $I$ is a real interval and $C_0(I) \subseteq X \subseteq L_p(I)$, and $\overline{C_0(I)}^X=X$.
\item $(X, \{E_n(I)\})$, where is a real interval and $C_0(I) \subseteq X \subseteq L_p(I)$, and $\overline{C_0(I)}^X=X$.
\item $(X, \{E_n(\T)\})$, where $C(\T) \subseteq X \subseteq L_p(\T)$, and $\overline{C(\T)}^X=X$.
\item
$(X, \{R_n(\partial \D)\})$, where $C(\partial \D) \subseteq X\subseteq L_p(\partial \D)$, and $\overline{C(\partial \D)}^X=X$.
\item
$(X, \{R_n(\overline{\D})\})$, where $A\subseteq X\subseteq H_p$, and $\overline{A}^X=X$.
\end{enumerate}
\end{theorem}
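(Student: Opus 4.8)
The plan is to recognize all five families as (generalized Haar) approximation schemes and then to quote the machinery already in place: Theorem~\ref{cheb_syst} covers the schemes living on a real interval or on the circle, namely (1)--(4), while Proposition~\ref{analytic} covers the analytic case (5). Thus the whole argument reduces to two verifications for each item: that $(X,\{A_n\})$ genuinely satisfies Definition~\ref{def_appr_scheme}, and that the underlying family is generalized Haar in the required sense.

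First I would check the approximation-scheme axioms. Homogeneity is immediate, since a nonzero scalar multiple of a (trigonometric-)rational function of degree $\leq n$ is again of degree $\leq n$. For the semigroup condition, writing two elements over a common denominator gives $p_1/q_1 + p_2/q_2 = (p_1q_2 + p_2q_1)/(q_1q_2)$; the product $q_1q_2$ still does not vanish on the relevant set and has degree (or frequency range) at most $2n$, and the numerator likewise, so $R_n+R_n\subseteq R_{2n}$ and $E_n+E_n\subseteq E_{2n}$. Hence $K(n)=2n$, and in particular $c=2>1$. For density, note that $\bigcup_n A_n$ contains all (trigonometric) polynomials, obtained by taking denominator $1$; on a compact interval or on $\T$ these are dense in $C_0(I)$ (resp. in $A$) by Weierstrass, while on an unbounded real interval the real rational functions with no real poles and numerator of lower degree than denominator form a subalgebra of $C_0(\R)$ that separates points and vanishes nowhere, hence is dense by the locally compact Stone--Weierstrass theorem (choosing a finite atomless measure $\mu$ with $C_0(I)\subseteq X\subseteq L_p(\mu)$ so that Theorem~\ref{cheb_syst} applies). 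Combined with the standing hypotheses $\overline{C_0(I)}^X=X$ (resp. $\overline{A}^X=X$), this yields $\overline{\bigcup_n A_n}=X$.

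The second verification is that each family is generalized Haar, and here Proposition~\ref{haar_schemes} does essentially all the work. For $R_n(I)$ with $I\subset\R$ this is precisely Proposition~\ref{haar_schemes}: $\Re g$ differs from a real polynomial of degree $\leq 2n$ by the strictly positive factor $|q|^2$, so it has at most $2n$ zeroes. For the trigonometric families $E_n(I)$, $E_n(\T)$, and $R_n(\partial\D)$, the computation in that same proposition shows that $|q|^2\,\Re g$ is a trigonometric polynomial of degree at most $2n$, hence $\Re g$ has $O(n)$ zeroes, and one obtains a generalized Haar system with $\psi(n)$ linear in $n$. For $R_n(\overline{\D})$ the denominator does not vanish on $\overline{\D}$, so each element lies in the disk algebra $A\subset X\subset H_p$, and on $\T$ one has $|q|^2\,\Re g=\Re(p\overline{q})$, a trigonometric polynomial of degree $\leq n$. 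With these linear Haar exponents in hand, Theorem~\ref{cheb_syst} settles (1)--(4) and Proposition~\ref{analytic} settles (5).

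I expect the main obstacle to be the density step in the cases where the domain is noncompact or a proper sub-arc of a period, rather than the Haar property, which is bookkeeping on top of Proposition~\ref{haar_schemes}. The unbounded-interval case in particular needs the locally compact Stone--Weierstrass theorem applied to rational functions vanishing at infinity, together with a judicious choice of finite atomless measure; and one must keep the conventions ``degree $<n$'' versus ``degree $\leq n$'' consistent so that the frequency counts, and hence $\psi$, come out polynomial in $n$. None of this affects the conclusion, since Theorem~\ref{cheb_syst} and Proposition~\ref{analytic} require only \emph{some} polynomial bound on the number of sign changes together with the scheme axioms verified above (equivalently, by Corollary~\ref{coro}, a uniform lower bound on $E(S(X),A_n)$).
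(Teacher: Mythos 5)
Your proposal is correct and follows essentially the same route as the paper's proof: transfer density from $C_0(I)$ (resp.\ $C(\T)$, $A$) to $X$ using the standing hypotheses, then invoke Proposition~\ref{haar_schemes} together with Theorem~\ref{cheb_syst} for the real/circle cases and Proposition~\ref{analytic} for case (5). The only cosmetic difference is item (4): the paper deduces it from (3) via the isometry $f \mapsto f(e^{it})$, which by \eqref{algebraic_rational_equals_trigonom} carries $R_{2n}(\partial \D)$ onto $E_n(\T)$, whereas you verify the Haar-type zero bound for $R_n(\partial\D)$ on $\T$ directly -- the same underlying computation, packaged differently.
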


\begin{proof} We start noting that the densities $\overline{C_0(I)}^X=X$, $\overline{C(\T)}^X=X$ and $\overline{A}^X=X$ are assumed to guarantee that, if our approximation scheme $(A_n)$ is dense, for example, in $C_0(I)$ then it is also dense in $X$. To prove this, take $x\in X$, $\varepsilon>0$ arbitrarily small and $C>0$ such that $\|\cdot\|_X\leq C\|\cdot\|_{\infty}$. Look for $f\in C_0(I)$ such that $\|x-f\|^q_X<\frac{\varepsilon}{2}$ and $a\in\bigcup_nA_n$ such that $\|f-a\|^q_{\infty}\leq \frac{\varepsilon}{2C^q}$. Then $\|x-a\|_X^q\leq \|x-f\|_X^q+\|f-a\|_X^q\leq \frac{\varepsilon}{2}+C^q\|f-a\|_{\infty}^q\leq \varepsilon$.

Now (1), (2) and (3) are direct consequences of
Proposition~\ref{haar_schemes} and Theorem~\ref{cheb_syst}.

To deduce (4) from (3),
consider a map $U$, taking a function $f : \partial \D \to \C$
to $\tilde{f} : \T \to \C$, where $\tilde{f}(t) = f(e^{it})$.
Clearly, $U$ is an isometry from $C(\partial \D)$ onto $C(\T)$,
and from $L_p(\partial \D)$ onto $L_p(\T)$. Hence it is clear that $U$ maps the space $X$ isometrically onto a space $Y$ which
satisfies $C(\T)\subseteq Y\subseteq L_p(\T)$.  Moreover, the equality \eqref{algebraic_rational_equals_trigonom} implies that
$U$ maps $R_{2n}(\partial \D)$ onto $E_n(\T)$.

To establish (5), note that the elements of $A$ or $H_p$ are uniquely
determined by their restrictions to $\partial \D$
(see e.g.~Appendix 3 of \cite{LGM}).
Thus, we identify our functions on $\D$ with functions
on $\partial \D$. The density of $\cup_n R_n(\overline{\D})$ in $X$
follows from the proof of Theorem 1.5.2 of \cite{LGM} and the density of $A$ in $X$.
Identifying $\partial \D$ with $\T$, we complete the proof by applying Proposition~\ref{analytic}.
\end{proof}

\begin{corollary}\label{weighted}
Suppose $X$ is either $C(\overline{\R})$ (the set of continuous functions
$f$ on $\R$ for which $\lim_{t \to +\infty} f(t)$ and $\lim_{t \to -\infty} f(t)$ exist
and are equal), or $L_p(W, \R)$, where $0 < p < \infty$, and the weight $W$
is given by $W(x) = 2/(1+x^2)$.
For $n \in \N$, denote by $R_n(\overline{\R})$ the set of rational functions
$p/q$, where $\deg p \leq \deg q < n$, and $q$ has no real roots.
Then the approximation scheme $(X, \{R_n(\overline{\R})\})$
satisfies Shapiro's Theorem.
\end{corollary}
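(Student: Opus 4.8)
The plan is to transfer the problem to the unit circle via the Cayley transform and then invoke Theorem~\ref{rationals}(4). Let $\phi:\overline{\R}\to\partial\D$ be the Cayley map $\phi(x)=(x-i)/(x+i)$, with $\phi(\infty)=1$; this is a homeomorphism of $\overline{\R}$ onto $\partial\D$, with inverse $\phi^{-1}(z)=i(1+z)/(1-z)$. Consider the composition operator $U$ given by $(Uf)(z)=f(\phi^{-1}(z))$. I claim $U$ is an isometric isomorphism of $X$ onto a space $Y$ with $C(\partial\D)\subseteq Y\subseteq L_p(\partial\D)$ and $\overline{C(\partial\D)}^Y=Y$. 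For $X=C(\overline{\R})$ this is immediate from $\phi$ being a homeomorphism of compact spaces, and gives $Y=C(\partial\D)$. For $X=L_p(W,\R)$, writing $\phi(x)=e^{i\theta(x)}$ one computes $\theta'(x)=2/(1+x^2)=W(x)$, so that $d\theta=W(x)\,dx$; since $\int_\R \frac{2}{1+x^2}\,dx=2\pi$, the change of variables $z=\phi(x)$ is measure preserving, and $U$ is an isometry of $L_p(W,\R)$ onto $Y=L_p(\partial\D)$.

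Next I would track how $U$ acts on the approximation scheme. Take $p/q\in R_n(\overline{\R})$, so $\deg p\le\deg q=d<n$ and $q$ has no real zeros. Substituting $x=i(1+z)/(1-z)$ and clearing the common factor $(1-z)^d$ turns $p(x)$ and $q(x)$ into $\tilde P(z)/(1-z)^d$ and $\tilde Q(z)/(1-z)^d$, where $\tilde P,\tilde Q$ are polynomials in $z$ of degree at most $d<n$; thus $(U(p/q))(z)=\tilde P(z)/\tilde Q(z)$. The hypothesis that $q$ has no real roots gives $\tilde Q(z)\neq 0$ for $z\in\partial\D\setminus\{1\}$, and the degree condition $\deg p\le\deg q$ forces the leading term to survive at $z=1$, so that $\tilde Q(1)\neq 0$ as well. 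Hence $\tilde P/\tilde Q\in R_n(\partial\D)$, and therefore $U(R_n(\overline{\R}))\subseteq R_n(\partial\D)$ for every $n$. The nonvanishing of $\tilde Q$ on \emph{all} of $\partial\D$, i.e. the behaviour at the point $z=1$ corresponding to $x=\infty$, is the one place that needs care, and is exactly where $\deg p\le\deg q$ is used; this is the main obstacle.

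Finally I would conclude by comparing density sequences. One first checks that $\{R_n(\overline{\R})\}$ is an approximation scheme: homogeneity is clear, combining fractions gives $R_n(\overline{\R})+R_n(\overline{\R})\subseteq R_{2n}(\overline{\R})$, and density of $\bigcup_n R_n(\overline{\R})$ follows in the $C(\overline{\R})$ case from Stone--Weierstrass applied to the self-adjoint unital algebra generated by the constants and the functions $1/(x-a)$ ($a\in\C\setminus\R$), and in the $L_p(W,\R)$ case from this together with the finiteness of $W$. Since $U$ is an isometric isomorphism carrying $S(X)$ onto $S(Y)$, for each $n$
\[
\dens_n(X,\{R_n(\overline{\R})\})=E\big(S(Y),U(R_n(\overline{\R}))\big)\ge E\big(S(Y),R_n(\partial\D)\big),
\]
the inequality holding because $U(R_n(\overline{\R}))\subseteq R_n(\partial\D)$. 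By Theorem~\ref{rationals}(4) the scheme $(Y,\{R_n(\partial\D)\})$ satisfies Shapiro's Theorem, so Corollary~\ref{coro} yields $E(S(Y),R_n(\partial\D))=1$ for every $n$. As density sequences never exceed $1$, it follows that $\dens_n(X,\{R_n(\overline{\R})\})=1$ for all $n$, and a second application of Corollary~\ref{coro} shows that $(X,\{R_n(\overline{\R})\})$ satisfies Shapiro's Theorem.
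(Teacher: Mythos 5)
Your proof is correct, and although it shares the paper's overall strategy --- transferring the problem to the circle via a M\"{o}bius identification of $\overline{\R}$ with $\partial\D$ and then invoking Theorem~\ref{rationals} --- the mechanism you use to relate the two schemes is genuinely different. The paper uses the map $t \mapsto \tan(t/2)$, restricts to the dense subfamilies $R_n'(\overline{\R})$ and $E_n'(\T)$ of rational functions with \emph{distinct} poles, and uses partial fractions (plus a formula quoted from \cite{LGM}) to exhibit a one-to-one correspondence between these subfamilies; it then needs a perturbation argument ($R_n'$ dense in $R_n$) to return to the original scheme, and it routes through the trigonometric rationals, i.e.\ part (3) of Theorem~\ref{rationals}. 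You instead prove only the one-sided inclusion $U(R_n(\overline{\R})) \subseteq R_n(\partial\D)$ by clearing the factor $(1-z)^{\deg q}$, and then exploit monotonicity of the density sequence: since $(Y,\{R_n(\partial\D)\})$ satisfies Shapiro's Theorem by Theorem~\ref{rationals}(4), Lemma~\ref{brud1} (equivalently Corollary~\ref{coro}(a)$\Rightarrow$(b)) gives $E(S(Y),R_n(\partial\D))=1$ for all $n$, so the smaller sets $U(R_n(\overline{\R}))$ also have density sequence identically $1$, and Corollary~\ref{coro}(c)$\Rightarrow$(a) finishes. This is more self-contained: an inclusion suffices where the paper builds a bijection, and no perturbation argument or outside citation is needed; what the paper's sharper correspondence buys is an isometric identification of the two schemes (up to dense subsets), hence exact equality of approximation errors, which your one-sided comparison does not yield. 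One small correction of attribution: the nonvanishing $\tilde Q(1)\neq 0$ comes from clearing exactly $(1-z)^{d}$ with $d=\deg q$, so that the only surviving term at $z=1$ is $b_d(2i)^d\neq 0$, where $b_d$ is the leading coefficient of $q$; the hypothesis $\deg p\le\deg q$ is instead what makes $\tilde P$ a polynomial (equivalently, keeps $U(p/q)$ bounded at $z=1$). This slip does not affect the validity of your argument.
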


\begin{proof}
In this proof, we use some ideas of \cite[Section 1.5]{LGM}.
As before, identify $\T$ with $[-\pi, \pi]$. Consider the map
$\Phi : \T \to \R : t \mapsto \tan(t/2)$ ($-\pi \sim \pi$ is taken to $\infty$).
The map $U_\Phi : f \mapsto f \circ \Phi$ is then an isometry from $Y$ onto $X$,
where $Y$ is either $C(\T)$ or $L_p(\T)$.

Denote by $R_n^\prime(\overline{\R})$ the set of all functions
$p/q \in R_n(\overline{\R})$ for which all the roots of $q$ are distinct.
Similarly, let $E_n^\prime(\T)$ the set of all functions
$p/q \in E_n(\T)$ for which all the roots of $q$ are distinct.
A small perturbation argument shows that $R_n^\prime(\overline{\R})$
($E_n^\prime(\T)$) is dense in $R_n(\overline{\R})$ (resp.~$E_n(\T)$).

Any $f \in R_n^\prime(\overline{\R})$ can be written as
$f = \alpha_0 \one + \sum_{j=1}^m \alpha_j g_{c_j}$, with $m < n$.
Here, $\one(x) = 1$, and $g_c(x) = (1-ix)/(x-c)$ ($c \notin \R$).
By formula (5.13) of \cite{LGM}, $g_c \circ \Phi = \alpha f_z$, where
$z = (i-c)/(i+c)$, $\alpha$ is a numerical constant, depending on $z$,
and $f_z(t) = 1/(e^{it} - z)$. Thus, $\Phi$ implements a
$1-1$ correspondence between $R_n^\prime(\overline{\R})$ and $E_n^\prime(\T)$.

It is established in \cite[Section 1.5]{LGM} that $(Y, \{E_n^\prime(\T)\})$
is an approximation scheme. By Proposition~\ref{rationals},
$(X, \{E_n^\prime(\T)\})$ satisfies Shapiro's Theorem.
As $U_\Phi$ is an isometry, $(X, \{R_n^\prime(\overline{\R})\})$ is also
an approximation scheme, satisfying Shapiro's Theorem.
The density of $R_n^\prime(\overline{\R})$ in $R_n(\overline{\R})$
completes the proof.
\end{proof}

\begin{remark}\label{other_ideas}
%
Below we outline some alternative approaches to the results of
Theorem~\ref{rationals}.
For instance, one can show that $(C([a,b]), \{R_n([a,b])\})$ satisfies
Shapiro's Theorem, one can use a Bernstein-type inequality due to Dolzhenko \cite{D}:
the total variation of $f\in R_n([a,b])$ satisfies
$V_{[a,b]}(f)=\int_a^b|f'(t)|dt \leq 2n\|f\|_{C([a,b])}$.
The space of continuous functions of bounded variation
$Y=CBV[a,b]$, equipped with the norm
$\|f\|_{Y}=\|f\|_{C[a,b]}+V_{[a,b]}(f)<\infty$, is a proper
dense linear subspace of $C([a,b])$. An application of Bernstein's
Inequality (Theorem~\ref{central}) to $\| \cdot \|_Y$ completes the proof.

For $L_p(a,b)$, $(1<p<\infty)$, we may use a result by Pekarskii
\cite{P} (see also Theorem 1.1 in \cite[page 300]{LGM}): for $k=1,2,\cdots$,
$1<p<\infty$, and $\gamma=(r+\frac{1}{p})^{-1}$, the approximation scheme
$(R_n([-1,1]))$ satisfies a Bernstein-style inequality:
\[
\|f^{(k)}\|_{L_{\gamma}(-1,1)}\leq C(p,k)n^k\|f\|_{L_p(-1,1)} \ \ (f\in R_n([-1,1])), \ n=1,2,\cdots.
\]
Hence we can use Theorem \ref{central} for $(L_p(-1,1),\{R_n([-1,1])\})$ with
$Y=\{f\in L_p(-1,1): f^{(k)}\in L_{\gamma}(-1,1)\}$ (with the norm $\|f\|=\|f\|_p+\|f^{(k)}\|_{L_{\gamma}(-1,1)}$).

One can also tackle $L_p$ by using the strong relation between rational approximation
and approximation by spline functions with free knots.
By Theorem 6.8 from \cite[page 340]{LGM}, for $1<p< \infty$, $0<q\leq\infty$ and $0<\alpha<r$, the approximation spaces
$\mathbb{R}^{\alpha}_{p,q}=\{f\in L_p(0,1):\{n^{\alpha-\frac{1}{q}}E(f,R_n(0,1))_{L_p}\}\in \ell_q\}$ and  $\mathbb{S}^{\alpha}_{p,q}=\{f\in L_p(0,1):\{n^{\alpha-\frac{1}{q}}E(f,S_{n,r}(0,1))_{L_p}\}\in \ell_q\}$ are the same
(with equivalent norms). Theorem \ref{new_splines} below guarantees that  $\mathbb{R}^{1}_{p,\infty}=\mathbb{S}^{1}_{p,\infty}$ is a strict subset of $L_p(0,1)$.
Hence there exists a function $f\in L_p(0,1)$ such that $E(f,R_n(0,1))_{L_p}\neq \mathbf{O}(n^{-1})$ and the result follows from Corollary \ref{coro2}.

We next sketch an argument showing that
$(C(\partial\D),\{R_{n}(\partial\D)\}_{n=0}^\infty)$ satisfies Shapiro's Theorem.
That is, by Corollary~\ref{coro}, we have to find $f \in C(\partial\D)$ such that
the sequence $E(f, R_{n}(\partial\D)) \geq \alpha_n$, for a prescribed sequence
$\{\alpha_n\} \searrow 0$. Having already shown that
$(C([0,1]), \{R_{n}([0,1])\})$ satisfies Shapiro's Theorem,
we conclude that there exists $h \in C([0,1])$ such that
$E(h, R_{n}([0,1])) \geq \alpha_n$ for every $n$.
Extend $h$ to a bounded continuous function $g$ on $\R$, for which
$\lim_{t\to +\infty}g(t)= \lim_{t\to -\infty}g(t)$ exist and are equal.
Clearly, $E(g, R_{n}(\R)) \geq E(h, R_{n}([0,1])) \geq \alpha_n$ for every $n$.
Finally, consider the linear fractional transformation
$w(t)=\frac{(i-1)t+(i+1)}{(1-i)t+(i+1)}$, mapping $\R$ onto $\partial\D$.
Define $f = g\circ w^{-1} \in C(\partial\D)$.
Note that $R \in R_n(\R)$ if and only if $R\circ w^{-1}\in R_n(\partial\D)$. Therefore,
\[
E(f,R_n(\T))_{C(\partial\D)}=E(f\circ w,R_n(\R))_{C(\R)} \geq E(h,R_n([0,1]))_{C[0,1]}
\geq \alpha_n
\]
for every $n$. We conclude that $(C(\partial\D),\{R_{n}(\partial\D)\})$ satisfies Shapiro's
Theorem. To deduce from this that $(C(\T),\{E_{n}(\T)\}_{n=0}^\infty)$ satisfies Shapiro's
Theorem, observe that \eqref{algebraic_rational_equals_trigonom} guarantees that
$E(f,E_n(\T))_{C(\T)}=E(f,R_{2n}(\partial\D))_{C(\partial\D)}$.
\end{remark}

\subsection{Approximation by splines}\label{further_examples}
In this subsection, we show that some ``very redundant'' approximation
systems based on splines satisfy Shapiro's Theorem.
Let us denote by $\mathcal{S}_{n,r}(I)$ the set
of polynomial splines of degree less than $r$ with $n$ free knots (nodes) on the interval $I$.
For any pair of sequences $0 \leq r_1 \leq r_2 \leq \ldots$ and $1\leq n_1<n_2< \cdots $
the sets $A_i = \mathcal{S}_{n_i,r_i}([a,b])$ form an approximation scheme in
$C[a,b]$ or $L_p(a,b)$, with $1 \leq p < \infty$ (in the case of $C[a,b]$,
we assume that the splines in question are continuous).

\begin{theorem}\label{new_splines}
The approximation scheme defined above (either in $C([a,b])$, or in $L_p(a,b)$, for
$0 < p < \infty$) satisfies Shapiro's Theorem.
\end{theorem}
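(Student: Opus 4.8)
The plan is to verify the weak Brudnyi condition of Corollary~\ref{coro}: I will exhibit a single constant $c>0$ and, for each index $i$, a norm-one function $h_i$ with $E(h_i,A_i)\geq c$. This forces $\dens_i=E(S(X),A_i)\geq c$ for every $i$, and the implication $(c)\Rightarrow(a)$ of Corollary~\ref{coro} then yields Shapiro's Theorem. That $(A_i)$ is an approximation scheme is already recorded above; I would only note in passing that $\mathcal{S}_{n,r}+\mathcal{S}_{n,r}\subseteq\mathcal{S}_{2n,r}$ (union of the two knot sets, same degree), so $A_i+A_i\subseteq A_{K(i)}$ with $K(i)=\min\{j:n_j\geq 2n_i\}$.

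The structural fact I would isolate is a knot-independent bound on the number of \emph{sign changes} of a spline (not its zeros). For a nonzero real $g\in A_i=\mathcal{S}_{n_i,r_i}([a,b])$, the at most $n_i$ knots split $[a,b]$ into at most $n_i+1$ intervals, on each of which $g$ is a polynomial of degree $\leq r_i-1$ and hence changes sign at most $r_i-1$ times; across each of the $\leq n_i$ knots at most one further sign change can occur (this also covers the discontinuous splines appearing in the $L_p$ case). Therefore $g$ has at most $M_i:=(n_i+1)(r_i-1)+n_i$ sign changes, a bound depending only on $n_i,r_i$.

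The main obstacle — and the reason Theorem~\ref{cheb_syst} cannot be invoked as a black box — is that splines are \emph{not} a generalized Haar system in the sense used there: a spline may vanish identically on a whole subinterval and so have infinitely many zeros. The remedy is the observation that the proof of Theorem~\ref{cheb_syst} never uses the zero count directly; it uses the hypothesis only through the finite set $\mathcal{S}$ of sign-change points of $\Re g$ (which for real splines is $g$), via the single inequality $|\mathcal{S}|<N$. I would therefore re-run that argument verbatim with $N:=M_i+1$ in place of $\psi(n)+1$, since $|\mathcal{S}|\leq M_i<N$ guarantees, as there, that more than $N$ of the $2N$ paired intervals $(t_{2k-2},t_{2k})$ meet no sign change of $g$.

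Concretely, realize $X\subseteq L_p(\mu)$ for $\mu$ the normalized Lebesgue measure on $[a,b]$ (both $C([a,b])$ and $L_p(a,b)$ embed this way, choosing any $p\in(0,\infty)$ in the continuous case, with $\|\cdot\|_{L_p(\mu)}\leq\|\cdot\|_X\leq C\|\cdot\|_\infty$ on $C_0$). Following Theorem~\ref{cheb_syst}, partition a subinterval of nearly full measure into $4N$ pieces and set $h=\sum_{j=1}^{4N}(-1)^j h_j^{1/p}$ with bumps $h_j\geq 0$ of unit height supported on consecutive intervals and satisfying $\int h_j\,d\mu>1/(5N)$. On each of the more than $N$ paired intervals free of sign changes of $g$, the function $g$ keeps a constant sign, and on the half where $h$ opposes that sign one gets $\int|h-g|^p\,d\mu>1/(5N)$; summing yields $\int|h-g|^p\,d\mu>1/5$ for every $g\in A_i$. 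Hence $\|C^{-1}h\|_X\leq 1$ while $E(C^{-1}h,A_i)\geq C^{-1}(1/5)^{1/p}=:c$, uniformly in $i$. With the uniform lower bound $\dens_i\geq c$ in hand, Corollary~\ref{coro} completes the proof. Everything beyond the sign-change bound and this recognition that the proof of Theorem~\ref{cheb_syst} needed nothing stronger is routine.
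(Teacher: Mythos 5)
Your proof is correct, and it takes a genuinely different route from the paper's. The paper treats the two cases separately: for $C([a,b])$ it simply cites \cite[Theorem 3.1]{almiradeltoro2}, while for $L_p(a,b)$ it fixes the degree $r$, invokes the Bernstein-type inequality for free-knot splines from \cite[Theorem 8.2, p.~386]{devore}, namely $\|f\|_{B_t^\alpha(L_t)}\leq Cn^\alpha\|f\|_{L_p}$ for $f\in\mathcal{S}_{n,r}$, embeds $B_t^\alpha(L_t)$ into the Lorentz space $L_{p,t}$, which is a proper subspace of $L_p$ (using \cite{HS}), applies Theorem~\ref{central} (a Bernstein inequality with respect to a proper subspace implies Shapiro's Theorem), and finally passes to the diagonal scheme $A_i=\mathcal{S}_{n_i,r_i}$ by Corollary~\ref{increasing}. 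You instead verify the weak Brudnyi condition of Corollary~\ref{coro} directly: you observe that the proof of Theorem~\ref{cheb_syst} uses the generalized-Haar hypothesis only through a finite bound on the set $\mathcal{S}$ of sign-change points, and that a nonzero spline in $\mathcal{S}_{n_i,r_i}$, though it may vanish identically on subintervals and so have infinitely many zeros, admits the knot-plus-piecewise bound $|\mathcal{S}|\leq M_i=(n_i+1)(r_i-1)+n_i$; re-running the oscillating-bump construction with $N=M_i+1$ then gives $\dens_i\geq C^{-1}5^{-1/p}$ uniformly in $i$. Your argument is self-contained and elementary, handles $C([a,b])$ and all $L_p$, $0<p<\infty$, in one stroke, needs neither the external spline/Besov literature nor the fixed-degree reduction and Corollary~\ref{increasing}, and isolates a genuine extension of the Haar machinery (a sign-change bound suffices; a zero bound is not needed). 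What the paper's route buys in exchange is brevity modulo the quoted results, plus extra information: the Bernstein inequality exhibits a concrete proper smoothness subspace containing all the splines, and this properness of the associated approximation spaces is exactly what is exploited later, e.g.\ in Remark~\ref{other_ideas}.

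One small point of care. When a spline vanishes identically on a piece, a transition from positive to negative values need not be localized at a single ``sign-change point,'' so the set $\mathcal{S}$ should not be defined naively as the set of points in every neighborhood of which both strict signs occur. The remedy is already implicit in your count, which charges one unit to every knot: take $\mathcal{S}$ to be all knots together with the sign-flip zeros of the nonzero polynomial pieces. Then $|\mathcal{S}|\leq M_i$, and any interval disjoint from $\mathcal{S}$ lies inside a single piece on which the polynomial has constant weak sign, which is exactly what the bump argument requires.
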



\begin{proof}
The case of $C([a,b])$ follows from \cite[Theorem 3.1]{almiradeltoro2}. When working with $L_p(a,b)$ ($0<p<\infty$), assume with no loss of generality
that $[a,b]=[0,1]$. For a fixed $r \in \N$,
consider the approximation scheme
$(L_p(0,1), \{B_{n,r}\}_{n=1}^{\infty})$, where $B_{n,r} = \mathcal{S}_{n,r}([0,1])$.
Pick $0 < \alpha < \min\{r,1/p\}$, and find $t > 0$
satisfying $1/t = \alpha + 1/p$.
By Theorem 8.2 of \cite[page 386]{devore}, a Bernstein's inequality holds:
\begin{equation}\label{petrushev_inequality}
\|f\|_{B_{t}^{\alpha}(L_{t}(0,1))}\leq
Cn^{\alpha}\|f\|_{L_p(0,1)}\; (f\in B_{n,r}) , n=1,2,\cdots .
\end{equation}
Here, $B_{q}^\alpha(L_p(\Omega))$ denotes the classical Besov space on $[0,1]$
(defined using the modulus of smoothness $w_r(f,t)_p$).
By \cite[Corollary 3.1]{HS}, $B_t^\alpha(L_t(0,1))$ embeds into  the classical Lorentz space
$L_{p,t}(0,1)$. Furthermore, as $t < p$, $L_{p,t}$ embeds into $L_{p,p} = L_p$
(see e.g.~\cite[Theorem 1.9.9]{brundykrugljak}).
It is easy to show that the last embedding is proper.
By Theorem~\ref{central}, the approximation scheme
$(L_p(0,1), \{B_{n,r}\}_{n=1}^{\infty})$ satisfies Shapiro's Theorem.
We complete the proof by applying Corollary \ref{increasing}.
\end{proof}

\subsection{$n$-term approximation}
In this section we study Shapiro's Theorem for $n$-term approximation.
More precisely, suppose $\mathcal{D}$ is a dictionary, and
$(\Sigma_n({\mathcal{D}})$ is the associated approximation scheme
(defined in \eqref{nterm}). Then
$\Sigma_n(\mathcal{D})+\Sigma_n(\mathcal{D})= \Sigma_{2n}(\mathcal{D})$, so that Theorem \ref{condicion} is applicable in this context.  Obviously the properties of the sequence of errors
$E(x,\Sigma_{n}(\mathcal{D}))$ strongly depend on the dictionary $\mathcal{D}$.
For example, if $\overline{\mathcal{D}}^{X}=X$, then
$E(x,\Sigma_{n}(\mathcal{D})) = 0$ for all $n\geq 1$ and the dictionary is
``too rich'' to be of interest.

For the sake of brevity, we say that a dictionary ${\mathcal{D}}$
{\it satisfies Shapiro's Theorem} in a quasi-Banach space $X$ if
the approximation scheme
$(X, \{\Sigma_n({\mathcal{D}})\})$ satisfies Shapiro's Theorem.

Proposition~\ref{stability} implies that the dictionaries satisfying Shapiro's
Theorem are stable under small perturbations:

\begin{corollary}\label{dict_stable}
Suppose a quasi-Banach space $X$ is such that there exists
$p \in (0,1]$, for which any $x_1, x_2 \in X$ satisfy
$\|x_1 + x_2\|^p \leq \|x_1\|^p + \|x_2\|^p$.
Consider the dictionaries ${\mathcal{D}}_1=\{u_{i}\}_{i\in I}$ and ${\mathcal{D}}_2=\{e_{i}\}_{i\in I}$ in $X$, such that ${\mathcal{D}}_1$
satisfies Shapiro's Theorem. Suppose, furthermore, that there exists
$\lambda \in (0,1)$ such that $\|\sum_i a_i (u_i-e_i)\|\leq \lambda\|\sum_i a_i e_i\|$
for any family $(a_i)_{i \in I}$ with finitely many non-zero entries.
Then ${\mathcal{D}}_2$ satisfies Shapiro's Theorem.
In particular, ${\mathcal{D}}_2$ satisfies Shapiro's Theorem
in the following two situations:
\begin{enumerate}
\item
$(\sum |a_i|^p)^{1/p}\leq c \|\sum a_i e_i\|$
for arbitrary scalars $a_i$, and $\sup_{i\in I}\|u_i-e_i\|< c^{-1}$.
\item
$\sup |a_i|\leq c \|\sum a_i e_i\|$
for arbitrary scalars $a_i$, and $(\sum_{i\in I}\|u_i-e_i\|^p)^{1/p} < c^{-1}$.
\end{enumerate}
\end{corollary}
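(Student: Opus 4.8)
The plan is to derive the whole statement from Proposition~\ref{stability}, applied to the pair of approximation schemes $A_n = \Sigma_n({\mathcal{D}}_1)$ and $B_n = \Sigma_n({\mathcal{D}}_2)$ (both are approximation schemes since ${\mathcal{D}}_1, {\mathcal{D}}_2$ are dictionaries). As ${\mathcal{D}}_1$ satisfies Shapiro's Theorem, $(A_n)$ does too, so the only thing to check is the hypothesis $\liminf_n E(S(X) \cap B_n, A_n) < 1$. In fact I expect to prove the stronger uniform bound $E(S(X) \cap B_n, A_n) \leq \lambda$ for every $n$, which trivially forces the $\liminf$ below $1$.

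For the main step I would fix $n$ and take any $b \in S(X) \cap \Sigma_n({\mathcal{D}}_2)$, writing $b = \sum_{i \in F} a_i e_i$ with $F \subset I$ and $|F| \leq n$. The crucial point is that the two dictionaries are indexed by the \emph{same} set $I$, so the element $a = \sum_{i \in F} a_i u_i$ is a combination of at most $n$ atoms of ${\mathcal{D}}_1$, hence lies in $A_n = \Sigma_n({\mathcal{D}}_1)$. The main hypothesis then yields
\[
E(b, A_n) \leq \|b - a\| = \Big\| \sum_{i \in F} a_i (u_i - e_i) \Big\| \leq \lambda \Big\| \sum_{i \in F} a_i e_i \Big\| = \lambda \|b\| = \lambda .
\]
Taking the supremum over $b$ gives $E(S(X) \cap B_n, A_n) \leq \lambda < 1$, and Proposition~\ref{stability} then shows that $(X, \{B_n\})$ -- equivalently, ${\mathcal{D}}_2$ -- satisfies Shapiro's Theorem.

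It remains to see that each of the concrete conditions (1) and (2) produces a valid $\lambda \in (0,1)$ with $\|\sum_i a_i(u_i - e_i)\| \leq \lambda \|\sum_i a_i e_i\|$. For both I would start from the $p$-subadditivity of the norm, which gives $\|\sum_i a_i(u_i - e_i)\|^p \leq \sum_i |a_i|^p \|u_i - e_i\|^p$. In case (1), bounding $\|u_i - e_i\| \leq M := \sup_i \|u_i - e_i\|$ and invoking $(\sum_i |a_i|^p)^{1/p} \leq c \|\sum_i a_i e_i\|$ leads to $\|\sum_i a_i(u_i - e_i)\| \leq Mc \|\sum_i a_i e_i\|$, so $\lambda = Mc < 1$ by hypothesis. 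In case (2), factoring out $\sup_i |a_i|$, using $\sup_i |a_i| \leq c \|\sum_i a_i e_i\|$, and setting $N := (\sum_i \|u_i - e_i\|^p)^{1/p} < c^{-1}$ gives $\lambda = cN < 1$.

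I anticipate no serious obstacle: the argument is essentially bookkeeping on top of Proposition~\ref{stability}. The single point demanding care is the index-matching in the main step, where one must pair each $e_i$ with the same-indexed $u_i$ so that a combination of $n$ atoms of ${\mathcal{D}}_2$ is sent to a combination of at most $n$ atoms of ${\mathcal{D}}_1$, keeping both $b$ and $a$ at the $n$-th level of their respective schemes. The derivation of (1) and (2) is then a routine application of $p$-subadditivity.
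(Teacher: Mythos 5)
Your proof is correct and follows exactly the route the paper intends: the corollary is stated as a direct consequence of Proposition~\ref{stability}, and your index-matched substitution $\sum_{i\in F} a_i e_i \mapsto \sum_{i\in F} a_i u_i$ giving $E(S(X)\cap\Sigma_n({\mathcal{D}}_2),\Sigma_n({\mathcal{D}}_1))\leq\lambda<1$, together with the $p$-subadditivity estimates for cases (1) and (2), is precisely the intended argument.
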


Note that the inequality $\sup |a_i|\leq c \|\sum a_i e_i\|$ (with an appropriate
constant $c$) is satisfied if $(e_i)$, or even if $(e_i)$ arises from a bounded biorthogonal
system.


Below we give several examples of redundant dictionaries satisfying
Shapiro's Theorem.

\begin{proposition} \label{diccionarioredundante}
Let $\mathcal{D}=\{\chi_{(a,b)}:0\leq a<b\leq 1\}$ be the set of
characteristic functions of subintervals of $[0,1]$.
Then $(L_p(0,1),\{\Sigma_n(\mathcal{D})\}_{n=0}^{\infty})$ satisfies Shapiro's theorem
for $0 < p < \infty$.
\end{proposition}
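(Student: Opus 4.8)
The plan is to verify the weak Brudnyi condition of Definition~\ref{weak_brudnyi} and then invoke the implication (c)$\Rightarrow$(a) of Corollary~\ref{coro}. First I would record the two structural facts: since $\spn[\mathcal{D}]$ is exactly the space of step functions, which is dense in $L_p(0,1)$, $\mathcal{D}$ is a genuine dictionary, and $\Sigma_n(\mathcal{D})+\Sigma_n(\mathcal{D})=\Sigma_{2n}(\mathcal{D})$, so $(\Sigma_n(\mathcal{D}))$ is an approximation scheme. The essential observation is that every $g\in\Sigma_n(\mathcal{D})$ is a step function whose discontinuities lie among the at most $2n$ endpoints of the subintervals used to build it; hence $g$ is constant off a set of at most $2n$ points.

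The heart of the argument is to construct, for each $n\geq 1$, a normalized test function that oscillates too fast to be tracked by such a coarse step function. I would set $N=8n$, partition $(0,1)$ into equal subintervals $J_i=((i-1)/N,i/N)$, and put $x_n=\sum_{i=1}^N(-1)^i\chi_{J_i}$; since $|x_n|=1$ a.e., $\|x_n\|_p=1$. Grouping the $J_i$ into $M=4n$ consecutive blocks $B_j=J_{2j-1}\cup J_{2j}$, I would call $B_j$ \emph{good} if $g$ has no discontinuity in its interior. As $g$ has at most $2n$ discontinuities and the block interiors are disjoint, at least $M-2n=2n$ blocks are good, and on each good block $g$ equals a constant $c_j$ a.e. Because $x_n=-1$ on $J_{2j-1}$ and $x_n=+1$ on $J_{2j}$, the local error there is
\[
\int_{B_j}|x_n-g|^p\,dt=\frac{1}{N}\big(|1+c_j|^p+|1-c_j|^p\big)\geq\frac{m_p}{N},
\]
where $m_p=\min_{c\in\mathbb{R}}\big(|1+c|^p+|1-c|^p\big)>0$; this minimum is positive and attained, since the integrand is continuous, everywhere positive, and blows up at infinity (in fact $m_p=\min\{2,2^p\}$).

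Summing over the at least $2n$ good blocks then gives $\|x_n-g\|_p^p\geq 2n\cdot m_p/(8n)=m_p/4$ for every $g\in\Sigma_n(\mathcal{D})$, hence $E(x_n,\Sigma_n(\mathcal{D}))\geq(m_p/4)^{1/p}=:c_0$. As $c_0>0$ is independent of $n$ and $\|x_n\|=1$, this yields $\dens_n=E(S(X),\Sigma_n(\mathcal{D}))\geq c_0$ for all $n$, i.e. the weak Brudnyi condition with constant $c_0$, and Corollary~\ref{coro} delivers Shapiro's Theorem. (Equivalently, this is Property (P), so Theorem~\ref{condicion} applies as well.) I do not expect a serious obstacle: the only points requiring care are the bookkeeping that at most $2n$ blocks are spoiled by the $\leq 2n$ discontinuities of $g$, and the elementary fact $m_p>0$. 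The one genuine idea is simply that a step function with few pieces cannot follow an $N$-fold sign oscillation once $N\gg n$.
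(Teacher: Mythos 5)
Your proof is correct, but it takes a genuinely different route from the paper's. The paper's own argument is a two-line structural reduction: by induction on $n$, every element of $\Sigma_n(\mathcal{D})$ is a linear combination of at most $2n+1$ characteristic functions of intervals, hence $\Sigma_n(\mathcal{D})\subseteq\mathcal{S}_{4n+2,1}(0,1)$, the piecewise-constant splines with free knots, and the conclusion is then quoted from Theorem~\ref{new_splines} --- which in turn rests on a Bernstein-type inequality into a Besov space, Lorentz-space embeddings, Theorem~\ref{central} and Corollary~\ref{increasing}. You instead verify the weak Brudnyi condition of Definition~\ref{weak_brudnyi} directly: the alternating function $x_n=\sum_{i=1}^{8n}(-1)^i\chi_{J_i}$ cannot be approximated in $L_p$ within $(m_p/4)^{1/p}$ by any $g\in\Sigma_n(\mathcal{D})$, since $g$ has at most $2n$ discontinuities, so at least $2n$ of the $4n$ two-interval blocks carry no discontinuity of $g$, and each such block contributes at least $m_p/N$ to $\|x_n-g\|_p^p$; Corollary~\ref{coro} (or, equivalently, Property (P) plus Theorem~\ref{condicion}) then finishes. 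Your bookkeeping is sound, with one point to phrase carefully: a ``good'' block must be required to have no discontinuity of $g$ in the \emph{open interval} $((2j-2)/N,\,2j/N)$, including the midpoint $(2j-1)/N$ separating $J_{2j-1}$ from $J_{2j}$ (which, as you defined $B_j=J_{2j-1}\cup J_{2j}$, is not literally a point of $B_j$); a jump exactly there would wreck the block estimate, but since these open intervals are pairwise disjoint your count of at least $2n$ good blocks is unaffected. The computation $m_p=\min\{2,2^p\}>0$ is also right. As for what each approach buys: the paper's route recycles an already-proved theorem and situates the dictionary inside the spline framework (which is what Remark~\ref{binary} exploits), while yours is elementary and self-contained, avoiding all the Besov/Lorentz machinery, and is in fact in the same spirit as the paper's proofs of Theorem~\ref{cheb_syst} and Theorem~\ref{translates}(1): an oscillation-counting argument in which discontinuities play the role that zeros play for generalized Haar systems (note that $\mathcal{D}$ itself is not generalized Haar, so Theorem~\ref{cheb_syst} could not be applied directly).
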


\begin{remark}\label{binary}
Consider the dictionary ${\mathcal{D}}^\prime \subset {\mathcal{D}}$, consisting of characteristic functions of binary intervals in $L_p(0,1)$. By \cite{liv}, the greedy algorithm in this setting
converges ``very fast,'' when $f \in L_p(0,1)$ is such that the sequence
$(E(f, \Sigma_n({\mathcal{D}}^\prime)))$ decreases in a certain controlled manner.
The result above shows that, in general, $(E(f, \Sigma_n({\mathcal{D}}^\prime)))$
may decrease arbitrarily slowly.
\end{remark}

\begin{proof} 
It is not difficult to prove, by induction on $n$, that any
element of $\Sigma_n(\mathcal{D})$ can be written as a linear combination of
at most $2n+1$ characteristic functions of intervals with non-empty interiors.
This, in turn, implies $\Sigma_n(\mathcal{D})\subseteq \mathcal{S}_{4n+2,1}(0,1)$,
and the result follows from Theorem \ref{new_splines}.
\end{proof} 

Shapiro's Theorem also holds for the dictionary of imaginary exponentials
${\mathcal{D}}=\{t \mapsto \exp(i \lambda t): \lambda \in \R\}$  on any interval $[a,b]$.
Indeed, the theorem below deals with ridge functions, and includes these exponentials
as a particular case (see e.g.~\cite{cheney_light} for an introduction to ridge functions).
Suppose $\Pi = \prod_{i=1}^N [A_i, B_i]$ is a parallelepiped in $\R^N$,
and the dictionary ${\mathcal{D}}$ consists of functions
$f(t) = \exp(-i \langle \alpha, t \rangle)$, with $\alpha \in \R^N$ and $t \in \Pi$
($\langle \alpha, t \rangle =\sum_{i=1}^N\alpha_it_i$ denotes the usual scalar
product of $\R^N$).

\begin{theorem}\label{ridge}
Suppose $\Pi$ is parallelepiped, and
$X$ is a Banach space
of functions on $\Pi$ such that $L_1(\Pi) \supset X \supset C(\Pi)$, and that
$\overline{\spn[{\mathcal{D}}]} = X$. Then ${\mathcal{D}}$ satisfies Shapiro's Theorem.
\end{theorem}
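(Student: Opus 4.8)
The plan is to verify Shapiro's Theorem through Property~(P) and Theorem~\ref{condicion}. First I would record that $(\Sigma_n(\mathcal{D}))$ genuinely is an approximation scheme: homogeneity is built into the definition \eqref{nterm}, density is exactly the hypothesis $\overline{\spn[\mathcal{D}]}=X$, and $\Sigma_n(\mathcal{D})+\Sigma_n(\mathcal{D})=\Sigma_{2n}(\mathcal{D})$ supplies the doubling condition with $c=2$, so Theorem~\ref{condicion} will apply the moment Property~(P) is verified. Since the inclusions $C(\Pi)\subseteq X\subseteq L_1(\Pi)$ are continuous (Closed Graph Theorem), there are constants $c_1,c_2$ with $\|f\|_{L_1}\le c_1\|f\|_X$ and $\|f\|_X\le c_2\|f\|_\infty$ for $f\in C(\Pi)$. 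Hence it suffices to produce, for each $n$, a function $x_n\in C(\Pi)$ with $\dens_n\ge E(x_n,\Sigma_n(\mathcal{D}))_X/\|x_n\|_X\ge c_0\,n^{-b}$ for fixed $c_0,b>0$. This polynomial lower bound is precisely Property~(P) (Definition~\ref{def:prop_P}); moreover, by the submultiplicativity $\dens_{m+n}\le\dens_m\dens_n$ of Corollary~\ref{n_term-decay}, a single $\dens_{n_0}<1$ would force $\dens_{kn_0}\le\dens_{n_0}^{k}\to 0$ exponentially, so any such power--law bound already upgrades to $\dens_n=1$ for all $n$.

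Next I would reduce the $N$--dimensional ridge problem to a one--dimensional one. After an affine change of variables (which carries exponentials to exponentials and distorts the $C$-- and $L_1$--norms only by fixed constants) we may take $\Pi=[0,1]^N$. Writing $t=(t_1,t')$ and choosing the witness to depend on the first coordinate only, $x_n(t)=\phi_n(t_1)$, the key observation is that for any $g=\sum_{j=1}^{n}c_j e^{-i\langle\alpha_j,\cdot\rangle}\in\Sigma_n(\mathcal{D})$ and any fixed $t'$, the slice $t_1\mapsto g(t_1,t')$ is a one--dimensional exponential sum in the frequencies $\alpha_{j,1}$, hence belongs to the $1$D class $\Sigma_n$. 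Fubini's theorem then gives $\|x_n-g\|_{L_1(\Pi)}=\int_{[0,1]^{N-1}}\|\phi_n-g(\cdot,t')\|_{L_1[0,1]}\,dt'\ge E(\phi_n,\Sigma_n)_{L_1[0,1]}$, so $\dens_n\ge E(\phi_n,\Sigma_n)_{L_1[0,1]}/(c_1c_2\|\phi_n\|_\infty)$. Everything therefore reduces to constructing, for each $n$, a bounded $\phi_n$ on $[0,1]$ whose $L_1[0,1]$--distance to every $n$--term exponential sum is at least a fixed power of $1/n$.

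For the one--dimensional estimate I would take $\phi_n$ to be a normalized broadband packet whose spectrum consists of $\asymp n$ distinct real frequencies of size $\asymp n$ (for instance a suitably normalized $\Re\sum_{k} e^{2\pi i k t}$ over a block of $\asymp n$ integers), so that $\phi_n$ is genuinely \emph{not} an $n$--term exponential sum while $\|\phi_n\|_\infty\lesssim\sqrt{n}$ and $\|\phi_n\|_{L_1}\asymp 1$. The lower bound on $E(\phi_n,\Sigma_n)_{L_1}$ would then come from testing against functionals $f\mapsto\int_0^1 f\bar\psi$ with $\|\psi\|_\infty\le 1$ chosen nearly biorthogonal to the spectrum of $\phi_n$. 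The main obstacle is exactly what separates this dictionary from the generalized Haar and Bernstein examples treated earlier: it is infinitely redundant and its frequencies are unbounded, so there is no Bernstein inequality into a proper smoothness space (a single $e^{-i\Lambda t}$ already has arbitrarily large derivative) and $\Re g$ may change sign arbitrarily often, so $\{\Sigma_n\}$ is not generalized Haar. Consequently an approximant $g\in\Sigma_n$ may carry very large coefficients on clustered or very high frequencies that the test pairings do not detect. To defeat this I would invoke a frequency--\emph{independent} inequality of Tur\'an--Nazarov type, asserting that an $n$--term exponential sum cannot concentrate: control of its $L_1$--mass on a subinterval controls its total $L_1$--mass, with a constant depending only on $n$ and the interval lengths. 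The argument then splits into two regimes. If $\|g\|_{L_1}$ is large relative to $\|\phi_n\|_{L_1}$, then $\|\phi_n-g\|_{L_1}\ge\|g\|_{L_1}-\|\phi_n\|_{L_1}$ is already of order one. Otherwise $g$ is $L_1$--bounded, and averaging against a bump of width a small multiple of $1/n$ (which maps $\Sigma_n$ into $\Sigma_n$ by rescaling coefficients and contracts $L_1$) suppresses all spectral components of $g$ of size $\gg n$ while essentially preserving $\phi_n$; this replaces $g$ by an effectively band--limited $n$--term sum sitting in a trigonometric--polynomial space of dimension $\asymp n$. On that finite--dimensional model, $\Sigma_n$ is the union of the spans of $n$ dictionary vectors, a set of real dimension $\asymp n$, so by choosing the ambient degree (hence the model dimension) a large multiple of $n$ the generic packet $\phi_n$ is separated from $\Sigma_n$. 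The delicate point will be making the Tur\'an--Nazarov constant and this dimension--count separation quantitative enough to yield a true $n^{-\beta}$ lower bound rather than an exponentially small one; once that is achieved, Theorem~\ref{condicion} completes the proof.
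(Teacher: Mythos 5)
Your global frame is the same as the paper's --- verify Property (P) for $(\Sigma_n(\mathcal{D}))$, use $\Sigma_n(\mathcal{D})+\Sigma_n(\mathcal{D})=\Sigma_{2n}(\mathcal{D})$, and invoke Theorem~\ref{condicion} (or, equivalently, your observation that a polynomial lower bound on $\dens_n$ plus Corollary~\ref{n_term-decay} forces $\dens_n=1$) --- and your Fubini reduction to one variable is legitimate, since slices of $n$-term ridge sums are $n$-term exponential sums in $t_1$. But the heart of the proof, a bound $E(\phi_n,\Sigma_n)_{L_1}\geq c\,n^{-\beta}$ holding \emph{uniformly} over all $n$-term exponential sums, is never established, and the tools you name cannot deliver it. The Nazarov--Tur\'an inequality carries a constant of the form $(C|I|/|E|)^{n-1}$, exponential in $n$, so any estimate routed through it degenerates to an exponentially small lower bound --- exactly the regime (cf.\ Corollary~\ref{n_term-decay}) in which Shapiro's Theorem may fail; you concede this point but do not resolve it. The dimension count is only qualitative: $\Sigma_n$ is a noncompact union of subspaces with unbounded coefficients, so the fact that a ``generic'' $\phi_n$ has positive distance to a set of real dimension $\asymp n$ yields no uniform $n^{-\beta}$ estimate. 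Finally, the band-limiting step breaks down: in the regime $\|g\|_{L_1}\lesssim 1$ you have no control on the individual coefficients $c_j$ (near-cancelling configurations, e.g.\ divided differences of exponentials with clustered frequencies, have bounded $L_1$ norm and arbitrarily large coefficients), so after convolving with a bump $b$ the terms $c_j\hat b(\alpha_j)e^{-i\langle\alpha_j,\cdot\rangle}$ need not be small, and $b*g$ is not quantitatively close to any fixed low-degree trigonometric space.

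The paper closes precisely this gap with an elementary mechanism your sketch lacks. It takes $x=n^{-2}\sum_{k=1}^{n^2}(-1)^k f_{(k,0,\ldots,0)}$, so $\|x\|\leq C$, and tests against the Fourier coefficients at $(k,0,\ldots,0)$, $1\leq k\leq n^2$. If $z=\sum_{j=1}^{m}a_jf_{\alpha^{(j)}}$ with $m<n$ satisfied $\bigl|\Re\,\widehat{x+z}(k,0,\ldots,0)\bigr|<1/n^2$ for \emph{every} such $k$, then $\Re\hat z(k,0,\ldots,0)$ would have sign $(-1)^{k+1}$ for all $k$; by pigeonhole some block of $n$ consecutive integers in $[1,n^2]$ avoids all first coordinates $\alpha^{(j)}_1$, so the rational function $\phi(t)=\sum_j b_j\bigl((\alpha^{(j)}_1-t)\,\alpha^{(j)}_2\cdots\alpha^{(j)}_N\bigr)^{-1}$ alternates sign at the integers of that block and acquires at least $m$ zeros there, forcing a Cauchy matrix to be singular --- contradicting Cauchy's determinant formula. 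Hence some coefficient satisfies $\bigl|\widehat{x+z}(k,0,\ldots,0)\bigr|\geq 1/n^2$, so $\|x+z\|\geq\|x+z\|_{L_1}\geq 1/n^2$, which is Property (P) with an explicit polynomial constant. Some device of this strength (sign alternation plus pigeonhole plus Cauchy nonsingularity, or a genuine substitute) is what your proposal is missing; without it the argument is a plan, not a proof.
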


The spaces $X$ with the properties described above include $L_p(\Pi)$
($1 \leq p < \infty$) and $C(\Pi)$. Indeed, $\spn[{\mathcal{D}}]$ is closed under
multiplication, and separates points in $\Pi$. By Stone-Weierstrass Theorem,
$\spn[{\mathcal{D}}]$ is dense in $C(\Pi)$. Furthermore, $C(\Pi)$ is
dense in $L_p(\Pi)$ for $1 \leq p < \infty$.


\begin{proof}
By scaling, we can assume $\Pi = [0,2\pi]^N$, and that $\Pi$ is
equipped with the Lebesgue measure $(2\pi)^{-N} dt_1 \ldots dt_N$.
Renorming $X$, we assume that $\|f\|_{L_1} \leq \|f\|$ for any $f \in X$.
Let $C$ be a constant for which $C \|f\|_\infty \geq \|f\|$.
The dictionary ${\mathcal{D}}$ consists of functions
$f_\alpha(t) = \exp(i \langle \alpha, t\rangle)$ ($\alpha \in \R^N$).
We shall show that ${\mathcal{D}}$ has Property (P).
To this end, fix $n \in \N$, and let
$x = \sum_{k=1}^{n^2} (-1)^k f_{(k,0, \ldots, 0)}/n^2$
(note that $f_{(k,0, \ldots, 0)}(t_1, \ldots, t_N) = \exp(i k t_1)$).
Clearly, $\|x\| \leq C$. Consider a family
$\alpha^{(j)} = (\alpha^{(j)}_1, \ldots, \alpha^{(j)}_N) \in \R^N$
($1 \leq j \leq m \leq n-1$), and scalars $a_1, \ldots, a_m$.
Let $y = x + z$, where $z = \sum_{j=1}^m a_j f_{\alpha^{(j)}}$.
We shall show that $\|y\| \geq 1/n^2$.

Perturbing the $\alpha^{(j)}$'s slightly, we can assume that
all the quantities $\alpha^{(j)}_1$ are different, and non-integer.
To estimate $\|y\|$, recall that, for a multiindex $k = (k_1, \ldots, k_N) \in \Z^N$,
and a function $\phi$ defined on $\Pi$, we define the Fourier coefficient
$$
\hat{\phi}(k) = \langle \phi, f_k \rangle =
\frac{1}{(2\pi)^N} \int_0^{2\pi} \ldots \int_0^{2\pi}
\phi(t_1, \ldots, t_N) \exp(- i (k_1 t_1 + \ldots + k_N t_N)) \, d t_1 \ldots d t_N .
$$
We shall show that, for at least one value $k \in \{1, \ldots, n^2\}$, $(-1)^k \Re(\hat{z}(k,0,\ldots,0)) \geq 0$.
Once this is done, we conclude that
\begin{align*}
\|y\|
&
\geq
\|y\|_1 \geq |\hat{y}(k,0,\ldots,0)| =
|\hat{x}(k,0,\ldots,0) + \hat{z}(k,0,\ldots,0)|
\\
&
=
|\frac{(-1)^k}{n^2} + \hat{z}(k,0,\ldots,0)| \geq
\big|\Re\big(\frac{(-1)^k}{n^2} + \hat{z}(k,0,\ldots,0)\big)\big| \geq 1/n^2 ,
\end{align*}
which is what we need.

A straightforward calculation shows that, for
$\alpha = (\alpha_1, \ldots \alpha_N)$,
$$
\hat{f_\alpha}(k,0, \ldots,0) =
\frac{c_{\alpha_1} \ldots c_{\alpha_N}}{(\alpha_1 - k) \alpha_2 \ldots \alpha_N}
, \, \, \,
{\mathrm{where}} \, \,  c_\beta = \frac{\exp(i\beta) - 1}{2 \pi i} .
$$
Let $b_j = \Re\big(a_j c_{\alpha^{(j)}_1} \ldots c_{\alpha^{(j)}_N}\big)$.
Suppose, for the sake of contradiction,
$$
\Re\big(\hat{z}(k,0,\ldots,0)\big) = \sum_{j=1}^m b_j
\frac{1}{\alpha_1^{(j)} - k} \frac{1}{\alpha_2^{(j)}} \ldots \frac{1}{\alpha_N^{(j)}}
$$
has the same sign as $(-1)^{k+1}$
for every value of $k$. As $m < n$, there exists
$L \in \{1, \ldots, n(n-1)\}$ such that $[L, L+n-1] \cap \{\alpha^{(1)}_1, \ldots, \alpha^{(m)}_1\} = \emptyset$.
Indeed, $\{\alpha^{(1)}_1, \ldots, \alpha^{(m)}_1\}$ partitions
$[1,n^2]$ into no more than $m+1$ subintervals. If each of these
subintervals contains less than $n$ integer points, then
the total number of integer points on $[1,n^2]$ cannot exceed
$(n-1)(n+1)$, which is clearly false.

For $t \in [L, L+n-1]$ define
$$
\phi(t) = \sum_{j=1}^m b_j
\frac{1}{\alpha_1^{(j)} - t} \frac{1}{\alpha_2^{(j)}} \ldots \frac{1}{\alpha_N^{(j)}}
$$
By assumption, $\phi(k)$ is positive when $k$ is  an odd  
integer, and negative if $k$ is an even
integer. Therefore, for $s \in \{1, \ldots, n-1\}$
there exists $t_s \in (L+s-1,L+s)$ such that $\phi(t_s) = 0$.

Now consider the $m \times m$ matrix
$$
A = \Big[ \Big(
\frac{1}{\alpha_1^{(j)} - t_s} \frac{1}{\alpha_2^{(j)}} \ldots \frac{1}{\alpha_N^{(j)}}
\Big)_{j,s=1}^m \Big] ,
$$
and the vector $b = (b_1, \ldots, b_m)^t$. Then $A b = 0$, hence the matrix $A$ is singular.
However, by Cauchy's Lemma (see e.g. \cite[p.~195]{cheney}), the determinant
of the matrix with entries $((x_i - y_j)^{-1})$ equals
$\prod_{i<j}(x_i - x_j)(y_i - y_j)/\prod_{i,j}(x_i + y_j)$, hence
$A$ is non-singular.
\end{proof}

Theorem~\ref{ridge} can be connected to the problem of approximation
by elements of a frame (see e.g.~\cite{casazza_art} for an introduction
to the topic). By Corollary 3.10 of \cite{han_larson}, any normalized
tight frame ${\mathcal{F}}$ in a Hilbert space of the form $(U^n \eta)_{n \in \Z}$
($U$ is a unitary operator) is unitarily equivalent to the set ${\mathcal{D}}$
of the functions $t \mapsto \exp(2 \pi i t)|_E$, where $E$ is an essentially
unique measurable subset of $[0,2\pi]$. If $E$ contains an interval,
Theorem~\ref{ridge} shows that ${\mathcal{D}}$ (and therefore,
${\mathcal{F}}$) satisfies Shapiro's Theorem. We do not know
whether this remains true for general sets $E$.

In general, a frame ${\mathcal{D}}$ need not satisfy Shapiro's Theorem.
For instance, we can find a family of vectors $(u_i^{(j)})_{i,j \in \N}$,
dense in $S(\ell_2)$, such that $(u_i^{(j)})_{i \in \N}$ is an orthonormal basis
for every $j$. If $\sum_j |\alpha_j|^2 = 1$, then
${\mathcal{D}} = (\alpha_j u_i^{(j)})_{i,j \in \N}$ is a tight frame
(that is, $\sum_{e \in {\mathcal{D}}} |\langle f, e\rangle|^2 = \|f\|^2$
for any $f \in \ell_2$), yet clearly ${\mathcal{D}}$ fails Shapiro's Theorem.
Frames which are ``not too rich'', however, do satisfy Shapiro's Theorem.
For instance, suppose a frame has {\it finite excess} -- that is,
the removal of finitely many elements turns it into a basis (see
e.g.~\cite{liu_zheng} for some remarkable properties of frames with
finite excess). Theorem~\ref{decomposition} shows that such frames
satisfy Shapiro's Theorem.
Another class of interest is that of {\it Riesz frames} -- that is, of frames
$(f_i)_{i \in I}$ for which
there exist positive constants $A \leq B$ such that, for every $J \subset I$, and
every $f \in \spn[f_i : i \in J]$,
$A \|f\|^2 \leq \sum_{i \in J} |\langle f_i, f \rangle|^2 \leq B \|f\|^2$.

\begin{proposition}\label{riesz_frame}
If a dictionary ${\mathcal{D}}$ is a Riesz frame in $\ell_2$, then it satisfies
Shapiro's Theorem.
\end{proposition}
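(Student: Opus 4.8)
The plan is to verify the weak Brudnyi condition and invoke Corollary~\ref{coro}. Write $\mathcal{D}=(f_i)_{i\in I}$ and let $0<A\le B$ be the Riesz frame constants, so that $A\|f\|^2\le\sum_{i\in J}|\langle f_i,f\rangle|^2\le B\|f\|^2$ for every $J\subseteq I$ and every $f\in\spn[f_i:i\in J]$; taking $J=I$ shows $A,B$ are ordinary frame bounds on all of $\ell_2$. By the implication $(c)\Rightarrow(a)$ of Corollary~\ref{coro} it suffices to produce one constant $c>0$ with $E(S(\ell_2),\Sigma_n(\mathcal{D}))\ge c$ for every $n$; I will obtain $c=\sqrt3/2$.

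First I would record a lower estimate for the $n$-term error. For a unit vector $x$ and a finite $S\subset I$ with $|S|\le n$, the best approximation from $\spn[f_i:i\in S]$ is the orthogonal projection $P_Sx$, so $E(x,\Sigma_n(\mathcal{D}))^2=1-\sup_{|S|\le n}\|P_Sx\|^2$. Since $P_Sx\in\spn[f_i:i\in S]$ and $\langle f_i,P_Sx\rangle=\langle P_Sf_i,x\rangle=\langle f_i,x\rangle$ for $i\in S$, the \emph{lower} Riesz bound applied with $J=S$ gives $A\|P_Sx\|^2\le\sum_{i\in S}|\langle f_i,x\rangle|^2$. Hence, writing $\sigma_n(x)$ for the sum of the $n$ largest numbers $|\langle f_i,x\rangle|^2$ ($i\in I$), one has $E(x,\Sigma_n(\mathcal{D}))^2\ge 1-\frac1A\sigma_n(x)$. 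It then remains to exhibit, for each $n$, a unit vector $x$ whose frame coefficients are so ``flat'' that $\sigma_n(x)\le A/4$.

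To build such an $x$ I would spread it over many coordinates. Fix a large $N$, choose $N$ linearly independent elements of $\mathcal{D}$ spanning an $N$-dimensional subspace $V_N$, and let $(u_k)_{k=1}^N$ be an orthonormal basis of $V_N$. The \emph{upper} frame bound controls the redundancy: via the frame operator, $\sum_{j\in I}\|P_{V_N}f_j\|^2=\sum_{k=1}^N\sum_{j\in I}|\langle f_j,u_k\rangle|^2\le BN$, so for any threshold $\tau>0$ the ``relevant'' set $R_\tau=\{j:\|P_{V_N}f_j\|^2\ge\tau\}$ has at most $BN/\tau$ elements. Every other index is harmless, since for $x\in V_N$ and $j\notin R_\tau$ we have $|\langle f_j,x\rangle|=|\langle P_{V_N}f_j,x\rangle|\le\|P_{V_N}f_j\|<\sqrt\tau$. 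The only danger comes from the finitely many relevant indices, and here an averaging argument applies: take $x=N^{-1/2}\sum_{k=1}^N\varepsilon_ku_k$ with signs $\varepsilon_k=\pm1$, so $\|x\|=1$, and for fixed $j$ the scalar $\langle f_j,x\rangle=N^{-1/2}\sum_k\varepsilon_k\langle f_j,u_k\rangle$ is a Rademacher sum with $\sum_k|\langle f_j,u_k\rangle|^2=\|P_{V_N}f_j\|^2\le B$; Hoeffding's inequality yields $\mathbb{P}(|\langle f_j,x\rangle|\ge s)\le 2\exp(-s^2N/(2B))$.

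Finally I would fix parameters and union-bound. Put $\tau=A/(4n)$ and $s^2=A/(4n)$; the union bound over the at most $4BNn/A$ relevant indices fails with probability at most $(8BNn/A)\exp(-AN/(8Bn))$, which is below $1$ once $N$ is large (depending on $n,A,B$), since $N/\log N\to\infty$. For such $N$ there is a sign choice making $|\langle f_j,x\rangle|^2\le A/(4n)$ for every relevant $j$; together with the bound $<\tau=A/(4n)$ for all irrelevant indices, \emph{every} coefficient satisfies $|\langle f_j,x\rangle|^2\le A/(4n)$, whence $\sigma_n(x)\le n\cdot A/(4n)=A/4$. Thus $E(x,\Sigma_n(\mathcal{D}))^2\ge 1-\frac14=\frac34$, i.e. $E(x,\Sigma_n(\mathcal{D}))\ge\sqrt3/2$. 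As this holds for every $n$, the weak Brudnyi condition is satisfied with $c=\sqrt3/2$, and Corollary~\ref{coro} gives Shapiro's Theorem. The main obstacle, which both the uniform lower bound $A$ and the upper bound $B$ are designed to overcome, is controlling the coefficients of $x$ against the \emph{redundant} frame elements lying outside $V_N$; this is exactly what the trace estimate $\sum_j\|P_{V_N}f_j\|^2\le BN$ (bounding the number of relevant directions) plus the concentration of the Rademacher averages resolve.
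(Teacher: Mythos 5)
Your proof is correct, but it takes a genuinely different route from the paper's. The paper's argument rests on a structure theorem of Casazza (\cite[Theorem 2.4]{casazza_subframe}): every Riesz frame splits into an unconditional (in fact Riesz) basis $(g_i)$ together with a leftover family $(h_i)$, each member of which lies in the span of at most $K$ basis vectors; the flat vector $y=\sum_{j=1}^{2nK} g_j/(D\sqrt{2nK})$ then stays at distance at least $C/(D\sqrt 2)$ from every element of $\Sigma_n(\mathcal{D})$, because such an element can only affect at most $nK$ of the $2nK$ basis directions (here $C,D$ come from \cite[Proposition 4.3]{casazza_art}). You avoid the structure theorem entirely: you exploit the defining subfamily inequality directly through the key estimate $E(x,\Sigma_n(\mathcal{D}))^2\ge 1-\sigma_n(x)/A$ --- and this is exactly where the uniform lower bound over all subfamilies $J$ is essential, since an ordinary frame would not give it --- and then manufacture a unit vector with uniformly small frame coefficients via the trace bound $\sum_j\|P_{V_N}f_j\|^2\le BN$, random signs, Hoeffding's inequality, and a union bound. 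Both proofs conclude identically, by verifying the weak Brudnyi condition and citing Corollary~\ref{coro}. What each buys: the paper's proof is shorter modulo the two cited frame-theoretic results; yours is self-contained apart from Hoeffding, yields a universal constant $\sqrt{3}/2$ independent of the frame bounds $A,B$ (the paper's constant depends on them through Casazza's theorem), and is more portable to settings where no basis-plus-controlled-redundancy decomposition is available. Two points you leave implicit, both standard and easily repaired: the bound $\sum_k|\langle f_j,u_k\rangle|^2=\|P_{V_N}f_j\|^2\le B$ fed into Hoeffding requires $\|f_j\|^2\le B$, which follows by applying the upper frame inequality to $f=f_j$; and if $\ell_2$ is taken over the complex scalars, Hoeffding must be applied to real and imaginary parts separately, which only changes the constants in the union bound.
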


\begin{proof}
By \cite[Theorem 2.4]{casazza_subframe}, we can represent ${\mathcal{D}}$ as a union
of two disjoint subsets: an unconditional basis $(g_i)_{i \in \N}$,
and a family $(h_i)_{i \in \Gamma}$ ($\Gamma$ may be finite or infinite),
such that, for every $i \in \Gamma$, there exists a set $\Delta_i$ such that
$h_i \in \spn[g_j : j \in \Delta_i]$, and $K = \sup_{i \in \Gamma} |\Delta_i| < \infty$.

By \cite[Proposition 4.3]{casazza_art}, there exist $0 < C \leq D$
(depending only on $A$ and $B$) with the property that
$C^2 \sum_j |\alpha_j|^2 \leq \|\sum_j \alpha_j g_j\|^2 \leq D^2 \sum_j |\alpha_j|^2$
for any $(\alpha_j) \in \ell_2$. Consider $y = \sum_{j=1}^{2nK} g_j/(D \sqrt{2nK})$ and
$$
z = \sum_{i \in {\mathcal{A}}} \alpha_i h_i + \sum_{j \in {\mathcal{B}}} \beta_j g_j
$$
with $|{\mathcal{A}}| + |{\mathcal{B}}| \leq n$. Then $\|y\| \leq 1$.
We show that $\|y - z\| \geq C/(D \sqrt{2})$.
As $(g_j)$ is a basis, we can write $y - z = \sum_{j=1}^\infty \gamma_j g_j$.
Note that, for
$$
j \in {\mathcal{C}} = \{1, \ldots, 2nK\} \backslash \big( {\mathcal{B}} \cup
( \cup_{i \in {\mathcal{A}}} \Delta_i ) \big) ,
$$
$\gamma_j = 1/(D \sqrt{2nK})$. As $|{\mathcal{C}}| \geq nK$, we conclude that
$\|y-z\| \geq C/(D \sqrt{2})$. Since this inequality holds for any
$z \in \Sigma_n({\mathcal{D}})$, Corollary~\ref{coro} completes the proof.
\end{proof}

Certain approximation schemes related to MRA wavelets also satisfy Shapiro's
Theorem. In the exposition below, we follow the notation of \cite{Wo}.
Suppose $\phi$ is a scaling function in $L_2(\R)$. More precisely, suppose
$\|\phi\| = 1$. For $k, j \in \Z$, let
$\phi_{k,j}(x) = 2^{k/2} \phi(2^k x - j)$. Let
$V_k = \spn[\{ \phi_{k,j} : j \in \Z\}]$. We are assuming that
$V_k \subset V_{k+1}$ for any $k \in \Z$,
$\overline{\cup_k V_k} = L_2(\R)$, and $\cap_k V_k = \{0\}$.
Moreover, we assume that $\{\phi_{0,j}\}_{j \in \Z}$ is an orthonormal
basis for $V_0$. Now we consider the dictionary ${\mathcal{D}} = \{\phi_{k,j} : k, j \in \Z\}$
in $L_p(\R)$, for $1 < p < \infty$ and its associated n-term approximation scheme $A_n = \Sigma_n({\mathcal{D}})$.


\begin{theorem}\label{scaling}
In the above notation, suppose the scaling function $\phi$ has compact support.
Then:
\begin{itemize}
\item[$(i)$] $(L_2(\R),\{A_n\})$ satisfies Shapiro's Theorem.
\item[$(ii)$]
Suppose, furthermore, that $\phi \in L_\infty(\R)$.
Then  $(L_p(\R),\{A_n\})$ satisfies Shapiro's Theorem for $1 \leq p < \infty$.
\end{itemize}
\end{theorem}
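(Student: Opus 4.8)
The plan is to verify Property (P) with exponent $b=0$ (that is, a uniform lower bound $E(x,A_n)\ge\delta>0$ valid for each $n$ with a fixed norm-$1$ witness $x$) and then invoke Theorem~\ref{condicion}, which applies here because $\Sigma_n(\mathcal{D})+\Sigma_n(\mathcal{D})=\Sigma_{2n}(\mathcal{D})$, so one may take $c=2$. Equivalently, a uniform bound $\dens_n\ge\delta$ rules out the exponential decay of Corollary~\ref{n_term-decay}, forcing $\dens_n=1$ for all $n$. The engine is the multiresolution structure. Let $W_0=V_1\ominus V_0$ and let $\{\psi_{0,m}=\psi(\cdot-m)\}_{m\in\Z}$ be the orthonormal wavelet basis of $W_0$; since $\phi$ has compact support, $\psi$ may be taken compactly supported as well. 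I will exploit two facts: (a) the orthogonal projection $Q_0$ onto $W_0$ annihilates every coarse atom, $Q_0\phi_{k,j}=0$ for $k\le0$, because $\phi_{k,j}\in V_k\subseteq V_0$ and $V_0\perp W_0$; and (b) every fine atom $\phi_{k,j}$ with $k\ge1$ has support of length $\le M2^{-k}\le M/2$ (writing $\mathrm{supp}\,\phi\subseteq[0,M]$), so that $Q_0\phi_{k,j}=\sum_m\langle\phi_{k,j},\psi_{0,m}\rangle\psi_{0,m}$ is supported, in the $\psi$-coordinates, on at most a fixed number $L$ of indices $m$, namely those for which $\mathrm{supp}\,\psi_{0,m}$ meets the short interval $\mathrm{supp}\,\phi_{k,j}$.

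For part (i), fix $n$, set $N=2nL$, and take $x=N^{-1/2}\sum_{m=1}^N\psi_{0,m}$, so that $\|x\|_2=1$ and $x\in W_0$. Given any $g=\sum_{\ell=1}^n a_\ell\phi_{k_\ell,j_\ell}\in\Sigma_n(\mathcal{D})$, facts (a) and (b) show that $Q_0g=\sum_{\ell:\,k_\ell\ge1}a_\ell Q_0\phi_{k_\ell,j_\ell}$ is supported on at most $nL$ of the coordinates $m$. Since $Q_0$ is a norm-one projection and $Q_0x=x$,
\[
\|x-g\|_2\ge\|Q_0(x-g)\|_2=\|x-Q_0g\|_2\ge\Big(\frac{N-nL}{N}\Big)^{1/2}=2^{-1/2},
\]
the last inequality because $x-Q_0g$ equals $N^{-1/2}$ on each of the $\ge N-nL$ coordinates untouched by $Q_0g$. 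Thus $E(x,\Sigma_n(\mathcal{D}))\ge2^{-1/2}$ for every $n$, which is Property (P), and Theorem~\ref{condicion} completes part (i).

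For part (ii) I would run the identical counting argument in $L_p$, replacing orthogonality by boundedness of the wavelet projection. The extra hypothesis $\phi\in L_\infty$, together with compact support, is exactly what guarantees that $\psi\in L_\infty$ has compact support with $L_p$-stable integer shifts: there are constants $0<c_p\le C_p$ with $c_p\|(a_m)\|_{\ell_p}\le\|\sum_m a_m\psi_{0,m}\|_p\le C_p\|(a_m)\|_{\ell_p}$ (the upper bound from finite overlap, the lower from linear independence of the shifts of a compactly supported wavelet), and that the projection $Q_0f=\sum_m\langle f,\psi_{0,m}\rangle\psi_{0,m}$ is bounded on $L_p(\R)$ for $1\le p<\infty$. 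Granting this, choose $x=\lambda N^{-1/p}\sum_{m=1}^N\psi_{0,m}$ with $\lambda$ normalizing $\|x\|_p=1$; then, exactly as above and with $N=2nL$,
\[
\|x-g\|_p\ge\|Q_0\|^{-1}\|x-Q_0g\|_p\ge\|Q_0\|^{-1}c_p\lambda\Big(\frac{N-nL}{N}\Big)^{1/p}\ge\delta>0
\]
uniformly in $n$, giving Property (P) and hence Shapiro's Theorem via Theorem~\ref{condicion}.

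I expect the genuine work to sit entirely in part (ii). The localization-and-counting heart of the proof is the same in every $L_p$, so the only real obstacle is the harmonic-analytic input that, for a compactly supported scaling function with $\phi\in L_\infty$, the shifts of the wavelet $\psi$ are $L_p$-stable and the wavelet projection $Q_0$ is $L_p$-bounded down to $p=1$; these are standard facts about compactly supported wavelets (see \cite{Wo}), but they are precisely where the hypotheses are consumed. In $L_2$ all of this is free from orthonormality, so part (i) is essentially immediate once the projection-and-localization idea is in place.
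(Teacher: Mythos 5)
Your proof is correct, but it takes a genuinely different route from the paper's. The paper never constructs the wavelet $\psi$ at all: it works only with the scaling function and the projections $P_k$ onto the spaces $V_k$. Its witness vector spreads mass over $n+1$ widely separated \emph{scales}, $x=(x_0+\cdots+x_n)/\sqrt{n+1}$ with $x_s\in V_{sN+1}\ominus V_{sN}$, and the argument is a pigeonhole over scale bands combined with an almost-orthogonality lemma (Lemma~\ref{almost_orth}: any $n$-term combination of sufficiently fine atoms is nearly annihilated by $P_0$), ending in a three-projection contradiction; this yields Property (P) with the decaying bound $E(x,A_n)\geq 1/(8\sqrt{n+1})$, i.e.\ $b=1/2$. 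You instead pass to the detail space $W_0=V_1\ominus V_0$ with its compactly supported orthonormal wavelet basis $\{\psi_{0,m}\}$, spread mass over $2nL$ \emph{translates at a single scale}, and run a pigeonhole over positions: coarse atoms ($k\leq 0$) are killed exactly by $Q_0$, while each fine atom ($k\geq 1$) occupies at most $L$ wavelet coordinates. This buys a strictly stronger conclusion: a lower bound $\dens_n\geq 2^{-1/2}$ uniform in $n$, which by the submultiplicativity of Corollary~\ref{n_term-decay} (or directly by Corollary~\ref{coro}) forces $\dens_n=1$ for all $n$, whereas the paper only obtains an $n^{-1/2}$ rate. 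The price is one extra (standard) ingredient the paper avoids, namely the existence of a compactly supported wavelet associated with an MRA whose scaling function is compactly supported. In $L_p$ both proofs consume the hypothesis $\phi\in L_\infty$ in the same place: the paper through the uniform $L_p$-boundedness of the $P_k$ (Lemma~\ref{almost_orth_p}), you through the $L_p$-boundedness of $Q_0=P_1-P_0$ and the $\ell_p$-stability of the shifts $\{\psi_{0,m}\}$.

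Two minor repairs to part (ii). First, attributing the lower stability bound to ``linear independence'' is not a proof; the standard argument uses orthonormality plus H\"older: $|a_m|=|\langle f,\psi_{0,m}\rangle|\leq \|\psi\|_{p'}\,\|f\|_{L_p(\mathrm{supp}\,\psi_{0,m})}$ for $f=\sum_m a_m\psi_{0,m}$, and summing over $m$ using the finite overlap of the supports gives $\|(a_m)\|_{\ell_p}\leq C\|f\|_p$. Second, you should note explicitly that the normalizing constant $\lambda$ is bounded below uniformly in $n$ (from the upper stability bound, $\lambda\geq C_p^{-1}$), since otherwise the final constant $\delta$ is not visibly independent of $n$. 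Both points are routine and do not affect the validity of the argument.
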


Note first that the orthogonal projection from $L_2(\R)$
onto $V_k$ is given by
\begin{equation}
P_k f = \sum_{j \in \Z} \phi_{k,j} \int_\R f(t) \overline \phi_{k,j}(t) \, dt .
\label{orth_proj}
\end{equation}
If $\phi \in L_\infty(\R)$, then this family of projections is also
uniformly bounded on $L_p(\R)$, for any $p \in [1,\infty)$
(see \cite[Section 8.1]{Wo} for the proof of this fact, and
for further properties of these projections).

Define the map $D_k$ by setting $D_k f(x) = f(2^k x)$. Then
$V_k = D_k(V_0)$ for any $k$, and  $P_k = D_k P_0 D_{-k}$.

\begin{lemma}\label{almost_orth}
Suppose $\phi$ is a scaling function in $L_2(\R)$ with compact support.
Then, for any $n \in \N$ and $\varepsilon > 0$, there exists $N \in \N$
with the property that, for any $S \subset \{N, N+1, \ldots, \}\times \Z$ of
cardinality $n$ or less, and any $f = \sum_{s \in S} \alpha_s \phi_s$,
$\|P_0 f\| \leq \varepsilon \|f\|$.
\end{lemma}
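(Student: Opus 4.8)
The plan is to prove the estimate directly, \emph{without} attempting to control the coefficients $\alpha_s$ — an approach doomed by the refinement relation $\phi=\sum_l h_l\phi_{1,l}$, which makes finitely many of the $\phi_s$ genuinely near-linearly-dependent (so no lower frame bound is available for $n$-element subfamilies). The decisive observation is instead a support estimate. If $\phi$ is supported in $[-M,M]$, then each $\phi_{k,j}$ is supported in an interval of length $2M\cdot 2^{-k}$. Since every index $s=(k,j)\in S$ has $k\geq N$, \emph{all} of the constituent functions $\phi_s$ — not merely the finest ones — have support of length at most $2M\cdot 2^{-N}$. Consequently, writing $\Omega=\bigcup_{s\in S}\operatorname{supp}\phi_s$, any $f=\sum_{s\in S}\alpha_s\phi_s$ with $|S|\leq n$ is supported in $\Omega$, and $|\Omega|\leq 2Mn\,2^{-N}\to 0$ as $N\to\infty$. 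Here the cardinality bound $|S|\le n$ enters in exactly one place: it bounds the number of support pieces, hence the total measure $|\Omega|$.

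Next I would expand $P_0 f$ in the orthonormal basis $\{\phi_{0,m}\}_{m\in\Z}$ of $V_0$ afforded by \eqref{orth_proj}, so that $\|P_0 f\|^2=\sum_m|\langle f,\phi_{0,m}\rangle|^2$. For each $m$, since $f$ is supported in $\Omega$, the Cauchy--Schwarz inequality gives
\[
|\langle f,\phi_{0,m}\rangle| = \Big|\int_\Omega f\,\overline{\phi_{0,m}}\,\Big|
\leq \|f\|\,\Big(\int_\Omega|\phi(x-m)|^2\,dx\Big)^{1/2}.
\]
The key point is that this step factors $\|f\|$ out of the estimate, so that the cancellation present in $f$ becomes irrelevant. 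Summing over $m$ and interchanging the sum and the integral,
\[
\|P_0 f\|^2 \leq \|f\|^2 \int_\Omega \Big(\sum_{m\in\Z}|\phi(x-m)|^2\Big)\,dx = \|f\|^2 \int_\Omega G(x)\,dx,
\]
where $G(x)=\sum_{m\in\Z}|\phi(x-m)|^2$ is the $1$-periodic function assembled from the integer translates of $\phi$.

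Finally I would invoke the integrability of $G$: since $\int_0^1 G(x)\,dx=\sum_m\int_0^1|\phi(x-m)|^2\,dx=\|\phi\|^2=1$, we have $G\in L_1(\T)$, and absolute continuity of the Lebesgue integral yields a $\sigma>0$ with $\int_E G<\varepsilon^2$ whenever $|E|<\sigma$. Choosing $N$ so large that $2Mn\,2^{-N}<\sigma$ forces $\int_\Omega G<\varepsilon^2$ for \emph{every} admissible $S$, whence $\|P_0 f\|\leq\varepsilon\|f\|$, as required. I expect the only genuine difficulty to be conceptual rather than computational: recognizing that one should bound $\|P_0 f\|$ by $\|f\|$ times a quantity depending solely on $\operatorname{supp} f$, thereby sidestepping the apparent obstruction posed by the near-dependence of the $\phi_s$. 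It is worth noting that this argument uses only that $\phi\in L_2(\R)$ has compact support, and requires no smoothness or boundedness of $\phi$ (the hypothesis $\phi\in L_\infty$ is needed only for part $(ii)$ of Theorem~\ref{scaling}).
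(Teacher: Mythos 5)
Your overall strategy --- bound $\|P_0 f\|$ by $\|f\|$ times a quantity depending only on the measure of $\mathrm{supp}\,f$, which is small because that support is a union of at most $n$ intervals of length $O(2^{-N})$ --- is sound, and it is the same mechanism as the paper's own proof (Cauchy--Schwarz against translates of $\phi$, plus absolute continuity of the integral; the paper additionally counts how many translates $\phi_{0,\ell}$ can meet $\mathrm{supp}\,f$, at most $Kn$ of them). But your last step contains a genuine error. You invoke absolute continuity for the periodized function $G(x)=\sum_m|\phi(x-m)|^2$ on the strength of $G\in L_1(\T)$, and then feed into it the set $\Omega\subset\R$, using only the bound $|\Omega|\leq 2Mn\,2^{-N}$. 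That implication is false: $G$ is periodic, hence \emph{not} integrable on $\R$, and ``small total measure implies small integral'' fails for sets spread over many periods. Concretely, if $\phi\notin L_\infty$ then $G$ is unbounded (near its spikes, which recur in every period); given any $\sigma>0$, pick $B>2/\sigma$, a set $P\subset[0,1)$ of positive measure at most $\sigma/2$ on which $G>B$, and an integer $m$ with $\sigma/2\leq m|P|\leq\sigma$; then $E=\bigcup_{k=1}^m(P+k)$ satisfies $|E|\leq\sigma$ yet $\int_E G\geq Bm|P|\geq B\sigma/2>1$. So no $\sigma$ with the property you claim exists. This breaks your argument exactly in the case you single out at the end --- unbounded $\phi$ --- where you assert the proof needs nothing beyond $\phi\in L_2$ with compact support. (For bounded $\phi$ one has $G\leq C\|\phi\|_\infty^2$ pointwise, and your step is trivially valid; but then the advertised extra generality is lost. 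The lemma itself \emph{is} true without boundedness: it is your justification, not the statement, that fails.)

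The gap is repairable, and either repair essentially reinstates what the paper does. (a) Use the structure of $\Omega$, not just its measure: each of the $\leq n$ constituent intervals $J_i$ has length $\leq 2M2^{-N}\leq 1$, so by periodicity $\int_{J_i}G=\int_{\tilde J_i}G$ for some $\tilde J_i\subset[0,1)$ with $|\tilde J_i|=|J_i|$; applying absolute continuity \emph{on the torus}, with threshold $\varepsilon^2/n$, to each interval separately gives $\int_\Omega G\leq\sum_i\int_{\tilde J_i}G<\varepsilon^2$ once $N$ is large (the extra factor $n$ is harmless, since $N$ is chosen after $n$). (b) Do what the paper does: never periodize, but note that $\mathrm{supp}\,f$ meets the supports of at most $Kn$ translates $\phi_{0,\ell}$ (with $K$ depending only on $\mathrm{supp}\,\phi$), and bound each of those $\leq Kn$ coefficients by Cauchy--Schwarz against $|\phi|^2$, which \emph{is} in $L_1(\R)$, so absolute continuity over arbitrary small-measure subsets of $\R$ is legitimate there. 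In short: your reorganization of the sum over all translates into the single function $G$ is precisely the move that destroys integrability on $\R$, and some form of the paper's counting (or a fold-into-one-period argument) must be put back in.
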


\begin{proof} 
Let $T = \{t \in \R : \phi(t) \neq 0\}$, and $T_{k,j} = 2^{-k/2}(T+j)$.
By assumption, $T$ is (up to a set of measure zero) a subset of
a certain interval $I$, of length $|I|$.
Then $T_{k,j}$ belongs to an interval of length $2^{-k/2} |I|$.
Thus, there exists a constant $K$ such that
$$
|\{\ell \in \Z : \left|T_{k,j} \cap T_{0,\ell}\right| > 0\} |\leq K
$$
for any $k \geq 0$ and $j \in \Z$.

Consider $f = \sum_{i=1}^n \alpha_i \phi_{k_i, j_i}$,
with $k_i \geq 0$. Let $T_f = \cup_i T_{k_i,j_i}$. Then
$$
|\{\ell \in \Z : \left|T_f \cap T_{0,\ell}\right| > 0\}| \leq K n .
$$
Find $N \in \N$ such that
$|\langle g, \phi \rangle| \leq \varepsilon \|g\|/Kn$
whenever $g$ differs from $0$ on a set of measure at most $n 2^{-N} d$.
Then, for any $f = \sum_{i=1}^n \alpha_i \phi_{k_i, j_i}$,
with $k_i \geq N$, $|\langle f, \phi_{0,j} \rangle| \leq \varepsilon \|f\|/(Kn)$.
Moreover, $\langle f, \phi_{0,j} \rangle \neq 0$ for at most $Kn$ different
values of $j$. To complete the proof, recall that, by \eqref{orth_proj},
$$
P_0 f = \sum_{j \in \Z} \langle f, \phi_{0,j} \rangle \phi_{0,j} .
$$
\end{proof} 

A variant of the previous lemma (with identical proof) also holds for
$1 < p < \infty$.

\begin{lemma}\label{almost_orth_p}
Suppose $\phi$ is a scaling function in $L_\infty(\R)$ with compact support,
and $1 \leq p < \infty$.
Then, for any $n \in \N$ and $\varepsilon > 0$, there exists $N \in \N$
with the property that, for any $S \subset \{N, N+1, \ldots, \}\times \Z$ of
cardinality $n$ or less, and any $f = \sum_{s \in S} \alpha_s \phi_s$,
$\|P_0 f\|_p \leq \varepsilon \|f\|_p$.
\end{lemma}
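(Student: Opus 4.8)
The plan is to follow the proof of Lemma~\ref{almost_orth}, making the single change that the Cauchy--Schwarz estimate appropriate to $L_2$ is replaced by Hölder's inequality for the conjugate exponent $p'$ of $p$. As there, I would first record the two geometric consequences of $\phi$ having compact support. Write $T = \{t : \phi(t) \neq 0\}$, contained up to a null set in an interval $I$ of length $d = |I|$, so that the support $T_{k,j}$ of $\phi_{k,j}$ lies in an interval of length $2^{-k}d \leq d$ whenever $k \geq 0$. First, there is a constant $K$, depending only on $d$, such that any such $T_{k,j}$ meets the support of $\phi_{0,\ell}$ for at most $K$ values of $\ell$; hence if $f = \sum_{s \in S}\alpha_s\phi_s$ with $|S| \leq n$, then $P_0 f = \sum_\ell \langle f, \phi_{0,\ell}\rangle \phi_{0,\ell}$ has at most $Kn$ nonzero terms. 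Second, since every $s = (k,j) \in S$ has $k \geq N$, the support $T_f = \cup_{s \in S} T_s$ has measure at most $n 2^{-N} d$.

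Next I would estimate each coefficient $c_\ell = \langle f, \phi_{0,\ell}\rangle$. Since $\phi \in L_\infty(\R)$ has compact support, $\phi \in L_{p'}(\R)$, and Hölder's inequality gives
\[
|c_\ell| = \Big| \int_{T_f} f \, \overline{\phi_{0,\ell}} \Big| \leq \|f\|_p \Big( \int_{T_f} |\phi_{0,\ell}|^{p'} \Big)^{1/p'} \leq \|f\|_p \, \|\phi\|_\infty \, (n 2^{-N} d)^{1/p'} .
\]
Because $p > 1$ forces $1/p' > 0$, the right-hand side tends to $0$ as $N \to \infty$, uniformly in $\ell$; equivalently, one may invoke the absolute continuity of $\int |\phi|^{p'}$, exactly as the $L_2$ argument invokes that of $\int |\phi|^2$. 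Finally I would assemble the bound by the triangle inequality, using that each $\phi_{0,\ell}$ is a translate of $\phi$, so $\|\phi_{0,\ell}\|_p = \|\phi\|_p$:
\[
\|P_0 f\|_p \leq \sum_\ell |c_\ell| \, \|\phi_{0,\ell}\|_p \leq Kn \, \|\phi\|_p \max_\ell |c_\ell| \leq Kn \, \|\phi\|_p \, \|\phi\|_\infty \, (n 2^{-N} d)^{1/p'} \, \|f\|_p .
\]
Choosing $N$ large enough that the constant on the right is at most $\varepsilon$ completes the argument.

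The main obstacle is the endpoint $p = 1$, where $p' = \infty$ and the displayed Hölder estimate degenerates to $|c_\ell| \leq \|f\|_1 \|\phi\|_\infty$, which carries no decay in $N$. Thus the support-smallness mechanism driving the case $p > 1$ does not by itself deliver the conclusion at $p = 1$: for the Haar scaling function $\phi = \one_{[0,1]}$ one computes $\|P_0 \phi_{N,0}\|_1 = \|\phi_{N,0}\|_1 = 2^{-N/2}$ for every $N$. So at $p = 1$ one would need either a genuine cancellation argument exploiting the fine-scale oscillation of $f$, or to restrict the statement to $1 < p < \infty$.
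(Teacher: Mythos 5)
For $1 < p < \infty$ your argument is correct and is precisely the paper's intended proof: the paper offers no separate argument for this lemma, stating only that it has a proof ``identical'' to that of Lemma~\ref{almost_orth}, and that identical proof is exactly your substitution of H\"older's inequality (together with the bound $\int_{T_f} |\phi_{0,\ell}|^{p'} \leq \|\phi\|_\infty^{p'} |T_f|$) for the Cauchy--Schwarz step, the two support-counting facts being unchanged from the $L_2$ case.

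Your closing remark about $p=1$ deserves emphasis, because it is more than a limitation of your method: the Haar computation you give is a genuine counterexample to the lemma as printed. Indeed, $\phi = \one_{[0,1]}$ is a legitimate bounded, compactly supported scaling function, and for $f = \phi_{N,0} = 2^{N/2}\one_{[0,2^{-N}]}$ one has $P_0 f = 2^{-N/2}\one_{[0,1]}$, whence $\|P_0 f\|_1 = \|f\|_1 = 2^{-N/2}$ for every $N$; so no $N$ works even for $n=1$ and $\varepsilon < 1$, and the statement fails at $p=1$. The paper is in fact internally inconsistent here: the sentence introducing the lemma promises a variant ``for $1 < p < \infty$,'' while the lemma statement itself allows $1 \leq p < \infty$; your analysis shows the prose is the correct version and the displayed hypothesis is not. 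This has a downstream consequence: the proof of Theorem~\ref{scaling}(ii), which asserts Shapiro's Theorem for $(L_p(\R),\{A_n\})$ with $1 \leq p < \infty$ and invokes this lemma to establish Property (P), is thereby incomplete at the endpoint $p=1$; there one needs either a different argument exploiting cancellation (or, in special cases such as the Haar dictionary, a reduction to splines with free knots as in Theorem~\ref{new_splines}), or the theorem's range should be restricted to $1 < p < \infty$.
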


\begin{proof}[Proof of Theorem~\ref{scaling}]
We prove part $(i)$ only, since part $(ii)$ is handled in the
same manner, with minimal changes.
By Theorem~\ref{condicion}, it suffices to show that this
approximation scheme has Property (P). To this end, fix $n \in \N$.
Let $c = 1/(8\sqrt{n+1})$.
By Lemma~\ref{almost_orth}, there exists $N \in \N$
 such that $\|P_0 f\| \leq c \|f\|$
for any $f = \sum_{i=1}^n \alpha_i \phi_{k_i, j_i}$
whenever $k_i \geq N-1$ for each $i$. It is easy to see that,
for any $m \in \Z$, we have $\|P_m f\| \leq c \|f\|$
for any $f = \sum_{i=1}^n \alpha_i \phi_{k_i, j_i}$
whenever $k_i \geq m+N-1$ for each $i$.

Find norm $1$ vectors
$x_s \in V_{sN+1} \ominus V_{sN}$ ($0 \leq s \leq n$).
Let $x = (x_0 + \ldots + x_n)/\sqrt{n+1}$.
We show that $E(x, A_n) \geq c$.
Indeed, consider $f = \sum_{i=1}^n \alpha_i \phi_{k_i, j_i} \in A_n$,
and suppose, for the sake of contradiction, that $\|x - f\| < c$.
By Pigeon-Hole Principle, there exists $s \in \{0, \ldots, n\}$
with the property that no $k_i$ belongs to $\{sN, \ldots, (s+1)N-1\}$.
Let $f_- = \sum_{k_i \leq sN} \alpha_i \phi_{k_i, j_i}$,
and $f_+ = \sum_{k_i \geq (s+1)N} \alpha_i \phi_{k_i, j_i}$.
Note that, by our choice of $N$,
$\|P_m f_+\| \leq c\|f_+\|$ whenever $m \leq sN+1$.

In this notation,
$$
c > \|x - f\| \geq \|(I - P_{sN})(x - f)\| =
\|\frac{x_s + \ldots + x_n}{\sqrt{n+1}} - (I - P_{sN}) f_+\| .
$$
By the triangle inequality, $\|(I - P_{sN}) f_+\| < 1 + c$.
Therefore, $\|f_+\| < 2$. Indeed, otherwise we would have
$$
1+c > \|(I - P_{sN}) f_+\| \geq \|f_+\| - \|P_{sN} f_+\| \geq
(1-c) \|f_+\| \geq 2 (1 - c) ,
$$
which contradicts the fact that $c < 1/8$.

Similarly,
$$
c > \|x - f\| \geq \|(I - P_{sN+1})(x - f)\| =
\|\frac{x_{s+1} + \ldots + x_n}{\sqrt{n+1}} - (I - P_{sN+1}) f_+\| .
$$
Thus, by the triangle inequality,
\begin{align*}
2c
&
>
\| \Big( \frac{x_s + \ldots + x_n}{\sqrt{n+1}} - (I - P_{sN}) f_+ \Big) -
\Big( \frac{x_{s+1} + \ldots + x_n}{\sqrt{n+1}} - (I - P_{sN+1}) f_+ \Big) \|
\\
&
=
\|\frac{x_{s+1}}{\sqrt{n+1}} + P_{sN} f_+ - P_{sN+1} f_+ \| \geq
\|\frac{x_{s+1}}{\sqrt{n+1}}\| - \|P_{sN} f_+\| - \|P_{sN+1} f_+\| .
\end{align*}
We know that $\|P_m f_+\| \leq c \|f_+\| \leq 2c$ for any $m \leq sN+1$.
Recall that $\|x_{s+1}/\sqrt{n+1}\| = 8c = 1/\sqrt{n+1}$.
The previous centered inequality then implies $2c > 8c - 2c - 2c = 4c$,
a contradiction.
\end{proof} 

Next we deal with the dictionaries in $L_p(\R)$
or $C_0(\R)$ arising from translates of
a single function. More precisely, for $\phi \in L_p(\R)$,
consider the set ${\mathcal{D}} = \{\phi_c : c \in \R\}$,
with $\phi_c(t) = \phi(t-c)$.
It is a well known result by Wiener (see \cite[pp.97-103]{Wi}, or \cite[Chapter 8]{edwards}) that $\spn[{\mathcal{D}}]$ is dense in $L_1(\R)$ if and only if the Fourier transform of $\phi$ doesn't vanish on $\R$,
and $\spn[{\mathcal{D}}]$ is dense in $L_2(\R)$ if and only if
the Fourier transform of $\phi$ vanishes only on a measure $0$ subset of $\R$.
This condition is satisfied, for instance, if $\phi$ is a Gaussian
function $\phi(t) = e^{-at^2/2}$, for some $a>0$.

\begin{theorem}\label{translates}
Suppose $X$ is either $L_p(\R)$ ($0 < p < \infty$) or $C_0(\R)$, and
$\phi$ is a function in $X$. Denote by ${\mathcal{D}}$ the set of translates
$\{\phi_c : c \in \R\}$. Then the approximation scheme
$(X,\{\Sigma_n({\mathcal{D}})\})$ satisfies Shapiro's Theorem in each of the following two cases:
\begin{enumerate}
\item
$\phi$ has compact support, and the linear span of its translates is dense in $X$.
\item
$\phi$ is a Gaussian function.
\end{enumerate}
\end{theorem}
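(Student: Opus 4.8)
The plan is to reduce both cases to the weak Brudnyi condition of Corollary~\ref{coro}(c): since $\Sigma_n(\mathcal{D}) + \Sigma_n(\mathcal{D}) = \Sigma_{2n}(\mathcal{D})$, it suffices to exhibit, for each $n$, a norm one test function $x_n \in X$ for which $E(x_n, \Sigma_n(\mathcal{D}))$ is bounded below by a constant $c_0 > 0$ independent of $n$; this forces $\dens_n \geq c_0$ for all $n$, and Shapiro's Theorem follows. Throughout I would first record that $\spn[\mathcal{D}]$ is dense in $X$, so that $(X, \{\Sigma_n(\mathcal{D})\})$ is genuinely an approximation scheme: in case (1) this is assumed, while in case (2) it follows from Wiener's Tauberian theorem, since the Fourier transform of a Gaussian never vanishes.

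For case (1), I would exploit the fact that a member $g = \sum_{i=1}^m \alpha_i \phi_{c_i}$ of $\Sigma_n(\mathcal{D})$, with $m \leq n$, is supported in a union of $m \leq n$ intervals, each of length $L$ equal to the length of an interval containing $\mathrm{supp}\,\phi$. In $C_0(\R)$ I would take $x_n$ to be a sum of $n+1$ disjoint unit bumps centered at points $p_0 < \dots < p_n$ spaced more than $L$ apart, so $\|x_n\|_\infty = 1$; each of the at most $n$ support intervals of $g$ then meets at most one $p_j$, so some peak $p_{j_0}$ is missed, where $g(p_{j_0}) = 0$ and hence $\|x_n - g\|_\infty \geq 1$, giving $\dens_n = 1$. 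In $L_p(\R)$ I would instead take $x_n = (2nL)^{-1/p}\chi_{[0,2nL]}$; since $\mathrm{supp}\,g$ has measure at most $nL$, the function $x_n$ survives untouched on a set of measure at least $nL$, whence $\|x_n - g\|_p^p \geq 1/2$ and $\dens_n \geq (1/2)^{1/p}$.

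Case (2) is the substantive one, and the crucial structural input is a zero-counting estimate: every nonzero $g = \sum_{i=1}^m \alpha_i \phi_{c_i} \in \Sigma_n(\mathcal{D})$ (with distinct $c_i$, $m \leq n$) has at most $n-1$ real zeros. I would prove this by factoring out the Gaussian: writing $\phi(t) = e^{-at^2/2}$, one has $g(t) = e^{-at^2/2}\sum_{i=1}^m \beta_i e^{\lambda_i t}$ with $\lambda_i = a c_i$ distinct and $\beta_i = \alpha_i e^{-ac_i^2/2}$, so the zeros of $g$ coincide with those of the exponential sum $h(t) = \sum_i \beta_i e^{\lambda_i t}$. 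A Rolle's-theorem induction, dividing by $e^{\lambda_1 t}$ and differentiating (which lowers the number of exponents by one), shows $h$ has at most $m-1 \leq n-1$ zeros. With complex scalars the same applies to $\Re g = \sum_i \Re(\alpha_i)\phi_{c_i}$, still a real Gaussian combination. Consequently $\{\Sigma_n(\mathcal{D})\}$ is a generalized Haar system with $\psi(n) = n$.

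With this in hand, for $X = C_0(\R)$ I would invoke Theorem~\ref{cheb_syst} directly, taking $I = \R$ and $\mu$ the standard Gaussian probability measure, which is finite and atomless and satisfies $C_0(\R) \subset L_p(\mu)$. For $X = L_p(\R)$ with Lebesgue measure, Theorem~\ref{cheb_syst} cannot be quoted verbatim, since its hypothesis $C_0(I) \subseteq X \subseteq L_p(\mu)$ fails for the infinite Lebesgue measure; however, the construction inside its proof applies unchanged. Concretely, I would build the oscillating test function $x_n = \sum_{j=1}^{4N}(-1)^j h_j$ (with $N = n+1$) from disjoint nonnegative bumps $h_j$ on consecutive subintervals of a fixed bounded interval, normalized so $\|x_n\|_p = 1$; since $g$ changes sign fewer than $N$ times, it preserves sign on more than $N$ of the doubled subintervals, on each of which $x_n$ forces $\int |x_n - g|^p \geq 1/(4N)$, yielding $E(x_n, \Sigma_n(\mathcal{D})) \geq (1/4)^{1/p}$. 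Because $x_n$ is supported in a bounded set of finite Lebesgue measure, the infinitude of the ambient measure plays no role. The main obstacle is thus the zero-counting lemma for Gaussian combinations, together with the need to bypass Theorem~\ref{cheb_syst} in the non-finite-measure space $L_p(\R)$; once these are handled, Corollary~\ref{coro} closes both cases.
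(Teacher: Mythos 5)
Your proposal is correct, and the two cases fare differently when compared with the paper. Case (1) is essentially the paper's own argument: the paper takes $f=m^{-1/p}\sum_{i=1}^m\chi_{[ai,ai+1]}$ with widely spaced blocks (spacing dictated by the length of an interval containing $\mathrm{supp}\,\phi$) and lets $m\to\infty$ to make the density constant arbitrarily close to $1$, while your single-block function $(2nL)^{-1/p}\chi_{[0,2nL]}$ yields the fixed constant $2^{-1/p}$; both suffice by Corollary~\ref{coro}. Case (2) is where you genuinely diverge. The paper's proof is very short: it cites Zalik's theorem \cite{zalik} for density of $\spn[{\mathcal{D}}]$ in $C_0(\R)$ (hence in $L_p(\R)$), then quotes Erdelyi's Bernstein-type inequality \cite{erd}, $\|f'\|_p\leq c n^{1/2}\|f\|_p$ for $f\in\Sigma_n({\mathcal{D}})$ and all $p\in(0,\infty]$, and invokes Theorem~\ref{central}. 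You instead prove that Gaussian combinations form a generalized Haar system: factoring $e^{-at^2/2}$ out of $\sum_i\alpha_i\phi_{c_i}$ leaves an exponential sum $\sum_i\beta_ie^{\lambda_it}$ with distinct real exponents, which by the standard Rolle induction has at most $n-1$ real zeros; you then apply Theorem~\ref{cheb_syst} directly for $C_0(\R)$ (the Gaussian probability measure is indeed a legitimate choice of $\mu$ there), and for $L_p(\R)$ you correctly observe that Theorem~\ref{cheb_syst} cannot be quoted verbatim --- $C_0(\R)\not\subseteq L_p(\R)$ and Lebesgue measure is infinite --- so you rerun its sign-change construction on a fixed bounded interval, which works because the test function is compactly supported. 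Your route is more self-contained (Rolle's theorem replaces Erdelyi's nontrivial inequality) and stays inside the paper's own Haar machinery; the paper's route is shorter and, through Theorem~\ref{central}, also produces an explicit proper subspace $Y$ satisfying Bernstein's inequality, which plugs into Corollary~\ref{coro2}. One repair you should make: your density justification for case (2) cites Wiener's Tauberian theorem, which covers only $L_1(\R)$ and $L_2(\R)$; for $C_0(\R)$ and the remaining values of $p$ you need Zalik's theorem (as the paper does) or a separate argument, since uniform density on $\R$ does not formally transfer to $L_p$ over an infinite-measure space.
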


\begin{proof}
(1) We consider the case of $X = L_p(\R)$. The space $C_0(\R)$ can
be tackled in a similar fashion.
By Corollary~\ref{coro}, it suffices to show that, for any $n \in \N$ and
$\varepsilon > 0$, there exists $f \in L_p(\R)$ such that
$E(f, \Sigma_n({\mathcal{D}})) > 1 - \varepsilon$.
To this end, pick $m \in \N$ such that  $n/m < \varepsilon$.
Find a finite interval $I$ such that $\phi$ vanishes outside of $I$.
Set $f = m^{-1/p} \sum_{i=1}^m \chi_{[ai, ai+1]}$, where
$a = |I| + 2$, and $m > n$. Consider
$g = \sum_{j=1}^n \alpha_j \phi_{c_j} \in \Sigma_n({\mathcal{D}})$.
Then $g$ vanishes outside $S = \cup_{j=1}^n (I+c_j)$. By definition of $a$,
$[ai,ai+1] \cap S = \emptyset$ for at least $m-n$ values of $i$. Therefore,
$\|f-g\| \geq ((m-n)/m)^{1/p}$, which is what we need.

(2)
By \cite[Theorem 2]{zalik}, $\spn[{\mathcal{D}}]$
is dense in $C_0(\R)$, hence also in $L_p(\R)$ ($0 \leq p < \infty$).
A Bernstein-type inequality from \cite{erd} shows that,
for any $f \in \Sigma_n({\mathcal{D}})$, we have
$\|f^\prime\|_p \leq c n^{1/2} \|f\|_p$, for any $p \in (0,\infty]$.
An application of Theorem~\ref{central} completes the proof.
\end{proof} 

Finally we consider approximation schemes in tensor products and operator ideals.
Suppose $X$ and $Y$ are Banach spaces. A {\it cross-norm} $\alpha$ on the algebraic
tensor product $X \otimes Y$ is a norm satisfying $\|x \otimes y\| = \|x\| \|y\|$
for any $x \in X$ and $y \in Y$. The completion of $X \otimes Y$ with respect to
this norm is denoted by $X \otimes_\alpha Y$ (this is a Banach space).
The reader is referred to e.g.~\cite{DJT, PieID, T-J} for information about tensor norms.

\begin{proposition}\label{tensor}
Suppose $X$ and $Y$ are infinite dimensional Banach spaces, and $\alpha$ is
a cross-norm. Denote by $A_n$ the set of sums $\sum_{j=1}^n x_j \otimes y_j$ in
$Z = X \otimes_\alpha Y$. Then $(A_n)$ is an approximation
scheme in $Z$, satisfying Shapiro's Theorem.
\end{proposition}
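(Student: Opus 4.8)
The plan is to identify $A_n$ with the set of tensors of rank at most $n$ and then to verify Property $(P)$, after which Shapiro's Theorem follows from Theorem~\ref{condicion}. First I would record that $A_n$ coincides with $\Sigma_n(\mathcal D)$ for the dictionary $\mathcal D = \{x \otimes y : \|x\| = \|y\| = 1\}$ of simple tensors, since a linear combination of at most $n$ simple tensors is exactly a sum $\sum_{j=1}^n x_j \otimes y_j$. The scheme axioms are then immediate: the inclusions are strict because, $X$ and $Y$ being infinite dimensional, for each $n$ one finds linearly independent $x_1,\dots,x_{n+1}$ and $y_1,\dots,y_{n+1}$ so that $\sum_{j=1}^{n+1} x_j\otimes y_j$ has tensor rank exactly $n+1$; homogeneity is clear; $\bigcup_n A_n = X\otimes Y$ is dense in its completion $Z$; and summing two rank-$\le n$ tensors gives $A_n + A_n \subseteq A_{2n}$, so Theorem~\ref{condicion} will be applicable with constant $2$.

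Throughout I would use the two-sided estimate $\|\cdot\|_\vee \le \alpha \le \|\cdot\|_\wedge$ between the injective and projective tensor norms. For a fixed $n$ set $N = 6n$ and, using Auerbach's lemma on an $N$-dimensional subspace together with Hahn--Banach, choose biorthogonal systems $(x_i, f_i)_{i=1}^N$ in $X$ and $(y_i, g_i)_{i=1}^N$ in $Y$ with $\|x_i\| = \|f_i\| = \|y_i\| = \|g_i\| = 1$ and $\langle x_j, f_i\rangle = \langle y_j, g_i\rangle = \delta_{ij}$. Put $u = \frac1N\sum_{i=1}^N x_i\otimes y_i$; the projective bound gives $\alpha(u)\le \|u\|_\wedge \le \frac1N\sum_i\|x_i\|\|y_i\| = 1$. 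Since $A_n$ is homogeneous it then suffices to bound $E(u,A_n)$ from below and normalise, because $E(u/\alpha(u),A_n) = E(u,A_n)/\alpha(u)\ge E(u,A_n)$.

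The heart of the argument, and the step I expect to be the main obstacle, is the lower bound on $E(u,A_n)=\inf_{v\in A_n}\alpha(u-v)$. Here I would pass to the injective norm: identifying $w\in X\otimes Y$ with the finite-rank operator $T_w\colon X^*\to Y$, $T_w f = \sum f(x_i)y_i$, one has $\|w\|_\vee = \|T_w\|$, and every $v\in A_n$ yields a rank-$\le n$ operator $T_v$. Hence
\[
E(u,A_n)\ \ge\ \inf_{v\in A_n}\|T_u-T_v\|\ \ge\ a_{n+1}(T_u),
\]
where $a_{n+1}(T)=\inf\{\|T-L\|:\operatorname{rank}L\le n\}$ is the $(n{+}1)$st approximation number. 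To estimate $a_{n+1}(T_u)$ I would sandwich a scaled identity: with $J\colon\ell_1^N\to X^*$, $e_i\mapsto f_i$, and $V\colon Y\to\ell_\infty^N$, $y\mapsto(\langle y,g_j\rangle)_j$ (both of norm $\le 1$), a direct computation gives $V T_u J = \tfrac1N I_N\colon\ell_1^N\to\ell_\infty^N$. By the ideal property of approximation numbers, $a_{n+1}(T_u)\ge \tfrac1N\,a_{n+1}(I_N\colon\ell_1^N\to\ell_\infty^N)$. Finally, since $\|A\|_{\ell_1^N\to\ell_\infty^N}=\max_{i,j}|A_{ij}|$, a rank-$\le n$ matrix $S$ with $\max_{i,j}|\delta_{ij}-S_{ij}|=\rho$ satisfies $\operatorname{tr}S\ge N(1-\rho)$ and $|\operatorname{tr}S|\le\sqrt n\,\|S\|_F\le\sqrt n\,(N(1+\rho)^2+N^2\rho^2)^{1/2}$; for $N=6n$ this forces $\rho\ge \tfrac{1}{2\sqrt n}$, so $a_{n+1}(I_N\colon\ell_1^N\to\ell_\infty^N)\ge\tfrac1{2\sqrt n}$.

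Combining the estimates, $E(u,A_n)\ge a_{n+1}(T_u)\ge \tfrac1{N}\cdot\tfrac1{2\sqrt n}=\tfrac1{12\,n^{3/2}}$, and after normalising, $\hat u=u/\alpha(u)$ has norm $1$ with $E(\hat u,A_n)\ge \tfrac1{12\,n^{3/2}}$. Thus the scheme has Property $(P)$ (with $a=12$, $b=3/2$), and since $A_n+A_n\subseteq A_{2n}$, Theorem~\ref{condicion} yields Shapiro's Theorem. The only place where more than the cross-norm identity $\alpha(x\otimes y)=\|x\|\|y\|$ is used is the lower bound $\alpha\ge\|\cdot\|_\vee$, which holds for reasonable cross-norms; I would flag this hypothesis explicitly, as the lower-bound direction genuinely requires it.
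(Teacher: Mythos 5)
Your proposal is correct, and at its core it follows the same strategy as the paper's proof of Proposition~\ref{tensor}: verify Property (P) and invoke Theorem~\ref{condicion} (via $A_n+A_n\subseteq A_{2n}$), with the test element an averaged sum $\frac{1}{N}\sum_{i}x_i\otimes y_i$ over an Auerbach system, and with the lower bound obtained by pairing against the biorthogonal functionals $f_p\otimes g_q$ and running a rank-versus-Frobenius-norm argument on the resulting scalar matrix. The differences are in the packaging. The paper takes $N=n$, pairs $z-c$ directly with $f_p\otimes g_q$, and uses the Eckart--Young-type fact that a matrix of rank less than $n$ is at Hilbert--Schmidt distance at least $1$ from $I_n$; this yields $E(z,A_{n-1})\ge 1/n^2$, i.e.\ Property (P) with $b=2$. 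You instead take $N=6n$, phrase the pairing as the factorization $VT_uJ=\frac1N I_N$ through $\ell_1^N\to\ell_\infty^N$, invoke the ideal property of approximation numbers, and finish with a trace-versus-Frobenius estimate, yielding $b=3/2$. Both middle steps are elementary; the paper's is shorter, while yours isolates a reusable statement (a lower bound for $a_{n+1}$ of the formal identity $\ell_1^N\to\ell_\infty^N$). Your explicit flagging of the hypothesis $\alpha\ge\|\cdot\|_\vee$ is a genuine and worthwhile point: the paper's proof uses the same hypothesis implicitly, at the step where $\|z-c\|<1/n^2$ is converted into $|\langle f_p\otimes g_q,z-c\rangle|<1/n^2$ --- that implication says precisely that $f_p\otimes g_q$ acts on $X\otimes_\alpha Y$ with norm at most $1$, i.e.\ that the cross-norm is reasonable.

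One quantitative slip, easily repaired: with $N=6n$, your constraint $N(1-\rho)\le\sqrt n\,\bigl(N(1+\rho)^2+N^2\rho^2\bigr)^{1/2}$, which after squaring simplifies to $5-14\rho+(5-6n)\rho^2\le 0$, does \emph{not} force $\rho\ge\frac{1}{2\sqrt n}$ when $n\le 3$ (at $n=1$, $\rho=\frac12$ it reads $-2.25\le 0$, which is satisfied, so no contradiction arises); it does force it for every $n\ge 4$. Since Property (P) tolerates arbitrary constants $a,b$, this is harmless: for $n\le 3$ use monotonicity, $E(\hat u_4,A_n)\ge E(\hat u_4,A_4)\ge \frac{1}{96}$, where $\hat u_4$ is your normalized element built for $n=4$, and conclude that Property (P) holds for all $n$ with, say, $a=96$ and $b=3/2$.
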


\begin{proof}
Obviously, $A_n=\Sigma_n(\mathcal{D})$, where $\mathcal{D}=X\otimes Y$ is,
by definition of $Z$, a dense subset of $Z$.
To tackle Shapiro's Theorem, we show that $(A_n)$ has
Property $(P)$. Fix $n$.
Find unit vectors $(x_i)_{i=1}^n$ and $(y_i)_{i=1}^n$ in $X$ and $Y$,
respectively, forming Auerbach bases in their respective
linear spans. That is, for any scalars $\gamma_1, \ldots, \gamma_n$,
$$
\max |\gamma_k| \leq \min\{ \|\sum \gamma_k x_k\| , \|\sum \gamma_k y_k\|\}
\leq \sum |\gamma_k| .
$$
Let $z = \sum_{k=1}^n x_k \otimes y_k/n$. Then $\|z\| \leq 1$.
We show that $E(z, A_{n-1}) \geq 1/n^2$.

Suppose, for the sake of contradiction, that $\|z-c\| < 1/n^2$ for some
$c = \sum_{j=1}^{n-1} a_j \otimes b_j \in A_{n-1}$.
By Hahn-Banach Extension Theorem, there exist norm one linear functionals
$(f_k)$ and $(g_k)$ in $X^*$ and $Y^*$, respectively, which are biorthogonal to
$(x_k)$ and $(y_k)$. For $1 \leq p, q \leq n$,
$|\langle f_p \otimes g_q, z \rangle - \langle f_p \otimes g_q, c \rangle | < 1/n^2$.
However, $\langle f_p \otimes g_q, z \rangle = \delta_{pq}/n$,
hence $(\langle f_p \otimes g_q, z \rangle)_{p,q=1}^n = I/n$,
where $I$ is the $n \times n$ identity matrix.
On the other hand, the matrix
$d = (\langle f_p \otimes g_q, c\rangle)_{p,q=1}^n)$ has
rank less than $n$. Indeed, for each $j$, the rank of
$(\langle f_p, a_j \rangle \langle g_q, b_j \rangle)_{p,q}$ doesn't exceed $1$.
As $d = \sum_{j=1}^{n-1} (\langle f_p, a_j \rangle \langle g_q, b_j \rangle)_{p,q}$,
we conclude that $\rank d < n$.

Now equip the space of $n \times n$ matrices with the Hilbert-Schmidt
norm. It is well known that, for any matrix $A$ of rank less than $n$,
$\|I - A\|_{HS} \geq 1$. On the other hand,
$\|I/n - d\|_{HS}^2 = \sum_{p,q=1}^n |\delta_{pq} - d_{pq}|^2 < n^2 \cdot 1/n^4 = 1/n^2$,
a contradiction.
\end{proof}

Now suppose ${\mathcal{A}}$ is a quasi-Banach operator ideal, equipped with the norm
$\| \cdot \|_{\mathcal{A}}$ (see \cite{DJT, PieID, T-J}
for the definition and basic properties of operator ideals).
Define the {\it ${\mathcal{A}}$-approximation numbers} by setting
$$
a^{(\mathcal{A})}_n(T) = \inf_{u \in B(X,Y), \rank u < n} \|T-u\|_{\mathcal{A}} .
$$
Denote by $A^{(\mathcal{A})}(X,Y)$ the set of $\mathcal{A}$-approximable operators
-- that is, the operators T for which $\lim_n a^{(\mathcal{A})}_n(T) = 0$ --
equipped with the norm $\| \cdot \|_{\mathcal{A}}$. One can easily see this
is a quasi-Banach space (a Banach space if ${\mathcal{A}}$ is a Banach ideal).
Note that, if $\mathcal{A}$ is the ideal of bounded operators (or compact
operators), with its canonical norm, we obtain the usual definitions of
approximation numbers, and approximable operators, respectively.

Let $\mathcal{D}$ be the dictionary of rank $1$ vectors in
$A^{(\mathcal{A})}(X,Y)$, where $X$ and $Y$ are infinite dimensional
Banach spaces. For any $T \in B(X,Y)$, we have
$E(T, \Sigma_i(\mathcal{D})) = a^{(\mathcal{A})}_{i+1}(T)$.

\begin{corollary}\label{ideal}
In the above notation, the approximation scheme $(\Sigma_i(\mathcal{D}))$ satisfies
Shapiro's Theorem.
\end{corollary}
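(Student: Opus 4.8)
The plan is to recognize this corollary as the operator-theoretic counterpart of Proposition~\ref{tensor}, and to deduce it either directly from that proposition (when $\mathcal{A}$ is a Banach ideal) or by re-running its Property~(P) argument in the present quasi-Banach setting. First I would pin down the scheme: the dictionary $\mathcal{D}$ consists of the rank one operators $x^*\otimes y$ ($x^*\in X^*$, $y\in Y$), so $\Sigma_n(\mathcal{D})$ is exactly the set of operators of rank at most $n$. Since rank is subadditive, $\Sigma_n(\mathcal{D})+\Sigma_m(\mathcal{D})\subseteq\Sigma_{n+m}(\mathcal{D})$, and in particular $A_n+A_n\subseteq A_{2n}$. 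Hence Theorem~\ref{condicion} applies once Property~(P) is verified, i.e. once, for each $n$, we produce a norm one $T$ with $E(T,A_n)=a^{(\mathcal{A})}_{n+1}(T)\geq 1/(an^b)$.

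Next I would observe that $A^{(\mathcal{A})}(X,Y)$ is precisely the completion of the algebraic tensor product $X^*\otimes Y$ (the finite rank operators) under $\|\cdot\|_{\mathcal{A}}$, and that the normalization axiom $\|x^*\otimes y\|_{\mathcal{A}}=\|x^*\|\,\|y\|$ makes $\|\cdot\|_{\mathcal{A}}$ a cross-(quasi-)norm. When $\mathcal{A}$ is a Banach ideal this is literally the situation of Proposition~\ref{tensor}, with the pair $(X,Y)$ there replaced by $(X^*,Y)$ (both infinite dimensional, as $X,Y$ are), and the sets $A_n$ coincide; so the corollary follows at once.

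To cover a genuinely quasi-Banach ideal I would rerun the construction from the proof of Proposition~\ref{tensor}. By Auerbach's lemma applied in $n$-dimensional subspaces, together with Hahn--Banach, I would pick norm one Auerbach systems $(x_k,x_k^*)$ in $X$ and $(y_k,y_k^*)$ in $Y$ for $1\leq k\leq n$, and set $T=\frac{1}{n}\sum_{k=1}^n x_k^*\otimes y_k$. The evaluation functionals $S\mapsto y_q^*(Sx_p)$ are dominated by the operator norm, hence by $\|\cdot\|_{\mathcal{A}}$; the matrix $(y_q^*(Tx_p))_{p,q=1}^n$ equals $\frac{1}{n}I$, whereas any $c$ with $\rank c<n$ yields a matrix of rank $<n$. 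The Hilbert--Schmidt estimate $\|I-A\|_{HS}\geq 1$ for rank-deficient $A$ then forces $E(T,A_{n-1})\gtrsim n^{-2}$. After an Aoki--Rolewicz renorming to a $p$-norm and normalization of $T$, this delivers Property~(P) with $b=1+1/p$, and Theorem~\ref{condicion} concludes.

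I expect the main obstacle to be the quasi-Banach bookkeeping. In the $p$-normed world the quasi-triangle inequality only gives $\|\sum_{k=1}^n x_k^*\otimes y_k\|_{\mathcal{A}}\lesssim n^{1/p}$, so normalizing $T$ inflates the exponent, and one must check that the polynomial lower bound survives this rescaling (it does, at the cost of enlarging $b$). One also has to verify carefully that the evaluation functionals $S\mapsto y_q^*(Sx_p)$ have norm controlled purely by $\|x_p\|\,\|y_q^*\|=1$, which is exactly where the domination of the operator norm by $\|\cdot\|_{\mathcal{A}}$ for a normalized ideal is used, so that the Hilbert--Schmidt comparison is legitimate.
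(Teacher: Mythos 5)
Your proof is correct, and its core is the same as the paper's: the paper's entire proof is the identification $A^{(\mathcal{A})}(X,Y)=X^*\otimes_\alpha Y$ followed by a citation of Proposition~\ref{tensor}, which is exactly your first reduction. Where you genuinely diverge is in noticing that this citation, taken literally, only covers Banach ideals: Proposition~\ref{tensor} is stated for a cross-\emph{norm} on a pair of Banach spaces, whereas the corollary allows $\mathcal{A}$ to be a quasi-Banach ideal, in which case $\|\cdot\|_{\mathcal{A}}$ restricted to $X^*\otimes Y$ is only a cross-quasi-norm and the step $\|\frac1n\sum_k x_k^*\otimes y_k\|\leq 1$ in that proof fails. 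Your patch -- Aoki--Rolewicz renorming to a $p$-norm, the resulting bound $\|T\|\lesssim n^{1/p-1}$, and rescaling so that Property (P) holds with exponent $b=1+1/p$ before invoking Theorem~\ref{condicion} via $A_n+A_n\subseteq A_{2n}$ -- is sound, and the degraded exponent is harmless since Property (P) tolerates any polynomial rate. You also supply the one justification that the abstract tensor argument leaves implicit but which is clean in the ideal setting: the evaluation functionals $S\mapsto y_q^*(Sx_p)$ are bounded by the operator norm, which the ideal quasi-norm dominates by the ideal axioms, so the rank-versus-Hilbert--Schmidt comparison ($\|I-A\|_{HS}\geq 1$ for $\rank A<n$) applies to $(y_q^*((T-c)x_p))_{p,q}$. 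In short, your argument proves the corollary in the generality in which it is stated, whereas the paper's two-line proof, read strictly, does so only for Banach ideals.
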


\begin{proof}
It is well known that $A^{(\mathcal{A})}(X,Y)$ can be identified with $X^* \otimes_\alpha Y$,
for the appropriate cross-norm $\alpha$. An application of Proposition~\ref{tensor}
completes the proof.
\end{proof} 

For certain ideals $\mathcal{A}$, this theorem can be strengthened:
it is possible to construct $T \in B(X,Y)$ for which the sequence
$(a_n^{(\mathcal{A})}(T))$ ``behaves like'' a prescribed sequence $(\alpha_n)$.
This result appears in the forthcoming paper \cite{Oi} of the
second author.

\section{Controlling the rate of approximation}\label{fast_decay}

In the previous sections of this paper, we proved that, for a number of
approximation schemes $(A_n)$, we can find and element $x$ in the ambient space,
for which the sequence $(E(x,A_n))$ decreases arbitrarily slowly.
In some situations, we can go further and guarantee a prescribed behavior of
$(E(x,A_n))$.

Recall that an approximation scheme $(X_n)$ in a Banach space $X$ is called
{\it linear} if the sets $X_n$ are linear subspaces of $X$.
By a classical result of Bernstein (see Section~\ref{intro}), if all the
$X_n$'s are finite dimensional and $\{\varepsilon_n\} \searrow 0$, then
there exists $x \in X$ such that $E(x,X_n) = \varepsilon_n$ for every $n \geq 0$.
Without the finite dimensionality assumption, things are different.
It was shown in \cite{nikolskii} (see also \cite[Section I.6.3]{singerlibro})
that a Banach space $X$ is reflexive if and only if for any finite sequence
of closed subspaces $\{0\} = X_0 \subsetneq X_1 \subsetneq X_2 \ldots \subsetneq X_n
\subsetneq X_{n+1} \subset X$, and for any $\varepsilon_0 \geq \varepsilon_1 \geq
\ldots \geq \varepsilon_n$, there exists $x \in X_{n+1}$ such that $E(x, X_k) = \varepsilon_k$ for any $0 \leq k \leq n$.
An inspection of the proof shows the following:

\begin{proposition}\label{nikolskii_modified}
Suppose $X$ is a Banach space, $\{0\} = X_0 \subsetneq X_1 \subsetneq X_2 \ldots
\subsetneq X_n \subsetneq X_{n+1} \subset X$ is a sequence of its
closed subspaces, and $\varepsilon_0 > \varepsilon_1 > \ldots > \varepsilon_n$.
Then there exists $x \in X_{n+1}$ such that
$E(x, X_k) = \varepsilon_k$ for any $0 \leq k \leq n$.
\end{proposition}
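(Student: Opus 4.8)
The plan is to deduce the proposition from a slightly stronger ``shifted'' statement proved by induction on the length of the chain. For $0 \le m$, let $P(m)$ be the assertion: whenever $\{0\}=X_0 \subsetneq X_1 \subsetneq \cdots \subsetneq X_m$ are closed subspaces of a Banach space $X$, a vector $p \in X$ satisfies $E(p,X_m)=\varepsilon_m$, and $\varepsilon_0 > \varepsilon_1 > \cdots > \varepsilon_m$, then there exists $a \in X_m$ with $E(p+a,X_k)=\varepsilon_k$ for all $0 \le k \le m$. The proposition follows at once from $P(n)$: since $X_n \subsetneq X_{n+1}$ are closed, I pick $v \in X_{n+1}$ with $E(v,X_n)=1$ and set $p=\varepsilon_n v \in X_{n+1}$, so that $E(p,X_n)=\varepsilon_n$; then $P(n)$ produces $a \in X_n$, and $x=p+a \in X_{n+1}$ has the required distances (note $E(p+a,X_n)=E(p,X_n)=\varepsilon_n$ automatically, as $a \in X_n$).

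The base case $P(0)$ is trivial: there $X_0=\{0\}$, the hypothesis reads $\|p\|=\varepsilon_0$, and $a=0$ works. For the inductive step $P(m-1)\Rightarrow P(m)$ I reduce the chain length by one. First I choose a direction $e_m \in X_m$ with $E(e_m,X_{m-1})=1$ (possible since $X_{m-1}\subsetneq X_m$ is a strict inclusion of closed subspaces) and consider the function $f(s)=E(p+se_m,X_{m-1})$ of a real variable $s$. Passing to the quotient $X/X_{m-1}$ and using $E(z,Y)=\|z+Y\|_{X/Y}$ shows that $f$ is convex, $1$-Lipschitz, and coercive ($f(s)\to\infty$ as $|s|\to\infty$, because the image of $e_m$ in $X/X_{m-1}$ has norm $1$). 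Once I know $\inf_s f(s) < \varepsilon_{m-1}$, the intermediate value theorem yields $s_0$ with $f(s_0)=\varepsilon_{m-1}$; setting $p'=p+s_0 e_m$ gives $E(p',X_{m-1})=\varepsilon_{m-1}$, so $P(m-1)$ applies to the chain $X_0\subsetneq\cdots\subsetneq X_{m-1}$ with shift $p'$ and yields $a'\in X_{m-1}$ realizing $\varepsilon_0,\ldots,\varepsilon_{m-1}$. Then $a=s_0 e_m + a' \in X_m$ satisfies $p+a=p'+a'$, so $E(p+a,X_k)=\varepsilon_k$ for $k\le m-1$, while $E(p+a,X_m)=E(p,X_m)=\varepsilon_m$.

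The one genuine obstacle is guaranteeing $\inf_s f(s) < \varepsilon_{m-1}$, since a priori $\inf_s f(s)=E(p, X_{m-1}+\R e_m) \ge E(p,X_m)=\varepsilon_m$ could overshoot $\varepsilon_{m-1}$; this is exactly the coupling that dooms a naive ``outside-in'' construction with a fixed direction. I resolve it by choosing $e_m$ \emph{adapted to} $p$. Because $E(p,X_m)=\varepsilon_m < \varepsilon_{m-1}$ (here the \emph{strict} inequality is essential, and is precisely what lets me dispense with the reflexivity hypothesis of the Nikolskii--Tjuriemskih theorem), I can select a near-best approximant $y_0\in X_m$ with $\|p-y_0\|<\varepsilon_{m-1}$. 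If $y_0\notin X_{m-1}$ I normalize $e_m = y_0/E(y_0,X_{m-1})$, and otherwise I take any $e_m\in X_m\setminus X_{m-1}$; in both cases $y_0 \in X_{m-1}+\R e_m$, whence $\inf_s f(s)\le\|p-y_0\|<\varepsilon_{m-1}$, as needed. The remaining points (continuity and coercivity of $f$, and the quotient-norm identity) are routine, so the induction closes and the proposition follows.
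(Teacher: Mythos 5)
Your proof is correct. Note that the paper itself offers no argument for Proposition \ref{nikolskii_modified}: it states Nikolskii's reflexive-space theorem (as in Singer's book, Section I.6.3) and asserts that ``an inspection of the proof shows'' the strict-inequality version holds in an arbitrary Banach space. Your write-up is therefore a genuine, self-contained substitute for that citation, and it isolates exactly the point the paper gestures at. Structurally, you run the natural outside-in induction: first fix the distance to the top subspace by scaling a direction $v \in X_{n+1}$ normalized so that $E(v,X_n)=1$ (normalizing the \emph{distance} rather than the norm, which homogeneity permits), then at each stage perturb by an element of $X_m$ --- which leaves $E(\cdot,X_m)$ untouched --- and tune $E(\cdot,X_{m-1})$ to the prescribed value by the intermediate value theorem applied to the continuous, coercive function $f(s)=E(p+se_m,X_{m-1})$. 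The single place where the classical argument invokes reflexivity (existence of an exact best approximant in $X_m$) is precisely where you instead exploit the strict inequality $\varepsilon_m<\varepsilon_{m-1}$ to obtain a near-best approximant $y_0\in X_m$ with $\|p-y_0\|<\varepsilon_{m-1}$; and your device of routing the direction $e_m$ through $y_0$ (so that $y_0\in X_{m-1}+\R e_m$, forcing $\inf_s f(s)<\varepsilon_{m-1}$) is what defuses the coupling problem that would otherwise block a fixed-direction construction. All the supporting claims check out: the quotient-norm identity gives convexity and (with $E(e_m,X_{m-1})=1$) the $1$-Lipschitz bound, coercivity needs only $E(e_m,X_{m-1})>0$, which closedness of $X_{m-1}$ supplies even in your unnormalized second case, and the reduction of the Proposition to $P(n)$ is immediate since adding $a\in X_n$ does not change the distance to $X_n$. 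In short: correct, complete, and a useful explicit record of an argument the paper leaves implicit.
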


If the chain of subspaces $(X_n)$ is infinite, we obtain a somewhat weaker
result.

\begin{theorem}\label{control_decay}
Let $\{0\} = X_0 \subsetneq X_1 \subsetneq X_2 \ldots$ be a sequence
of closed subspaces of a Banach space $X$.
Then for every $\{\varepsilon_n\}\searrow 0$ and every $\{\delta_n\}\searrow 0$
there are $x\in X$, $C>0$ and $\{n(m)\}_{m=0}^{\infty}$  sequence of natural numbers verifying $n(m)\geq m$ for all $m$, such that
\[
\delta_m\varepsilon_{n(m)}\leq E(x,X_{n(m)})\leq C\varepsilon_{n(m)},\ \ m=0,1,2,\cdots.
\]
Moreover, there exists an strictly increasing sequence of natural numbers $\{h(m)\}_{m=0}^{\infty}$ such that
\[
\delta_{h(m)}\varepsilon_{h(m)}\leq E(x,X_{h(m)})\leq C\varepsilon_{h(m)},\ \ m=0,1,2,\cdots.
\]
\end{theorem}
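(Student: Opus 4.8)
The plan is to build the desired element as an absolutely convergent series of ``Riesz bumps'' supported on a rapidly growing subsequence of the chain, and to control $E(x,X_n)$ only at the subsequence indices, which is all the statement requires (the index $n(m)$ is merely asked to satisfy $n(m)\ge m$). First I would fix $n_0=0$ and choose inductively $n_j>n_{j-1}$ with $\varepsilon_{n_j}\le \tfrac15\varepsilon_{n_{j-1}}$; this is possible since $\varepsilon_n\searrow 0$ (I may assume $\varepsilon_n>0$ for all $n$, the case of an eventually vanishing $\varepsilon$ being degenerate, as it forces $x$ into a fixed subspace). Because the chain is strictly increasing, $X_{n_{j-1}}\subsetneq X_{n_j}$, so Lemma~\ref{riesz} supplies a unit vector $u_j\in X_{n_j}$ with $E(u_j,X_{n_{j-1}})\ge \tfrac12$. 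Setting $c_j=\varepsilon_{n_{j-1}}$ makes $\sum_j c_j<\infty$ (the sum is geometric), so $x_0=\sum_{j\ge 1}c_j u_j$ converges in the Banach space $X$.

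The heart of the matter is the two-sided estimate $\tfrac14\varepsilon_{n_j}\le E(x_0,X_{n_j})\le \tfrac54\varepsilon_{n_j}$ for every $j\ge 0$. The upper bound is immediate: the partial sum $\sum_{i\le j}c_i u_i$ lies in $X_{n_j}$, whence $E(x_0,X_{n_j})\le \sum_{i>j}c_i=\sum_{i\ge j}\varepsilon_{n_i}\le \tfrac54\varepsilon_{n_j}$ by the geometric decay. For the lower bound I would isolate the first ``uncaptured'' bump $c_{j+1}u_{j+1}$: writing $x_0=\big(\sum_{i\le j}c_iu_i\big)+c_{j+1}u_{j+1}+\sum_{i>j+1}c_iu_i$ and absorbing the first (subspace) term, one gets $E(x_0,X_{n_j})\ge c_{j+1}E(u_{j+1},X_{n_j})-\sum_{i>j+1}c_i\ge \tfrac12\varepsilon_{n_j}-\tfrac14\varepsilon_{n_j}=\tfrac14\varepsilon_{n_j}$, where the tail $\sum_{i>j+1}c_i\le \tfrac14\varepsilon_{n_j}$ is again controlled by the geometric gap.

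Finally I would harvest both displayed inequalities from this estimate. Put $\lambda=\max\{1,4\delta_0\}$, $x=\lambda x_0$ and $C=\tfrac{5\lambda}4$; then $E(x,X_{n_j})\in[\tfrac\lambda4\varepsilon_{n_j},\,\tfrac{5\lambda}4\varepsilon_{n_j}]$, and since $\tfrac\lambda4\ge \delta_0\ge \delta_m$ for all $m$, the lower factor dominates every $\delta_m$. For the first assertion I set $n(m)$ equal to the least $n_j\ge m$ (so $n(m)\ge m$), yielding $\delta_m\varepsilon_{n(m)}\le E(x,X_{n(m)})\le C\varepsilon_{n(m)}$; for the ``moreover'' I take the strictly increasing $h(m)=n_m$ and use $\delta_{h(m)}\le\delta_0$. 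The one delicate point is that the upper and lower bounds must be realized at the \emph{same} index $n_j$, and this is precisely what forces the geometric gap $\varepsilon_{n_j}\le\tfrac15\varepsilon_{n_{j-1}}$ (so that each tail is a small fraction of its leading term); it is the only step where a careless choice of the $n_j$ would break the argument. I would note in passing that the construction actually delivers constant-factor two-sided control $E(x,X_{n_j})\asymp\varepsilon_{n_j}$ along the subsequence, so the weights $\delta_m$ on the left are comfortably satisfied after the harmless rescaling by $\lambda$ that absorbs $\delta_0$.
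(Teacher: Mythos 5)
Your proof is correct, but it takes a genuinely different route from the paper's. The paper's argument is non-constructive and leans on its own machinery: it introduces the approximation space $A_0(\{\varepsilon_n\})=\{x\in X:\{E(x,X_n)/\varepsilon_n\}\in c_0\}$ with norm $\|x\|_{A_0}=\sup_n E(x,X_n)/\varepsilon_n$, computes $E(x,X_m)_{A_0}=\sup_{n\ge m}E(x,X_n)/\varepsilon_n$, observes that $(X_n)$ is a non-trivial linear approximation scheme in $A_0$ and hence satisfies Shapiro's Theorem, and then invokes Corollary~\ref{coro}(d) (whose lower-bound form rests on Brudnyi's theorem via Theorem~\ref{shapiroimpliesbrudnyi}) in $A_0$ with the sequence $\{2\delta_m\}$; the indices $n(m)$ are extracted from the suprema, and $C=\|x\|_{A_0}$. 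You instead give a direct, self-contained gliding-hump construction: lacunary indices along which $\varepsilon$ drops by a fixed factor, Riesz-lemma bumps, and a geometrically convergent series, with the two-sided estimate $\tfrac14\varepsilon_{n_j}\le E(x_0,X_{n_j})\le\tfrac54\varepsilon_{n_j}$ verified exactly as you state. The two arguments yield incomparable strengthenings: yours gives constant-factor two-sided control along the subsequence (so the $\delta_m$'s are absorbed into $\delta_0$ by rescaling and play no real role), but no control of $E(x,X_n)/\varepsilon_n$ between the $n_j$'s; the paper's element lies in $A_0$, so it satisfies $E(x,X_n)\le C\varepsilon_n$ for \emph{all} $n$ -- indeed $E(x,X_n)/\varepsilon_n\to 0$ -- which is precisely why the paper needs the weights $\delta_m$ (a constant-factor lower bound along a subsequence is impossible for an element of $A_0$), and conversely why your element is not in $A_0$. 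Both conclusions contain the statement as written. Two minor quibbles: Lemma~\ref{riesz} as stated produces $w$ with $\|w\|\le 1$ rather than a unit vector (normalizing only improves the distance to the subspace, so this is harmless), and the degenerate case $\varepsilon_N=0$ should be dismissed explicitly (e.g.\ $x=0$, $n(m)=\max\{m,N\}$, $h(m)=N+m$ makes all three quantities vanish) -- though the paper, too, implicitly assumes $\varepsilon_n>0$, since its $A_0$-norm divides by $\varepsilon_n$.
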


\begin{proof}
Assume, without loss of generality, that $\varepsilon_0 = 1$.
Define $A_0 = A_0(\{\varepsilon_n\})$ as the set
$\{x\in X:\{\frac{E(x,X_n)}{\varepsilon_n}\}_{n=0}^{\infty}\in c_0\}$,
equipped with the norm $\|x\|_{A_0}=\sup_{n\in\mathbb{N}}\frac{E(x,X_n)}{\varepsilon_n}$
(see \cite[Prop. 3.8, Theorems 3.12 and 3.17]{almiraluther2}).
It is easy to see that $A_0$ is a Banach space, and the natural embedding
of $A_0$ into $X$ is contractive. We claim that, for any $x \in X$ and $m \geq 0$,
\begin{equation}
E(x,X_m)_{A_0(\{\varepsilon\})} = \sup_{n \geq m} \frac{E(x,X_n)}{\varepsilon_n} .
\label{approximate}
\end{equation}
Indeed,
\begin{equation}
E(x,X_m)_{A_0(\{\varepsilon\})} = \inf_{y\in X_m} \|x-y\|_{A_0} =
\inf_{y\in X_m} \sup_n \frac{E(x-y,X_n)}{\varepsilon_n} .
\label{def_of_E}
\end{equation}
For $n < m$, we trivially have $E(x-y,X_n) \leq \|x-y\|$.
For $m \geq n$, $E(x-y,X_n) = E(x,X_n)$. Taking the infimum over $y \in X_m$
in \eqref{def_of_E}, and recalling that $\varepsilon_0 \geq \varepsilon_1 \geq \ldots$,
we obtain \eqref{approximate}.

Note that $\lim_m \sup_{n \geq m} \varepsilon_n^{-1} E(x,X_n) = 0$,
hence $(X_n)$ is an approximation scheme in $A_0$. Moreover, this scheme
is non-trivial: for each $n$, the inclusion of $\overline{X_n}^{A_0}$ into $A_0$ is strict.
Thus, $(A_0,\{X_n\})$ satisfies Shapiro's Theorem.
By Corollary~\ref{coro}, for every $\{\delta_n\}\searrow 0$ there exists
$x\in A_0(\varepsilon_n)$ such that
\[
\sup_{n\geq m}\frac{E(x,X_n)}{\varepsilon_n}=E(x,X_m)_{A_0(\varepsilon_n)}\geq 2\delta_m \ \ (m=0,1,2,\cdots).
\]
In other words, for every $m\in\mathbb{N}$ there exists $n(m)\geq m$ such that
$E(x,X_{n(m)})\geq \delta_m\varepsilon_{n(m)}$. Taking $C=\|x\|_{A_0}$,
we establish the first claim of this theorem. To prove the second claim,
it is enough to take a strictly increasing subsequence $h(m)$ of $n(m)$, and
to recall that $\{\delta_m\}$ is decreasing.
\end{proof} 

Recall the {\it density sequence} $\dens_i = E(S(X),A_i)$, defined in
Section~\ref{fail_shapiro}. There, it was observed that $\{\dens_i\}_{i=0}^{\infty}$
is non-increasing, and $(X,\{A_n\})$ satisfies Shapiro's Theorem if and only if
$\dens_i = 1$ for every $i$. The following result is a ``mirror image'' of Brudnyi's theorem.

\begin{theorem}\label{fast}
Suppose that $\{\varepsilon_i\}$ is a sequence of positive numbers converging to
$0$, $(X,\{A_n\})$ is an approximation scheme in a Banach space $X$, and $\dens_i > 0$
for $i=0,1,\cdots$. Then there exists $x \in X \backslash (\cup_i A_i)$ such that
$0<E(x,A_i) \leq \varepsilon_i$ for each $i$.
\end{theorem}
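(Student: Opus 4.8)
The plan is to build $x$ as a rapidly convergent series $x=\sum_{k\ge 1}t_kb_k$, choosing the scalars $t_k>0$ and the blocks $b_k$ so that the partial sums $S_k=\sum_{j\le k}t_jb_j$ act simultaneously as good approximants (forcing $E(x,A_n)\le\varepsilon_n$), while each block $b_k$ is held at a positive distance from the appropriate level of the scheme (forcing $E(x,A_n)>0$). First I would reduce to the case that $\{\varepsilon_n\}$ is non-increasing: replacing $\varepsilon_n$ by $e_n=\min_{j\le n}\varepsilon_j$ yields a non-increasing positive sequence tending to $0$ with $e_n\le\varepsilon_n$, so it suffices to achieve $E(x,A_n)\le e_n$. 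I would also note that the hypothesis $\dens_i>0$ for all $i$ (equivalently, by Proposition~\ref{general}, $\cup_i\overline{A_i}\ne X$) is to be invoked not at level $p_{k-1}$ but at level $K(p_{k-1})$, which is legitimate because $\dens_{K(p_{k-1})}>0$ as well.

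The construction is inductive. Set $p_0=0$ and $S_0=0\in A_{p_0}$. Given $S_{k-1}\in A_{p_{k-1}}$, let $\gamma_k=\dens_{K(p_{k-1})}/2>0$ and pick, using $\dens_{K(p_{k-1})}>0$, a unit vector $u_k$ with $E(u_k,A_{K(p_{k-1})})>\gamma_k$; then, using the density of $\cup_mA_m$, choose $b_k\in A_{M_k}$ with $\|u_k-b_k\|<\gamma_k/2$, so that $\|b_k\|\le 3/2$ and $E(b_k,A_{K(p_{k-1})})>\gamma_k/2$. Put $p_k=K(\max\{p_{k-1},M_k\})$, so that $S_k=S_{k-1}+t_kb_k\in A_{p_{k-1}}+A_{M_k}\subseteq A_{p_k}$ for every choice of $t_k$, and $p_{k-1}<p_k$. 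Finally choose $t_k>0$ so small that, writing $\rho_k=\tfrac32\sum_{j>k}t_j$ (so $\|x-S_k\|\le\rho_k$), one has $t_{k+1}\le t_k/2$, $3t_k\le e_{p_k-1}$ (this controls the upper bound on the block $[p_{k-1},p_k)$), and $\sum_{j>k}t_j<t_k\gamma_k/6$ (this protects the lower bound at level $p_{k-1}$). At each stage these are finitely many conditions on the \emph{tail} of $(t_j)$, all met by taking $t_k$ to be $2^{-k}$ times the minimum of the finitely many positive quantities $e_{p_k-1}$, $\{t_i\gamma_i\}_{i<k}$ produced so far; the values $p_k$ and $\gamma_k$ are known before $t_k$ is selected.

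For the upper bound, given $n$ choose $k$ with $p_k\le n<p_{k+1}$; then $S_k\in A_{p_k}\subseteq A_n$, so $E(x,A_n)\le\|x-S_k\|\le\rho_k\le 3t_{k+1}\le e_{p_{k+1}-1}\le e_n\le\varepsilon_n$, using $t_{j+1}\le t_j/2$ and the monotonicity of $\{e_n\}$. For the lower bound, fix $k\ge 1$ and $a\in A_{p_{k-1}}$; since $S_{k-1},a\in A_{p_{k-1}}$ we have $a-S_{k-1}\in A_{p_{k-1}}+A_{p_{k-1}}\subseteq A_{K(p_{k-1})}$, and the identity $x-a=(t_kb_k-(a-S_{k-1}))+(x-S_k)$ together with homogeneity gives $\|x-a\|\ge t_kE(b_k,A_{K(p_{k-1})})-\|x-S_k\|\ge t_k\gamma_k/2-\rho_k\ge t_k\gamma_k/2-t_k\gamma_k/4=t_k\gamma_k/4>0$. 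Hence $E(x,A_{p_{k-1}})>0$ for every $k$, and since $p_{k-1}\to\infty$ and $E(x,\cdot)$ is non-increasing along the chain, $E(x,A_n)>0$ for all $n$; in particular $x\notin\cup_iA_i$. The main obstacle is exactly the tension between these two estimates: the upper bound forces $\rho_k\le e_{p_{k+1}-1}$, which is tiny because $p_{k+1}$ is large, while the lower bound forces $\rho_k$ to be small only \emph{relative to} $t_k\gamma_k$; reconciling both through the single geometric-type sequence $(t_k)$, and absorbing $a-S_{k-1}$ into $A_{K(p_{k-1})}$ (which is why one must work at level $K(p_{k-1})$ and use $\dens_{K(p_{k-1})}>0$ rather than $\dens_{p_{k-1}}>0$), is the heart of the argument. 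A secondary point worth stressing is that the generic unit vectors $u_k$ must be replaced by nearby dictionary elements $b_k\in A_{M_k}$; without this the partial sums would leave the scheme and the upper bound would collapse.
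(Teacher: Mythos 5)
Your proof is correct and follows essentially the same route as the paper's: an inductive construction of partial sums inside the scheme, where each new block is a small multiple of an element of some $A_{M_k}$ kept at distance of order $\dens_{K(p_{k-1})}$ from $A_{K(p_{k-1})}$ (your inline lower-bound estimate, absorbing $a-S_{k-1}$ into $A_{K(p_{k-1})}$, is exactly the content of the paper's Lemma~\ref{universal}), with geometrically decaying coefficients reconciling the upper and lower bounds. The only differences are cosmetic: your explicit reduction to the non-increasing sequence $e_n=\min_{j\le n}\varepsilon_j$ (a point the paper's proof leaves implicit when it writes $\varepsilon_{i_j}\le\varepsilon_\ell$ for $\ell<i_j$), and the constants in your recipe $t_k=2^{-k}\min\{\cdot\}$, which need a harmless adjustment (e.g.\ an extra factor $1/6$) to cover the cases $k=1,2$.
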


\begin{lemma}\label{universal}
Suppose $(X,\{A_n\})$ is an approximation scheme in a Banach space $X$.
Suppose, furthermore, that $i \in \N$ satisfies $\dens_{K(i)} > 0$. Then for any
$c \in (0,1)$ there exist $j > i$ and $y_0 \in A_j$, such that $\|y_0\| = 1$, and
$E(x + \alpha y_0, A_i) > c |\alpha| \dens_{K(i)}$ for any $x \in A_i$,
and any scalar $\alpha$.
\end{lemma}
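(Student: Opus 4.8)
The plan is to decouple the estimate from $x$ by first proving a uniform lower bound that holds for \emph{any} vector in the role of $y_0$, and only afterwards to arrange that a suitable $y_0$ lives inside the scheme. First I would establish the key inequality
\[
E(x + \alpha y_0, A_i) \geq |\alpha|\, E(y_0, A_{K(i)})
\]
for every $x \in A_i$, every scalar $\alpha$, and \emph{any} $y_0 \in X$. The mechanism is that, for $a \in A_i$, homogeneity gives $-x \in A_i$, so $a - x \in A_i + A_i \subseteq A_{K(i)}$; writing $x + \alpha y_0 - a = \alpha y_0 - (a-x)$ and noting that $a - x$ is one admissible competitor in $A_{K(i)}$, we get $\|x + \alpha y_0 - a\| \geq E(\alpha y_0, A_{K(i)}) = |\alpha|\, E(y_0, A_{K(i)})$, and taking the infimum over $a \in A_i$ finishes this step. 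This is the conceptual heart of the lemma: the difference of two elements of $A_i$ falls into $A_{K(i)}$, which is exactly why the hypothesis is phrased in terms of $\dens_{K(i)}$ rather than $\dens_i$.

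Granting this, it suffices to produce $j > i$ and a norm-one $y_0 \in A_j$ satisfying the strict inequality $E(y_0, A_{K(i)}) > c\, \dens_{K(i)}$, since then for $\alpha \neq 0$ the displayed bound yields $E(x+\alpha y_0, A_i) \geq |\alpha|\,E(y_0,A_{K(i)}) > c\,|\alpha|\,\dens_{K(i)}$. To build such a $y_0$ I would fix $c' \in (c,1)$ and invoke the definition $\dens_{K(i)} = E(S(X), A_{K(i)})$ to choose $w \in S(X)$ with $E(w, A_{K(i)}) > c'\,\dens_{K(i)}$. Because $\bigcup_n A_n$ is dense in $X$ (Definition~\ref{def_appr_scheme}(iii)), I would then pick $y \in \bigcup_n A_n$ with $\|w - y\|$ as small as desired; $y$ is nonzero since it is close to the unit vector $w$. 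Setting $y_0 = y/\|y\|$ (legitimate by homogeneity) and $j = \max\{j', i+1\}$ where $y \in A_{j'}$, the nesting of the scheme gives $y_0 \in A_{j'} \subseteq A_j$ with $j > i$ as required.

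The remaining step is a routine perturbation estimate. Since $E(\cdot, A_{K(i)})$ is $1$-Lipschitz, I would bound $E(y, A_{K(i)}) \geq E(w, A_{K(i)}) - \|w-y\|$ and $\|y\| \leq 1 + \|w-y\|$, so that
\[
E(y_0, A_{K(i)}) = \frac{E(y, A_{K(i)})}{\|y\|} \geq \frac{c'\,\dens_{K(i)} - \|w-y\|}{1 + \|w-y\|},
\]
and the right-hand side exceeds $c\,\dens_{K(i)}$ once $\|w-y\|$ is taken small enough, thanks to the gap $c' > c$. I do not expect a genuine obstacle here; the only points requiring attention are keeping the inequality strict (guaranteed by the freedom to shrink $\|w-y\|$ against the fixed gap $c' - c$) and the degenerate case $\alpha = 0$, where both sides vanish and the strict conclusion is understood to be read for $\alpha \neq 0$.
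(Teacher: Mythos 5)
Your proposal is correct and follows essentially the same route as the paper: the identical key observation that $a-x\in A_i+A_i\subseteq A_{K(i)}$ reduces everything to the bound $E(x+\alpha y_0,A_i)\geq|\alpha|\,E(y_0,A_{K(i)})$, and $y_0$ is then obtained from the density of $\bigcup_n A_n$ exactly as in the paper (which simply asserts that $S(X)\cap\bigl(\bigcup_j A_j\bigr)$ is dense in $S(X)$, the fact your normalization-and-perturbation estimate proves explicitly). Your treatment is in fact slightly more careful than the paper's, both in spelling out that perturbation step and in flagging the degenerate case $\alpha=0$.
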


\begin{proof}
As $S(X) \bigcap (\bigcup_j A_j)$ is dense in $S(X)$,
we can find $j \in \N$ and $y_0 \in A_j \cap S(X)$ in such a way that
$E(y_0,A_{K(i)}) > c\dens_{K(i)}$.
Then, for any $x, z \in A_i$,
$$
\|(x + \alpha y_0) - z\| = \|\alpha y_0 - (z-x)\| \geq
|\alpha| E(y_0,A_{K(i)}) > c |\alpha| \dens_{K(i)},
$$
which is what we wanted to prove.
\end{proof}

\begin{proof}[Proof of Theorem~\ref{fast}]
We are going to find a ``rapidly increasing'' sequence
$0 = i_0 < 1 = i_1 < i_2 < i_3 < \ldots$, a ``rapidly
decreasing'' sequence $\delta_1 > \delta_2 > \ldots > 0$, and a sequence of elements $x_j \in
A_{i_j}$, in such a way that the following holds for every $j$:
\begin{equation}
\delta_j \leq \min\{ \varepsilon_{i_j}/2 , \delta_{j-1}
\dens_{K(i_{j-2})}/4 \} , \, \,
\|x_j - x_{j-1}\| \leq \delta_j , \, \,
E(x_j, A_{i_{j-1}}) > 4 \delta_j \dens_{K(i_{j-1})}/5 .
\label{conditions_delta}
\end{equation}
As $\delta_j \leq \delta_{j-1}/4$, $\{x_j\}$ is a  Cauchy
sequence in $X$.
Let $x = \lim_j x_j$. We claim that  $x \notin \cup_i A_i$ and
$E(x, A_\ell) < \varepsilon_\ell$ for each $\ell$.
Indeed, for $i_{j-1} \leq \ell < i_j$,
$$
E(x,A_\ell) \leq E(x,A_{i_{j-1}}) \leq \sum_{k \geq j} \delta_k \leq
\delta_j \sum_{s=0}^\infty 4^{-s} < 2 \delta_j \leq \varepsilon_{i_j} \leq
\varepsilon_\ell .
$$
On the other hand,
$$
E(x,A_{i_j}) \geq \frac{4 \dens_{K(i_{j-1})}\delta_j}{5} - \sum_{k>j}
\delta_k \geq
\frac{4 \dens_{K(i_{j-1})}\delta_j}{5} - \dens_{K(i_{j-1})} \delta_j
\sum_{s=1}^\infty 4^{-s} >
\frac{\dens_{K(i_{j-1})}\delta_j}{3} > 0 .
$$

Thus, it suffices to show the existence of the sequences $\{i_j\}$, $\{x_j\}$,
and $\{\delta_j\}$ with desired properties.
Set $x_0 = 0$. Let $\delta_1 = \varepsilon_1/2$,
and pick an arbitrary $x_1 \in A_1$ with $\|x_1\| = \delta_1$.
Now suppose $x_j\in A_{i_j}$, $\delta_j>0$, and $n_j\in \N$ have been defined for $j < k$,
in such a way that \eqref{conditions_delta} are satisfied.
By Lemma~\ref{universal}, we can find $s$ such that
there exists $y \in A_s$ with $\|y\| = 1$, for which
$E(x_{k-1} + \delta y, A_{i_{k-1}}) > 4 \delta \dens_{K(i_{k-1})}/5$
hold for any $\delta > 0$. Set $i_k = K(s)$, and
$\delta_k = \min\{\varepsilon_{i_k} , \delta_{k-1} \dens_{K(i_{k-2})}/4\}$.
Then $x_k = x_{k-1} + \delta_k y \in A_{i_k}$,
$\|x_k - x_{j-k}\| = \delta_k$, and
$E(x_k, A_{i_{k-1}}) > 4 \dens_{K(i_{k-1})} \delta_k/5$.
\end{proof}


\bigskip

\footnotesize{J. M. Almira

Departamento de Matem\'{a}ticas. Universidad de Ja\'{e}n.

E.P.S. Linares,  C/Alfonso X el Sabio, 28

23700 Linares (Ja\'{e}n) Spain

Email: jmalmira@ujaen.es

Phone: (34)+ 953648503

Fax: (34)+ 953648575}

\bigskip

\footnotesize{T. Oikhberg

Department of Mathematics, The University of California at Irvine, Irvine CA 92697, {\it and}

Department of Mathematics, University of Illinois at Urbana-Champaign, Urbana, IL 61801

Email: toikhber@math.uci.edu

Phone: (1)+ 949-824-1267

Fax: (34)+ 949-824-7993}

\end{document}